\date{}
\title[Polar decomposition and regularizing effects]{Polar decomposition of semigroups generated by non-selfadjoint quadratic differential operators and regularizing effects}
\author{Paul Alphonse}
\address{(Paul Alphonse) Univ Rennes, CNRS, IRMAR - UMR 6625, F-35000 Rennes}
\email{paul.alphonse@ens-rennes.fr}
\author{Joackim Bernier}
\address{(Joackim Bernier) Univ Rennes, CNRS, IRMAR - UMR 6625, F-35000 Rennes}
\email{joackim.bernier@ens-rennes.fr}
\keywords{Quadratic operators, Polar decomposition, Splitting method, Fourier integral operators, Subelliptic estimates}
\subjclass[2010]{35B65, 35H20, 65P10, 47A60}
\numberwithin{equation}{section}
\newtheorem{thm}{Theorem}[section]
\newtheorem{prop}[thm]{Proposition}
\newtheorem{lem}[thm]{Lemma}
\newtheorem{cor}[thm]{Corollary}
\newtheorem{dfn}[thm]{Definition}
\theoremstyle{definition}
\newtheorem{ex}[thm]{Example}
\newtheorem{rk}[thm]{Remark}
\DeclareMathOperator{\Reelle}{Re}
\DeclareMathOperator{\Tr}{Tr}
\DeclareMathOperator{\Ker}{Ker}
\DeclareMathOperator{\Imag}{Im}
\DeclareMathOperator{\GL}{GL}
\DeclareMathOperator{\Symm}S
\DeclareMathOperator{\Symp}{Sp}
\DeclareMathOperator{\Supp}{Supp}
\DeclareMathOperator{\Op}{Op}
\DeclareMathOperator{\Her}H
\DeclareMathOperator{\atanh}{atanh}
\DeclareMathOperator{\ad}{ad}
\DeclareMathOperator{\atan}{atan}
\DeclareMathOperator{\Log}{Log}
\begin{document}

\sloppy

\selectlanguage{english}

\begin{abstract}
We characterize geometrically the regularizing effects of the semigroups generated by accretive non-selfadjoint quadratic differential operators. As a byproduct, we establish the subelliptic estimates enjoyed by these operators, being expected to be optimal. These results prove conjectures by M. Hitrik, K. Pravda-Starov and J. Viola. The proof relies on a new representation of the polar decomposition of these semigroups. In particular, we identify the selfadjoint part as the evolution operator generated by the Weyl quantization of a time-dependent real-valued nonnegative quadratic form for which we prove a sharp anisotropic lower bound.
\end{abstract}

\maketitle

\section{Introduction}\label{intro}

We consider the semigroups generated by accretive non-selfadjoint quadratic differential operators. They are the evolution operators associated with partial differential equations of the form
\begin{equation}\label{myPde}
\left\{\begin{array}{l}
	\partial_t u + q^w(x,D_x) u = 0, \\[2pt]
	u(0,\cdot) = u_0,
\end{array}\right.
\end{equation}
where $u_0\in L^2(\mathbb R^n)$, $n\geq 1$ is a fixed number and $q^w(x,D_x)$ is the \emph{Weyl quantization} of  a complex-valued quadratic form $q:\mathbb R^{2n}\rightarrow\mathbb C$ with a nonnegative real part. Denoting $Q\in S_{2n}(\mathbb C)$ the matrix of $q$ in the canonical basis of $\mathbb R^{2n}$, $q^w(x,D_x)$ is nothing but the differential operator 
$$q^w(x,D_x) = \begin{pmatrix} x & -i\nabla \end{pmatrix} Q \begin{pmatrix} x \\ -i\nabla
\end{pmatrix}.$$
This operator is equipped with the domain $D(q^w) = \{u\in L^2(\mathbb R^n) : q^w(x,D_x)u\in L^2(\mathbb R^n)\}.$ Note that this definition coincides with the classical definition of $q^w(x,D_x)$ as an oscillatory integral. We recall that since the real part of the quadratic form $q$ is nonnegative, the quadratic operator $q^w(x,D_x)$ is shown in \cite[pp. 425-426]{MR1339714} to be maximal accretive and to generate a strongly continuous contraction semigroup $(e^{-t q^w})_{t\geq0}$ on $L^2(\mathbb R^n)$. 

In this paper, proving a conjecture of M. Hitrik, K. Pravda-Starov and J. Viola in \cite{MR3710672}, we characterize and quantify geometrically the regularizing effects of  $(e^{-t q^w})_{t\geq0}$ in the asymptotic $0< t \ll1$. Basically, we determine how smooth and localized are the mild (i.e.\ semigroup) solutions of \eqref{myPde}. This problematic is natural and interesting in itself but it is also motivated by its applications in control theory (see Remark \ref{rk_control} below). Furthermore, it is not trivial because, since our operators are non-selfadjoint, we have to deal with nonlinear interactions between phenomena of diffusions and transports (understood in some very weak senses). For example, considering the Kolmogorov operator $x_2\partial_{x_1} - \partial_{x_2}^2$,  it can be proven that its associated semigroup is smoothing super-analytically both with respect to the variables $x_1$ and $x_2$ (see e.g.\ \cite{AB}). In the more general framework of the quadratic differential operators, this problematic was widely studied (see e.g.\ \cite{A, MR2507625, MR3710672, MR3841852, MR3756858}) but results were established only for some specific subclasses of these operators. As a byproduct, using interpolation theory, we establish sharp subelliptic estimates that were also conjectured in \cite{MR3710672} and widely studied (see e.g.\ \cite{AB,MR3710672,MR3841852,MR2752935}).

Beyond our results, we believe that one of the main interests of this paper consists in the methods we introduce, their possible applications and the links we highlight between the analysis of the properties of semigroups and the study of splitting methods in geometric numerical integration. Our proof relies on a new representation of the polar decomposition of the evolution operators:
\begin{equation}\label{intro_thedec}
	 e^{-tq^w} =  e^{-ta_t^w} e^{-itb_t^w},
\end{equation}
where $a_t,b_t$ are some real valued quadratic forms depending analytically on $0\leq t\ll 1$, $a_t$ is nonnegative and $e^{-ta_t^w}$ (resp. $e^{-itb_t^w}$) denotes the evolution operator generated by $a_t^w$ (resp. $ib_t^w$) at time $t$. The existence of such a representation relies on the \emph{exact classical-quantum correspondence} (through the theory of Fourier Integral Operators developed by L. H\"ormander in \cite{HFIO}). This correspondance allows to identify a semigroup generated by the Weyl quantization of a quadratic form with the Hamiltonian flow of this quadratic form (i.e.\ the exponential of a matrix). The key observation in this paper is that, since  $e^{-t i b_t^w} $ is unitary, the regularizing effects are entirely driven by  $e^{-t a_t^w}$. In other words, $a_t$ encodes all the regularizing effects generated by the nonlinear interactions between the phenomena of diffusions and transports. For example, a formula of Kolmogorov (see e.g.\ \cite{AB}) proves that for the Kolmogorov operator, the factorization \eqref{intro_thedec} becomes
$$\forall t\geq 0,\quad e^{t (\partial_{x_2}^2-x_2\partial_{x_1})} = e^{t (\partial_{x_2} + t\partial_{x_1}/2)^2 + t^3 \partial_{x_1}^2/12 }e^{-tx_2\partial_{x_1}}.$$
As a consequence, the smoothing properties of this semigroup become as explicit as for the heat equation. Obviously, in general, there is no elementary explicit formula giving $a_t$. The main technical result of this paper is the derivation of a sharp anisotropic lower bound for $a_t$ in the asymptotic of $0< t \ll 1$. The starting of this derivation is the observation that $a_t^w$ results from the \emph{backward error analysis} of the Lie splitting method\footnote{This is a classical problematic in geometric numerical integration, we refer the reader to \cite{HLW} for a presentation of this topic.} associated with the decomposition $2(\Reelle q)^w = q^w + \bar q^w$:
$$e^{-tq^w}e^{-t\bar q^w} = e^{-2ta_t^w}.$$ 
This formula provides a direct way to determine $a_t$ as a function of $t$ and $q$. Using the generalization \cite{MR3880300} of the results of L. H\"ormander \cite{MR1339714}, our results could be extended to non-autonomous equations. Furthermore, in view of \cite{B,Joe}, we expect that our results could be extended to deal with semigroups generated by inhomogeneous quadratic differential operators. However, these extensions would require some important technicalities. Consequently, they would deserve some further analysis in future works. For the moment, it is not clear how our methods could be extended to deal with non-quadratic operators. It would also deserve some further investigations.
We believe that our representation \eqref{intro_thedec} could also be useful to analyse some other properties of the semigroups like the propagation of coherent states or singularities. Finally, our methods seem promising to design and analyse rigorously some splitting methods to solve numerically equations of the form \eqref{myPde}, see~\cite{B}.

\subsubsection*{Outline of the work} Section \ref{sec_relou} is devoted to present the main results contained in this paper, put in their bibliographic context and illustrated with examples. In Section \ref{Polar}, we establish the polar decomposition of quadratic semigroups in any positive times whereas Section \ref{short} is devoted to the study of the selfadjoint part for small times. As a byproduct of this decomposition, we study the regularizing effects of semigroups generated by non-selfadjoint quadratic differential operators in Section \ref{Reg} from which we derive subelliptic estimates enjoyed by accretive quadratic operators in Section \ref{Sub}. Section \ref{Appendix} is an appendix containing the proofs of some technical results.

\subsubsection*{Convention} Any complex-valued quadratic form $q:\mathbb R^{2n}\rightarrow\mathbb C$ will be implicitly extended to the complex phase space $\mathbb C^{2n}$ in the following way:
\begin{equation}\label{07052019E1}
	\forall X\in\mathbb C^{2n},\quad q(X) = X^TQ\overline X = q(\Reelle X) + q(\Imag X),
\end{equation}
where $Q\in\Symm_{2n}(\mathbb C)$ denotes the matrix of the quadratic form $q$ in the canonical basis of $\mathbb R^{2n}$.

\subsubsection*{Notations} The following notation will be used all over the work:
\begin{enumerate}[label=\textbf{\arabic*.},leftmargin=* ,parsep=2pt,itemsep=0pt,topsep=2pt]
\item For all complex matrix $M\in M_n(\mathbb C)$, $M^T$ denotes the transpose matrix of $M$ while $M^* = \overline M^T$ denotes its adjoint.
\item $\langle\cdot,\cdot\rangle$ denotes the inner product on $\mathbb C^n$ as defined in \eqref{15022019E4}.
\item We set $\vert\cdot\vert$ the Euclidean norm on $\mathbb R^n$ extended to $\mathbb C^n$ as explained in the previous convention.
\item The notation $\Vert\cdot\Vert$ stands for the matrix norm on $M_{2n}(\mathbb C)$ induced by the norm $\Vert\cdot\Vert_2$ on $\mathbb C^{2n}$. From there, we introduce the norm $\Vert\cdot\Vert_{\infty}$ on $M_{2n}(\mathbb C)\times M_{2n}(\mathbb C)$ defined by 
\begin{equation}\label{29032021E1}
	\Vert(M,N)\Vert_{\infty} = \max(\Vert M\Vert,\Vert N\Vert).
\end{equation}
\item When $\mathbb K = \mathbb R$ or $\mathbb C$, we denote by $\Symp_{2n}(\mathbb K)$ the symplectic group whose definition is recalled at the beginning of Subsection \ref{symp}.
\item We denote by $\mathbb C\langle X,Y \rangle$ the ring of the non-commutative polynomials in $X$ and $Y$, as defined e.g.\ in \cite[Chapter 6]{MR3308118}. For all nonnegative integer $k\geq0$, we set $\mathbb C_{k,0}\langle X,Y \rangle$ the subspace of $\mathbb C\langle X,Y \rangle$ of non-commutative polynomials of degree smaller than or equal to $k$ vanishing at $(0,0)$.
\item For all vector subspaces $V\subset \mathbb K^n$, with $\mathbb K=\mathbb R$ or $\mathbb C$, the notation $V^{\perp}$ is devoted for the orthogonal complement of $V$ with respect to the canonical Euclidean (when $\mathbb K = \mathbb R$) or Hermitian (when $\mathbb K = \mathbb C$) structure of $\mathbb K^n$.
\item If $f:(-\alpha,\alpha)\rightarrow M_n(\mathbb C)$ is an analytic function such that $f(0) = 0$, with $\alpha\in(0,+\infty]$, there exists another analytic function $g:(-\alpha,\alpha)\rightarrow M_n(\mathbb C)$ such that for all $t\in(-\alpha,\alpha)$, $f(t) = tg(t)$. With an abuse of notation, we will denote
\begin{equation}\label{15052019E1}
	\forall t\in(-\alpha,\alpha),\quad g(t) = f(t)/t.
\end{equation}
\end{enumerate}

\section{Formalism and main results}
\label{sec_relou}

\subsection{Hamiltonian formalism and singular space} Before stating the main results contained in this paper, we need to introduce the Hamilton map and the singular space associated with the quadratic form $q$, which will play a key role in the following. According to \cite[Definition 21.5.1]{MR2304165}, the Hamilton map $F$ of the quadratic form $q$ is defined as the unique matrix $F\in M_{2n}(\mathbb C)$ satisfying the identity
\begin{equation}\label{14022019E9}
	\forall X,Y\in\mathbb R^{2n},\quad q(X,Y) = \sigma(X,FY),
\end{equation}
with $q(\cdot,\cdot)$ the polarized form associated with $q$ and $\sigma$ the standard symplectic form given by
\begin{equation}\label{09052019E12}
	\sigma((x,\xi),(y,\eta)) = \langle\xi,y\rangle -\langle x,\eta\rangle,\quad (x,y),(\xi,\eta)\in\mathbb C^{2n},
\end{equation}
where $\langle\cdot,\cdot\rangle$ denotes the inner product on $\mathbb C^n$ defined by
\begin{equation}\label{15022019E4}
	\langle x,y\rangle = \sum_{j=0}^nx_jy_j,\quad x =(x_1,\ldots,x_n),y=(y_1,\ldots,y_n)\in\mathbb C^n.
\end{equation}
Note that this inner product $\langle\cdot,\cdot\rangle$ is linear in both variables but not sesquilinear. By definition, the matrix $F$ is given by
\begin{equation}\label{05112018E7}
	F = JQ,
\end{equation}
where $Q\in S_{2n}(\mathbb C)$ is the symmetric matrix associated with the bilinear form $q(\cdot,\cdot)$,
\begin{equation}\label{29062018E3}
	\forall X,Y\in\mathbb R^{2n},\quad q(X,Y) = \langle QX,Y\rangle = \langle X,QY\rangle,
\end{equation}
and $J\in\GL_{2n}(\mathbb R)$ stands for the symplectic matrix defined by
\begin{equation}\label{08112018E1}
J = \begin{pmatrix}
	0_n & I_n \\
	-I_n & 0_n
\end{pmatrix}\in\Symp_{2n}(\mathbb R).
\end{equation}

The notion of singular space was introduced in \cite[formula (1.1.14)]{MR2507625} by M. Hitrik and K. Pravda-Starov by pointing out the existence of a particular vector subspace $S$ in the phase space $\mathbb R^{2n}$, which is intrinsically associated with the quadratic symbol $q$, and defined as the
following intersection of kernels
\begin{equation}\label{13112018E6}
	S = \bigcap_{j=0}^{+\infty}\Ker(\Reelle F(\Imag F)^j)\cap\mathbb R^{2n},
\end{equation}
where the notations $\Reelle F$ and $\Imag F$ stand respectively for the real part and the imaginary part of the Hamilton map $F$ associated with $q$. Note that the subspace $S$ readily satisfies the following two properties
\begin{equation}\label{23012019E2}
	(\Reelle F)S = \{0\}\quad\text{and}\quad(\Imag F)S\subset S.
\end{equation}
Furthermore, the intersection defining $S$ in \eqref{13112018E6} being an intersection of subspaces of a finite dimensional vector space, this intersection is finite. More precisely, we may consider the smallest integer $k_0\geq0$ satisfying 
\begin{equation}\label{12102018E7}
	S = \bigcap_{j=0}^{k_0}\Ker(\Reelle F(\Imag F)^j)\cap\mathbb R^{2n}.
\end{equation}
Notice that as a consequence of the Cayley-Hamilton theorem, we have $0\le k_0 \le 2n-1$. This integer $k_0$ will play a key role in the following. Since the quadratic symbol has a nonnegative real part $\Reelle q\geq0$, the singular space can be defined in an equivalent way as the subspace in the phase space where all the Poisson brackets
$$H_{\Imag q}^k\Reelle q = \big(\partial_{\xi}\Imag q\cdot\partial_x-\partial_x\Imag q\cdot\partial_{\xi}\big)^k\Reelle q,\quad k\geq0,$$
are vanishing
$$S = \big\{X\in\mathbb R^{2n} : (H^k_{\Imag q}\Reelle q)(X) = 0,\ k\geq0\big\}.$$
This dynamical definition shows that the singular space corresponds exactly to the set of points $X\in\mathbb R^{2n}$, where the function $t\mapsto(\Reelle q)(e^{tH_{\Imag q}}X)$ vanishes to infinite order at $t=0$. This is also equivalent to the fact that this function is identically zero on $\mathbb R$. As pointed out in \cite{MR2507625, MR2752935, MR3244980}, the singular space is playing a basic role in understanding the spectral and hypoelliptic properties of non-elliptic quadratic operators, as well as the spectral and pseudospectral properties of certain classes of degenerate doubly characteristic pseudodifferential operators \cite{MR2753626, MR3137478}.

\subsection{Polar decomposition of semigroups generated by non-selfadjoint quadratic differential operators} We begin by giving a sharp description of the polar decomposition of the evolution operators $e^{-tq^w}$. More precisely, we aim at establishing that for any $t\geq0$, the operator $e^{-tq^w}$ admits the decomposition
\begin{equation}\label{01022019E1}
	e^{-tq^w} = e^{-ta^w_t}e^{-itb^w_t},
\end{equation}
where $a_t,b_t:\mathbb R^{2n}\rightarrow\mathbb R$, with $t\geq0$, are real-valued time-dependent quadratic forms, $a_t$ being nonnegative. In formula \eqref{01022019E1}, the linear operators $e^{-ta^w_t}$ and $e^{-itb^w_t}$ are defined as follows: for some fixed $t\geq0$, the quadratic operators $a^w_t(x,D_x)$ and $ib^w_t(x,D_x)$ respectively generate a semigroup $(e^{-sa^w_t})_{s\geq0}$ and a group $(e^{-isb^w_t})_{s\in\mathbb R}$ of contraction operators on $L^2(\mathbb R^n)$ (since the quadratic form $a_t$ is nonnegative and the quadratic form $ib_t$ is purely imaginary) and the operators $e^{-ta^w_t}$ and $e^{-itb^w_t}$ are respectively defined by
\begin{equation}\label{14052019E1}
	e^{-ta^w_t} = e^{-sa^w_t}\big\vert_{s=t}\quad\text{and}\quad e^{-itb^w_t} = e^{-isb^w_t}\big\vert_{s=t}.
\end{equation}
Notice that if the quadratic operators $(\Reelle q)^w$ and $(\Imag q)^w$ commute, then the relation \eqref{01022019E1} is satisfied with $a_t = \Reelle q$ and $b_t = \Imag q$. Moreover, \eqref{01022019E1} is the polar decomposition of the evolution operator $e^{-tq^w}$ as defined in Subsection \ref{polardecomposition}. In fact, the equality \eqref{01022019E1} will be proven only for small times $0\le t\ll1$. In the case where $t\gg1$, a formula similar to \eqref{01022019E1} will be established with the operator $e^{-itb^w_t}$ replaced by a unitary operator $U_t$ which \textit{a priori} cannot be written as an operator defined in \eqref{14052019E1}. The main result contained in this paper is the following:

\begin{thm}\label{12102018T1} Let $q:\mathbb R^{2n}\rightarrow\mathbb C$ be a complex-valued quadratic form with a nonnegative real part $\Reelle q\geq0$. Then, there exist a family $(a_t)_{t\in\mathbb R}$ of nonnegative quadratic forms $a_t:\mathbb R^{2n}\rightarrow\mathbb R_+$ depending analytically on the time-variable $t\in\mathbb R$ and a family $(U_t)_{t\in\mathbb R}$ of metaplectic operators such that
$$\forall t\geq0,\quad e^{-tq^w} = e^{-ta_t^w}U_t.$$
Moreover, there exists a positive constant $T>0$ and a family $(b_t)_{-T<t< T}$ of real-valued quadratic forms $b_t:\mathbb R^{2n}\rightarrow\mathbb R$ also depending analytically on the time-variable $-T<t< T$, such that 
$$\forall t\in[0,T),\quad e^{-tq^w} = e^{-ta_t^w}e^{-itb^w_t}.$$
\end{thm}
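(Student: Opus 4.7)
The plan is to construct $a_t$ and $b_t$ by first performing a polar decomposition at the level of the complex symplectic matrices associated with $e^{-tq^w}$ under the exact classical-quantum correspondence, then lifting the pieces back to the operator level. Through H\"ormander's theory of FIOs with complex quadratic phase \cite{MR1339714}, compositions and adjoints of semigroups $e^{-tq^w}$ translate at the matrix level to products and Hermitian conjugates of complex symplectic matrices in $\Symp_{2n}(\mathbb C)$, with the identity $(e^{-tq^w})^* = e^{-t\bar q^w}$ playing a pivotal role.

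To construct $a_t$, I would form the positive self-adjoint operator
\[
P_t^2 := e^{-tq^w}\bigl(e^{-tq^w}\bigr)^* = e^{-tq^w}\,e^{-t\bar q^w}
\]
and define $e^{-ta_t^w}$ to be its principal self-adjoint square root. At the matrix level this is the positive part of the polar decomposition of the complex symplectic matrix $\exp(-2itF)$ associated with $e^{-tq^w}$, namely a positive Hermitian symplectic matrix whose principal Hermitian logarithm always lies in the Lie algebra of Hamilton maps and therefore corresponds to a real-valued quadratic form $a_t$. Non-negativity of $a_t$ follows from the fact that the square-rooted operator is a contraction, and analyticity in $t$ is inherited from the analyticity of the principal Hermitian logarithm on the cone of positive-definite matrices. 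Although the operator formula for $P_t^2$ only makes sense for $t\geq 0$, the matrix construction extends to every $t\in\mathbb R$, giving an analytic family of quadratic forms on the whole real line.

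I would then set $U_t := e^{ta_t^w}\,e^{-tq^w}$, which for $t\geq 0$ is unitary by the uniqueness of the polar decomposition. At the matrix level $U_t$ corresponds to a \emph{real} symplectic matrix in $\Symp_{2n}(\mathbb R)$, so $U_t$ is metaplectic. For $t$ near $0$ this matrix lies in a neighbourhood of $\Id$ on which the matrix logarithm is well-defined and analytic; its logarithm is the Hamilton map of a real quadratic form $b_t$ depending analytically on $t$, yielding $U_t = e^{-itb_t^w}$ on some interval $[0,T)$. The radius $T$ is determined by the size of the neighbourhood of $\Id$ on which $\exp\colon\mathfrak{sp}(2n,\mathbb R)\to\Symp_{2n}(\mathbb R)$ is a diffeomorphism.

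The main technical obstacle is to make the classical-quantum correspondence for accretive (rather than unitary) quadratic operators sharp enough to transport each matrix-level operation (product, adjoint, positive square root, logarithm) unambiguously to the Weyl-quantized level, and in particular to identify the operator $P_t$ as the Weyl quantization $e^{-ta_t^w}$ of a genuine non-negative quadratic form varying analytically with $t\in\mathbb R$. The fact that $b_t$ exists only for small $t$ is structural rather than a defect of the method: the exponential map $\mathfrak{sp}(2n,\mathbb R)\to\Symp_{2n}(\mathbb R)$ fails to be surjective, so no real quadratic generator need exist for $U_t$ at large times, which is precisely why the full-time statement retains only the abstract metaplectic factor $U_t$.
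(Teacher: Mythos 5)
Your overall route is in fact the paper's own: the factorization is the operator polar decomposition $e^{-tq^w}=\vert (e^{-tq^w})^*\vert\,U_t$, transported to the matrix level through the exact classical--quantum correspondence, with $e^{-tq^w}e^{-t\bar q^w}$ corresponding to $e^{-2itJQ}e^{-2itJ\overline Q}=e^{-4itJA_t}$ and $b_t$ obtained from the logarithm of the remaining real symplectic factor near the identity. But the two steps carrying the technical weight are asserted rather than proved, and the justification you give for them rests on a misidentification. The operator adjoint corresponds at the matrix level to $T\mapsto \overline T^{-1}$ (Proposition \ref{17052019P1}), \emph{not} to the Hermitian adjoint, so the matrix $K_t=e^{-2itJQ}e^{-2itJ\overline Q}$ underlying your $P_t^2$ is not Hermitian positive definite; its square root $G_t$ satisfies $\overline{G_t}=G_t^{-1}$ rather than $G_t^*=G_t$, and the ``principal Hermitian logarithm on the cone of positive-definite matrices'' is simply not available. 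To define $\sqrt{K_t}$ and $\Log K_t$ for every $t\in\mathbb R$ one must first prove that $\sigma(K_t)\subset\mathbb R^*_+$, which is Lemma \ref{08012019L1} and requires the identity \eqref{07012019E13}, a spectrum-localization argument and connectedness in $t$; this is a genuine gap in your proposal, not a routine verification.

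Second, even granting the functional calculus, the claim that the logarithm ``lies in the Lie algebra of Hamilton maps and therefore corresponds to a real-valued quadratic form'' is precisely what has to be proved: one must show $\Log K_t=-4itJA_t$ with $A_t$ \emph{real and symmetric}, which the paper extracts from $G_t\in\Symp_{2n}(\mathbb C)$, $\overline{G_t}=G_t^{-1}$ and the resulting relation $J\Psi_t=\Psi_t^*J$. Likewise, the non-negativity of $a_t$ as a quadratic form does not follow in one line from $P_t$ being a contraction; the paper proves $A_t\geq M_t\geq0$ through the integral representation of $M_t$ (Lemmas \ref{08012019L4} and \ref{08012019L2}). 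Your contraction argument could be repaired (a selfadjoint contraction semigroup has a non-negative generator, and for purely quadratic symbols operator non-negativity forces symbol non-negativity by testing on translated coherent states), but only \emph{after} the realness and symmetry of $A_t$ are established, which is the hard part. Finally, the composition law for these Fourier integral operators only gives $\mathscr K_{T_1T_2}=\pm\mathscr K_{T_1}\mathscr K_{T_2}$, so the identity $e^{-tq^w}=e^{-ta_t^w}e^{-itb_t^w}$ holds a priori only up to a sign, which must be removed by a continuity-in-$t$ argument as in \eqref{14022019E5}.
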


We refer the reader to Definition \ref{17052019D1} in the appendix where the metaplectic operators (and more generally the Fourier integral operators associated with nonnegative complex symplectic transformations) are defined.

The principal application of this decomposition will be to describe the regularizing effects of the semigroup $(e^{-tq^w})_{t\geq0}$, which requires a precise knowledge of the selfadjoint part $e^{-ta^w_t}$ given by Theorem \ref{12102018T1}. More precisely, we will need an estimate from below of the time-dependent quadratic form $a_t$. This is the purpose of the following theorem:

\begin{thm}\label{14102018T2} Let $q:\mathbb R^{2n}\rightarrow\mathbb C$ be a complex-valued quadratic form with a nonnegative real part $\Reelle q\geq0$. We consider $F$ the Hamilton map of $q$ and $S$ its singular space. Let $(a_t)_{t\in\mathbb R}$ be the family of nonnegative quadratic forms given by Theorem \ref{12102018T1}. Then, there exist some positive constants $0<T<1$ and $c>0$ such that for all $0\le t\le T$ and $X\in\mathbb R^{2n}$,
\begin{equation}
\label{pour le coup cest bien d'avoir un numero car les gens risquent de me citer}
a_t(X)\geq c\sum_{j=0}^{k_0} t^{2j} \Reelle q\big((\Imag F)^j X\big),
\end{equation}
where $0\le k_0\le 2n-1$ is the smallest integer such that \eqref{12102018E7} holds.
\end{thm}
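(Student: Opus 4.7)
The starting point is the backward-error identity $e^{tq^w}e^{t\bar q^w}=e^{2ta_t^w}$ highlighted in the introduction. Since the Moyal product is exact on quadratic symbols, commutators of Weyl quantizations translate into Poisson brackets, and the Baker--Campbell--Hausdorff (BCH) expansion of $\log(e^{tq^w}e^{t\bar q^w})$ yields, at the level of symbols, a convergent power series
$$
a_t(X)=\Reelle q(X)-\tfrac{t}{2}\{\Reelle q,\Imag q\}(X)+\sum_{k\geq 2}t^k c_k(X),
$$
in which each $c_k$ is a real quadratic form in $X$ built from a universal real linear combination of $k$-fold iterated Poisson brackets of $\Reelle q$ and $\Imag q$.

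I would then compare $a_t$ with
$$
\Lambda_t(X):=\sum_{j=0}^{k_0}t^{2j}\,\Reelle q\bigl((\Imag F)^j X\bigr)
$$
scale by scale along the filtration
$$
V_j:=\bigcap_{\ell=0}^{j-1}\Ker\bigl(\Reelle F(\Imag F)^\ell\bigr)\cap\mathbb R^{2n},\qquad V_0=\mathbb R^{2n},\ V_{k_0+1}=S.
$$
Since $\Reelle q\geq 0$, a vector $X$ belongs to $V_j$ if and only if $\Reelle q((\Imag F)^\ell X)=0$ for every $\ell=0,\ldots,j-1$. Choosing supplementary subspaces $W_j$ with $V_j=V_{j+1}\oplus W_j$, the form $\Lambda_t$ restricts on $W_j$ to $t^{2j}\Reelle q((\Imag F)^j\,\cdot\,)$, which is positive definite there. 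The theorem then reduces to the inductive claim
$$
\forall Y\in W_j,\quad c_k(Y)=0 \text{ for } k<2j,\qquad c_{2j}(Y)\geq c'_j\,\Reelle q\bigl((\Imag F)^j Y\bigr),
$$
for some $c'_j>0$, since the higher order corrections $\sum_{k>2j}t^{k-2j}c_k(Y)$ are then absorbed by the leading term uniformly on the unit sphere of $W_j$, provided $t$ is small enough.

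The algebraic heart of the proof is the verification of this inductive claim. The vanishing of $c_k$ on $V_j$ for $k<2j$ reflects the observation underlying the singular space theory: an iterated Poisson bracket of $\Reelle q$ and $\Imag q$ of order $k<2j$, evaluated at $Y\in V_j$, can be rewritten in terms of polar forms $\Reelle q\bigl((\Imag F)^{\ell_1}Y,(\Imag F)^{\ell_2}Y\bigr)$ with $\min(\ell_1,\ell_2)<j$, each of which vanishes by the Cauchy--Schwarz-type inequality $|\Reelle q(X,Y)|^2\leq\Reelle q(X)\Reelle q(Y)$ granted by $\Reelle q\geq 0$. The positivity $c_{2j}(Y)\geq c'_j\Reelle q((\Imag F)^j Y)$ then comes from singling out, at order $2j$, the unique diagonal BCH contribution $\Reelle q((\Imag F)^j Y,(\Imag F)^j Y)=\Reelle q((\Imag F)^j Y)$; its coefficient is strictly positive thanks to the symmetry $q\leftrightarrow\bar q$ of the factorisation $e^{tq^w}e^{t\bar q^w}$, which eliminates odd-parity contributions in the BCH series.

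The main obstacle I anticipate is precisely the combinatorial step of pinning down the positive coefficient of the diagonal term at each even order $2j\leq 2k_0$ and verifying that no off-diagonal bracket of the same order can cancel it. Once this algebraic identity is in place, the remaining ingredients -- analyticity of $t\mapsto a_t$ from Theorem~\ref{12102018T1}, the Cauchy--Schwarz absorption of the cross-terms, and a standard compactness argument on each unit sphere of $W_j$ -- are routine.
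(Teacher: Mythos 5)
Your strategy is genuinely different from the paper's (which never expands the BCH series term by term: it works with the closed-form matrix $M_t=\int_0^1(e^{-2i\alpha t\overline F}\Phi_t)^*(\Reelle Q)(e^{-2i\alpha t\overline F}\Phi_t)\,\mathrm d\alpha$ satisfying $A_t\geq M_t$, and bounds the resulting $L^2(0,1)$-norm of an $\mathbb C^{2n}$-valued polynomial in $\alpha$ from below), and as written it has a genuine gap at the reduction step. Knowing that $a_t(Y)\geq c'_j\,t^{2j}\Reelle q((\Imag F)^jY)$ for $Y$ in each stratum $W_j$ \emph{separately} does not yield the claimed bound for a general $X=\sum_j X_j$: the quadratic form $a_t$ is not block-diagonal with respect to $\bigoplus_j W_j$, and the off-diagonal terms coupling $W_j$ and $W_{j'}$ appear at order $t^{j+j'}$, i.e.\ exactly at the geometric mean of the two diagonal scales $t^{2j}$ and $t^{2j'}$. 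They therefore cannot be absorbed by a crude smallness-of-$t$ argument; whether the diagonal survives is a quantitative question about the relative sizes of the coefficients. (Already for the Kolmogorov operator one has $a_t=\xi_2^2+t\xi_1\xi_2+\tfrac{t^2}{3}\xi_1^2$: the cross term $t\xi_1\xi_2$ is borderline, and the conclusion holds only because $\tfrac13>\tfrac14$.) This is precisely the point the paper's proof is built to handle: writing $\sqrt{m_t(X)}=\Vert\sum_k\alpha^k y_k\Vert_{L^2(0,1)}$ with $y_k=\frac{(-2t)^k}{k!}\sqrt{\Reelle Q}(i\overline F)^k\Phi_tX$ and invoking the equivalence of the $L^2(0,1)$-norm with the weighted $\ell^1$-norm $\sum_k k!2^{-k}\vert y_k\vert$ on polynomials of degree $\leq k_0$ is exactly a positivity statement for the full Gram (Hilbert) matrix, which controls all cross terms simultaneously. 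Your proposal contains no substitute for this step.

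A second, smaller issue is the "algebraic heart" you yourself flag: the strict positivity of the coefficient of the diagonal term $\Reelle q((\Imag F)^jY)$ at order $t^{2j}$ is asserted, not proved, and the parity argument offered for it is not correct. The factorization $e^{-tq^w}e^{-t\bar q^w}=e^{-2ta_t^w}$ is a Lie (not Strang) splitting, so odd powers of $t$ do occur in $a_t$ — the first correction is already $-\tfrac t2\{\Reelle q,\Imag q\}$, and the explicit Kramers--Fokker--Planck and Kolmogorov formulas in the paper contain odd powers of $t$. So there is no parity cancellation to invoke; the needed positivity must be extracted combinatorially (in the paper it ultimately comes from the expansion $A_t=\sum_{k\geq0}\frac1{2k+1}(\Psi_t^k)^*M_t\Psi_t^k$ together with the integral formula for $M_t$, not from BCH coefficients). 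Until both points are addressed, the proposal does not constitute a proof.
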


Theorem \ref{14102018T2} implies in particular that for all $0\le t\ll1$, the quadratic form $a_t$ enjoys degenerate anisotropic coercive estimates in the phase space. This corollary is proven in  Lemma \ref{23102018L1}. In the particular case when $S=\{0\}$, this lemma implies that the quadratic form $a_t$ is positive definite for all $0\le t\ll1$.  Moreover, it highlights the role of the singular space $S$ in the polar decomposition given by Theorem \ref{12102018T1} through the index $0\le k_0\le 2n-1$ which is intrinsically related to its structure.

The calculation of the quadratic forms $a_t$ and $b_t$ is quite difficult in practice (except for example for the Ornstein-Uhlenbeck operators see e.g.\ \cite{AB}). The Kramers-Fokker-Planck operator without external potential also makes an exception as illustrated in the following example:

\begin{ex} Let $K$ be the Kramers-Fokker-Planck operator without external potential defined by
\begin{equation}\label{10012019E5}
	K = -\Delta_v + \vert v\vert^2 + \langle v,\nabla_x\rangle,\quad (x,v)\in\mathbb R^{2n},
\end{equation}
and equipped with the domain $D(K) = \{u\in L^2(\mathbb R^{2n}) : Ku\in L^2(\mathbb R^{2n})\}$. The operator $K$ is quadratic since its Weyl symbol is the quadratic form $q:\mathbb R^{4n}\rightarrow\mathbb C$ given by $q(x,v,\xi,\eta) = \vert\eta\vert^2 + \vert v\vert^2 + i\langle v,\xi\rangle,$ with $(x,v,\xi,\eta)\in\mathbb R^{4n}$. Moreover, for all $t\geq0$, the evolution operator $e^{-tK}$ can be written as 
\begin{equation}\label{14022019E2}
	e^{-tK} = e^{-ta^w_t}e^{-itb^w_t},
\end{equation}
where the time-dependent quadratic operators $a^w_t$ and $b^w_t$ are defined for all $t\geq0$ by
$$a^w_t = -\Delta_v + \vert v\vert^2 - \frac{\sinh(2t)}{\cosh(2t) + 1}\langle\nabla_x,\nabla_v\rangle - \frac{2t\cosh(2t) - \sinh(2t)}{4t(\cosh(2t) + 1)}\Delta_x\quad \text{and}\quad  b^w_t = \frac{\tanh t}{it}\langle v,\nabla_x\rangle.$$
Indeed, as we will see in the proof of Theorem \ref{12102018T1}, establishing the relation \eqref{14022019E2} is equivalent to proving the following equality between matrices:
\begin{equation}\label{14022019E3}
	e^{-2itJQ} = e^{-2itJA_t}e^{2tJB_t},
\end{equation}
where $J\in\Symp_{4n}(\mathbb R)$ is the symplectic matrix defined in \eqref{08112018E1}, $Q\in\Symm_{4n}(\mathbb C)$ is the matrix of the quadratic form $q$ in the canonical basis of $\mathbb R^{4n}$, and the time-dependent matrices $A_t,B_t\in\Symm_{4n}(\mathbb R)$ are respectively defined for all $t\geq0$ by
$$A_t = \begin{pmatrix}
	0_n & 0_n & 0_n & 0_n \\
	0_n & I_n & 0_n & 0_n \\
	0_n & 0_n & \frac{2t\cosh(2t) - \sinh(2t)}{4t(\cosh(2t) + 1)}I_n & \frac{\sinh(2t)}{2(\cosh(2t) + 1)}I_n \\
	0_n & 0_n & \frac{\sinh(2t)}{2(\cosh(2t) + 1)}I_n & I_n
\end{pmatrix}
\
B_t = \begin{pmatrix}
	0_n & 0_n & 0_n & 0_n \\
	0_n & 0_n & \frac{\tanh t}{2t}I_n & 0_n \\
	0_n & \frac{\tanh t}{2t}I_n & 0_n & 0_n \\
	0_n & 0_n & 0_n & 0_n
\end{pmatrix}.$$
Moreover, \eqref{14022019E3} follows from a direct calculus.
\end{ex}

\begin{rk} The technics used to derive the polar decompositions of semigroups generated by accretive non-selfadjoint quadratic differential operators can also be used to obtain other splitting formulas. For example, let us consider the harmonic oscillator $\mathcal H = -\Delta_x+\vert x\vert^2$, with $x\in\mathbb R^n$. We prove in Proposition \ref{11022019P1} (in dimension $1$, but the proof works the same in any dimension by tensorization) with the same arguments as the ones used in the proof of Theorem \ref{12102018T1} that for all $t\geq0$, the evolution operator $e^{-t\mathcal H}$ generated by $\mathcal H$ is written as
$$e^{-t\mathcal H} = e^{-\frac12(\tanh t)\vert x\vert^2}e^{\frac12\sinh(2t)\Delta_x}e^{-\frac12(\tanh t)\vert x\vert^2}.$$
The method can be generally used for all semigroups generated by accretive non-selfadjoint quadratic differential operators.
\end{rk}

\begin{rk}\label{exfrac} The polar decomposition provided by Theorem \ref{12102018T1} for the semigroups generated by accretive non-selfadjoint quadratic differential operators is as well valid for an other general class of semigroups called fractional Ornstein-Uhlenbeck semigroups defined as follows: given $s>0$ a positive real number, $B$ and $Q$ real $n\times n$ matrices, with $Q$ symmetric positive semidefinite, we define the fractional Ornstein-Uhlenbeck operator $L_s$ as
$$L_s = \frac12\Tr^s(-Q\nabla^2_x) + \langle Bx,\nabla_x\rangle,$$
and equipped with the domain $D(L_s) = \{u\in L^2(\mathbb R^n) : L_su\in L^2(\mathbb R^n)\}.$ The operator $\Tr^s(-Q\nabla^2_x)$ stands for the Fourier multiplier with symbol $\langle Q\xi,\xi\rangle^s$. The two authors proved in \cite[Theorem 1.1]{AB} that the operator $L_s$ generates a strongly continuous semigroup $(e^{-tL_s})_{t\geq0}$ on $L^2(\mathbb R^n)$ and that for all $t\geq0$, the evolution operator $e^{-tL_s}$ is explicitly given by the following formula:
\begin{equation}\label{01022019E2}
	\forall t\geq0,\quad e^{-tL_s} = \exp\Big(-\frac12\int_0^t\big\vert\sqrt Qe^{\tau B^T}D_x\big\vert^{2s}\ \mathrm d\tau\Big)e^{-t\langle Bx,\nabla_x\rangle}.
\end{equation}
For all $t\geq0$, the relation \eqref{01022019E2} is the polar decomposition of the operator $e^{-tL_s}$.
\end{rk}

\subsection{Regularizing effects of semigroups generated by accretive non-selfadjoint quadratic differential operators} As an application of the splitting formula given by Theorem \ref{12102018T1} and the estimate given by Theorem \ref{14102018T2}, we  investigate the regularizing properties of the evolution operators $e^{-tq^w}$ for all $t\geq0$. As pointed out in the works \cite{A, MR2507625, MR3710672, MR3841852, MR3756858}, the understanding of this smoothing effect is closely related to the structure of the singular space $S$. Indeed, the notion of singular space allows to study the propagation of Gabor singularities for the solutions of the quadratic differential equations
$$\left\{\begin{array}{l}
	\partial_tu + q^w(x,D_x)u = 0, \\[5pt]
	u(0) = u_0\in L^2(\mathbb R^n).
\end{array}\right.$$
We recall from \cite[Section 5]{MR3880300} that the Gabor wave front set $WF(u)$ of a tempered distribution $u\in\mathscr S'(\mathbb R^n)$ measures the directions in the phase space in which a tempered distribution does not behave like a Schwartz function. In particular, when $u\in\mathscr S'(\mathbb R^n)$, its Gabor wave front set $WF(u)$ is empty if and only if $u\in\mathscr S(\mathbb R^n)$. The following microlocal inclusion was proven in \cite[Theorem 6.2]{MR3756858}:
\begin{equation}\label{25042018E14}
	\forall u\in L^2(\mathbb R^n),\forall t>0,\quad WF(e^{-tq^w}u)\subset e^{tH_{\Imag q}}(WF(u)\cap S)\subset S,
\end{equation}
where $(e^{tH_{\Imag q}})_{t\in\mathbb{R}}$ is the flow generated  by the Hamilton vector field associated with the imaginary part of the quadratic form $q$, $H_{\Imag q} = (\partial_{\xi}\Imag q)\cdot\partial_x-(\partial_x\Imag q)\cdot\partial_{\xi}$. This result points out that the possible Gabor singularities of the solution $e^{-tq^w}u$ can only come from Gabor singularities of the initial datum $u$ localized in the singular space $S$ and are propagated along the curves given by the flow of the Hamilton vector field $H_{\Imag q}$ associated with the imaginary part of the symbol. The microlocal inclusion \eqref{25042018E14} was shown to hold as well for other types of wave front sets, as Gelfand-Shilov wave front sets \cite{MR3649471} or polynomial phase space wave front sets \cite{MR3767155}. 

Drawing our inspiration from the work \cite{MR3710672}, we consider the vector subspaces $V_0,\ldots,V_{k_0}\subset\mathbb R^{2n}$ defined by
\begin{equation}\label{12102018E8}
	V_k = \bigcap_{j=0}^k\Ker(\Reelle F(\Imag F)^j)\cap\mathbb R^{2n},\quad 0\le k\le k_0,
\end{equation}
where $0\le k_0\le 2n-1$ is the smallest integer such that \eqref{12102018E7} holds. According to \eqref{12102018E7}, the family of vector subspaces $V_0^{\perp},\ldots,V^{\perp}_{k_0}$ is increasing for the inclusion and satisfies
\begin{equation}\label{12102018E9}
	V_0^{\perp}\subsetneq\ldots\subsetneq V_{k_0}^{\perp} = S^{\perp},
\end{equation}
where the orthogonality is taken with respect to the canonical Euclidean structure of $\mathbb R^{2n}$. This stratification allows one to define the index with respect to the singular space of any point $X_0\in S^{\perp}$ as
\begin{equation}\label{23012019E4}
	k_{X_0} = \min\big\{0\le k\le k_0 : X_0\in V^{\perp}_k\big\}.
\end{equation}
When the singular space of $q$ is reduced to zero $S=\{0\}$, the microlocal inclusion \eqref{25042018E14} implies that the semigroup $(e^{-tq^w})_{t\geq0}$ is smoothing in any positive time $t>0$ in the Schwartz space $\mathscr S(\mathbb R^n)$, but this result does not provide any control of the blow-up of the associated seminorms as $t\rightarrow0^+$. However, the notion of index was shown in \cite{MR3710672} to allow to determine the short-time asymptotics of the regularizing effect induced by the semigroup $(e^{-tq^w})_{t\geq0}$ in the phase space direction given by the vector $X_0\in\mathbb R^{2n}$. More precisely, \cite[Theorem 1.1]{MR3710672} states that when the singular space is trivial $S=\{0\}$, there exists a positive constant $C>1$ such that for all $X_0\in\mathbb R^{2n} = S^{\perp}$, $0< t\le 1$ and $u\in L^2(\mathbb R^n)$,
\begin{equation}\label{28012019E1}
	\big\Vert\langle X_0,X\rangle^we^{-tq^w}u\big\Vert_{L^2(\mathbb R^n)}\le \frac{C\vert X_0\vert}{t^{k_{X_0}+\frac12}}\ \Vert u\Vert_{L^2(\mathbb R^n)},
\end{equation}
where $0\le k_{X_0}\le k_0$ denotes the index of the point $X_0\in\mathbb R^{2n}=S^{\perp}$ with respect to the singular space and where the pseudodifferential operator $\langle X_0,X\rangle^w$ is defined as the differential operator whose Weyl symbol is given by the linear form $\langle X_0,X\rangle$, that is
\begin{equation}\label{23012019E5}
	\langle X_0,X\rangle^w = \langle x_0,x\rangle + \langle\xi_0,D_x\rangle,\quad X_0 = (x_0,\xi_0)\in\mathbb R^{2n}.
\end{equation}
This result shows that the structure of the singular space accounting for the family of vector subspaces $(V_k)_{0\le k\le k_0}$, allows one to sharply describe the short-time asymptotics of the regularizing effect induced by the semigroup $(e^{-tq^w})_{t\geq0}$. The degeneracy degree of the phase space direction $X_0\in\mathbb R^{2n} = S^{\perp}$ given by the index with respect to the singular space directly accounts for the blow-up upper bound $t^{-k_{X_0} -\frac12}$, for small times $t\rightarrow0^+$. As a corollary, the same three authors proved in \cite[Corollary 1.2]{MR3710672} that still under the assumption $S=\{0\}$, there exists a positive constant $C>1$ such that for all $m\geq1$ and $X_1,\ldots,X_m\in\mathbb R^{2n} = S^{\perp}$, $0< t\le 1$ and $u\in L^2(\mathbb R^n)$,
\begin{equation}\label{25012019E1}
	\big\Vert\langle X_1,X\rangle^w\ldots\langle X_m,X\rangle^we^{-tq^w}u\big\Vert_{L^2(\mathbb R^n)}
	\le \frac{C^m}{t^{(k_0+\frac12)m}}\ \Bigg[\prod_{j=1}^m\vert X_j\vert\Bigg]\ (m!)^{k_0+\frac12}\ \Vert u\Vert_{L^2(\mathbb R^n)}.
\end{equation}
This implies in particular that when $S=\{0\}$, the semigroup $(e^{-tq^w})_{t\geq0}$ is smoothing in any positive time $t>0$ in the Gelfand-Shilov space $S^{k_0+1/2}_{k_0+1/2}(\mathbb R^n)$. We recall that when $\mu$ and $\nu$ are two positive real numbers satisfying $\mu+\nu\geq1$, the Gelfand-Shilov space $S^{\mu}_{\nu}(\mathbb R^n)$ consists in all the Schwartz functions $f\in\mathscr S(\mathbb R^n)$ satisfying that 
$$\exists C>1,\forall(\alpha,\beta)\in\mathbb N^{2n},\quad \big\Vert x^{\alpha}\partial^{\beta}_xf(x)\big\Vert_{L^2(\mathbb R^n)}
\le C^{1+\vert\alpha\vert+\vert\beta\vert}\ (\alpha!)^{\nu}\ (\beta!)^{\mu}.$$
We refer to \cite[Chapter 6]{MR2668420} for an extensive discussion about the Gelfand-Shilov spaces. This result was sharpened by the same three authors in \cite[Theorem 1.2]{MR3841852} with a different approach based on FBI technics, where they proved that $S=\{0\}$ implies that the semigroup $(e^{-tq^w})_{t\geq0}$ is actually smoothing in any positive time $t>0$ in the Gelfand-Shilov space $S^{1/2}_{1/2}(\mathbb R^n)$ with a control of the blow-up of the associated seminorms in the asymptotics $t\rightarrow0^+$. Moreover, estimates similar to \eqref{25012019E1} in the asymptotics $t\rightarrow+\infty$ were obtained in the case where $S=\{0\}$, see again Theorem 1.1 and Corollary 1.2 in \cite{MR3710672}. We also refer the reader to \cite{MR2899986, MR2390087} where quadratic semigroups are studied in long-time asymptotics.

On the other hand, when the singular space $S$ of $q$ is possibly non-zero but still has a symplectic structure, that is, when the restriction of the canonical symplectic form to the singular space $\sigma_{\vert S}$ is non-degenerate, the above result can be easily extended but only when differentiating the semigroup in the directions of the phase space given by the symplectic orthogonal complement of the singular space 
$$S^{\sigma\perp} = \big\{X\in\mathbb R^{2n} : \forall Y\in S,\quad \sigma(X,Y) = 0\big\}.$$
Indeed, when the singular space $S$ has a symplectic structure, it was proven in \cite[Subsection 2.5]{MR3710672} that the quadratic form $q$ can be written as $q = q_1 + q_2$ with $q_1$ a purely imaginary-valued quadratic form defined on $S$ and $q_2$ another one defined on $S^{\sigma\perp}$ with a nonnegative real part and a zero singular space. The symplectic structures of $S$ and $S^{\sigma\perp}$ imply that the operators $q^w_1(x,D_x)$ and $q^w_2(x,D_x)$ commute as well as their associated semigroups
$$\forall t>0,\quad e^{-tq^w} = e^{-tq^w_1}e^{-tq^w_2} = e^{-tq^w_2}e^{-tq^w_1}.$$
Moreover, since $\Reelle q_1=0$, $(e^{-tq^w_1})_{t\geq0}$ is a contraction semigroup on $L^2(\mathbb{R}^n)$ and the partial smoothing properties of the semigroup $(e^{-tq^w})_{t\geq0}$ can be deduced from a symplectic change of variables and the result known for zero singular spaces applied to the semigroup $(e^{-tq^w_2})_{t\geq0}$. We refer the reader to \cite[Subsection 2.5]{MR3710672} for more details about the reduction by tensorization of the non-zero symplectic case to the case when the singular space is zero. 

In the case when the singular space $S$ is not necessary trivial nor symplectic but is spanned by elements of the canonical basis of $\mathbb R^{2n}$ satisfies the condition $S\subset\Ker(\Imag F)$, with $F$ the Hamilton map of the quadratic form $q$, some partial Gelfand-Shilov smoothing effects in any positive time $t>0$ for the semigroup $(e^{-tq^w})_{t\geq0}$ were obtained by the first author in \cite[Theorem 1.4]{A}, with some control of the associated seminorms as $t\rightarrow0^+$. Moreover, we mention that the two authors, in \cite[Theorem 1.2]{AB}, described the regularizing effects of the Ornstein-Uhlenbeck operator,  whose singular space is not symplectic nor satisfies the condition $S\subset\Ker(\Imag F)$.

In this paper, we investigate the smoothing properties of the evolution operators $e^{-tq^w}$ for any positive times $t>0$, and we aim at sharpening and generalizing the estimates \eqref{25012019E1} without making any assumptions on the singular space $S$. As in the work \cite{MR3710672}, the notion of index plays a key role in understanding the blow-up of the seminorms associated with the smoothing effects of the semigroup $(e^{-tq^w})_{t\geq0}$:

\begin{thm}\label{22102018T1} Let $q:\mathbb R^{2n}\rightarrow\mathbb C$ be a complex-valued quadratic form with a nonnegative real part $\Reelle q\geq0$. We consider $S$ the singular space of $q$ and $0\le k_0\le 2n-1$ the smallest integer such that \eqref{12102018E7} holds. Then, there exist some positive constants $c>1$ and $t_0>0$ such that for all $m\geq1$, $X_1,\ldots,X_m\in S^{\perp}$, $0<t<t_0$ and $u\in L^2(\mathbb R^n)$,
$$\big\Vert\langle X_1,X\rangle^w\ldots\langle X_m,X\rangle^we^{-tq^w}u\big\Vert_{L^2(\mathbb R^n)}
\le \frac{c^m}{t^{k_{X_1}+\ldots+k_{X_m}+\frac m2}}\ \Bigg[\prod_{j=1}^m\vert X_j\vert\Bigg]\ (m!)^{\frac12}\ \Vert u\Vert_{L^2(\mathbb R^n)},$$
where $0\le k_{X_j}\le k_0$ stands for the index of the point $X_j\in S^{\perp}$ with respect to the singular space.
\end{thm}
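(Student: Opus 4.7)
The plan is to combine Theorem~\ref{12102018T1} with the anisotropic coercivity of Theorem~\ref{14102018T2}, using the polar decomposition to reduce the estimate to a bound on iterated first-order Weyl multipliers acting against the selfadjoint semigroup $(e^{-ta_t^w})$, which can then be handled by functional calculus. Concretely, for $0\le t<T$ one has $e^{-tq^w}=e^{-ta_t^w}e^{-itb_t^w}$; since the second factor is unitary on $L^2(\mathbb R^n)$, setting $v=e^{-itb_t^w}u$ preserves the $L^2$-norm, and it suffices to bound
$$\bigl\|\langle X_1,X\rangle^w\cdots\langle X_m,X\rangle^we^{-ta_t^w}v\bigr\|_{L^2}$$
in terms of $\|v\|_{L^2}=\|u\|_{L^2}$.

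The next step turns the pointwise lower bound of Theorem~\ref{14102018T2} into an operator inequality. Exploiting the index stratification $V_0^{\perp}\subsetneq\cdots\subsetneq V_{k_0}^{\perp}=S^{\perp}$ and the explicit form of the lower bound, an elementary algebraic manipulation (essentially the content of Lemma~\ref{23102018L1}) yields, for each $X_0\in S^{\perp}$ of index $k_{X_0}$,
$$\langle X_0,Y\rangle^{2}\le C\,t^{-2k_{X_0}}\,|X_0|^{2}\,a_t(Y),\qquad Y\in\mathbb R^{2n}.$$
Since $\langle X_0,X\rangle$ is a linear Weyl symbol, its Moyal square is exact, $(\langle X_0,X\rangle^w)^{2}=(\langle X_0,X\rangle^2)^w$; combined with the fact that the Weyl quantization of a non-negative quadratic form is a non-negative operator, this lifts to the operator inequality
$$(\langle X_0,X\rangle^w)^{2}\le C\,t^{-2k_{X_0}}\,|X_0|^{2}\,a_t^w.$$

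Set $A_j=\langle X_j,X\rangle^w$ and $L=a_t^w$. The operators $A_j$ are selfadjoint and pairwise commute modulo scalars, since $[A_i,A_j]=-i\sigma(X_i,X_j)\,\Id$. One iterates the operator inequality above to dominate $(A_1\cdots A_m)^{*}(A_1\cdots A_m)$ by $\prod_{j=1}^{m}\bigl(C\,t^{-2k_{X_j}}|X_j|^{2}\bigr)\cdot L^{m}$, up to subdominant remainders coming from the commutators $[A_j,L]$. The scalar semigroup bound
$$\|L^{m/2}e^{-tL}\|_{L^2\to L^2}\le\sup_{s\ge 0}s^{m/2}e^{-ts}=\Bigl(\tfrac{m}{2et}\Bigr)^{m/2},$$
together with Stirling's formula $(m/(2e))^{m/2}\le C^{m}(m!)^{1/2}$, then produces the factor $C^{m}(m!)^{1/2}t^{-m/2}$; gathering all contributions yields the announced estimate on the time window $(0,t_0)$ with $t_0=T$.

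The main obstacle is the iteration in the final step. The $A_j$'s do not commute with $L=a_t^w$, and each commutator $[A_j,L]$ is again a first-order Weyl operator, so naive expansion generates a proliferation of terms that threatens to spoil the sharp $(m!)^{1/2}$ combinatorial factor. A natural device is to use the sandwich identity
$$\bigl\|A_1\cdots A_m e^{-tL}v\bigr\|^{2}=\langle e^{-tL}(A_1\cdots A_m)^{*}(A_1\cdots A_m)e^{-tL}v,\,v\rangle$$
and to dominate the middle operator inductively, exploiting the key feature that within the quadratic Weyl calculus the commutators of linear symbols with the quadratic symbol $a_t$ remain linear. Because this keeps the induction stable in the finite-dimensional space of linear forms on $\mathbb R^{2n}$, the combinatorial bookkeeping stays tractable and the sharp $(m!)^{1/2}$ factor is preserved.
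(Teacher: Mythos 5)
Your reduction to the selfadjoint factor via the polar decomposition, the quadratic-form inequality $\langle X_0,Y\rangle^2\le C\,t^{-2k_{X_0}}|X_0|^2 a_t(Y)$, the exactness of the Moyal square of a linear symbol, and the spectral bound on $a_t^w e^{-sa_t^w}$ are all correct and are indeed ingredients of the paper's proof (Lemmas \RefLem{23102018L1}, \RefLem{23102018L2} and \RefLem{06072018L1}). The gap is in your final iteration step. From $A_j^2\le c_j L$ (with $L=a_t^w$) you cannot conclude $(A_1\cdots A_m)^*(A_1\cdots A_m)\le \big(\prod_j c_j\big)L^m$ "up to subdominant remainders": the map $X\mapsto X^2$ is not operator monotone, the $A_j$ do not commute with $L$, and the commutators $[A_j,L]$, while indeed quantizations of linear forms, point in \emph{new} phase-space directions (roughly $JA_tX_j$) whose index with respect to the singular space is not controlled by $k_{X_j}$. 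Each such remainder therefore requires the same estimate in a possibly more degenerate direction, with an extra $t^{-1}$-type loss, and the induction does not close; this is precisely the "proliferation" you flag but do not resolve.

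The paper circumvents this entirely by a different device: it writes $e^{-ta_t^w}$ as the product of $m$ copies of $e^{-\frac tm a_t^w}$ and uses the \emph{exact} Egorov formula for Fourier integral operators to commute each $\langle X_j,X\rangle^w$ through $j-1$ of these factors, yielding
$$\langle X_1,X\rangle^w\cdots\langle X_m,X\rangle^we^{-ta^w_t}=\langle Y_{1,t},X\rangle^we^{-\frac tma^w_t}\cdots\langle Y_{m,t},X\rangle^we^{-\frac tma^w_t},\qquad Y_{j,t}=e^{\frac{2i(j-1)t}{m}A_tJ}X_j.$$
Each single factor is then bounded by $c|X_j|\,t^{-k_{X_j}}(t/m)^{-1/2}$ using your two ingredients, and $m^{m/2}\le e^{m/2}(m!)^{1/2}$ gives the sharp combinatorial factor. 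This requires one further nontrivial input that your plan also lacks: the coercivity of $\kappa_t$ must be shown to be stable under the conjugated directions $e^{2i\alpha tA_tJ}X_0$, uniformly in $\alpha\in[0,1]$ and small $t$, which is the content of the perturbation Lemma \RefLem{05112018L2} (applied in Lemma \RefLem{06092019L1}). Without the semigroup-splitting/Egorov step and this invariance lemma, the argument as proposed does not go through.
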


In the case when $m=1$, Theorem \ref{22102018T1} recovers the estimate \eqref{28012019E1}. The short-time asymptotics given by \eqref{25012019E1} of $m$ differentiations of the semigroup $(e^{-tq^w})_{t\geq0}$, as for it, is sharpened in $\mathcal O(t^{-k_{X_1}-\ldots k_{X_m} - \frac m2})$, which was the bound conjectured by the three authors of \cite{MR3710672} in page $622$. This result discloses that these short-time asymptotics depend on the phase space directions of differentiations. Moreover, the power over $(m!)^{k_0+\frac12}$ is sharpened in $(m!)^{\frac12}$, which in particular allows one to recover the Gelfand-Shilov $S^{1/2}_{1/2}(\mathbb R^n)$ regularizing effect of the semigroup $(e^{-tq^w})_{t\geq0}$ in any positive time $t>0$ when $S=\{0\}$ already established in \cite[Theorem 1.2]{MR3841852}, with now a precise control in short-time of the associated seminorms.

\begin{ex}\label{ex} Let $Q,R$ and $B$ be real $n\times n$ matrices, $Q$ and $R$ being symmetric positive semidefinite. We consider the generalized Ornstein-Uhlenbeck operator
\begin{equation}\label{15012019E5}
	P =-\frac12\Tr(Q\nabla^2_x) + \frac12\langle Rx,x\rangle + \langle Bx,\nabla_x\rangle,
\end{equation}
equipped with the domain $D(P) = \{u\in L^2(\mathbb R^n) : Pu\in L^2(\mathbb R^n)\}$. Notice that $P$ is a pseudodifferential operator whose Weyl symbol $p$ is given by
\begin{equation}\label{15012019E9}
	p(x,\xi) = \frac12\langle Q\xi,\xi\rangle + \frac12\langle Rx,x\rangle + i\langle Bx,\xi\rangle - \frac12\Tr(B).
\end{equation}
The operator $\tilde P = P+\frac12\Tr(B)$ is therefore a quadratic operator and it follows from a straightforward computation, see e.g.\ \cite[Section 5]{A}, that the Hamilton map $F$ and the singular space $S$ of $\tilde P$ are respectively given by
$$F = \frac12\begin{pmatrix}
	iB & Q \\
	-R & -iB^T
\end{pmatrix}\quad\text{and}\quad S=\bigcap_{j=0}^{n-1}\big(\Ker(RB^j)\times\Ker(Q(B^T)^j)\big).$$
We can consider $0\le k_0\le n-1$ the smallest integer such that $S$ is written as
\begin{equation}\label{28012019E4}
	S = \bigcap_{j=0}^{k_0}\big(\Ker(RB^j)\times\Ker(Q(B^T)^j)\big).
\end{equation}
We notice that the singular space of $\tilde P$ has a decoupled structure in the phase space in the sense that $S$ is written as the cartesian product $S=S_x\times S_{\xi}$, where the two vector subspaces $S_x\subset\mathbb R^n_x$ and $S_{\xi}\subset\mathbb R^n_{\xi}$ are respectively defined by
$$S_x = \bigcap_{j=0}^{k_0}\Ker(RB^j)\subset\mathbb R^n_x\quad\text{and}\quad S_{\xi} = \bigcap_{j=0}^{k_0}\Ker(Q(B^T)^j)\subset\mathbb R^n_{\xi}.$$
For all $x\in S^{\perp}_x$ and $\xi\in S^{\perp}_{\xi}$, we can define the indexes $0\le k_x\le k_0$ and $0\le k_{\xi}\le k_0$ of the points $x$ and $\xi$ with respect to the spaces $S_x$ and $S_{\xi}$ respectively by
$$k_x = \min\bigg\{0\le k\le k_0 : x\in\bigg(\bigcap_{j=0}^k\Ker(RB^j)\bigg)^{\perp}\bigg\},$$
and
$$k_{\xi} = \min\bigg\{0\le k\le k_0 : \xi\in\bigg(\bigcap_{j=0}^k\Ker(Q(B^T)^j)\bigg)^{\perp}\bigg\}.$$
Notice that the integer $k_x$ (resp. $k_{\xi}$) coincides with the index of the point $(x,0)\in S^{\perp}_x\times\{0\}\subset S^{\perp}$ (resp. of the point $(0,\xi)\in\{0\}\times S^{\perp}_{\xi}\subset S^{\perp}$) with respect to the singular space. Theorem \ref{22102018T1} implies in particular that there exist some positive constants $c>1$ and $t_0>0$ such that for all $m,p\geq0$, $x_1,\ldots,x_m\in S^{\perp}_x$, $\xi_1,\ldots,\xi_p\in S^{\perp}_{\xi}$, $0<t<t_0$ and $u\in L^2(\mathbb R^n)$,
\begin{multline}\label{30012019E1}
	\big\Vert\langle x_1,x\rangle\ldots\langle x_m,x\rangle\langle \xi_1,\nabla_x\rangle\ldots\langle \xi_p,\nabla_x\rangle e^{-tP}u\big\Vert_{L^2(\mathbb R^n)} \\
\le \frac{c^{1+m+p}}{t^{k_{x_1}+\ldots+k_{x_m}+k_{\xi_1}+\ldots+k_{\xi_p}+\frac m2+\frac p2}}\ \Bigg[\prod_{j=1}^m\vert x_j\vert\Bigg]\ \Bigg[\prod_{j=1}^p\vert\xi_j\vert\Bigg]\ (m!)^{\frac12}\ (p!)^{\frac12}\ \Vert u\Vert_{L^2(\mathbb R^n)},
\end{multline}
where the integers $0\le k_{x_j}\le k_0$ (resp. $0\le k_{\xi_j}\le k_0$) denote the indexes of the points $x_j$ (resp. $\xi_j$) with respect to $S_x$ (resp. $S_{\xi}$). This proves that the semigroup $(e^{-tP})_{t\geq0}$ enjoys partial Gelfand-Shilov regularity in any positive time $t>0$.
\end{ex}

Theorem \ref{22102018T1} implies in particular that for all $X_0\in S^{\perp}$ and $t>0$, the linear operator $\langle X_0,X\rangle^we^{-tq^w}$ is bounded on $L^2(\mathbb R^n)$. In fact, the reciprocal assertion also holds as shown in the following theorem:

\begin{thm}\label{10122018T1} Let $q:\mathbb R^{2n}\rightarrow\mathbb C$ be a complex-valued quadratic form with a nonnegative real part $\Reelle q\geq0$. We consider $S$ the singular space of $q$. If there exist $t>0$ and $X_0\in\mathbb R^{2n}$ such that the linear operator $\langle X_0,X\rangle^we^{-tq^w}$ is bounded on $L^2(\mathbb R^n)$, then $X_0\in S^{\perp}$.
\end{thm}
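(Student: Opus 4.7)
The plan is to argue by contradiction: assuming $X_0\notin S^\perp$, I construct a bounded sequence $(u_\lambda)_{\lambda>0}\subset L^2(\mathbb R^n)$ along which $\|\langle X_0,X\rangle^w e^{-tq^w} u_\lambda\|_{L^2}$ diverges. The central tools are the polar decomposition $e^{-tq^w}=e^{-ta_t^w}U_t$ of Theorem \ref{12102018T1} (with $U_t$ a metaplectic operator quantizing a real symplectic map $\chi_t$), the Weyl--Heisenberg translations $T_Z$ on $L^2(\mathbb R^n)$, the metaplectic intertwining $U_t T_Z=T_{\chi_t(Z)}U_t$, and the commutation identity for linear Weyl symbols, $\ell^w T_Z = T_Z\ell^w - \ell(Z)\,T_Z$.

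Since $X_0\notin S^\perp$, pick $Y_0\in S$ with $\langle X_0,Y_0\rangle\neq 0$, and let $\widetilde Y=\chi_t^{-1}(Y_0)\in\mathbb R^{2n}$. For a fixed normalized Schwartz Gaussian $\phi_0$, define $u_\lambda=T_{\lambda\widetilde Y}\phi_0$, so that $\|u_\lambda\|_{L^2}=\|\phi_0\|_{L^2}$ is constant. Granting for the moment the inclusion $S\subset\Ker A_t$ (so that $a_t$ is invariant under translation by $Y_0$, hence $e^{-ta_t^w}$ commutes with $T_{\lambda Y_0}$), the polar decomposition and metaplectic intertwining give
\[
e^{-tq^w} u_\lambda = e^{-ta_t^w} T_{\lambda Y_0} U_t\phi_0 = T_{\lambda Y_0}\, e^{-tq^w}\phi_0 = T_{\lambda Y_0}\psi_0,
\]
where $\psi_0:=e^{-tq^w}\phi_0\in\mathcal S(\mathbb R^n)\setminus\{0\}$, as the quadratic semigroup maps Gaussians to nonzero complex Gaussians. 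Substituting $Z=\lambda Y_0$ into the linear-symbol commutator and using the unitarity of $T_{\lambda Y_0}$ then yields
\[
\|\langle X_0,X\rangle^w e^{-tq^w} u_\lambda\|_{L^2} \geq \lambda\,|\langle X_0,Y_0\rangle|\,\|\psi_0\|_{L^2} - \|\langle X_0,X\rangle^w\psi_0\|_{L^2},
\]
which diverges as $\lambda\to+\infty$, contradicting the assumed boundedness.

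The main obstacle is justifying the inclusion $S\subset\Ker A_t$ for \emph{every} $t\geq 0$: the lower bound of Theorem \ref{14102018T2} only gives the reverse inclusion $\Ker A_t\cap\mathbb R^{2n}\subset S$ on a small time interval. To bridge this, I would use the identity $e^{-2ta_t^w}=e^{-tq^w}e^{-t\bar q^w}$ (a direct consequence of the polar decomposition) and expand $-2ta_t^w$ via Baker--Campbell--Hausdorff into a series (convergent for small $t$) consisting of $-2t(\Reelle q)^w$ plus iterated commutators of $q^w$ and $\bar q^w$, whose symbols are iterated Poisson brackets of $q$ and $\bar q$. The pivotal inductive lemma is: \emph{if $f$ is a quadratic form on $\mathbb R^{2n}$ with $f|_S\equiv 0$, then $\{q,f\}|_S\equiv 0$ and $\{\bar q,f\}|_S\equiv 0$.} Indeed, for $Y\in S$ the condition $(\Reelle F)Y=0$ gives $FY=i(\Imag F)Y\in iS$, so the Hamilton vector field of $q$ at $Y$ is an imaginary multiple of $(\Imag F)Y\in S$, and a direct calculation yields $\{q,f\}(Y)=c\,f(Y,(\Imag F)Y)$ for a nonzero imaginary constant $c$; this vanishes because $Y,(\Imag F)Y\in S$ (by $\Imag F$-invariance of $S$) and polarization of $f|_S\equiv 0$ gives $f|_{S\times S}\equiv 0$. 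Bootstrapping from $\Reelle q|_S\equiv 0$ and $\{q,\bar q\}|_S\equiv 0$ (this last identity reduces via $\{q,\bar q\}=-2i\{\Reelle q,\Imag q\}$ to the vanishing of the Hamilton vector field of $\Reelle q$ at points of $S$), the induction propagates to every BCH coefficient, so $a_t|_S\equiv 0$ for small $t$. Analyticity of $t\mapsto a_t$ from Theorem \ref{12102018T1} extends this to all $t\in\mathbb R$, and the non-negativity of $a_t$ upgrades the scalar identity $a_t(Y)=0$ to $A_tY=0$ for every $Y\in S$, establishing the required inclusion.
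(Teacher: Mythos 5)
Your proposal is correct, and its skeleton coincides with the paper's: translate a fixed Gaussian by $\lambda Y_0$ with $Y_0\in S$ chosen so that $\langle X_0,Y_0\rangle\neq0$, use that the selfadjoint factor commutes with this translation because the relevant quadratic form vanishes on $S$ at the given time $t$, and extract a term growing linearly in $\lambda$, whose coefficient is nonzero by injectivity of the semigroup (Corollary \ref{11022019C1}). Two points of genuine divergence are worth noting. First, the paper disposes of $U_t$ at the outset by unitarity (so that only $\langle X_0,X\rangle^we^{-ta_t^w}$ needs to be shown unbounded) and then computes the pairing $\langle\langle X_0,X\rangle^we^{-ta_t^w}u_\lambda,u_\lambda\rangle$ exactly, via the Mehler symbol $c_te^{-tm_t}$ and the conjugation identity of Lemma \ref{09052019L1}, obtaining $\mathrm{const}+\lambda\vert X_{0,S}\vert^2\langle e^{-ta_t^w}u_0,u_0\rangle$; you instead keep $U_t$ and pull the translation through it by metaplectic covariance before bounding norms from below --- equivalent, at the cost of an extra (standard, up-to-phase) intertwining identity $U_tT_Z=c\,T_{H_tZ}U_t$ that the paper never needs. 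Second, and more substantively, the paper's proof that the selfadjoint part is translation-invariant along $S$ rests on the integral representation of $M_t$ (Lemma \ref{08012019L4}) together with the stability $(e^{-2i\alpha t\overline F}\Phi_t)S\subset S+iS$ and $(\Reelle Q)(S+iS)=\{0\}$, established in Section \ref{short} and invoked at \eqref{09052019E4}, followed by analytic continuation in $t$; your Baker--Campbell--Hausdorff/Poisson-bracket induction is a different, self-contained proof of the same vanishing, pleasantly aligned with the dynamical definition of $S$ through the brackets $H^k_{\Imag q}\Reelle q$. To make that step rigorous you should run BCH at the level of the Hamilton matrices, via $e^{-4itJA_t}=e^{-2itJQ}e^{-2itJ\overline Q}$ (where convergence for small $t$ is genuine), and transfer to symbols through the exact commutator/Poisson-bracket correspondence exactly as in Section \ref{Polar}; the remaining steps --- analyticity of $t\mapsto a_t(Y)$, and non-negativity upgrading $a_t(Y)=0$ to $A_tY=0$ --- are sound.
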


Notice that if $t>0$ and $X_0\in \mathbb R^{2n}$ are such that the operator $\langle X_0,X\rangle^we^{-tq^w}$ is bounded on $L^2(\mathbb R^n)$, then $X_0\in S^{\perp}$ according to Theorem \ref{10122018T1} and then Theorem \ref{22102018T1} can be applied to obtain that for all $m\geq1$, the operators $(\langle X_0,X\rangle^w)^me^{-tq^w}$ are also bounded on $L^2(\mathbb R^n)$. 

\begin{rk}
\label{rk_control} In the study of the null-controllability of quadratic differential equations, a key ingredient is to obtain some dissipation estimates for the semigroup $(e^{-tq^w})_{t\geq0}$ in order to use a Lebeau-Robbiano strategy, see e.g.\ \cite{A, AB, BEPS, BJKPS, MR3732691, MR3816981}. The regularizing effects given by Theorem \ref{22102018T1} allow to give a sufficient geometric condition on the singular space $S$ of $q$ so that such dissipation estimates hold. More precisely, let $\pi_k:L^2(\mathbb R^n)\rightarrow E_k$ be the frequency cutoff projection defined as the orthogonal projection onto the vector subspace $E_k\subset L^2(\mathbb R^n)$ given by $E_k = \{u\in L^2(\mathbb R^n) : \Supp\widehat u\subset[-k,k]^n\}$, with $k\geq1$ a positive integer. It can be proven while using Theorem \ref{22102018T1} and the strategy used in \cite[Section 4.2]{A}, that when the singular space $S$ of $q$ takes the form $S = \Sigma\times\{0_{\mathbb R^n_{\xi}}\}$, with $\Sigma\subset\mathbb R^n_x$ a vector subspace, there exist some positive constants $c_1,c_2>0$ and $0<t_0<1$ such that for all $k\geq1$, $0<t<t_0$ and $u\in L^2(\mathbb R^n)$,
\begin{equation}\label{28012019E2}
	\big\Vert(1-\pi_k)e^{-tq^w}u\big\Vert_{L^2(\mathbb R^n)}\le c_1e^{-c_2t^{2k_0+1}k^2}\Vert u\Vert_{L^2(\mathbb R^n)}.
\end{equation}
When the singular space of $q$ is reduced to zero $S=\{0\}$, dissipative estimates similar to \eqref{28012019E2} were obtained with $\pi_k$ some cutoff projections with respect to the Hermite basis of $L^2(\mathbb R^n)$, see e.g.\ \cite{BJKPS, MR3732691}.
\end{rk}

\subsection{Subelliptic estimates enjoyed by quadratic operators} Finally, we study the subelliptic estimates enjoyed by accretive non-selfadjoint quadratic differential operators. When the singular space of the quadratic form $q$ is reduced to zero $S = \{0\}$, K. Pravda-Starov proved in \cite{MR2752935} that the quadratic operator $q^w(x,D_x)$ satisfies specific global subelliptic estimates with a loss of derivatives with respect to the elliptic case directly depending on the structural parameter of the singular space $0\le k_0\le 2n-1$ defined in \eqref{12102018E7}. More precisely, \cite[Theorem 1.2.1]{MR2752935} states that when the singular space is equal to zero $S=\{0\}$, there exists a positive constant $c>0$ such that for all $u\in D(q^w)$,
\begin{equation}\label{29012019E1}
	\big\Vert\langle(x,D_x)\rangle^{\frac2{2k_0+1}}u\big\Vert_{L^2(\mathbb R^n)}\le c\big[\Vert q^w(x,D_x)u\Vert_{L^2(\mathbb R^n)} + \Vert u\Vert_{L^2(\mathbb R^n)}\big],
\end{equation}
where $0\le k_0\le 2n-1$ is the smallest integer such that \eqref{12102018E7} holds, with 
$$\langle(x,D_x)\rangle^{\frac2{2k_0+1}} = (1+x^2+D^2_x)^{\frac1{2k_0+1}},$$
being the operator defined by the functional calculus of the harmonic oscillator. The estimate \eqref{29012019E1} was first proven in \cite{MR2752935} with a technical multiplier method, and recovered in the two papers \cite[Theorem 1.1]{MR3841852} and \cite[Corollary 1.3]{MR3710672} respectively by using techniques of FBI transforms and the interpolation theory. Moreover, the three authors of \cite{MR3710672} and \cite{MR3841852} sharpened this result by improving it in the directions of the phase space which are less degenerate, that is with smaller indices with respect to the singular space. In order to recall their result, we need to consider the following quadratic forms
\begin{equation}\label{26102018E1}
	p_k(X) = \sum_{j=0}^k\Reelle q\big((\Imag F)^jX\big),\quad 0\le k\le k_0,
\end{equation}
where $0\le k_0\le 2n-1$ is the smallest integer such that \eqref{12102018E7} holds. We also consider the quadratic operators $\Lambda^2_k$ defined for all $0\le k\le k_0$ by
\begin{equation}\label{25102018E12}
	\Lambda_k^2 = 1 + p^w_k(x,D_x),
\end{equation}
and equipped with the domains $D(\Lambda_k^2) = \{u\in L^2(\mathbb R^n) : \Lambda_k^2u\in L^2(\mathbb R^n)\}.$
Since $\Reelle q\geq0$ is a nonnegative quadratic form, it can be proven by using for example Lemma~\ref{06072018L1} that the operators $\Lambda^2_k$ are positive and as a consequence, we can consider the fractional powers of those operators. When the singular space $S$ of $q$ is reduced to zero, Theorem 1.4 in \cite{MR3710672} states that there exists a positive constant $c>0$ such that for all $u\in D(q^w)$,
\begin{equation}\label{29012019E2}
	\big\Vert\Lambda_0u\big\Vert_{L^2(\mathbb R^n)} + \sum_{k=1}^{k_0}\big\Vert\Lambda^{\frac2{2k+1}}_ku\big\Vert_{L^2(\mathbb R^n)}\le c\big[\Vert q^w(x,D_x)u\Vert_{L^2(\mathbb R^n)} + \Vert u\Vert_{L^2(\mathbb R^n)}\big].
\end{equation}
The authors of \cite{MR3710672} expected the powers $2/(2k+1)$ over the operators $\Lambda_k$ to be sharp but also expected the power over the term $\Lambda_0$ to be equal to $2$ and not to $1$.

No general theory has been developed when the singular space $S$ is not necessarily equal to zero. However, let us mention that some subelliptic estimates were obtained for the Kramers-Fokker-Planck operator without external potential $K$ defined in \eqref{10012019E5} by F. H\'erau and K. Pravda-Starov in \cite[Proposition 2.1]{MR2786222} with a multiplier method and for the Ornstein-Uhlenbeck operator (under the Kalman rank condition) by the two authors in \cite[Corollary 1.15]{AB} while using the interpolation theory as in the work \cite{MR3710672}.

In this paper, we aim at extending and sharpening the subelliptic estimates \eqref{29012019E2} to all quadratic forms $q:\mathbb R^{2n}\rightarrow\mathbb C$ with nonnegative real parts $\Reelle q\geq0$, without making any assumptions on their singular spaces $S$.

\begin{thm}\label{22102018T2} Let $q:\mathbb R^{2n}\rightarrow\mathbb C$ be a complex-valued quadratic form with a nonnegative real part $\Reelle q\geq0$. We consider $S$ the singular space of $q$ and $0\le k_0\le 2n-1$ the smallest integer such that \eqref{12102018E7} holds. Then, there exists a positive constant $c>0$ such that for all $u\in D(q^w)$,
$$\sum_{k=0}^{k_0}\big\Vert\Lambda^{\frac2{2k+1}}_ku\big\Vert_{L^2(\mathbb R^n)}\le c\big[\Vert q^w(x,D_x)u\Vert_{L^2(\mathbb R^n)} + \Vert u\Vert_{L^2(\mathbb R^n)}\big].$$
\end{thm}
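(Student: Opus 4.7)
The plan is to derive Theorem~\ref{22102018T2} as a consequence of the regularizing estimate of Theorem~\ref{22102018T1}, through an interpolation argument in the functional calculus of $q^w(x,D_x)$.

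First, I would write the non-negative quadratic form $p_k$ as a sum of squares of linear forms. The non-negativity of $\Reelle q$ implies the equivalence $\Reelle q(Z)=0\iff \Reelle F\cdot Z=0$, so the definition \eqref{12102018E8} of $V_k$ yields $\Ker p_k = V_k$. The restriction of $p_k$ to $V_k^{\perp}$ is therefore positive definite, and its diagonalization provides a representation $p_k(X)=\sum_i\langle Y_i,X\rangle^2$ with all $Y_i\in V_k^{\perp}$, so that each index $k_{Y_i}\leq k$. The Weyl calculus gives the exact identity $(\langle Y_i,X\rangle^w)^2=(\langle Y_i,X\rangle^2)^w$ (the self Poisson bracket of a linear form vanishes), hence $\Lambda_k^2=1+\sum_i(\langle Y_i,X\rangle^w)^2$. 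Applying then Theorem~\ref{22102018T1} with $m=2$ and $X_1=X_2=Y_i$ and summing over $i$ yields the key semigroup estimate
$$\big\|\Lambda_k^2\,e^{-tq^w}u\big\|_{L^2(\mathbb R^n)}\leq\frac{C}{t^{2k+1}}\|u\|_{L^2(\mathbb R^n)},\qquad 0<t<t_0,$$
and, more generally, $\|(\Lambda_k^2)^N e^{-tq^w}\|_{L^2\to L^2}\leq C_N\,t^{-N(2k+1)}$ by iterating with $m=2N$.

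Next, I would convert this semigroup bound into the subelliptic estimate. It suffices to prove that the operator $\Lambda_k^{2/(2k+1)}(q^w+1)^{-1}$ is bounded on $L^2(\mathbb R^n)$, since this implies $\|\Lambda_k^{2/(2k+1)}u\|_{L^2}\leq C\|(q^w+1)u\|_{L^2}\leq C(\|q^w u\|_{L^2}+\|u\|_{L^2})$. I would establish this boundedness via Stein's complex interpolation theorem applied to the analytic family
$$z\longmapsto\Lambda_k^{2z}(q^w+1)^{-(2k+1)z}$$
on the strip $\{0\leq\Reelle z\leq 1\}$; at $z=1/(2k+1)$ this family coincides with the desired operator. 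Summing the resulting estimates over $k=0,\ldots,k_0$ then completes the proof.

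The main obstacle will be the verification of the boundary estimates in the interpolation. On $\Reelle z=0$, one needs the unitarity of $\Lambda_k^{2iy}$ (a consequence of selfadjoint functional calculus) together with a polynomial-growth bound in $y$ on the imaginary powers $(q^w+1)^{iy}$, which is expected from the sectorial structure of the maximal accretive operator $q^w$. On $\Reelle z=1$, boundedness of $\Lambda_k^2(q^w+1)^{-(2k+1)}$ is required; a direct use of the integral representation $(q^w+1)^{-(2k+1)}=\frac{1}{(2k)!}\int_0^\infty t^{2k}e^{-t}e^{-tq^w}\,dt$ together with the semigroup bound yields an integrand of order $t^{-1}e^{-t}$ near $t=0$, precisely on the border of integrability. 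This borderline behavior is exactly the point where the sharp exponent $2/(2k+1)$ is encoded, and resolving it presumably requires either a Hardy-type endpoint argument exploiting the higher-order estimates $\|(\Lambda_k^2)^Ne^{-tq^w}\|\lesssim t^{-N(2k+1)}$ for $N\ge 2$, or a direct analysis leveraging the specific structure of $\Lambda_k^2$ as a quadratic operator together with the polar decomposition of Theorem~\ref{12102018T1}.
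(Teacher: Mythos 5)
Your first half is correct and coincides with the paper's Lemma \ref{23102018L3}: the Gauss decomposition $p_k=\sum_i\langle Y_i,X\rangle^2$ with $Y_i\in V_k^{\perp}\subset S^{\perp}$ and $k_{Y_i}\le k$, the exact Weyl identity for squares of linear forms, and the application of Theorem \ref{22102018T1} are exactly the paper's route to the semigroup bounds. But the second half has a genuine gap, and you have in fact located it yourself: in the Stein complex-interpolation scheme the boundary operator at $\Reelle z=1$ is $\Lambda_k^2(q^w+1)^{-(2k+1)}$, and the integral representation of the resolvent power only yields the non-integrable bound $t^{-1}e^{-t}$ near $t=0$. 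Your proposed escape via the higher-order estimates does not repair this within the complex-interpolation framework: replacing the family by $z\mapsto\Lambda_k^{4Nz}(q^w+1)^{-N(4k+2)z}$ and using $\Vert(\Lambda_k^2)^{2N}e^{-tq^w}\Vert\lesssim t^{-N(4k+2)}$ still produces the integrand $t^{N(4k+2)-1}e^{-t}\cdot t^{-N(4k+2)}=t^{-1}e^{-t}$ at the right boundary, so the logarithmic divergence is scale-invariant and survives every choice of $N$. As written, the proof is therefore incomplete precisely at the endpoint where the sharp exponent $2/(2k+1)$ lives.

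The paper's resolution is to abandon complex interpolation in favour of \emph{real} interpolation, which is tailored to this endpoint. One first upgrades the semigroup estimate to the fourth power, $\Vert\Lambda_k^4e^{-tq^w}u\Vert_{L^2}\le ce^{\mu t}t^{-(4k+2)}\Vert u\Vert_{L^2}$ (obtained from Theorem \ref{22102018T1} applied with $m=2$ and $m=4$ to the products of the $\langle Y_i,X\rangle^w$), sets $\mathscr H_k=D(\Lambda_k^4)$ and $p^w=q^w+\mu$, and invokes Corollary 5.13 of \cite{MR2523200}: the maximal accretivity of $p^w$ together with $\Vert e^{-tp^w}\Vert_{\mathcal L(L^2,\mathscr H_k)}\le ct^{-(4k+2)}$ yields directly the continuous inclusion $D(q^w)\subset(L^2,\mathscr H_k)_{1/(4k+2),2}$, with no boundedness of any borderline operator required. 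Theorem 4.36 of \cite{MR2523200} then identifies $(L^2,D(\Lambda_k^4))_{1/(4k+2),2}=D\big(\Lambda_k^{2/(2k+1)}\big)$ because $\Lambda_k^2$ is positive selfadjoint, and the claimed estimate follows. Note that the fourth power (rather than $\Lambda_k^2$ with exponent $1/(2k+1)$) is not cosmetic: for $k=0$ the parameter $1/(2k+1)=1$ falls outside the admissible range $\theta\in(0,1)$ of real interpolation, whereas $1/(4k+2)=1/2$ works and is precisely what upgrades the power over $\Lambda_0$ from $1$ to $2$, which was the conjectured improvement over \eqref{29012019E2}.
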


As in the case when the singular space is trivial, this result shows that the quadratic operator $q^w(x,D_x)$ enjoys anisotropic subelliptic estimates, this anisotropy being directly related to the structure \eqref{12102018E7} of the singular space $S$. Moreover, Theorem \ref{22102018T2} confirms that the power over the operator $\Lambda_0$ associated with the real part of the quadratic form $q$ is actually equal to $2$.

\begin{ex} Let $P$ be the generalized Ornstein-Uhlenbeck operator defined in \eqref{15012019E5}. It follows from a straightforward calculation that for all $0\le k\le k_0$, the operator $\Lambda^2_k$ associated with the quadratic operator $P+\frac12\Tr(B)$ is given by
$$\Lambda^2_k = 1 + \sum_{j=0}^k\frac1{2^{j+1}}\big\vert\sqrt RB^jx\big\vert^2 + \sum_{j=0}^k\frac1{2^{j+1}}\big\vert\sqrt Q(B^T)^jD_x\big\vert^2,$$
where $0\le k_0\le n-1$ is the smallest integer such that \eqref{28012019E4} holds. It therefore follows from Theorem \ref{22102018T2} that there exists a positive constant $c>0$ such that for all $0\le k\le k_0$ and $u\in D(P)$,
$$\Big\Vert\Big(1 + \sum_{j=0}^k\frac1{2^{j+1}}\big\vert\sqrt RB^jx\big\vert^2 + \sum_{j=0}^k\frac1{2^{j+1}}\big\vert\sqrt Q(B^T)^jD_x\big\vert^2\Big)^{\frac1{2k+1}}u\Big\Vert_{L^2(\mathbb R^n)}
\le c\big[\Vert Pu\Vert_{L^2(\mathbb R^n)} + \Vert u\Vert_{L^2(\mathbb R^n)}\big].$$
\end{ex}

\section{Splitting of semigroups generated by non-selfadjoint quadratic differential operators}\label{Polar}

This section is devoted to the proof of Theorem \ref{12102018T1}.  Let $q:\mathbb R^{2n}\rightarrow\mathbb C$ be a complex-valued quadratic form with a nonnegative real part $\Reelle q\geq0$. We consider $Q\in\Symm_{2n}(\mathbb C)$ the matrix of $q$ in the canonical basis of $\mathbb R^{2n}$. We also consider  $J$ the symplectic matrix defined in \eqref{08112018E1}. Our goal is first to construct a family $(a_t)_{t\in\mathbb R}$ of nonnegative quadratic forms $a_t:\mathbb R^{2n}\rightarrow\mathbb R_+$ depending analytically on the time-variable $t\in\mathbb R$ and a family $(U_t)_{t\in\mathbb R}$ of metaplectic operators such that for all $t\geq0$,
\begin{equation}\label{14012019E1}
	e^{-tq^w} = e^{-ta_t^w}U_t,
\end{equation}
and then to prove that there exist a positive constant $T>0$ and a family $(b_t)_{-T<t<T}$ of real-valued quadratic forms $b_t:\mathbb R^{2n}\rightarrow\mathbb R$ also depending analytically on the time-variable $-T<t<T$, such that for all $0\le t<T$,
\begin{equation}\label{14012019E2}
	e^{-tq^w} = e^{-ta_t^w}e^{-itb^w_t}.
\end{equation}

To that end, we begin by establishing that proving \eqref{14012019E1} and \eqref{14012019E2} is actually equivalent to solving a finite-dimensional problem involving matrices. First of all, in order to give an intuition of this equivalence, let us formally prove that given some $t>0$, the factorization \eqref{14012019E2} is equivalent to the finite dimensional matrix relation
\begin{equation}\label{11022019E2}
	e^{-2itJQ} = e^{-2itJA_t} e^{2tJB_t},
\end{equation}
where $A_t$ (resp. $B_t$) is the matrix of the quadratic form $a_t$ (resp. $b_t$) in the canonical basis of $\mathbb R^{2n}$. The equivalence between \eqref{14012019E2} and \eqref{11022019E2}  will be justified rigorously shortly later with the theory of Fourier integral operators. By applying the Baker-Campbell-Hausdorff formula introduced in \cite{MR1575931} and \cite H, the relation \eqref{14012019E2} is formally equivalent to
\begin{equation}\label{11022019E3}
	-tq^w = \sum_{m=0}^{+\infty}\sum_{p\in\{a_t,ib_t\}^m}(\ad_{tp^w_1})\ldots(\ad_{tp^w_m})(\alpha_mta_t^w+\beta_mitb_t^w),
\end{equation}
where $\alpha_m,\beta_m\in\mathbb Q$ are explicit rational coefficients and
$$\ad_{\mathcal P_1}\mathcal P_2 := [\mathcal P_1,\mathcal P_2] = \mathcal P_1 \mathcal P_2 - \mathcal P_2\mathcal P_1,$$
denotes the commutator between the operators $\mathcal P_1$ and $\mathcal P_2$. However, if $q_1,q_2:\mathbb R^{2n}\rightarrow\mathbb C$ are two quadratic forms, elements of Weyl calculus, see e.g.\ \cite[Theorem 18.5.6]{MR2304165}, show that the commutator $[q^w_1,q^w_2]$ is also a differential operator given by
\begin{equation}\label{11022019E4}
	[q^w_1,q^w_2] = -i\{q_1,q_2\}^w, \quad\text{where}\quad \{q_1,q_2\} = \nabla_\xi q_1\cdot\nabla_xq_2 - \nabla_xq_1\cdot\nabla_\xi q_2.
\end{equation}
Note that $\{q_1,q_2\}$ is the canonical Poisson bracket between the quadratic forms $q_1$ and $q_2$. We therefore deduce that \eqref{11022019E3} is equivalent to the equality between quadratic forms
\begin{equation}\label{11022019E5}
	-tq = \sum_{m=0}^{+\infty}\sum_{p\in\{-ia_t,b_t\}^m}(\ad_{tp_1})\ldots(\ad_{tp_m})(\alpha_mta_t + \beta_mtib_t),
\end{equation}
where we set $\ad_{p_1}p_2 := \{p_1,p_2\}$. Moreover, we observe that if $q_1,q_2:\mathbb R^{2n}\rightarrow\mathbb C$ are two quadratic forms, the Hamilton map of the Poisson bracket $\{q_1,q_2\}$ is $-2[F_1,F_2]$, with $[F_1,F_2]$ the commutator of $F_1$ and $F_2$ the Hamilton maps of $q_1$ and $q_2$, see e.g.\ \cite[Lemma 3.2]{MR2390087}. As a consequence, we deduce while using \eqref{05112018E7} and multiplying by $2i$ that \eqref{11022019E5} is equivalent to the matrix relation
\begin{equation}\label{11022019E6}
	-2itJQ =  \sum_{m=0}^{+\infty}\sum_{P\in\{2iA_t,-2B_t\}^m}(\ad_{tJP_1})\ldots(\ad_{t JP_m})(\alpha_m 2itJA_t-\beta_m 2tJB_t).
\end{equation}
Thus, by applying once again the Baker-Campbell-Hausdorff formula, the relation \eqref{14012019E2} is equivalent to \eqref{11022019E2}. Obtaining the quadratic forms $a_t$ and $b_t$ is then far easier henceforth the equivalence between \eqref{14012019E2} and \eqref{11022019E2} is established. Indeed, let us check that the relation \eqref{11022019E2} is equivalent to the following triangular system
\begin{equation}\label{14022019E4}
\left\{\begin{array}{lll} 
	e^{-4itJA_t} &=& e^{-2itJQ}e^{-2itJ\overline Q}, \\[1pt]
	e^{2tJB_t} &=& e^{2itJA_t} e^{-2itJQ}.
\end{array}\right.
\end{equation}
Obviously, if \eqref{14022019E4} holds, then \eqref{11022019E2} is satisfied. On the other hand, when \eqref{11022019E2} holds, we observe that  
$$e^{-2itJQ}e^{-2itJ\overline Q}= e^{-2itJA_t}e^{2tJB_t}e^{-2tJB_t}e^{-2itJA_t}= e^{-4itJA_t}.$$
Moreover, the equality $e^{2tJB_t}=e^{2itJA_t}e^{-2itJQ}$ is only a rewriting of \eqref{11022019E2} and hence, \eqref{14022019E4} holds. The first equation of \eqref{14022019E4} will be solved for any time $t\in\mathbb R$ by using the holomorphic functional calculus. The second one will only be solved for short times $\vert t\vert\ll 1$.

In order to justify rigorously this reduction to a finite-dimensional problem, we shall use the Fourier integral operator representation of the evolution operators $e^{-tq^w}$ proven in \cite[Theorem 5.12]{MR1339714} and recalled in the following proposition:

\begin{prop}\label{13112018P2} Let $\tilde q:\mathbb R^{2n}\rightarrow \mathbb C$ be a complex-valued quadratic form with a nonnegative real part $\Reelle \tilde q\geq0$. Then, for all $t\geq0$, the evolution operator $e^{-t\tilde q^w} =  \mathscr K_{e^{-2itJ\tilde Q}}$ generated by the quadratic operator $\tilde q^w(x,D_x)$ is a Fourier integral operator whose kernel is a Gaussian distribution associated with the nonnegative complex symplectic linear bijection $e^{-2itJ\tilde Q}\in\Symp_{2n}(\mathbb C)$, with $\tilde Q\in\Symm_{2n}(\mathbb C)$ the matrix of $\tilde q$ with respect to the canonical basis of $\mathbb R^{2n}$.
\end{prop}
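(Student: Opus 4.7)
The statement is a recall of a classical result of Hörmander, so the plan below reflects how I would reconstruct its proof rather than produce anything genuinely new.

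The overall strategy is to build the kernel of $e^{-t\tilde q^w}$ by explicit Gaussian ansatz and then identify it with the Hörmander-style kernel associated to the complex symplectic exponential $e^{-2itJ\tilde Q}$. First, I would observe that the Hamilton map of $\tilde q$ is $\tilde F = J\tilde Q$, so that at the classical level the expected ``flow'' at complex time $-it$ is precisely the matrix $e^{2(-it)\tilde F} = e^{-2itJ\tilde Q}$. The task is therefore to establish the exact classical--quantum correspondence between this matrix and the Schwartz kernel of the semigroup.

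Next, I would search for the kernel in the form of a Gaussian
$$K_t(x,y) = c(t)\,\exp\bigl(-\tfrac{1}{2}\langle \Phi(t)(x,y),(x,y)\rangle\bigr),$$
where $\Phi(t)$ is a complex symmetric $2n\times 2n$ matrix with $\Reelle\Phi(t)\geq 0$ and $c(t)$ a scalar normalization. Inserting this ansatz into the Cauchy problem $\partial_t K_t + \tilde q^w(x,D_x) K_t = 0$ with initial datum $K_0(x,y)=\delta(x-y)$, the symbolic calculus reduces the PDE to a matrix Riccati equation for $\Phi(t)$ together with a linear ODE for $c(t)$. The Riccati equation is well-posed locally in $t$ and its solution can be encoded, via the standard linearization trick, as the evolution of a Lagrangian subspace under the linear flow generated by $\tilde F$, i.e.\ under the matrix exponential $e^{-2itJ\tilde Q}$. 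Thus the coefficients of $\Phi(t)$ are rational functions of the blocks of $e^{-2itJ\tilde Q}$, yielding the Gaussian distribution canonically associated to this complex linear map in the sense of \cite{MR1339714}.

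To make this global in $t\geq 0$ I would first work under the strong assumption $\Reelle\tilde q>0$, where the semigroup is analytic in a complex half-plane, solve the Riccati equation without blow-up thanks to dissipation, and then pass to the limit $\Reelle\tilde q\to (\Reelle\tilde q)_0\geq 0$ by continuity of the FIO representation with respect to the complex symplectic parameter (the space of non-negative complex symplectic transformations being closed under limits). At each step I would check that the symplectic map $e^{-2itJ\tilde Q}$ remains in the non-negative complex symplectic monoid, which amounts to verifying the inequality $\Imag\bigl(\sigma(\overline{e^{-2itJ\tilde Q}Z},e^{-2itJ\tilde Q}Z)\bigr)\leq \Imag\bigl(\sigma(\overline Z, Z)\bigr)$ for all $Z\in\mathbb C^{2n}$; this follows by differentiating in $t$ and using $\Reelle\tilde q\geq 0$.

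The main obstacle is the last point: controlling the non-negativity of the complex symplectic transformation for all $t\geq 0$, rather than just for short times, and proving that the Gaussian kernel produced by the Riccati flow matches exactly Hörmander's definition of an FIO associated to a non-negative complex linear canonical relation (including the proper choice of square root for the normalization constant $c(t)$, which requires keeping track of a continuous branch along the path $s\in[0,t]$). Once this is done, the semigroup identity $e^{-(t+s)\tilde q^w}=e^{-t\tilde q^w}e^{-s\tilde q^w}$ and the functoriality of the FIO correspondence yield the claim for every $t\geq 0$.
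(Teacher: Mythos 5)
The paper does not actually prove this proposition: it is quoted from H\"ormander \cite{MR1339714} (Theorem 5.12), so there is no internal argument to compare yours against. Your plan is a legitimate reconstruction along the Mehler--Riccati route: Gaussian ansatz for the kernel, matrix Riccati equation linearized by the flow $e^{-2itJ\tilde Q}$, and identification with the Gaussian distribution attached to that complex symplectic map. H\"ormander's own argument is organized differently: he first develops the calculus of Gaussian kernels associated to non-negative Lagrangian planes (existence of parametrizations by quadratic phases, composition, continuity, the bound $\Vert\mathscr K_T\Vert_{\mathcal L(L^2)}\le1$), and then identifies $e^{-t\tilde q^w}$ with $\mathscr K_{e^{-2itJ\tilde Q}}$ via uniqueness for the evolution equation and the symplectic classification of $\tilde q$ into normal forms. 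Your route buys explicitness (it produces the Mehler formula directly); his handles the degenerate cases from the outset.

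The genuine gap in your sketch is that the nondegenerate Gaussian ansatz $K_t(x,y)=c(t)\exp(-\tfrac12\langle\Phi(t)(x,y),(x,y)\rangle)$ with $\Phi(t)$ a finite symmetric matrix cannot represent the kernel in general: for $\tilde q=0$ the kernel is $\delta(x-y)$, and for purely imaginary $\tilde q$ (the metaplectic case) it is typically a Gaussian measure carried by a proper linear subspace. This is precisely why H\"ormander's kernels \eqref{17052019E5} are oscillatory integrals with auxiliary variables $\theta\in\mathbb R^N$. Your proposed fix --- perturb to $\Reelle\tilde q>0$ and pass to the limit --- is the right idea, but it transfers the entire weight of the proof onto the continuity of $T\mapsto\mathscr K_T$ and the closedness of the class of non-negative Lagrangian planes under limits, which are themselves the substantive content of the cited theory rather than routine facts; the same applies to tracking a continuous branch of the square root in $c(t)$ through a limit in which the relevant determinant may vanish. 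As written, the proposal is an outline that presupposes the machinery it is meant to establish rather than a self-contained proof.
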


\noindent We refer the reader to Subsection \ref{FIO} in the appendix for the definition of the Fourier integral operators $\mathscr K_T$ and their basic properties, where $T$ is a nonnegative complex symplectic linear bijection in $\mathbb C^{2n}$. The key property satisfied by the operators $\mathscr K_T$ that we will need here is that if $T_1$ and $T_2$ are two nonnegative complex symplectic linear bijections in $\mathbb C^{2n}$, then $T_1T_2$ is also a nonnegative complex symplectic linear bijection and
\begin{equation}\label{11022019E7}
	\mathscr K_{T_1T_2} = \pm\mathscr K_{T_1}\mathscr K_{T_2},
\end{equation}
see Proposition \ref{17052019P3}. The sign uncertainty in \eqref{11022019E7} will not be an issue in the following. As a consequence of \eqref{11022019E7} and Proposition \ref{13112018P2}, we shall on the one hand, to prove \eqref{14012019E1}, obtain the existence of two families $(A_t)_{t\in\mathbb R}$ and $(H_t)_{t\in\mathbb R}$ of real symmetric positive semidefinite matrices $A_t\in\Symm^+_{2n}(\mathbb R)$ and real symplectic matrices $H_t\in\Symp_{2n}(\mathbb R)$ respectively, whose coefficients depend analytically on the time variable $t\in\mathbb R$, such that for all $t\in\mathbb R$,
\begin{equation}\label{13112018E21}
	e^{-2itJQ} = e^{-2itJA_t}H_t.
\end{equation}
On the other hand, to establish \eqref{14012019E2}, we shall prove that there exist a positive constant $T>0$ and a family $(B_t)_{-T<t<T}$ of real symmetric matrices, whose coefficients also depend analytically on the time-variable $-T<t<T$, such that for all $-T<t<T$, the real symplectic matrix $H_t$ is given by
\begin{equation}\label{08012019E10}
	H_t = e^{2tJB_t}.
\end{equation}
Indeed, let us first assume that \eqref{13112018E21} holds and let us prove \eqref{14012019E1}. It follows from \eqref{11022019E7} that for all $t\geq0$, up to sign,
$$e^{-tq^w} = \mathscr K_{e^{-2itJQ}} = \mathscr K_{e^{-2itJA_t}H_t} = \pm\mathscr K_{e^{-2itJA_t}}\mathscr K_{H_t} = e^{-ta^w_t}U_t,$$
where $U_t = \varepsilon_t\mathscr K_{H_t}$ is a metaplectic operator on $L^2(\mathbb R^n)$, see Definition \ref{17052019D1}, with $\varepsilon_t\in\{-1,1\}$, and $a_t:\mathbb R^{2n}\rightarrow\mathbb R_+$ is the nonnegative time-dependent quadratic form associated with the matrix $A_t$ in the canonical basis of $\mathbb R^{2n}$. This proves that \eqref{14012019E1} holds. On the other hand, to derive \eqref{14012019E2} from \eqref{08012019E10}, we consider the time-dependent quadratic form $b_t:\mathbb R^{2n}\rightarrow\mathbb R$, with $0\le t<T$, associated with the time-dependent matrix $B_t$ in the canonical basis of $\mathbb R^{2n}$. Indeed, when \eqref{08012019E10} holds, it follows from the definition of the operators $U_t$ and Proposition \ref{13112018P2} that for all $0\le t<T$, 
\begin{equation}\label{12072019E1}
	U_t = \varepsilon_t\mathscr K_{H_t} = \varepsilon_t\mathscr K_{e^{2tJB_t}} = \varepsilon_te^{-itb^w_t}.
\end{equation}
We then deduce from \eqref{14012019E1} and \eqref{12072019E1} that for all $0\le t<T$,
\begin{equation}\label{14112018E2}
	e^{-tq^w} = \varepsilon_te^{-ta^w_t}e^{-itb^w_t},
\end{equation}
It only remains to check that $\varepsilon_t = 1$ for all $0\le t<T$. To that end, we consider $u\in\mathscr S(\mathbb R^n)$ a non-zero Schwartz function. We deduce from \eqref{14112018E2} that for all $0\le t<T$,
$$\big\langle e^{-tq^w}u,e^{-itb^w_t}u\big\rangle_{L^2(\mathbb R^n)} = \varepsilon_t\big\langle e^{-ta^w_t}e^{-itb^w_t}u,e^{-itb^w_t}u\big\rangle_{L^2(\mathbb R^n)}.$$
Since the quadratic form $a_t$ is nonnegative for all $t\geq0$, the operator $e^{-ta^w_t}$ is selfadjoint on $L^2(\mathbb R^n)$ and we therefore deduce by using the semigroup property of the family of operators $(e^{-sa^w_t})_{s\geq0}$ that for all $t\geq0$,
$$\big\langle e^{-tq^w}u,e^{-itb^w_t}u\big\rangle_{L^2(\mathbb R^n)} = \varepsilon_t\big\Vert e^{-\frac t2a^w_t}e^{-itb^w_t}u\big\Vert^2_{L^2(\mathbb R^n)}.$$
The operator $e^{-\frac t2a^w_t}$ is injective from Corollary \ref{11022019C1} and the operator $e^{-itb^w_t}$ is unitary for all $t\geq0$, since the quadratic form $b_t$ is real-valued. Thus, the Schwartz functions $e^{-\frac t2a^w_t}e^{-itb^w_t}u$ are non-zero and we have that for all $t\geq0$,
\begin{equation}\label{14022019E5}
	\varepsilon_t = \big\langle e^{-tq^w}u,e^{-itb^w_t}u\big\rangle_{L^2(\mathbb R^n)}\big\Vert e^{-\frac t2a^w_t}e^{-itb^w_t}u\big\Vert^{-2}_{L^2(\mathbb R^n)}.
\end{equation}
Moreover, it follows from \cite[Theorem 4.2]{MR1339714} that the maps $t\mapsto e^{-tq^w}u$, $t\mapsto e^{-itb^w_t}u$ and $t\mapsto e^{-ta^w_t}e^{-itb^w_t}u$ are continuous from $[0,+\infty)$ to $\mathscr S(\mathbb R^n)$. It follows from \eqref{14022019E5} that the map $t\mapsto\varepsilon_t$ is also continuous from $[0,T)$ to $\{-1,1\}$ and since $\varepsilon_0=1$, we have $\varepsilon_t = 1$ for all $0\le t<T$.  This ends the proof of \eqref{14012019E2}.

The present subsection is therefore devoted to the proof of \eqref{13112018E21} and \eqref{08012019E10}. We first focus on the identity \eqref{13112018E21}. As above, we can prove that this relation is equivalent to the following triangular system,
\begin{equation}\label{14022019E6}
\left\{\begin{array}{lll} 
	e^{-4itJA_t} &=& e^{-2itJQ}e^{-2itJ\overline Q}, \\[2pt]
	H_t &=& e^{2itJA_t} e^{-2itJQ}.
\end{array}\right.
\end{equation}
We begin by solving the first equation of \eqref{14022019E6}:

\begin{thm}\label{04122018T1} There exists a family $(A_t)_{t\in\mathbb R}$ of real symmetric positive semidefinite matrices $A_t\in\Symm^+_{2n}(\mathbb R)$ whose coefficients depend analytically on the time-variable $t\in\mathbb R$ such that for all $t\in\mathbb R$,
$$e^{-4itJA_t} = e^{-2itJQ}e^{-2itJ\overline Q}.$$
\end{thm}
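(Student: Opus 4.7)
The target identity amounts to exhibiting a suitably structured logarithm of $M_t := e^{-2itJQ}e^{-2itJ\overline Q}$, taking values in $-4itJ\cdot\Symm^+_{2n}(\mathbb R)$. The plan is to construct this logarithm via holomorphic functional calculus and then extract its algebraic structure from two natural symmetries of $M_t$.

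First, I would record elementary properties of $M_t$: it is entire analytic in $t$, lies in $\Symp_{2n}(\mathbb C)$ (as a product of symplectic exponentials, using $Q^T=Q$ and $J^T=-J$), and satisfies $\overline{M_t}=e^{2itJ\overline Q}e^{2itJQ}=M_t^{-1}$ since $J$ is real. Applying Proposition \ref{13112018P2} to both $q$ and $\overline q$, which share the same non-negative real part, together with the FIO composition \eqref{11022019E7} and the adjoint identity $(e^{-tq^w})^*=e^{-t\overline q^w}$, identifies $\mathscr K_{M_t}=\varepsilon_t\,e^{-tq^w}(e^{-tq^w})^*$ for some $\varepsilon_t\in\{-1,+1\}$. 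Continuity in $t$ and $\mathscr K_{M_0}=\Id$ force $\varepsilon_t=+1$, so $\mathscr K_{M_t}$ is a positive selfadjoint contraction on $L^2(\mathbb R^n)$ for every $t\geq 0$.

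The crux is that the spectrum of $M_t$ is contained in $(0,+\infty)$ for all $t\in\mathbb R$; granting this, the principal branch of logarithm yields an analytic family $L_t:=\log M_t$ via holomorphic functional calculus, with $L_0=0$. Symplecticity of $M_t$ places $L_t$ in the Lie algebra $\mathfrak{sp}_{2n}(\mathbb C)$, so $JL_t$ is complex symmetric. The conjugation identity $\overline{M_t}=M_t^{-1}$ yields $\overline{L_t}=-L_t$, i.e.\ $L_t=iR_t$ for a real matrix $R_t$. Combining these, $JR_t$ is real symmetric. I then define $A_t:=JR_t/(4t)$ for $t\neq 0$ and $A_0:=\Reelle\,Q$, which by the convention \eqref{15052019E1} applied to $L_t/t$ produces an analytic family of real symmetric matrices satisfying $M_t=e^{-4itJA_t}$ by construction. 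Positive semidefiniteness of $A_t$ is obtained by transferring back to the quantum side: Proposition \ref{13112018P2} applied to the quadratic form $a_t$ with matrix $A_t$ gives $\mathscr K_{e^{-4itJA_t}}=\pm e^{-2ta_t^w}$, so $e^{-2ta_t^w}=e^{-tq^w}(e^{-tq^w})^*$ with sign fixed by continuity at $t=0$. Since $a_t$ is real-valued and the right-hand side is bounded on $L^2$, the real quadratic form $a_t$ must be non-negative, hence $A_t\geq 0$.

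The main obstacle is the spectral localization of $M_t$. The symplectic and conjugation symmetries alone only constrain eigenvalues to quartets $\{\lambda,1/\lambda,\overline\lambda,1/\overline\lambda\}$, so positivity of $\mathrm{spec}(M_t)$ must come from the quantum positivity of $\mathscr K_{M_t}$. I would argue it by a bootstrap: for $|t|$ small, the Baker-Campbell-Hausdorff formula defines $A_t$ as a convergent power series with leading term $\Reelle\,Q\geq 0$, and a perturbation argument keeps $A_t\geq 0$, which in turn forces $\mathrm{spec}(M_t)\subset(0,+\infty)$ on the initial interval (since $-JA_t$ restricted to $\Ran\,A_t$ is conjugate to a real antisymmetric matrix and thus has purely imaginary spectrum). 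Extension to all $t\in\mathbb R$ then follows by an open-closed argument driven by the persistence of positivity of $\mathscr K_{M_t}$, which prevents eigenvalues of $M_t$ from leaving the positive real axis.
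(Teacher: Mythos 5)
Your overall architecture (localize the spectrum of $K_t:=e^{-2itJQ}e^{-2itJ\overline Q}$ on $(0,+\infty)$, take a holomorphic logarithm, and read off reality and symmetry of $A_t$ from the two identities $K_t\in\Symp_{2n}(\mathbb C)$ and $\overline{K_t}=K_t^{-1}$) matches the paper's, and the symmetry bookkeeping is correct. But the two analytic cruxes are not established, and the substitutes you propose do not work.

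First, the spectral localization. You defer it to an ``open-closed argument driven by the persistence of positivity of $\mathscr K_{K_t}$'', but you never supply the mechanism by which positivity of the \emph{operator} $\mathscr K_{K_t}$ on $L^2(\mathbb R^n)$ forbids an eigenvalue of the $2n\times 2n$ \emph{matrix} $K_t$ from acquiring a nonzero imaginary part; continuity of eigenvalues gives closedness of $\{t:\sigma(K_t)\subset(0,+\infty)\}$ but not openness, and the symplectic/conjugation symmetries only constrain eigenvalues to quartets $\{\lambda,1/\lambda,\overline\lambda,1/\overline\lambda\}$, which are perfectly compatible with leaving the real axis. The paper closes this by a purely matrix-level identity obtained by differentiating in $t$:
\begin{equation*}
K_t \;=\; I_{2n}-4iJ\Gamma_t,\qquad \Gamma_t=\int_0^t\big(e^{-2isJ\overline Q}\big)^*(\Reelle Q)\big(e^{-2isJ\overline Q}\big)\,\mathrm ds,
\end{equation*}
with $\Gamma_t$ Hermitian semidefinite; since $\sigma(J\Gamma_t)\subset i\mathbb R$ (Lemma \RefLem{07012019L1}), one gets $\sigma(K_t)\subset\mathbb R$ for \emph{all} $t$ at once, and then invertibility plus connectedness of $\mathbb R$ gives $\sigma(K_t)\subset\mathbb R^*_+$. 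This identity is the missing ingredient in your plan.

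Second, the positive semidefiniteness of $A_t$. Your quantum-side argument is circular: Proposition \RefProp{13112018P2} can only be applied to $a_t$ once you know $\Reelle a_t=a_t\geq 0$, which is what you are trying to prove. Your fallback — BCH gives $A_t=\Reelle Q+O(t)$ and ``a perturbation argument keeps $A_t\geq 0$'' — fails precisely in the case this paper is about: $\Reelle Q$ is in general only positive \emph{semi}definite (the singular space is nontrivial), and an $O(t)$ perturbation of a degenerate PSD matrix can acquire negative eigenvalues immediately (e.g. $\mathrm{diag}(1,0)+t\,\mathrm{diag}(0,-1)$). The paper instead derives from the same integral identity an exact representation $M_t=\int_0^1(e^{-2i\alpha tJ\overline Q}\Phi_t)^*(\Reelle Q)(e^{-2i\alpha tJ\overline Q}\Phi_t)\,\mathrm d\alpha\geq 0$ for $M_t=-(itJ)^{-1}(G_t-I_{2n})(G_t+I_{2n})^{-1}$, and then the convergent expansion $A_t=\sum_{k\geq0}\frac1{2k+1}(\Psi_t^k)^*M_t\Psi_t^k$ coming from the $\atanh$ series, which exhibits $A_t$ as a sum of real symmetric PSD matrices. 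Without something playing the role of this exact factorization through $\Reelle Q$, your proof does not go through.
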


To prove Theorem \ref{04122018T1}, we need some technical lemmas. The first of them investigates the spectrum of the symplectic matrices $e^{-2itJQ}e^{-2itJ\overline Q}$ appearing in Theorem \ref{04122018T1}:

\begin{lem}\label{08012019L1} For all $t\in\mathbb R$, the eigenvalues of the matrix $e^{-2itJQ}e^{-2itJ\overline Q}$ are positive real numbers.
\end{lem}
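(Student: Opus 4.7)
The plan is to exploit the operator--matrix correspondence, combining algebraic identities on $M_t := e^{-2itJQ}e^{-2itJ\overline Q}$ with positivity of the associated Fourier integral operator on $L^2(\mathbb R^n)$.

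\emph{Step 1 (algebraic identities).} Since $J$ is real, $\overline{e^{-2itJQ}} = e^{2itJ\overline Q}$, and a direct calculation yields
$$\overline{M_t} \;=\; e^{2itJ\overline Q}e^{2itJQ} \;=\; \big(e^{-2itJQ}e^{-2itJ\overline Q}\big)^{-1} \;=\; M_t^{-1}.$$
Combined with the symplectic property $M_t\in\Symp_{2n}(\mathbb C)$ (as a product of two complex symplectic matrices), this implies that the spectrum of $M_t$ is stable under both $\lambda\mapsto \lambda^{-1}$ and $\lambda\mapsto \overline\lambda^{-1}$, hence under complex conjugation $\lambda\mapsto\overline\lambda$. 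In particular the characteristic polynomial of $M_t$ has real coefficients, and any non-real eigenvalue must come as a conjugate pair lying on the unit circle.

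\emph{Step 2 (operator positivity).} The Weyl quantization of $\overline q$ is the formal adjoint of $q^w$, so by Proposition \ref{13112018P2} applied to $q$ and $\overline q$ and the composition rule \eqref{11022019E7},
$$e^{-tq^w}(e^{-tq^w})^* \;=\; e^{-tq^w}e^{-t\overline q^w} \;=\; \pm\,\mathscr K_{M_t}.$$
The left-hand side is a bounded positive selfadjoint operator on $L^2(\mathbb R^n)$, being of the form $AA^*$ with $A=e^{-tq^w}$ a contraction. Strong continuity in $t$ and the value at $t=0$ fix the sign as $+$, so $\mathscr K_{M_t}$ is a positive selfadjoint contraction.

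\emph{Step 3 (positive real spectrum).} A first-order expansion gives $M_t = I - 4it\, J\,\Reelle Q + O(t^2)$; since $\Reelle Q\geq 0$ is real symmetric, $J\,\Reelle Q$ is conjugate on the range of $\Reelle Q$ to a real antisymmetric matrix, and hence has purely imaginary spectrum $\{\pm i\mu_j : \mu_j \geq 0\}$. The eigenvalues of $M_t$ near $t=0$ are therefore of the form $1\pm 4t\mu_j + O(t^2)$ and lie in $(0,+\infty)$ for sufficiently small $t>0$. By the structural constraints of Step 1, an eigenvalue can migrate off the positive real axis only via a collision of two positive real eigenvalues and a subsequent bifurcation onto the unit circle. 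Such a bifurcation at a putative critical time $t^*$ is incompatible with the positive selfadjointness from Step 2: using the explicit action of Gaussian Fourier integral operators on coherent states adapted to the invariant Lagrangians of $M_{t^*}$ (as developed in Chapter 21 of \cite{MR2304165}), an eigenvalue of $M_{t^*}$ on the unit circle different from $1$ would yield an eigenfunction of $\mathscr K_{M_{t^*}}$ with non-real eigenvalue, contradicting Step 2. The case $t<0$ is reduced to the case $t>0$ by exchanging the roles of $q$ and $\overline q$.

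The main obstacle is Step 3: transferring operator positivity to matrix spectral positivity requires either the coherent-state analysis indicated above, or equivalently, an explicit construction of the principal logarithm of $M_t$ via the holomorphic functional calculus, showing that the absence of negative real or non-real eigenvalues is precisely what is needed for this logarithm to be well-defined and of the Hamiltonian form $-4itJA_t$ with $A_t$ real symmetric required in Theorem~\ref{04122018T1}.
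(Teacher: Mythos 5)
There is a genuine gap, and it sits exactly where you acknowledge it: Step 3. The entire content of the lemma is the passage from ``the Fourier integral operator attached to $M_t$ is positive selfadjoint'' to ``the matrix $M_t$ has positive real spectrum,'' and this passage is only sketched, via an appeal to coherent states adapted to invariant Lagrangians that is never constructed. The correspondence between the spectrum of a non-negative complex symplectic matrix $T$ and the spectrum of $\mathscr K_T$ is not a quotable fact in this generality, so the claimed contradiction at a critical time $t^*$ is not established. Worse, the structural reduction feeding into Step 3 is incorrect: the symmetries of Step 1 (stability of $\sigma(M_t)$ under $\lambda\mapsto\lambda^{-1}$ and $\lambda\mapsto\overline\lambda^{-1}$) only force eigenvalues to occur in quadruples $\{\lambda,\overline\lambda,\lambda^{-1},\overline\lambda^{-1}\}$; a non-real eigenvalue off the unit circle is perfectly compatible with them (the quadruple simply has four distinct elements), so it is false that ``any non-real eigenvalue must come as a conjugate pair lying on the unit circle,'' and the bifurcation can in principle send eigenvalues off both the real axis and the unit circle. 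Finally, the first-order expansion $M_t=I-4itJ\Reelle Q+O(t^2)$ does not rigorously yield positivity for small $t>0$: the eigenvalue $1$ of $M_0$ is degenerate, and whenever $\mu_j=0$ (which happens as soon as $\Reelle Q$ is degenerate, the case of interest) the first-order term vanishes and the $O(t^2)$ correction could a priori be non-real.

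The paper avoids all of this by replacing your asymptotic expansion with an exact identity valid for every $t$: differentiating $K_t=e^{-2itJQ}e^{-2itJ\overline Q}$ and using that $e^{-2itJQ}$ is symplectic gives
$$K_t=I_{2n}-4iJ\Gamma_t,\qquad \Gamma_t=\int_0^t(e^{-2isJ\overline Q})^*(\Reelle Q)(e^{-2isJ\overline Q})\,\mathrm ds,$$
where $\Gamma_t$ is Hermitian positive (resp.\ negative) semidefinite for $t\geq0$ (resp.\ $t\le0$). The spectral localization lemma you invoke in Step 3 (purely imaginary spectrum of $J\Gamma_t$) then gives $\sigma(K_t)\subset\mathbb R$ for \emph{all} $t$ in one stroke; invertibility excludes $0$, and continuity of the eigenvalues together with $\sigma(K_0)=\{1\}$ and connectedness of $\mathbb R$ forces $\sigma(K_t)\subset\mathbb R_+^*$. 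If you want to salvage your operator-theoretic route, the missing ingredient is precisely such a global algebraic identity; the $L^2$-positivity of $e^{-tq^w}(e^{-tq^w})^*$ alone does not, as written, control the spectrum of the underlying symplectic matrix.
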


\begin{proof} For all $t\in\mathbb R$, we define $K_t = e^{-2itJQ}e^{-2itJ\overline Q}$. We first check that for all $t\in\mathbb R$ we have
\begin{equation}\label{07012019E13}
	K_t = I_{2n} - 4iJ\Gamma_t, \quad\text{where}\quad \Gamma_t = \int_0^t(e^{-2isJ\overline Q})^*(\Reelle Q)(e^{-2isJ\overline Q})\ \mathrm ds.
\end{equation}
It follows from a direct computation for all $t\in\mathbb R$,
$$\partial_t \big(e^{-2itJQ}e^{-2itJ\overline Q}\big)
= -2ie^{-2itJQ}J(Q+\overline Q)e^{-2itJ\overline Q}
= -4ie^{-2itJQ}J(\Reelle Q)e^{-2itJ\overline Q}.$$
Since $Q$ is a symmetric matrix, it follows from Lemma \ref{09112018L1} that for all $t\in\mathbb R$, $e^{-2itJQ}\in\Symp_{2n}(\mathbb C)$ is a symplectic matrix and as a consequence of the above identity,
$$\partial_t \big(e^{-2itJQ}e^{-2itJ\overline Q}\big)
= -4iJ(e^{2itJQ})^T(\Reelle Q)e^{-2itJ\overline Q}
= -4iJ(e^{-2itJ\overline Q})^*(\Reelle Q)e^{-2itJ\overline Q}.$$
This proves that \eqref{07012019E13} holds. Since the matrices $\Gamma_t\in\Her_{2n}(\mathbb C)$ are Hermitian positive semidefinite when $t\geq0$ and Hermitian negative semidefinite when $t\le0$, we deduce from Lemma \ref{07012019L1} that for all $t\in\mathbb R$, the spectra of the matrices $J\Gamma_t$ satisfy $\sigma(J\Gamma_t)\subset i\mathbb R$. This combined with \eqref{07012019E13} shows that for all $t\in\mathbb R$, $\sigma\big(K_t\big)\subset\mathbb R$. The matrices $K_t\in\GL_{2n}(\mathbb C)$ are non singular and therefore, these inclusions can be refined to $\sigma\big(K_t\big)\subset\mathbb R^*$. Moreover, $\sigma(K_0) = \{1\}$ and the eigenvalues of $K_t$ are continuous with respect to the time-variable $t\in\mathbb R$ since the coefficients of the matrix $K_t$ are themselves continuous with respect to the time-variable $t\in\mathbb R$, see \cite[Theorem II.5.1]{MR0203473}. Since $\mathbb R$ is connected, this proves that $\sigma(K_t)\subset\mathbb R^*_+$ and ends the proof of Lemma \ref{08012019L1}.
\end{proof}

In the following, we shall need to define some matrices through the holomorphic functional calculus. We refer the reader to \cite[VII - 3.]{MR1009162} where this theory is presented. As a first application of this theory, we consider the matrix square root function $\sqrt{\cdot}$ defined on the set of matrices whose spectrum is contained in $\mathbb C\setminus\mathbb R_-$, which is possible since the function $z\mapsto\sqrt z = e^{\frac12\Log z}$ is well-defined and holomorphic in $\mathbb C\setminus\mathbb R_-$, with $\Log$ the principal determination of the logarithm in $\mathbb C\setminus\mathbb R_-$. For all $t\in\mathbb R$, since the spectrum of the matrix $K_t$ is only composed of positive real numbers, we can consider the matrix $G_t$ defined by
\begin{equation}\label{04122018E1}
	G_t = \sqrt{e^{-2itJQ}e^{-2itJ\overline Q}}.
\end{equation}
We shall check that the matrices $G_t$ are symplectic:

\begin{lem}\label{09012019L1} For all $t\in\mathbb R$, $G_t\in\Symp_{2n}(\mathbb C)$ is a complex symplectic matrix.
\end{lem}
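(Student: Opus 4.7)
The plan is to exploit the functional equation $\sqrt{z}\sqrt{1/z}=1$ satisfied by the principal branch of the square root on $\mathbb{C}\setminus\mathbb{R}_-$, together with the holomorphic functional calculus and the symplectic identity rewritten as a conjugation.

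First, I would observe that $K_t := e^{-2itJQ}e^{-2itJ\overline Q}$ is itself symplectic as the product of two symplectic matrices: by Lemma \ref{09112018L1}, $e^{-2itJQ}$ and $e^{-2itJ\overline Q}$ both belong to $\Symp_{2n}(\mathbb C)$ since $Q$ and $\overline Q$ are symmetric. Consequently $K_t^T J K_t = J$, which I will use in the form
$$K_t^T = J\, K_t^{-1}\, J^{-1}.$$
By Lemma \ref{08012019L1} the spectrum of $K_t$ is contained in $\mathbb R_+^*$, and the same therefore holds for $\text{Spec}(K_t^{-1})$ and $\text{Spec}(K_t^T)$. Setting $f(z) = \sqrt{z} = e^{\frac12\Log z}$, which is holomorphic on $\mathbb C\setminus\mathbb R_-$, the matrices $f(K_t)$, $f(K_t^{-1})$ and $f(K_t^T)$ are all well defined by the holomorphic functional calculus.

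Two routine properties of this calculus will then be applied: the conjugation rule $f(PMP^{-1}) = P f(M) P^{-1}$ (immediate for polynomials and preserved under the Cauchy integral), and the transposition rule $f(M^T) = f(M)^T$ (since transposition commutes with Cauchy integrals). Combining them with the symplectic identity yields
$$G_t^T = f(K_t)^T = f(K_t^T) = f(J K_t^{-1} J^{-1}) = J\, f(K_t^{-1})\, J^{-1},$$
and hence
$$G_t^T J G_t = J\, f(K_t^{-1})\, J^{-1} J\, f(K_t) = J\, f(K_t^{-1})\, f(K_t).$$

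The final step is to show $f(K_t^{-1})\,f(K_t) = I$. Since $K_t$ and $K_t^{-1}$ commute, their square roots (each a polynomial in $K_t$ via the Cauchy representation) commute as well. The scalar identity $\sqrt{z}\sqrt{1/z} = 1$, valid on $\mathbb R_+^*$ and hence, by analytic continuation, on the connected domain $\mathbb C\setminus\mathbb R_-$, transfers through the multiplicative property of the holomorphic functional calculus (applied on a contour surrounding $\text{Spec}(K_t)\cup\text{Spec}(K_t^{-1})\subset\mathbb R_+^*$) to give $f(K_t^{-1})f(K_t) = I$. Thus $G_t^T J G_t = J$ and $G_t\in\Symp_{2n}(\mathbb C)$.

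There is no substantive obstacle here: the argument is a clean application of the functional calculus, and the only care required is in checking that $f$, $f\circ(\cdot)^{-1}$ and their product can be evaluated on a common contour avoiding $\mathbb R_-$, which is guaranteed by Lemma \ref{08012019L1}.
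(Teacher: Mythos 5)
Your proof is correct and follows essentially the same route as the paper: both arguments rest on the symplecticity of $K_t$, the positivity of its spectrum from Lemma \ref{08012019L1}, the compatibility of the holomorphic functional calculus with transposition, and the identity $\sqrt{K_t^{-1}}=(\sqrt{K_t})^{-1}$. The paper carries out these manipulations explicitly inside the Cauchy integral \eqref{10052019E3}, whereas you package them as abstract conjugation/transposition rules, but the substance is identical.
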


\begin{proof} Let $t\in\mathbb R$ and $K_t = e^{-2itJQ}e^{-2itJ\overline Q}$. 
We first observe that since both matrices $Q$ and $\overline Q$ are symmetric, Lemma \ref{09112018L1} shows that the matrices $e^{-2itJQ}$ and $e^{-2itJ\overline Q}$ are symplectic and as a consequence, the matrices $K_t\in\Symp_{2n}(\mathbb C)$ are also symplectic. To prove that the matrix $G_t$ is also symplectic, we need to go back to the definition of the matrix square root given by the functional holomorphic calculus. Therefore, we consider $\Sigma_t\subset\mathbb C$ the following domain of the complex plane
$$\Sigma_t = \Big\{re^{i\theta} : c_{1,t}<r<c_{2,t},\ \theta\in\Big(-\frac{\pi}2,\frac{\pi}2\Big)\Big\},$$
where the positive constants $c_{1,t},c_{2,t}>0$ are chosen so that $\sigma(K_t)\subset(c_{1,t},c_{2,t})$ and $\sigma(K^{-1}_t)\subset(c_{1,t},c_{2,t})$. Notice that the existence of the constants $c_{1,t},c_{2,t}>0$ is given by Lemma \ref{08012019L1}. We assume that the boundary $\partial\Sigma_t$ of the domain $\Sigma_t$ is oriented counterclockwise. Then, it follows from \eqref{04122018E1} and the holomorphic functional calculus that the matrix $G_t$ is defined by 
\begin{equation}\label{10052019E3}
	G_t = \frac1{2i\pi}\int_{\partial\Sigma_t}\sqrt z\ (K_t-zI_{2n})^{-1}\ \mathrm dz,
\end{equation}
with $\sqrt z = e^{\frac12\Log z}$. Moreover, since the matrix $K_t$ is symplectic, we deduce that
\begin{align}\label{14012019E3}
	JG_t & = \frac1{2i\pi}\int_{\partial\Sigma_t}\sqrt z\ J(K_t-zI_{2n})^{-1}\ \mathrm dz = \frac{-1}{2i\pi}\int_{\partial\Sigma_t}\sqrt z\ (K_tJ-zJ)^{-1}\ \mathrm dz \\
	& = \frac{-1}{2i\pi}\int_{\partial\Sigma_t}\sqrt z\ (J(K^T_t)^{-1}-zJ)^{-1}\ \mathrm dz \nonumber  = \frac1{2i\pi}\int_{\partial\Sigma_t}\sqrt z\ ((K^T_t)^{-1}-zI_{2n})^{-1}J\ \mathrm dz \nonumber \\
	& = \bigg(\frac1{2i\pi}\int_{\partial\Sigma_t}\sqrt z\ (K^{-1}_t-zI_{2n})^{-1}\ \mathrm dz\bigg)^TJ = \Big(\sqrt{K^{-1}_t}\Big)^TJ. \nonumber
\end{align}
Finally, since the function $z\mapsto(\sqrt z)^{-1} = \sqrt{z^{-1}}$ is holomorphic on $\mathbb C\setminus\mathbb R_-$ and that the eigenvalues of the matrices $K_t$ are positive real numbers, it follows from the holomorphic functional calculus, see e.g.\ \cite[VII.3.12, Theorem 12]{MR1009162}, that $\sqrt{K^{-1}_t} = \big(\sqrt{K_t}\big)^{-1} = G^{-1}_t.$
This, combined with \eqref{14012019E3}, proves that $JG_t = (G^T_t)^{-1}J$, that is $G_t\in\Symp_{2n}(\mathbb C)$ is a symplectic matrix. This ends the proof of Lemma \ref{09012019L1}.
\end{proof}

We can now construct the matrices $A_t$. Since the function $z\mapsto\atanh((z - 1)(z+1)^{-1})$ is holomorphic on a neighborhood of $\mathbb R^*_+$, where $\atanh$ denotes the hyperbolic $\atan$ function (whose definition and properties can be found in \cite[Section 4.6]{MR0167642}), and that $\sigma(G_t)\subset\mathbb R^*_+$ for all $t\in\mathbb R$ from \eqref{04122018E1} and Lemma \ref{08012019L1}, the functional holomorphic calculus also allows to consider the family of matrices $(A_t)_{t\in\mathbb R}$ defined for all $t\in\mathbb R$ by
\begin{equation}\label{07012019E9}
	A_t = -(itJ)^{-1}\atanh\big(\big(G_t-I_{2n}\big)\big(G_t+I_{2n}\big)^{-1}\big).
\end{equation}
By construction, the function $t\in\mathbb R\mapsto\atanh((G_t-I_{2n})(G_t+I_{2n})^{-1})$ is real analytic and vanishes in $t=0$, since $G_0 = I_{2n}$ from \eqref{04122018E1} and $\atanh(0_{2n}) = 0_{2n}$. The matrix $A_t$ is therefore well-defined for all $t\in\mathbb R$ and the function $t\in\mathbb R\mapsto A_t$ is as well analytic according to \eqref{15052019E1}. This family $(A_t)_{t\in\mathbb R}$ satisfies the algebraic part of Theorem \ref{04122018T1}, as proved in the

\begin{lem}\label{08012019L6} For all $t\in\mathbb R$, the matrix $A_t$ satisfies $e^{-4itJA_t} = e^{-2itJQ}e^{-2itJ\overline Q}.$
\end{lem}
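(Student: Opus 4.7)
The plan is to unwind the definition of $A_t$ via the holomorphic functional calculus and recognize $-4itJA_t$ as $2\,\Log(G_t)$, after which exponentiation gives $G_t^2 = K_t := e^{-2itJQ}e^{-2itJ\overline Q}$ immediately.

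The key scalar identity I would exploit is
$$2\,\atanh\!\left(\frac{z-1}{z+1}\right) \;=\; \Log z,$$
which is valid on $\mathbb R_+^*$ (and on an open neighborhood of $\mathbb R_+^*$ contained in $\mathbb C\setminus\mathbb R_-$, where both the principal determination $\Log$ and the principal $\atanh$ are holomorphic). To lift this to the matrix $G_t$, I would use the following spectral input: by Lemma \ref{08012019L1}, $\sigma(K_t)\subset\mathbb R_+^*$, and choosing the principal square root via the functional calculus in \eqref{10052019E3} forces $\sigma(G_t)\subset\mathbb R_+^*$ as well. In particular $G_t+I_{2n}$ is invertible, the M\"obius image $(G_t-I_{2n})(G_t+I_{2n})^{-1}$ has spectrum in $(-1,1)$, and $\Log G_t$ is well defined.

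The functional calculus then yields
$$2\,\atanh\!\big((G_t-I_{2n})(G_t+I_{2n})^{-1}\big) \;=\; \Log G_t,$$
simply because the composition of holomorphic functional calculi agrees with the functional calculus of the composition when the spectra lie in the relevant domains. Plugging this into the definition \eqref{07012019E9} gives
$$-4itJA_t \;=\; 4\,\atanh\!\big((G_t-I_{2n})(G_t+I_{2n})^{-1}\big) \;=\; 2\,\Log G_t.$$
Exponentiating and using $e^{\Log G_t}=G_t$ (valid since $\sigma(G_t)\subset\mathbb R_+^*$) gives
$$e^{-4itJA_t} \;=\; e^{2\,\Log G_t} \;=\; G_t^{\,2} \;=\; K_t \;=\; e^{-2itJQ}e^{-2itJ\overline Q},$$
as required.

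There is no serious obstacle; the only thing to be careful about is consistency of branches. I would verify once that the three holomorphic functions $z\mapsto\sqrt z$, $z\mapsto\Log z$, and $z\mapsto\atanh\!\big(\tfrac{z-1}{z+1}\big)$ are all defined and holomorphic on a common open neighborhood of $\sigma(K_t)\cup\sigma(G_t)\subset\mathbb R_+^*$, so that the composition rule of the holomorphic functional calculus (\cite{MR1009162}, VII.3) applies unambiguously and the identity $2\,\atanh\!\big(\tfrac{z-1}{z+1}\big)=\Log z$ transfers from scalars to matrices. Once this is granted, the argument reduces to the two-line computation above.
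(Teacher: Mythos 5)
Your proposal is correct and follows essentially the same route as the paper: both arguments transfer a scalar identity to the matrix $G_t$ via the holomorphic functional calculus, using $\sigma(G_t)\subset\mathbb R^*_+$ from Lemma \RefLem{1} and the definition \Eqref{9} of $A_t$. The paper simply verifies the single identity $\exp\big(4\atanh\big(\tfrac{x-1}{x+1}\big)\big)=x^2$ on $\mathbb R^*_+$ and applies the functional calculus once, whereas you factor the same computation through $\Log G_t$; the two are equivalent.
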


\begin{proof} We first observe that 
\begin{equation}\label{08012019E13}
	\forall x>0,\quad \exp\Big(4\atanh\Big(\frac{x-1}{x+1}\Big)\Big) = x^2.
\end{equation}
Indeed, if $x>0$ a positive real number and $y\in\mathbb R$ is a real number such that $x = e^{2y}$, we have
$$\exp\Big(4\atanh\Big(\frac{x-1}{x+1}\Big)\Big) = \exp\Big(4\atanh\Big(\frac{e^{2y}-1}{e^{2y}+1}\Big)\Big) 
= \exp\big(4\atanh\big(\tanh y\big)\big) = e^{4y} = x^2.$$
Moreover, both functions $z\mapsto \exp(4\atanh((z-1)(z+1)^{-1}))$ and $z\mapsto z^2$ are holomorphic on a connected open neighborhood of $\mathbb R^*_+$ and $\sigma(G_t)\subset\mathbb R^*_+$ from \eqref{04122018E1} and Lemma \ref{08012019L1}. We therefore deduce from \eqref{07012019E9}, \eqref{08012019E13} and the holomorphic functional calculus that for all $t\in\mathbb R$,
$$e^{-4itJA_t} = \exp\big(4\atanh\big(\big(G_t-I_{2n}\big)\big(G_t+I_{2n}\big)^{-1}\big)\big) = G^2_t = e^{-2itJQ}e^{-2itJ\overline Q}.$$
This ends the proof of Lemma \ref{08012019L6}.
\end{proof}

Notice that the matrices $A_t$ can therefore be expressed by taking the logarithm of the matrices $e^{-2itJQ}e^{-2itJ\overline Q}$. Indeed, since the spectra of these matrices is contained in $\mathbb R^*_+$ from Lemma \ref{08012019L1} and that the function $\Log$ (which still denotes the principal determination of the logarithm in $\mathbb C\setminus\mathbb R_-$) is holomorphic in a neighborhood of $\mathbb R^*_+$, Lemma \ref{08012019L6} and the holomorphic functional calculus imply that for all $t\in\mathbb R$,
$$tA_t = -(4iJ)^{-1}\Log\big(e^{-2itJQ}e^{-2itJ\overline Q}\big).$$
Moreover, the function $t\in\mathbb R\mapsto\Log(e^{-2itJQ}e^{-2itJ\overline Q})$ is analytic by construction and vanishes in $t=0$. Consequently, the matrix $A_t$ is given for all $t\in\mathbb R$ by
\begin{equation}\label{08012019E9}
	A_t = -(4itJ)^{-1}\Log\big(e^{-2itJQ}e^{-2itJ\overline Q}\big).
\end{equation}

Now, it only remains to prove that the matrices $A_t$ are real and symmetric positive semidefinite. To that end, we introduce the family of matrices $(M_t)_{t\in\mathbb R}$ where $M_t$ is defined for all $t\in\mathbb R$ by
\begin{equation}\label{08012019E1}
	M_t = -(itJ)^{-1}\big(G_t-I_{2n}\big)\big(G_t+I_{2n}\big)^{-1}.
\end{equation}
Notice that the matrices $M_t$ are well-defined according to \eqref{15052019E1} since on the one hand, \eqref{04122018E1} and Lemma \ref{08012019L1} imply that $-1$ is not an eigenvalue of any matrix $G_t$ and on the other hand, the function $t\in\mathbb R\mapsto(G_t-I_{2n})(G_t+I_{2n})^{-1}$ is real analytic by construction and vanishes in $t=0$. Moreover, the function $t\in\mathbb R\mapsto M_t$ is analytic. We will prove in Lemma \ref{08012019L2} that the matrices $A_t$ can be expressed in terms of the matrices $M_t$ which will turn out to be real and symmetric. Moreover, the next lemma will imply that the matrices $M_t$ are positive semidefinite. The properties required for the matrices $A_t$ will then arise from the ones of the matrices $M_t$.

\begin{lem}\label{08012019L4} For all $t\in\mathbb R$, the matrix $M_t$ admits the following integral representation
$$M_t = \int_0^1(e^{-2i\alpha tJ\overline Q}\Phi_t)^*(\Reelle Q)(e^{-2i\alpha tJ\overline Q}\Phi_t)\ \mathrm d\alpha,$$
where the matrix $\Phi_t$ is given by
\begin{equation}\label{08012019E8}
	\Phi_t = 2 \bigg(\sqrt{e^{-2itJQ}e^{-2itJ\overline Q}}+I_{2n}\bigg)^{-1}.
\end{equation}
In particular, the matrix $M_t$ is Hermitian positive semidefinite.
\end{lem}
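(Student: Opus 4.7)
The strategy is to massage the matrix $M_t$ until we recognize it as the claimed integral. The starting point is the observation that $G_t$, $G_t\pm I_{2n}$ are polynomials in $K_t=G_t^2$, hence mutually commute. Using this and the identity $(G_t+I_{2n})^{-1}=\Phi_t/2$, I would write
$$(G_t-I_{2n})(G_t+I_{2n})^{-1}=(G_t^2-I_{2n})(G_t+I_{2n})^{-2}=(K_t-I_{2n})\,\Phi_t^2/4.$$
Substituting $K_t-I_{2n}=-4iJ\Gamma_t$, which is the identity \eqref{07012019E13} established in the proof of Lemma~\RefLem{08012019L1}, and computing $(itJ)^{-1}=iJ/t$ thanks to $J^2=-I_{2n}$, one readily obtains
$$M_t=-(itJ)^{-1}(G_t-I_{2n})(G_t+I_{2n})^{-1}=\tfrac{1}{t}\,\Gamma_t\Phi_t^2.$$

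The next and main step is to symmetrize this expression, that is, to replace $\Gamma_t\Phi_t^2$ by $\Phi_t^*\Gamma_t\Phi_t$. I would first prove the intertwining relation $\Gamma_t K_t=K_t^*\Gamma_t$ by a direct algebraic check: since $\Gamma_t$ is Hermitian and $J^*=-J$, one has $K_t^*=I_{2n}-4i\Gamma_t J$, and both $\Gamma_t K_t$ and $K_t^*\Gamma_t$ equal $\Gamma_t-4i\Gamma_t J\Gamma_t$. I would then transfer this intertwining to $G_t=\sqrt{K_t}$ via the Cauchy contour representation used in the proof of Lemma~\RefLem{09012019L1}: because the spectra of $K_t$ and $K_t^*$ coincide (they are the positive real numbers given by Lemma~\RefLem{08012019L1}, since conjugation fixes them), the relation $\Gamma_t K_t=K_t^*\Gamma_t$ yields $\Gamma_t(zI_{2n}-K_t)^{-1}=(zI_{2n}-K_t^*)^{-1}\Gamma_t$ for $z$ outside this common spectrum, and integrating against $\sqrt z$ along the same contour $\partial\Sigma_t$ gives $\Gamma_t G_t=G_t^*\Gamma_t$. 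Inverting $G_t+I_{2n}$ and $G_t^*+I_{2n}$ then yields $\Gamma_t\Phi_t=\Phi_t^*\Gamma_t$, so $\Gamma_t\Phi_t^2=\Phi_t^*\Gamma_t\Phi_t$.

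The conclusion is then immediate. Performing the change of variable $s=\alpha t$ in
$$\Gamma_t=\int_0^t\bigl(e^{-2isJ\overline Q}\bigr)^*(\Reelle Q)\bigl(e^{-2isJ\overline Q}\bigr)\,\mathrm ds$$
produces a factor $t$ in front, which cancels the $1/t$ in $M_t=\Phi_t^*\Gamma_t\Phi_t/t$, and pulling the time-independent factors $\Phi_t^*$ and $\Phi_t$ inside the integral yields exactly the announced formula. The Hermitian positive semidefinite character of $M_t$ is then visible on this formula: each integrand is of the form $A^*(\Reelle Q)A$ with $\Reelle Q\in\Her_{2n}(\mathbb C)$ positive semidefinite.

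The main obstacle is the symmetrization step: the expression $-(itJ)^{-1}(G_t-I_{2n})(G_t+I_{2n})^{-1}$ is not manifestly Hermitian, and the factors $iJ$ together with the non-normality of $G_t$ obscure this property. Hermiticity only materializes after identifying $\Gamma_t$ as the intertwiner between $K_t$ and its adjoint $K_t^*$ and transporting this relation to $G_t$ through the holomorphic functional calculus.
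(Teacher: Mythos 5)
Your proof is correct, and it reaches the integral formula by a genuinely different symmetrization mechanism than the paper's. The paper conjugates: it computes $(G_t+I_{2n})^*\,tM_t\,(G_t+I_{2n})=-i(G_t+I_{2n})^*J(G_t-I_{2n})$ and then uses the relation $(G_t+I_{2n})^*J=J(G_t+I_{2n})$ — which rests on $G_t$ being symplectic (Lemma \ref{09012019L1}) together with $\overline{G_t}=G_t^{-1}$ — to collapse this to $-iJ(G_t^2-I_{2n})=4\Gamma_t$, from which the sandwich $tM_t=\Phi_t^*\Gamma_t\Phi_t$ follows. You instead keep everything on one side, obtain $M_t=t^{-1}\Gamma_t\Phi_t^2$ by commutativity within the functional calculus of $K_t$, and then produce the adjoint by the intertwining $\Gamma_tK_t=K_t^*\Gamma_t$ (a one-line check from $K_t=I_{2n}-4iJ\Gamma_t$ and $\Gamma_t^*=\Gamma_t$, $J^*=-J$), transported to $G_t$ through the resolvent identity $\Gamma_t(zI_{2n}-K_t)^{-1}=(zI_{2n}-K_t^*)^{-1}\Gamma_t$ and the contour representation \eqref{10052019E3}; your remark that $\sigma(K_t^*)=\overline{\sigma(K_t)}=\sigma(K_t)\subset\mathbb R^*_+$, so the same contour $\partial\Sigma_t$ serves for both, is the point that makes this transport legitimate. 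The two routes encode the same underlying structure (indeed $J^{-1}\Gamma_t$ commutes with $K_t$, which is equivalent to your intertwining relation and is also what powers the paper's symplectic identity), but yours bypasses the conclusions of Lemma \ref{09012019L1} and the identity $\overline{G_t}=G_t^{-1}$ entirely, at the price of an extra passage through the functional calculus; the paper's version buys the auxiliary facts \eqref{09012019E1} and \eqref{09012019E11}, which it reuses later in the proof of Lemma \ref{08012019L2}.
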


\begin{proof} Let $t\in\mathbb R$. We begin by checking that the matrix $M_t$ satisfies the relation
\begin{equation}\label{12112018E4}
	(G_t +I_{2n})^*tM_t(G_t +I_{2n}) = iJ(I_{2n} - G^2_t).
\end{equation}
We recall that the matrix $J$ satisfies $J^{-1} = J^T = -J$. On the one hand, the left-hand side of this equality can be computed with the definition \eqref{08012019E1} of $M_t$:
\begin{equation}\label{hehe}
	(G_t + I_{2n})^*tM_t(G_t + I_{2n}) = -i(G_t +I_{2n})^*J(G_t - I_{2n}).
\end{equation}
On the other hand, since the matrix square root given by the holomorphic functional calculus commutes with the complex conjugate (which can be readily checked by using \eqref{10052019E3}) and with the invert function defined for all non-singular matrix whose spectrum is composed of positive real numbers, it follows from \eqref{04122018E1} that the matrix $G_t$ satisfies
\begin{equation}\label{09012019E1}
	\overline G_t = \sqrt{e^{2itJ\overline Q}e^{2itJQ}} = \sqrt{(e^{-2itJQ}e^{-2itJ\overline Q})^{-1}} = G^{-1}_t.
\end{equation}
Moreover, $G_t\in\Symp_{2n}(\mathbb C)$ is a symplectic matrix from Lemma \ref{09012019L1} and we deduce that
\begin{multline}\label{09012019E11}
	(G_t +I_{2n})^*J = (\overline{G_t} + I_{2n})^TJ = (G^{-1}_t+I_{2n})^TJ = -(JG^{-1}_t+J)^T \\
	= -(G^T_tJ+J)^T = J(G^T_t+I_{2n})^T = J(G_t + I_{2n}).
\end{multline}
Hence, substituting this equality in \eqref{hehe}, we get that
\begin{equation}\label{potatoes}
	(G_t + I_{2n})^*tM_t(G_t +I_{2n}) = -iJ(G_t + I_{2n})(G_t - I_{2n}) = -iJ(G^2_t - I_{2n}).
\end{equation}
This proves that \eqref{12112018E4} holds. Then, we deduce from \eqref{07012019E13} and \eqref{04122018E1} that the right-hand side of \eqref{12112018E4} is written as
$$iJ(I_{2n} - G^2_t) = iJ(I_{2n} - e^{-2itJQ}e^{-2itJ\overline Q}) = 4\int_0^t(e^{-2isJ\overline Q})^*(\Reelle Q)(e^{-2isJ\overline Q})\ \mathrm ds.$$
Therefore, we derive the following expression for the matrix $tM_t$:
\begin{equation}\label{09012019E3}
	tM_t = 4\int_0^t(e^{-2isJ\overline Q}(G_t + I_{2n})^{-1})^*(\Reelle Q)(e^{-2isJ\overline Q}(G_t + I_{2n})^{-1})\ \mathrm ds.
\end{equation}
Since the matrix $\Phi_t$ defined in \eqref{08012019E8} can also be written as $\Phi_t = 2(G_t+I_{2n})^{-1},$
we deduce from \eqref{09012019E3} that the matrix $tM_t$ is given by
$$tM_t = \int_0^t(e^{-2isJ\overline Q}\Phi_t)^*(\Reelle Q)(e^{-2isJ\overline Q}\Phi_t)\ \mathrm ds.$$
A change of variable in the integral ends the proof of Lemma \ref{08012019L4}.
\end{proof}

We can now derive the end of the proof of Theorem \ref{04122018T1} from Lemma \ref{08012019L4}. This is done in the following Lemma which will also be key to prove Theorem \ref{14102018T2} in Section \ref{short}.

\begin{lem}\label{08012019L2} For all $t\in\mathbb R$, the matrix $A_t$ is real and symmetric positive semidefinite. Moreover, the matrices $A_t$ and $M_t$ satisfy the following estimate:
$$\forall t\in\mathbb R,\quad A_t\geq M_t\geq0.$$
\end{lem}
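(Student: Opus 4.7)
The plan is to exploit the power-series expansion $\atanh(z)=\sum_{k\geq 0} z^{2k+1}/(2k+1)$ combined with the algebraic identity $N_t := (G_t-I_{2n})(G_t+I_{2n})^{-1} = -itJM_t$ (which follows from \eqref{08012019E1} using $(iJ)^{-1}=iJ$) to rewrite $A_t$ in a manifestly Hermitian positive form. Convergence of the series comes for free: by Lemma~\ref{08012019L1} the eigenvalues of $G_t$ are positive reals, so those of $N_t$ are of the form $(\lambda-1)/(\lambda+1)$ with $\lambda>0$ and lie in $(-1,1)$, keeping the spectral radius of $N_t$ strictly below $1$ for every $t\in\mathbb R$.

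Reality of $A_t$ would follow from complex conjugation. Using $\overline{G_t}=G_t^{-1}$ from \eqref{09012019E1}, a rational computation gives $\overline{N_t}=(G_t^{-1}-I_{2n})(G_t^{-1}+I_{2n})^{-1}=-N_t$; then $\overline{\atanh(N_t)}=-\atanh(N_t)$ by oddness of $\atanh$, and $\overline{-(itJ)^{-1}}=(itJ)^{-1}$ because $J$ is real, so the two sign flips cancel and $\overline{A_t}=A_t$.

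For the Hermitian structure and the inequality $A_t\geq M_t$, set $R_t := iJM_t$. Substituting $N_t=-itJM_t$ and using $(iJ)^2=I_{2n}$ yields $-(itJ)^{-1}N_t^{2k+1} = M_t\cdot N_t^{2k} = t^{2k}M_t R_t^{2k}$, hence
\begin{equation*}
A_t \;=\; \sum_{k\geq 0} \frac{t^{2k}}{2k+1}\, M_t R_t^{2k}.
\end{equation*}
Since $M_t$ is Hermitian positive semidefinite by Lemma~\ref{08012019L4} and $iJ$ is Hermitian, the matrix $\tilde R_t := M_t^{1/2}(iJ)M_t^{1/2}$ is Hermitian, and a direct expansion of the alternating products of $M_t^{1/2}$ and $iJ$ yields $M_t R_t^{2k}=M_t^{1/2}\tilde R_t^{2k}M_t^{1/2}$ for every $k\geq 0$. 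Moreover $R_t=-N_t/t$, so $t\tilde R_t$ is similar to $-N_t$ and has spectrum contained in $(-1,1)$. Letting $f(x):=\atanh(x)/x=\sum_{k\geq 0} x^{2k}/(2k+1)$, which satisfies $f(x)\geq 1$ on $(-1,1)$ because all coefficients are positive and the constant term equals $1$, Hermitian functional calculus applied to $\tilde R_t$ gives $f(t\tilde R_t)\geq I_{2n}$ and hence
\begin{equation*}
A_t \;=\; M_t^{1/2}\,f(t\tilde R_t)\,M_t^{1/2} \;\geq\; M_t^{1/2}\,I_{2n}\,M_t^{1/2} \;=\; M_t \;\geq\; 0.
\end{equation*}
In particular $A_t$ is Hermitian, which combined with the reality above forces $A_t\in\Symm^+_{2n}(\mathbb R)$, and the display above is the announced chain $A_t\geq M_t\geq 0$.

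The main obstacle I anticipate is the spectral containment $\sigma(t\tilde R_t)\subset(-1,1)$, which is the single hinge that simultaneously ensures convergence of the $\atanh$ series, legitimacy of the Hermitian functional calculus, and the key pointwise bound $f\geq 1$ needed for $A_t\geq M_t$; this containment is obtained by transferring the spectral information on $G_t$ provided by Lemma~\ref{08012019L1} through the identities $\sigma(t\tilde R_t)=\sigma(tR_t)=\sigma(-N_t)$.
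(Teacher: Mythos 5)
Your argument is correct and is essentially the paper's proof in a different packaging: the paper expands $\atanh\Psi_t$ (with $\Psi_t=(G_t-I_{2n})(G_t+I_{2n})^{-1}$, your $N_t$) into the series $\sum_k\frac1{2k+1}(\Psi_t^k)^*M_t\Psi_t^k$ using the symplectic intertwining $J\Psi_t=\Psi_t^*J$, obtains reality and symmetry from $\overline{\Psi_t}=-\Psi_t$, and gets $A_t\geq M_t\geq0$ because each term is a congruence of $M_t\geq0$ and the $k=0$ term equals $M_t$ — your terms $\frac{t^{2k}}{2k+1}M_tR_t^{2k}$ coincide with these term by term. Your symmetrization through $M_t^{1/2}$ and the functional-calculus bound $f(t\tilde R_t)\geq I_{2n}$ is a valid cosmetic variant of the same positivity mechanism, resting on the same spectral input ($\sigma(G_t)\subset\mathbb R_+^*$ from Lemma \ref{08012019L1}).
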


\begin{proof} To simplify the notations in the following, we consider the following matrices for all $t\in\mathbb R$,
\begin{equation}\label{07012019E14}
	\Psi_t = \big(G_t-I_{2n}\big)\big(G_t+I_{2n}\big)^{-1}.
\end{equation}
We recall that the matrix $\atanh$ function admits the following Taylor expansion for all matrices $R$ whose norm satisfies $\Vert R\Vert<1$,
\begin{equation}\label{09012019E7}
	\atanh R = \sum_{k=0}^{+\infty}\frac{R^{2k+1}}{2k+1}.
\end{equation}
We also recall from \eqref{07012019E9} that the matrices $A_t$ are defined for all $t\in\mathbb R$ with the convention \eqref{15052019E1} by $A_t = -(itJ)^{-1}\atanh \Psi_t$. It follows from the inequality
$$\forall x>0,\quad \bigg\vert\frac{\sqrt x-1}{\sqrt x+1}\bigg\vert<1,$$
the definitions \eqref{04122018E1} and \eqref{07012019E14} of the matrices $G_t$ and $\Psi_t$, and Lemma \ref{08012019L1} that the spectrum of the matrix $\Psi_t$ satisfies $\sigma(\Psi_t)\subset(-1,1)$ for all $t\in\mathbb R$. It therefore follows from \cite[Lemma 5.6.10]{MR1084815} that for all $t\in\mathbb R$, there exists a norm $\Vert\cdot\Vert_t$ on $M_n(\mathbb C)$ such that $\Vert\Psi_t\Vert_t<1$. This proves that the series $\sum\frac{\Psi^{2k+1}_t}{2k+1}$ converges in $M_n(\mathbb C)$ for all $t\in\mathbb R$ and we deduce from \eqref{09012019E7}  that for all $t\in\mathbb R$,
\begin{equation}\label{09012019E9}
	A_t = -(itJ)^{-1}\sum_{k=0}^{+\infty}\frac{\Psi^{2k+1}_t}{2k+1}.
\end{equation}
To prove that the matrices $A_t$ are real and symmetric, we need to derive a new expression for them. To that end, we compute the product $J\Psi_t$ by using the relation \eqref{09012019E11} (which also holds when the matrix $I_{2n}$ is replaced by $-I_{2n}$):
\begin{multline}\label{09012019E12}
	J\Psi_t = J\big(G_t-I_{2n}\big)\big(G_t+I_{2n}\big)^{-1} = \big(G_t-I_{2n}\big)^*J\big(G_t+I_{2n}\big)^{-1} \\
	= \big(G_t-I_{2n}\big)^*\big(\big(G_t+I_{2n}\big)^{-1}\big)^*J = \Psi^*_tJ.
\end{multline}
We deduce from \eqref{08012019E1}, \eqref{07012019E14}, \eqref{09012019E9} and \eqref{09012019E12} that for all $t\in\mathbb R$,
\begin{equation}\label{07012019}
	A_t = \sum_{k=0}^{+\infty}\frac1{2k+1}(\Psi^k_t)^*(-itJ)^{-1}\Psi_t(\Psi^k_t) = \sum_{k=0}^{+\infty}\frac1{2k+1}(\Psi^k_t)^*M_t(\Psi^k_t).
\end{equation}
We observe from \eqref{09012019E1} and \eqref{07012019E14} that for all $t\in\mathbb R$,
$$\overline{\Psi}_t = \big(\overline G_t-I_{2n}\big)\big(\overline G_t+I_{2n}\big)^{-1} = \big(G^{-1}_t-I_{2n}\big)\big(G^{-1}_t+I_{2n}\big)^{-1} 
= \big(I_{2n} - G_t\big)\big(I_{2n} + G_t\big)^{-1} = -\Psi_t.$$
Consequently, from \eqref{08012019E1}, \eqref{07012019E14}, \eqref{09012019E12}, the matrices $M_t$ satisfy the two relations
\begin{equation*}
\left\{\begin{array}{lll}
	\overline M_t &=& (itJ)^{-1}\overline{\Psi}_t = M_t, \\[5pt]
	M^*_t &=& (it)^{-1}\Psi^*_tJ = (it)^{-1}J\Psi_t = (-itJ)^{-1}\Psi_t = M_t.
\end{array}\right.
\end{equation*}
Thus, for all $t\in\mathbb R$ and $k\geq0$, the matrix $(\Psi^k_t)^*M_t(\Psi^k_t)$ is real and symmetric. As sums of such matrices, the matrices $A_t$ are also real and symmetric. Finally, we deduce from \eqref{07012019} and Lemma \ref{08012019L4} that for all $t\in\mathbb R$, $A_t\geq M_t\geq0$. This ends the proof of Lemma \ref{08012019L2}.
\end{proof}

We recall from \cite[Theorem 4.2]{MR1339714} that the evolution operators ${e^{-t\tilde q^w}}$, with $t\geq0$, generated by an accretive quadratic operator $\tilde q^w(x,D_x)$, with $\tilde q:\mathbb R^{2n}\rightarrow\mathbb C$ a complex-valued quadratic form with a nonnegative real-part $\Reelle\tilde q\geq0$, are pseudodifferential operators whose symbols are tempered distributions $p_t\in\mathscr S'(\mathbb R^{2n})$. More specifically, these symbols are $L^{\infty}(\mathbb R^{2n})$ functions explicitly given by the Mehler formula
\begin{equation}\label{12022019E2}
	p_t(X) = (\det(\cos(t\tilde F)))^{-\frac12}\exp(-\sigma(X,\tan(t\tilde F)X)),\quad X\in\mathbb R^{2n},
\end{equation}
whenever the condition $\det(\cos(t\tilde F))\ne0$ is satisfied, where $\tilde F$ denotes the Hamilton map associated with the quadratic form $\tilde q$. As a Corollary of Lemma \ref{08012019L2}, we can compute the Weyl symbol of the operator $e^{-ta^w_t}$ for all $t\geq0$, with $a_t:\mathbb R^{2n}\rightarrow\mathbb R_+$ the nonnegative quadratic form whose matrix in the canonical basis of $\mathbb R^{2n}$ is $A_t$, in terms of $m_t:\mathbb R^{2n}\rightarrow\mathbb R_+$ the nonnegative quadratic form whose matrix in the canonical basis of $\mathbb R^{2n}$ is $M_t$. By the way, this is a justification \textit{a posteriori} of the introduction of the matrices $M_t$.

\begin{cor}\label{09012019C1} For all $t\geq0$, the operator $e^{-ta^w_t}$ is a pseudodifferential operator whose Weyl symbol is given by
$$X\in\mathbb R^{2n}\mapsto (\det\cos(tJA_t))^{-\frac12}e^{-tm_t(X)}\in L^{\infty}(\mathbb R^{2n}).$$
\end{cor}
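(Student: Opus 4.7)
The plan is to apply the Mehler formula \eqref{12022019E2} to the real non-negative quadratic form $\tilde q = a_t$, treating $t$ as a frozen parameter that selects the symbol and evaluating the associated semigroup $(e^{-sa_t^w})_{s\geq0}$ at $s=t$. By \eqref{05112018E7}, the Hamilton map of $a_t$ is $JA_t$, so the formula predicts the Weyl symbol
$$(\det\cos(tJA_t))^{-1/2}\exp(-\sigma(X,\tan(tJA_t)X)),$$
provided that $\det\cos(tJA_t)\neq 0$. The proof then reduces to two points: (i) checking this non-degeneracy, and (ii) identifying the Gaussian phase $-\sigma(X,\tan(tJA_t)X)$ with $-tm_t(X)$.

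For (i), Lemma \RefLem{08012019L2} ensures that $A_t$ is real symmetric positive semidefinite. Consequently, $JA_t$ is conjugate on $\Ran A_t$ to the skew-Hermitian matrix $\sqrt{A_t}J\sqrt{A_t}$, so its spectrum lies in $i\mathbb R\cup\{0\}$. The eigenvalues of $\cos(tJA_t)$ are therefore of the form $\cosh(t\mu)\geq 1$ with $\mu\in\mathbb R$, whence $\det\cos(tJA_t)\geq 1>0$.

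For (ii), using the identity $\sigma(X,Y)=-X^TJY$ read off from \eqref{09052019E12}, the desired equality of phases is equivalent to the finite-dimensional matrix identity $-J\tan(tJA_t)=tM_t$. I would derive this by returning to the definitions \eqref{07012019E9} and \eqref{08012019E1}. The former rewrites as $-itJA_t=\atanh(\Psi_t)$, where $\Psi_t=(G_t-I_{2n})(G_t+I_{2n})^{-1}$. Taking $\tanh$ through the holomorphic functional calculus (which applies since $\sigma(\Psi_t)\subset(-1,1)$ by Lemma \RefLem{08012019L1}) and using the scalar identity $\tanh(iz)=i\tan(z)$ yields $\tan(tJA_t)=i\Psi_t$. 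On the other hand, the elementary relation $(iJ)^{-1}=iJ$ (consequence of $J^2=-I_{2n}$) combined with \eqref{08012019E1} gives $tM_t=-iJ\Psi_t$. Comparing the two expressions, $-J\tan(tJA_t)=-iJ\Psi_t=tM_t$, as required.

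The main obstacle will be the bookkeeping of signs and factors of $i$ in transporting the scalar identity $\tanh(iz)=i\tan(z)$ to matrices via the holomorphic functional calculus, and in matching the conventions defining the symplectic form, the Hamilton map, and the matrix of a quadratic form. All other ingredients are either spectral (for point (i)) or are direct consequences of the definitions of $G_t$, $\Psi_t$, $A_t$ and $M_t$ established earlier in the section.
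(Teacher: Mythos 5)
Your proposal is correct and follows essentially the same route as the paper: apply the Mehler formula to $a_t$, use Lemma \ref{08012019L2} together with the spectral localization of Lemma \ref{07012019L1} to see that $\sigma(tJA_t)\subset i\mathbb R$ and hence $\det\cos(tJA_t)\neq0$, and then identify $\tan(tJA_t)=tJM_t$ from the definitions \eqref{07012019E9} and \eqref{08012019E1}. The only (cosmetic) difference is that you obtain this last identity via $\tanh\circ\atanh=\mathrm{id}$ and $\tanh(iz)=i\tan z$, whereas the paper uses the equivalent exponential form $\tan(tJA_t)=i(e^{-2itJA_t}-I_{2n})(e^{-2itJA_t}+I_{2n})^{-1}$ together with $e^{-2itJA_t}=G_t$; both are legitimate applications of the holomorphic functional calculus.
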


\begin{proof} Let $t\geq0$. It follows from Lemma \ref{08012019L2} that the matrix $A_t$ is real symmetric positive semidefinite and this combined with Lemma \ref{07012019L1} show that the spectrum of the matrix $tJA_t$ is purely imaginary. As a consequence, the matrix $\cos(tJA_t)$ is non-singular and it follows from the Mehler formula \eqref{12022019E2} that the operator $e^{-ta^w_t}$ is a pseudodifferential operator whose Weyl symbol is a $L^{\infty}(\mathbb R^{2n})$-function given for all $X\in\mathbb R^{2n}$ by
$$ (\det\cos(tJA_t))^{-\frac12}\exp(-\sigma(X,\tan(tJA_t)X)).$$
Moreover, we deduce from \eqref{04122018E1}, \eqref{08012019E1} and Lemma \ref{08012019L6} that 
\begin{multline*}
	(tJ)^{-1}\tan(tJA_t) = -(itJ)^{-1}\big(e^{-2itJA_t}-I_{2n}\big)\big(e^{-2itJA_t}+I_{2n}\big)^{-1} \\
	= -(itJ)^{-1}\big(G_t-I_{2n}\big)\big(G_t+I_{2n}\big)^{-1} = M_t.
\end{multline*}
We deduce from \eqref{14022019E9} and the above equality that for all $X\in\mathbb R^{2n}$,
$$\sigma(X,\tan(tJA_t)X)) = \sigma(X,tJM_tX) = t\langle X,M_tX\rangle = tm_t(X).$$
This ends the proof of Corollary \ref{09012019C1}.
\end{proof}

The study of the family $(A_t)_{t\in\mathbb R}$ is now ended. Still in order to prove \eqref{13112018E21} \textit{via} \eqref{14022019E6}, we consider the time-dependent matrices $H_t$ defined for all $t\in\mathbb R$ by
\begin{equation}\label{08012018E6}
	H_t = e^{2itJA_t}e^{-2itJQ}.
\end{equation}
Notice that the analyticity of the function $t\in\mathbb R\mapsto H_t$ is induced by the ones of the functions $t\in\mathbb R\mapsto A_t$ and $t\in\mathbb R\rightarrow e^{-2itJM}$ for all $M\in M_{2n}(\mathbb C)$. We only need to check that each matrix $H_t$ is real and symplectic.

\begin{lem}\label{08012018L5} For all $t\in\mathbb R$, $H_t$ is a real symplectic matrix.
\end{lem}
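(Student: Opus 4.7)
The plan is to verify two separate properties whose conjunction yields the conclusion: (i) $H_t$ lies in the complex symplectic group $\Symp_{2n}(\mathbb{C})$, and (ii) $H_t$ is a real matrix. Combined, these imply $H_t \in \Symp_{2n}(\mathbb{R})$.

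For (i), I would appeal directly to Lemma \ref{09112018L1}: since $Q$ is symmetric by construction and $A_t$ is real symmetric by the just-established Lemma \ref{08012019L2}, both $e^{-2itJQ}$ and $e^{2itJA_t}$ belong to $\Symp_{2n}(\mathbb{C})$. Being a group, it contains their product $H_t$.

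For (ii), the key ingredient is the defining algebraic identity provided by Lemma \ref{08012019L6}, namely $e^{-4itJA_t} = e^{-2itJQ}\,e^{-2itJ\overline Q}$. Using that $A_t$ (and $J$) are real, complex conjugation gives $\overline{H_t} = e^{-2itJA_t} e^{2itJ\overline Q}$, whence
\[
H_t\,\overline{H_t}^{\,-1} \;=\; e^{2itJA_t}\,e^{-2itJQ}\,e^{-2itJ\overline Q}\,e^{2itJA_t} \;=\; e^{2itJA_t}\,e^{-4itJA_t}\,e^{2itJA_t} \;=\; I_{2n},
\]
where the second equality invokes Lemma \ref{08012019L6} and the third uses that the three surviving exponentials commute (they are all polynomials, in the limit sense, of the single matrix $JA_t$). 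Hence $H_t = \overline{H_t}$, which is exactly the reality claim.

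There is essentially no obstacle here: the substantive analytic work lies entirely in the preceding Lemmas \ref{08012019L2} (realness and symmetry of $A_t$) and \ref{08012019L6} (the factorisation $e^{-4itJA_t} = e^{-2itJQ}e^{-2itJ\overline Q}$). Once those are available, the present lemma is a short algebraic verification, and the only subtlety to state carefully is the commutation step in the reality argument, which is justified by the fact that $e^{2itJA_t}$ and $e^{-4itJA_t}$ are functions of the same matrix.
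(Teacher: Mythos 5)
Your proposal is correct and follows essentially the same route as the paper: the symplectic property comes from Lemma \ref{09112018L1} applied to the symmetric matrices $2itA_t$ and $-2itQ$, and the reality comes from conjugating $H_t$ and invoking the identity $e^{-4itJA_t}=e^{-2itJQ}e^{-2itJ\overline Q}$ of Lemma \ref{08012019L6}. The only cosmetic difference is that you verify $H_t\overline{H_t}^{\,-1}=I_{2n}$ (using commutation of exponentials of the single matrix $JA_t$) whereas the paper rewrites $\overline{H_t}$ directly as $e^{2itJA_t}e^{-2itJQ}e^{-2itJ\overline Q}e^{2itJ\overline Q}=H_t$; both rely on the same one-parameter-group fact.
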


\begin{proof} Let $t\in\mathbb R$. Since both matrices $A_t$ and $Q$ are symmetric (from Lemma \ref{08012019L2} concerning $A_t$), Lemma \ref{09112018L1} shows that the matrices $e^{2itJA_t}$ and $e^{-2itJQ}$ are symplectic. As a consequence, the matrix $H_t$ is also symplectic. Moreover, it follows from Lemma \ref{08012019L6} that
\begin{multline}\label{10012019E1}
	\overline{H_t} = e^{-2itJA_t}e^{2itJ\overline Q} = e^{2itJA_t}e^{-4itJA_t}e^{2itJ\overline Q} \\
	= e^{2itJA_t}e^{-2itJQ}e^{-2itJ\overline Q}e^{2itJ\overline Q} = e^{2itJA_t}e^{-2itJQ} = H_t,
\end{multline}
which proves that $H_t$ is a real matrix. This ends the proof of Lemma \ref{08012018L5}.
\end{proof}

This ends the proof of \eqref{13112018E21} and the splitting of the symplectic matrices $e^{-2itJQ}$ in any time $t\in\mathbb R$.

The rest of this section is then devoted to prove \eqref{08012019E10} which sharpens the decomposition \eqref{13112018E21} for small times $\vert t\vert\ll1$ . The strategy will be different than the one used until now, since the holomorphic functional calculus will not be used anymore to define the different matrices at play. The identity \eqref{08012019E10} is proved in the following lemma:

\begin{lem}\label{09012019L2} There exist a positive constant $T>0$ and a family $(B_t)_{-T< t< T}$ of real symmetric matrices $B_t\in\Symm_{2n}(\mathbb R)$ whose coefficients  depend analytically on the time-variable $-T< t< T$ such that for all $-T< t< T$, the symplectic matrix $H_t$ is written as $H_t = e^{2tJB_t}$.
\end{lem}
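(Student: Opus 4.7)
The strategy is to construct $B_t$ by taking a well-chosen matrix logarithm of $H_t$. Since the analytic function $t\in\mathbb R\mapsto H_t$ satisfies $H_0 = e^{0}e^{0} = I_{2n}$, the spectrum of $H_t$ stays in any prescribed neighborhood of $1$ for $t$ in a sufficiently small interval $(-T,T)$, in particular it avoids $\mathbb R_-$. The principal logarithm $\Log$ is holomorphic on $\mathbb C\setminus\mathbb R_-$, so the holomorphic functional calculus recalled in the proof of Lemma \ref{09012019L1} allows one to define
$$L_t = \Log H_t\in M_{2n}(\mathbb C),\qquad -T<t<T,$$
and the map $t\in(-T,T)\mapsto L_t$ is analytic with $L_0 = 0$. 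I would then set, using the convention \eqref{15052019E1},
$$B_t = -\frac{1}{2t}\,J L_t,\qquad -T<t<T,$$
which is analytic on $(-T,T)$ and satisfies $\exp(2tJB_t) = \exp(-J^2 L_t) = \exp(L_t) = H_t$, since $J^2 = -I_{2n}$.

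There remains to verify that each $B_t$ is real and symmetric. For reality, I would use Lemma \ref{08012018L5}: since $H_t$ is real and close to $I_{2n}$, the integral representation of the principal logarithm shows that $L_t = \Log H_t$ is real, hence $B_t = -(2t)^{-1}JL_t$ is real too. For symmetry, the key point is that $H_t\in\Symp_{2n}(\mathbb R)$ implies $L_t$ belongs to the Lie algebra $\mathfrak{sp}_{2n}(\mathbb R)$, i.e.\ $L_t^T J + J L_t = 0$. This can be obtained for instance by differentiating the relation $e^{sL_t/k}\cdots e^{sL_t/k}=H_t^{s}$ in $s$ at $s=0$, or more directly by noting that the holomorphic functional calculus applied on both sides of $H_t^T = JH_t^{-1}J^{-1}$ gives $L_t^T = -JL_tJ^{-1}$, which is equivalent to $L_t^T J = -JL_t$. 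Therefore
$$(JL_t)^T = L_t^T J^T = -L_t^T J = JL_t,$$
so $JL_t$ is symmetric, and $B_t = -(2t)^{-1}JL_t\in\Symm_{2n}(\mathbb R)$ as required.

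I expect the only delicate point to be the justification that $L_t$ lies in the symplectic Lie algebra: one has to be careful that $H_t$ is close enough to $I_{2n}$ so that the matrix logarithm used here agrees with the standard exponential-logarithm correspondence between a neighborhood of the identity in $\Symp_{2n}(\mathbb R)$ and a neighborhood of $0$ in $\mathfrak{sp}_{2n}(\mathbb R)$. This is a standard fact about Lie groups (one direct proof uses the Dunford integral representation of $L_t$ together with the identity $JH_t = (H_t^T)^{-1}J$, exactly as in the computation \eqref{14012019E3}). Once this is established, shrinking $T$ if necessary so that $H_t$ stays in the domain where the logarithm is the Lie-theoretic inverse of the exponential, the identity $H_t = e^{2tJB_t}$ with $B_t\in\Symm_{2n}(\mathbb R)$ and $t\mapsto B_t$ analytic on $(-T,T)$ follows. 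This completes the proof of \eqref{08012019E10} and hence of Theorem \ref{12102018T1}.
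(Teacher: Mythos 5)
Your proof is correct and follows essentially the same route as the paper: define $B_t=(2tJ)^{-1}\Log H_t$ (your $-\tfrac1{2t}JL_t$ is the same matrix since $J^{-1}=-J$), obtain analyticity from $H_0=I_{2n}$ and the convention \eqref{15052019E1}, reality from $\overline{H_t}=H_t$, and symmetry from the symplecticity of $H_t$. The only cosmetic difference is that the paper verifies $B_t^T=B_t$ by a term-by-term binomial manipulation of the logarithm series, whereas you pass through the (equivalent, and standard) functional-calculus identity $L_t^T=-JL_tJ^{-1}$.
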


\begin{proof} First, we recall that for all matrix $M\in M_{2n}(\mathbb C)$ satisfying $\Vert M-I_{2n}\Vert<1$, the matrix $\Log M$ is given by the following sum
\begin{equation}\label{10012019E3}
	\Log M = \sum_{k=1}^{+\infty}\frac{(-1)^{k-1}}k(M-I_{2n})^k.
\end{equation}
Since $H_t$ goes to $I_{2n}$ as $t$ goes to $0$, there exists a positive constant $T>0$ such that,
\begin{equation}\label{09012019E17}
	\forall t\in (-T,T),\quad \big\Vert H_t - I_{2n}\big\Vert<1 \quad\mathrm{and}\quad \big\Vert H^{-1}_t - I_{2n}\big\Vert<1.
\end{equation}
The estimate \eqref{09012019E17} allows to consider the matrix $B_t$ defined for all $-T< t< T$ by
\begin{equation}\label{09012019E18}
	B_t = (2tJ)^{-1}\Log H_t.
\end{equation}
 Notice that the function $t\in(-T,T)\mapsto\Log H_t$ is analytic by construction and vanishes in $t=0$ since $H_0 = I_{2n}$. The matrix $B_t$ is therefore well-defined for all $-T<t<T$ according to \eqref{15052019E1}. We deduce from \eqref{08012018E6}, \eqref{09012019E17} and \eqref{09012019E18}, that for all $-T< t< T$,
$e^{2tJB_t} = \exp\big(\Log H_t\big) = H_t.$
It remains to check that the matrices $B_t$ are real and symmetric. First we observe from \eqref{10012019E1} and \eqref{09012019E18} that for all $-T< t< T$,
$$\overline B_t = (2tJ)^{-1}\Log\overline H_t = (2tJ)^{-1}\Log H_t = B_t.$$
This proves that the matrices $B_t$ are real. Moreover, we deduce from \eqref{09012019E17}, \eqref{09012019E18}, Lemma \ref{08012018L5} and the binomial formula that for all $-T<t< T$,
\begin{align*}
	B^T_t & = (2t)^{-1}(\Log H_t)^TJ = (2t)^{-1}\sum_{k=1}^{+\infty}\frac{(-1)^{k-1}}k\big((H_t-I_{2n})^k\big)^TJ \\
	& = (2t)^{-1}\sum_{k=1}^{+\infty}\frac{(-1)^{k-1}}k\sum_{\ell=0}^k\binom kl(-1)^{k-l}(H^{\ell}_t)^TJ  = (2t)^{-1}\sum_{k=1}^{+\infty}\frac{(-1)^{k-1}}k\sum_{\ell=0}^k\binom kl(-1)^{k-l}J(H^{-1}_t)^{\ell} \\
	& = (2t)^{-1}\sum_{k=1}^{+\infty}\frac{(-1)^{k-1}}kJ(H^{-1}_t-I_{2n})^k  = -(2tJ)^{-1}\Log(H^{-1}_t) = (2tJ)^{-1}\Log H_t = B_t.
\end{align*}
The matrices $B_t$ are therefore symmetric. Moreover, the function $t\in(-T,T)\mapsto B_t$ is analytic by contruction. This ends the proof of Lemma \ref{09012019L2}.
\end{proof}

\section{Study of the real part for short times}\label{short}

In this section, we prove Theorem \ref{14102018T2}. Let $q:\mathbb R^{2n}\rightarrow\mathbb C$ be a complex-valued quadratic form with a nonnegative real part $\Reelle q\geq0$. We consider $F$ the Hamilton map associated with $q$, $S$ its singular space and $0\le k_0\le 2n-1$ the smallest integer such that \eqref{12102018E7} holds. Let $(a_t)_{t\in\mathbb R}$ be the family of nonnegative quadratic forms $a_t:\mathbb R^{2n}\rightarrow\mathbb R_+$ given by Theorem \ref{12102018T1} and $(m_t)_{t\in\mathbb R}$ be the family of nonnegative quadratic forms $m_t:\mathbb R^{2n}\rightarrow\mathbb R_+$ whose matrices in the canonical basis of $\mathbb R^{2n}$ are the matrices $M_t$ defined in \eqref{08012019E1}. We shall prove that the quadratic forms $m_t$ (and therefore the quadratic forms $a_t$) satisfy a sharp lower bound implying some degenerate anisotropic coercivity properties on the phase space. More precisely, we shall prove that there exist some positive constants $c>0$ and $T>0$ such that for all $0\le t\le T$ and $X\in\mathbb R^{2n}$,
\begin{equation}\label{12112018E11}
	a_t(X)\geq m_t(X)\geq c\sum_{k=0}^{k_0} t^{2k} \Reelle q\big((\Imag F)^k X\big).
\end{equation}
Notice that the left inequality in \eqref{12112018E11} is a consequence of Lemma \ref{08012019L2}. We are therefore interested in proving the right one. To that end, we consider the time-dependent quadratic form $\kappa_t:\mathbb C^{2n}\rightarrow\mathbb R$ defined in accordance with the convention \eqref{07052019E1} for all $t\geq0$ and $X\in\mathbb C^{2n}$ by
\begin{equation}\label{18102018E4}
	\kappa_t(X) = \sum_{k=0}^{k_0} t^{2k} \Reelle q\big((\Imag F)^k X\big) = \sum_{k=0}^{k_0}t^{2k}\big\vert\sqrt{\Reelle Q}(\Imag F)^kX\big\vert^2.
\end{equation}
We recall from Lemma \ref{08012019L4} that for all $t\geq0$, the matrix $M_t$ admits the following integral representation
\begin{equation}\label{14022019E8}
	M_t = \int_0^1(e^{-2i\alpha t\overline F}\Phi_t)^*(\Reelle Q)(e^{-2i\alpha t\overline F}\Phi_t)\ \mathrm d\alpha, \quad\mathrm{with}\quad \Phi_t = \bigg(\frac{\sqrt{e^{-2itF}e^{-2it\overline F}}+I_{2n}}2\bigg)^{-1}.
\end{equation}
We therefore deduce that for all $t\geq0$ and $X\in\mathbb R^{2n}$,
\begin{equation}\label{15022019E1}
	m_t(X) = X^TM_tX = \int_0^1(e^{-2i\alpha t\overline F}\Phi_tX)^*(\Reelle Q)(e^{-2i\alpha t\overline F}\Phi_tX)\ \mathrm d\alpha,
\end{equation}
and this equality can be written as
$$m_t(X) = \int_0^1\big\vert\sqrt{\Reelle Q}e^{-2i\alpha t\overline F}\Phi_tX\big\vert^2\ \mathrm d\alpha = \big\Vert\sqrt{\Reelle Q}e^{-2i\alpha t\overline F}\Phi_tX\big\Vert_{L^2(0,1)}^2.$$
By applying the Minkowski inequality, we therefore obtain that for all $t\geq0$ and $X\in\mathbb R^{2n}$,
\begin{equation}\label{05112018E4}
	\sqrt{m_t(X)}
	\geq\bigg\Vert\sum_{k=0}^{k_0}\frac{(-2t\alpha)^k}{k!}\sqrt{\Reelle Q}(i\overline F)^k\Phi_tX\bigg\Vert_{L^2(0,1)} 
	\! \! \!- \bigg\Vert\sum_{k>k_0}\frac{(-2t\alpha)^k}{k!}\sqrt{\Reelle Q}(i\overline F)^k\Phi_tX\bigg\Vert_{L^2(0,1)}\! \! \! \! \! \! \!.
\end{equation}
We then study separately the two terms of the right-hand side of the above estimate. \\[5pt]
\textbf{1.} First, we focus on controlling the first term in the right-hand side of \eqref{05112018E4}. On the finite-dimensional vector space $(\mathbb C_{k_0}[X])^{2n}$, the Hardy's norm $\Vert\cdot\Vert_{\mathcal H^1}$ defined by
$$\bigg\Vert\sum_{k=0}^{k_0}y_kX^k\bigg\Vert_{\mathcal H^1} = \sum_{k=0}^{k_0}k!2^{-k}\vert y_k\vert,\quad y_0,\ldots,y_{k_0}\in\mathbb C^{2n},$$
is equivalent to the standard Lebesgue's norm $\Vert\cdot\Vert_{L^2(0,1)}$ given by 
$$\bigg\Vert\sum_{k=0}^{k_0}y_kX^k\bigg\Vert^2_{L^2(0,1)} = \int_0^1\bigg\vert\sum_{k=0}^{k_0}y_k\alpha^k\bigg\vert^2\mathrm d\alpha,\quad y_0,\ldots,y_{k_0}\in\mathbb C^{2n}.$$
Thus, there exists a positive constant $c_1>0$ such that for all $t\geq0$ and $X\in\mathbb R^{2n}$,
\begin{equation}\label{05112018E5}
	\bigg\Vert\sum_{k=0}^{k_0}\frac{(-2t\alpha)^k}{k!}\sqrt{\Reelle Q}(i\overline F)^k\Phi_tX\bigg\Vert_{L^2(0,1)}
	\geq c_1\sum_{k=0}^{k_0} t^k \big\vert\sqrt{\Reelle Q}(i\overline F)^k\Phi_tX\big\vert.
\end{equation}
We develop the matrices $(i\overline F)^k$ in the following way:
\begin{equation}\label{12112018E13}
	(i\overline F)^k = (\Imag F)^k + B_k, \quad\mathrm{where}\quad B_k = \sum_{j=1}^{2^k-1}\varepsilon_{j,k}M_{j,k}(\Reelle F)(\Imag F)^{m_{j,k}},
\end{equation}
with $0\le m_{j,k}\le k-1$, $\varepsilon_{j,k}\in\{-1,1,-i,i\}$ and the matrices $M_{j,k}$ are finite products of $\Reelle F$ and $\Imag F$. Then, by putting \eqref{12112018E13} in \eqref{05112018E5} and using the triangle inequality, we obtain the following estimate for all $t\geq0$ and $X\in\mathbb R^{2n}$,
\begin{equation}\label{05112018E8}
	\sum_{k=0}^{k_0}t^k\big\vert\sqrt{\Reelle Q}(i\overline F)^k\Phi_tX\big\vert
	\geq\sum_{k=0}^{k_0}t^k \big\vert\sqrt{\Reelle Q}(\Imag F)^k\Phi_tX\big\vert
	- \sum_{k=0}^{k_0}t^k \big\vert\sqrt{\Reelle Q}B_k\Phi_tX\big\vert.
\end{equation}
Denoting by $\#$ the cardinality, we consider the two positive quantities
$$c_2 = \max_{0\le k\le k_0}\max_{1\le j\le 2^k-1}\big\Vert\sqrt{\Reelle Q}M_{j,k}J\sqrt{\Reelle Q}\big\Vert>0,$$
$$c'_2 = \max_{0\le k\le k_0}\max_{0\le m\le k-1}\#\big\{1\le j\le 2^k-1 : m_{j,k} = m\big\}>0,$$
Since $F = JQ$, it follows from the definition of $B_k$ that for all $t\geq0$ and $X\in\mathbb R^{2n}$,
\begin{align*}
	\sum_{k=0}^{k_0}t^k\big\vert\sqrt{\Reelle Q}B_k\Phi_tX\big\vert
	& \le \sum_{k=0}^{k_0}t^k\sum_{j=1}^{2^k-1}\big\vert\sqrt{\Reelle Q}M_{j,k}(\Reelle F)(\Imag F)^{m_{j,k}}\Phi_tX\big\vert \\
	& = \sum_{k=0}^{k_0}t^k\sum_{j=1}^{2^k-1}\big\vert\sqrt{\Reelle Q}M_{j,k}J\sqrt{\Reelle Q}\sqrt{\Reelle Q}(\Imag F)^{m_{j,k}}\Phi_tX\big\vert \\
	& \le c_2\sum_{k=0}^{k_0}t^k\sum_{j=1}^{2^k-1}\big\vert\sqrt{\Reelle Q}(\Imag F)^{m_{j,k}}\Phi_tX\big\vert.
\end{align*}
Then, we gather the integers $0\le m_{j,k}\le k-1$ taking the same value, which shows that for all $t\geq0$ and $X\in\mathbb R^{2n}$,
\begin{align*}
	\sum_{k=0}^{k_0}t^k\big\vert\sqrt{\Reelle Q}B_k\Phi_tX\big\vert
	& \le c_2\sum_{k=0}^{k_0}t^k\sum_{m=0}^{k-1}\sum_{\substack{1\le j\le 2^k-1 \\[1pt] m_{j,k} = m}}\big\vert\sqrt{\Reelle Q}(\Imag F)^m\Phi_tX\big\vert \\
	& \le c_2c'_2\sum_{k=0}^{k_0}\sum_{m=0}^{k-1}t^k\big\vert\sqrt{\Reelle Q}(\Imag F)^m\Phi_tX\big\vert. \nonumber
\end{align*}
Since $k-m\geq1$, we have that for all $0\le t\le 1$,
$t^k = t^{k-m}t^m\le t^{1+m}.$
The following inequality therefore holds for all $0\le t\le 1$ and $X\in\mathbb R^{2n}$,
$$\sum_{k=0}^{k_0}t^k\big\vert\sqrt{\Reelle Q}B_k\Phi_tX\big\vert\le c_2c'_2t\sum_{k=0}^{k_0}\sum_{m=0}^{k-1}t^m\big\vert\sqrt{\Reelle Q}(\Imag F)^m\Phi_tX\big\vert.$$
As a consequence, there exists a positive constant $c_3>0$ such that for all $0\le t\le 1$ and $X\in\mathbb R^{2n}$,
\begin{equation}\label{05112018E10}
	\sum_{k=0}^{k_0}t^k\big\vert\sqrt{\Reelle Q}B_k\Phi_tX\big\vert
	\le c_3t\sum_{k=0}^{k_0}t^k\big\vert\sqrt{\Reelle Q}(\Imag F)^k\Phi_tX\big\vert.
\end{equation}
It follows from \eqref{05112018E5}, \eqref{05112018E8} and \eqref{05112018E10} that for all $0\le t\le 1$ and $X\in\mathbb R^{2n}$,
\begin{equation}\label{13112018E2}
	\bigg\Vert\sum_{k=0}^{k_0}\frac{(-2t\alpha)^k}{k!}\sqrt{\Reelle Q}(i\overline F)^k\Phi_tX\bigg\Vert_{L^2(0,1)}
	\geq c_1(1-c_3t)\sum_{k=0}^{k_0}t^k\big\vert\sqrt{\Reelle Q}(\Imag F)^k\Phi_tX\big\vert.
\end{equation}
We recall from the third inequality of \eqref{13052019E20} (no assumption of smallness is required for $t\geq0$ to apply this estimate) that for all $0\le t\le 1$ and $X\in\mathbb R^{2n}$,
\begin{equation}\label{13112018E3}
	\sqrt{\kappa_t}(\Phi_tX)\le \sum_{k=0}^{k_0}t^k\big\vert\sqrt{\Reelle Q}(\Imag F)^k\Phi_tX\big\vert.
\end{equation}
As a consequence of \eqref{13112018E2} and \eqref{13112018E3}, there exist some positive constants $0<t_1<1$ and $c_4>0$ such that for all $0\le t\le t_1$ and $X\in\mathbb R^{2n}$,
\begin{equation}\label{19112018E12}
	\bigg\Vert\sum_{k=0}^{k_0}\frac{(-2t\alpha)^k}{k!}\sqrt{\Reelle Q}(i\overline F)^k\Phi_tX\bigg\Vert_{L^2(0,1)}\geq c_4\sqrt{\kappa_t}(\Phi_tX).
\end{equation}
In order to estimate from below the term $\sqrt{\kappa_t}(\Phi_tX)$, we would like to apply Lemma \ref{05112018L2} to the function 
\begin{equation}\label{04092019E4}
	G(M,N) = 2 ( \sqrt{e^{-2i(M+iN)}e^{-2i(M-iN)}} + I_{2n} )^{-1},
\end{equation}
in view of the definition of the matrices $\Phi_t$ in \eqref{14022019E8} . We prove in Lemma \ref{25122025L7} in the Appendix that the function $G$ actually satisfies the assumptions of Lemma \ref{05112018L2} and as a consequence, there exist some positive constants $c_5>0$ and $0<t_2<t_1$ such that for all $0\le t\le t_2$ and $X\in\mathbb R^{2n}$,
$$\bigg\Vert\sum_{k=0}^{k_0}\frac{(-2t\alpha)^k}{k!}\sqrt{\Reelle Q}(i\overline F)^k\Phi_tX\bigg\Vert_{L^2(0,1)}\geq c_5\sqrt{\kappa_t}(X).$$
This inequality, combined with \eqref{05112018E4}, leads to the following estimate for all $0\le t\le t_2$ and $X\in\mathbb R^{2n}$,
\begin{equation}\label{05112018E16}
	\sqrt{m_t}(X)\geq c_5\sqrt{\kappa_t}(X)
	- \bigg\Vert\sum_{k>k_0}\frac{(-2t\alpha)^k}{k!}\sqrt{\Reelle Q}(i\overline F)^k\Phi_tX\bigg\Vert_{L^2(0,1)}.
\end{equation}
\textbf{2.} The end of the proof consists in controlling the second term in the right hand side of \eqref{05112018E4}. The technics employed will be similar to the ones used in the end of the proof of Lemma \ref{05112018L2}. We begin by observing that for all $0\le t\le t_2$ and $X\in\mathbb R^{2n}$,
$$\bigg\Vert\sum_{k>k_0}\frac{(-2t\alpha)^k}{k!}\sqrt{\Reelle Q}(i\overline F)^k\Phi_tX\bigg\Vert^2_{L^2(0,1)}
= t^{2k_0+2}\bigg\Vert\sum_{k>k_0}t^{k-k_0-1}\frac{(-2\alpha)^k}{k!}\sqrt{\Reelle Q}(i\overline F)^k\Phi_tX\bigg\Vert^2_{L^2(0,1)}.$$
Note that the coefficients of the quadratic form above are continuous with respect to $t\in[0,t_2]$. As a consequence, there exists a positive constant $c_6>0$ such that for all $0\le t\le t_2$ and $X\in\mathbb R^{2n}$,
\begin{equation}\label{13112018E4}
	\bigg\Vert\sum_{k>k_0}\frac{(-2t\alpha)^k}{k!}\sqrt{\Reelle Q}(i\overline F)^k\Phi_tX\bigg\Vert^2_{L^2(0,1)}\le c_6 t^{2k_0+2}\vert X\vert^2.
\end{equation}
On the other hand, it follows from Lemma \ref{13112018L1} that there exists a positive constant $c_7>0$ such that for all $0\le t\le1$ and $X\in S^{\perp}$, 
\begin{equation}\label{13112018E5}
	\kappa_t(X)\geq c_7t^{2k_0}\vert X\vert ^2.
\end{equation}
As a consequence of \eqref{13112018E4} and \eqref{13112018E5}, we have that for all $0\le t\le t_2$ and $X\in S^{\perp}$,
\begin{equation}\label{13112018E8}
	\bigg\Vert\sum_{k>k_0}\frac{(-2t\alpha)^k}{k!}\sqrt{\Reelle Q}(i\overline F)^k\Phi_tX\bigg\Vert^2_{L^2(0,1)}
	\le \frac{c_6}{c_7}t^2\kappa_t(X).
\end{equation}
We deduce from \eqref{05112018E16} and \eqref{13112018E8} that there exist some positive constants $c_8>0$ and $0<t_3<t_2$ such that for all $0\le t\le t_3$ and $X\in S^{\perp}$,
\begin{equation}\label{13112018E9}
	m_t(X)\geq\bigg(c_4-\sqrt{c_6c_7^{-1}}t\bigg)^2\kappa_t(X)\geq c_8\kappa_t(X).
\end{equation}
It remains to check that the estimate \eqref{13112018E9} holds for all $X\in\mathbb R^{2n}$. To that end, we will use the following lemma of linear algebra:

\begin{lem}\label{28082019L1} Let $E$ be a real finite-dimensional vector space and $q_1,q_2$ be two nonnegative quadratic forms on $E$. If $E=F\oplus G$ is a direct sum of two vector subspaces such that $q_1\le q_2$ on $F$ and $q_1,q_2$ both vanish on $G$, then $q_1\le q_2$ on $E$.
\end{lem}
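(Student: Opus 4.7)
The plan is to reduce the inequality on $E$ to the assumed inequality on $F$ by showing that both $q_1$ and $q_2$ are, in effect, blind to the component in $G$. Concretely, let $b_1$ and $b_2$ denote the polar (symmetric bilinear) forms of $q_1$ and $q_2$. The crucial preliminary observation is that since $q_i$ is non-negative, the Cauchy--Schwarz inequality for the semi-definite bilinear form $b_i$ yields
\[
|b_i(x,y)|^2 \le q_i(x)\, q_i(y), \qquad x,y\in E.
\]
Applied with $y\in G$, where $q_i(y)=0$ by hypothesis, this forces $b_i(x,y)=0$ for every $x\in E$ and every $y\in G$. In other words, $G$ lies in the radical of each of the two quadratic forms.

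Once this is in hand, I would decompose an arbitrary vector $x\in E$ using the direct sum $E=F\oplus G$ as $x=f+g$ with $f\in F$ and $g\in G$, and expand
\[
q_i(x) = q_i(f) + 2 b_i(f,g) + q_i(g) = q_i(f),
\]
for $i=1,2$, using the vanishing of both $b_i(f,g)$ (by the preceding step) and $q_i(g)$ (by hypothesis). The assumption $q_1(f)\le q_2(f)$ for $f\in F$ then gives $q_1(x)\le q_2(x)$ directly, which concludes the proof.

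There is essentially no obstacle here: the only non-trivial input is the Cauchy--Schwarz inequality for a non-negative symmetric bilinear form, which is standard. It is worth noting that the argument uses neither the finite dimensionality of $E$ nor any compatibility between $F$ and $G$ beyond their being a direct sum, so the statement could be formulated in greater generality, but the present form suffices for its application in establishing the inequality \eqref{13112018E9} on all of $\mathbb R^{2n}$ from its validity on $S^\perp$ together with the vanishing of $m_t$ and $\kappa_t$ on $S$.
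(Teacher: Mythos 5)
Your proof is correct and is precisely the straightforward argument the paper leaves to the reader: the Cauchy--Schwarz inequality for the non-negative polar forms shows $G$ lies in the radical of each $q_i$, so both forms only see the $F$-component, and the inequality on $F$ transfers to all of $E$. Nothing to add.
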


\begin{proof}We equip $E$ with a basis. Let $(\cdot,\cdot)$ (resp. $\Vert\cdot\Vert$) be the associated scalar product (resp. Euclidean norm). We denote by $Q_1$ (resp. $Q_2$) the matrix of $q_1$ (resp. $q_2$) in this basis. Since the quadratic forms $q_1$ and $q_2$ are nonnegative, as a consequence of the spectral theorem we have $G\subset\Ker Q_1\cap\Ker Q_2$. Indeed, for all $g\in G$ and $j\in \{1,2\}$, we have$\Vert\sqrt{Q_j} g\Vert^2 = q_j(g) = 0 $, and so $g\in\Ker\sqrt{Q_j} =  \Ker Q_j$. Therefore, for all $(f,g)\in F\times G$, we have
$$
q_2(f+g) = (f+g,Q_2 (f+g)) = (f,Q_2 f) = q_2(f) \leq q_1(f) = q_1(f+g).
$$
Since $E=F+G$, this estimate ends the proof of Lemma \ref{28082019L1}.
\end{proof}

\noindent Let $0\le t\le t_3$. Since $\mathbb R^{2n} = S\oplus S^{\perp}$ and that \eqref{13112018E9} is valid on $S^{\perp}$, it is sufficient to prove that both nonnegative quadratic forms $\kappa_t$ and $m_t$ vanish on the singular space $S$, according to Lemma \ref{28082019L1}. We first notice that by definition, $\kappa_t$ is zero on the singular space $S$. We now prove that this property holds true as well for the quadratic form $m_t$, that is
\begin{equation}\label{09052019E4}
	\forall X\in S,\quad m_t(X) = 0.
\end{equation}
To that end, we use anew the integral representation of $m_t$ given by \eqref{15022019E1},
\begin{equation}\label{13112018E10}
	\forall X\in\mathbb R^{2n},\quad m_t(X) = \int_0^1(e^{-2i\alpha t\overline F}\Phi_tX)^*(\Reelle Q)(e^{-2i\alpha t\overline F}\Phi_tX)\ \mathrm d\alpha.
\end{equation}
According to \eqref{13112018E10}, it is sufficient to prove that 
\begin{equation}\label{13112018E11}
	\forall\alpha\in[0,1],\quad (e^{-2i\alpha t\overline F}\Phi_t)S\subset S+iS,
\end{equation}
since $(\Reelle Q)S = J^{-1}(\Reelle F)S = \{0\}$ from \eqref{05112018E7} and \eqref{23012019E2}. As a consequence of \eqref{05092019E6}, the inclusion $\Phi_tS\subset S+iS$ holds, up to decreasing the positive constant $t_3>0$. Moreover, notice from \eqref{23012019E2} that the space $S+iS$ is stable by the matrix $\overline F$ (since $(\Reelle F)S=\{0\}$ and $(\Imag F)S\subset S$), and therefore by the matrices $e^{-2i\alpha t\overline F}$ for all $0\le\alpha\le1$. This proves that the inclusion \eqref{13112018E11} actually holds. The estimate \eqref{13112018E9} can therefore be extended to all $0\le t\le t_3$ (up to decreasing $t_3>0$) and $X\in\mathbb R^{2n}$. This ends the proof of the estimate \eqref{12112018E11}.

\section{Regularizing effects of semigroups generated by non-selfadjoint quadratic differential operators}\label{Reg}

The aim of this section is to prove Theorem \ref{22102018T1} and Theorem \ref{10122018T1} about the regularizing properties of the semigroups generated by non-selfadjoint quadratic differential operators. Let $q:\mathbb R^{2n}\rightarrow\mathbb C$ be a complex-valued quadratic form with a nonnegative real part $\Reelle q\geq0$. We consider $Q\in\Symm_{2n}(\mathbb C)$ the matrix of $q$ in the canonical basis of $\mathbb R^{2n}$, $F\in M_{2n}(\mathbb C)$ its Hamilton map and $S$ its singular space.

\subsection{Regularizing effects} We begin by proving Theorem \ref{22102018T1}. Let $T>0$ and $(a_t)_{t\in\mathbb R}$, $(b_t)_{-T<t< T}$ be the families of quadratic forms given by Theorem \ref{12102018T1}. We recall that the quadratic forms $a_t$ are nonnegative, the quadratic forms $b_t$ are real-valued and $a_t$, $b_t$ depend analytically on the time-variable $t\in\mathbb R$ and $-T<t<T$ respectively. Moreover, the evolution operators $e^{-tq^w}$ can be factorized as
\begin{equation}\label{19112018E1}
	\forall t\in[0,T),\quad e^{-tq^w} = e^{-ta^w_t}e^{-itb^w_t}.
\end{equation} 
We can assume that the positive constant $0<T<1$ is the one given by Theorem \ref{14102018T2}, which implies that there exists a positive constant $c>0$ such that for all $0\le t\le T$ and $X\in\mathbb R^{2n}$,
\begin{equation}\label{15012019E2}
	a_t(X)\geq c\sum_{j=0}^{k_0}t^{2j}\Reelle q\big((\Imag F)^jX\big),
\end{equation}
where $0\le k_0\le 2n-1$ is the smallest integer such that \eqref{12102018E7} holds. As in Section \ref{Polar}, we denote by $A_t$ and $B_t$ the respective matrices of $a_t$ and $b_t$ in the canonical basis of $\mathbb R^{2n}$. Moreover, we consider anew the time-dependent quadratic form $\kappa_t$ defined in accordance with the convention \eqref{07052019E1} for all $t\geq0$ and $X\in\mathbb C^{2n}$ by
\begin{equation}\label{15012019E1}
	\kappa_t(X) = \sum_{j=0}^{k_0}t^{2j}\Reelle q\big((\Imag F)^jX\big) = \sum_{j=0}^{k_0}t^{2j}\big\vert\sqrt{\Reelle Q}(\Imag F)^jX\big\vert^2.
\end{equation}
The estimate \eqref{15012019E2} reads as: for all $0\le t\le T$ and $X\in\mathbb C^{2n}$, $a_t(X)\geq c\kappa_t(X)$. The aim of this section is to understand the smoothing properties of the evolution operators $e^{-tq^w}$. Since the operators $e^{-itb^w_t}$ are unitary on $L^2(\mathbb R^n)$, we first notice from \eqref{19112018E1} that it is sufficient to study the regularizing properties of the operators $e^{-ta^w_t}$ to derive the ones of the operators $e^{-tq^w}$. Therefore, for some $m\geq1$ and $X_1,\ldots,X_m\in S^{\perp}$, we are interested in the following linear operators
$$\langle X_1,X\rangle^w\ldots\langle X_m,X\rangle^we^{-ta^w_t},$$
where the operators $\langle X_j,X\rangle^w$ are defined in \eqref{23012019E5}. To deal with them, we will use the Fourier integral operator representation of the operators $e^{-ta^w_t}$ and the Egorov formula \eqref{17052019E6}. More precisely, it follows from \eqref{14052019E1} and Proposition \ref{13112018P2} that the operator $e^{-ta^w_t}$ is a Fourier integral operator associated with the nonnegative complex symplectic transformation $e^{-2itJA_t}$, and the Egorov formula \eqref{17052019E6} implies that for all $0\le t\le T$ and $X_0\in\mathbb R^n$,
\begin{equation}\label{09052019E10}
	\langle X_0,X\rangle^we^{-ta^w_t}=e^{-ta^w_t}\langle J^{-1}e^{2itJA_t}JX_0,X\rangle^w = e^{-ta^w_t}\langle e^{2itA_tJ}X_0,X\rangle^w.
\end{equation}
By using \eqref{09052019E10} and the semigroup property of the family of linear operators $(e^{-sa^w_t})_{s\geq0}$, we obtain the following factorization
\begin{align}\label{09052019E11}
	\langle X_1,X\rangle^w\ldots\langle X_m,X\rangle^we^{-ta^w_t} 
	& = \langle X_1,X\rangle^w\ldots\langle X_m,X\rangle^w\underbrace{e^{-\frac tma^w_t}\ldots e^{-\frac tma^w_t}}_{\text{$m$ factors}} \\
	& = \langle Y_{1,t},X\rangle^we^{-\frac tma^w_t}\ldots\langle Y_{m,t},X\rangle^we^{-\frac tma^w_t}, \nonumber
\end{align}
where, for $1\le j\le m$, $Y_{j,t} = e^{\frac{2i(j-1)t}mA_tJ}X_j$. The initial problem is therefore reduced to the analysis of the operators $\langle Y_{j,t},X\rangle^we^{-\frac tma^w_t}.$
The main instrumental result of this section is Lemma \ref{07112018L1} which requires some technical results to be proven. The first of them investigates the anisotropic coercivity properties of the time-dependent quadratic form $\kappa_t$ on $S^{\perp}$ the canonical Euclidean orthogonal complement of the singular space $S$. This is a refinement  of Lemma \ref{13112018L1}.

\begin{lem}\label{23102018L1} There exists a positive constant $c>0$ such that for all $0\le t\le 1$, $X_0\in S^{\perp}\setminus\{0\}$ and $X\in\mathbb C^{2n}$,
$$\kappa_t(X)\geq \frac c{\vert X_0\vert^2}\ t^{2k_{X_0}}\ \big\vert\langle X_0,X\rangle\big\vert^2,$$
where $0\le k_{X_0}\le k_0$ denotes the index of the point $X_0\in S^{\perp}$ with respect to the singular space defined in \eqref{23012019E4}.
\end{lem}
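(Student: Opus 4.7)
The plan is to exploit the sum-of-squares structure of $\kappa_t$ by writing the linear form $X\mapsto\langle X_0,X\rangle$ as a finite combination of the elementary forms $X\mapsto\langle W_j,\sqrt{\Reelle Q}(\Imag F)^j X\rangle$ for $0\le j\le k_{X_0}$, and then concluding via a weighted Cauchy--Schwarz inequality with weights given by suitable powers of $t$.

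The first step is to reformulate the spaces $V_k$ with the help of $\sqrt{\Reelle Q}$. From $\Reelle F=J\Reelle Q$, the invertibility of $J$ and the identity $\Ker(\Reelle Q)=\Ker(\sqrt{\Reelle Q})$, one gets
$$V_k = \bigcap_{j=0}^k \Ker\bigl(\sqrt{\Reelle Q}(\Imag F)^j\bigr)\cap\mathbb R^{2n},$$
so that, taking orthogonal complements in $\mathbb R^{2n}$,
$$V_k^\perp = \sum_{j=0}^k (\Imag F^T)^j\sqrt{\Reelle Q}\,\mathbb R^{2n}.$$
Consequently, for every $0\le k\le k_0$, the linear map $\Phi_k:(\mathbb R^{2n})^{k+1}\to V_k^\perp$ defined by $\Phi_k(W_0,\ldots,W_k)=\sum_{j=0}^k(\Imag F^T)^j\sqrt{\Reelle Q}\,W_j$ is onto, and a standard pseudo-inverse argument provides a constant $C_k>0$ such that any $X_0\in V_k^\perp$ admits a decomposition $X_0=\Phi_k(W_0,\ldots,W_k)$ with $\sum_{j=0}^k|W_j|^2\le C_k|X_0|^2$.

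Given $X_0\in S^\perp\setminus\{0\}$ of index $k=k_{X_0}$, apply this decomposition at level $k$. For $X\in\mathbb R^{2n}$, the symmetry of $\sqrt{\Reelle Q}$ and the bilinearity of $\langle\cdot,\cdot\rangle$ yield
$$\langle X_0,X\rangle = \sum_{j=0}^k\bigl\langle W_j,\sqrt{\Reelle Q}(\Imag F)^j X\bigr\rangle,$$
and two applications of the Cauchy--Schwarz inequality (one on each term, and one weighted by $t^{\pm j}$ on the resulting finite sum) give, for $0<t\le 1$,
$$|\langle X_0,X\rangle|^2 \le \Bigl(\sum_{j=0}^k t^{-2j}|W_j|^2\Bigr) \Bigl(\sum_{j=0}^k t^{2j}\bigl|\sqrt{\Reelle Q}(\Imag F)^j X\bigr|^2\Bigr) \le C_k t^{-2k}|X_0|^2\,\kappa_t(X),$$
where the last inequality uses $t^{-2j}\le t^{-2k}$ for $0\le j\le k$. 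Rearranging and setting $c=\min_{0\le k\le k_0}C_k^{-1}>0$ yields the desired inequality on $\mathbb R^{2n}$. Extension to $X\in\mathbb C^{2n}$ is immediate from the convention \eqref{07052019E1}, which splits both $\kappa_t(X)$ and $|\langle X_0,X\rangle|^2$ into the sum of the contributions coming from $\Reelle X$ and $\Imag X$; the boundary case $t=0$ is trivial when $k_{X_0}\ge 1$, and reduces to a direct Cauchy--Schwarz on $V_0^\perp=\Ran(\sqrt{\Reelle Q})$ when $k_{X_0}=0$.

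The main step is thus the purely linear-algebraic identity $V_k^\perp=\sum_{j=0}^k(\Imag F^T)^j\sqrt{\Reelle Q}\,\mathbb R^{2n}$ together with the uniformity of the preimage bound provided by $\Phi_k$; once these are in hand, the two Cauchy--Schwarz inequalities automatically distribute the scaling $t^{-2k}$ onto the factors $|X_0|^{-2}$ and $t^{2k_{X_0}}$ appearing in the lemma. No genuine obstacle is anticipated beyond these verifications.
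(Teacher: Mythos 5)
Your proof is correct, but it takes a route dual to the one in the paper. The paper fixes the level $k=k_{X_0}$, proves that the quadratic form $r_k(X)=\sum_{j=0}^{k}\vert\sqrt{\Reelle Q}(\Imag F)^jX\vert^2$ is coercive on $V_k^{\perp}$ (it is a non-negative form whose kernel on $V_k^{\perp}$ is trivial, hence $r_k\geq c_k\vert\cdot\vert^2$ there by finite-dimensionality), decomposes the \emph{test vector} $X=X'+X''$ with $X'\in V_k^{\perp}$, $X''\in V_k$, and concludes by a plain Cauchy--Schwarz on $\langle X_0,X'\rangle$ together with $\kappa_t\geq t^{2k}r_k$ for $t\le1$. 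You instead decompose the \emph{direction} $X_0$, using the explicit description $V_k^{\perp}=\sum_{j=0}^{k}((\Imag F)^T)^j\sqrt{\Reelle Q}\,\mathbb R^{2n}$ (which the paper never states) and the bounded right inverse of the surjection $\Phi_k$, and then a $t$-weighted Cauchy--Schwarz. The two arguments rest on the same finite-dimensional fact — coercivity of $r_k=\vert\Phi_k^TX\vert^2$ on $(\Ker\Phi_k^T)^{\perp}$ is exactly the boundedness of a right inverse of $\Phi_k$ — but your version makes the kernel/range duality explicit and shows more transparently how the weight $t^{-2j}\le t^{-2k_{X_0}}$ lands on the $\vert X_0\vert^{-2}t^{2k_{X_0}}$ prefactor, at the modest cost of invoking the pseudo-inverse and handling $t=0$ separately. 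All the auxiliary identities you use ($\Ker(\Reelle F(\Imag F)^j)=\Ker(\sqrt{\Reelle Q}(\Imag F)^j)$ via $F=JQ$ and $\Ker(\Reelle Q)=\Ker(\sqrt{\Reelle Q})$, the transposition identity for the bilinear form $\langle\cdot,\cdot\rangle$, and the reduction of the complex case to the real one via the convention \eqref{07052019E1}) check out, so there is no gap.
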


\begin{proof} For all $0\le k\le k_0$, let $r_k$ be the nonnegative quadratic form defined on the phase space by 
\begin{equation}\label{12092018E2}
	r_k(X) = \sum_{j=0}^k\Reelle q\big((\Imag F)^jX\big) = \sum_{j=0}^k\big\vert\sqrt{\Reelle Q}(\Imag F)^jX\big\vert^2\geq0,\quad X\in\mathbb R^{2n}.
\end{equation}
Moreover, we consider $V_k$ the vector subspace defined in \eqref{12102018E8}. We begin by proving that there exists a positive constant $c_k>0$ such that
\begin{equation}\label{23102018E8}
	\forall X\in V^{\perp}_k,\quad r_k(X)\geq c_k\vert X\vert^2.
\end{equation}
If a point $X\in V^{\perp}_k$ satisfies $r_k(X) = 0$, we deduce from \eqref{12092018E2} that
$$\forall j\in\{0,\ldots,k\},\quad \sqrt{\Reelle Q}(\Imag F)^jX = 0,$$
and since $F = JQ$ from \eqref{05112018E7}, this implies that $(\Reelle F)(\Imag F)^jX = 0$ for all $0\le j\le k$, that is $X\in V_k$. It then follows that $X = 0$. The nonnegative quadratic form $r_k$ is therefore positive on the vector subspace $V^{\perp}_k$. The estimate \eqref{23102018E8} is then proved.

Now, we consider $X_0\in S^{\perp}\setminus\{0\}$ and $0\le k_{X_0}\le k_0$ the index of the point $X_0$ with respect to the singular space defined in \eqref{23012019E4}. For all $X\in\mathbb R^{2n}$, we decompose $X = X'+X''$ with $X'\in V^{\perp}_{k_{X_0}}$ and $X''\in V_{k_{X_0}}$. Since $X_0\in V^{\perp}_{k_{X_0}}$ and $r_{k_{X_0}}$ is a nonnegative quadratic form which vanishes on the vector subspace $V_{k_{X_0}}$ from \eqref{14022019E9}, \eqref{12102018E8} and \eqref{12092018E2}, we deduce from \eqref{23102018E8} that
\begin{equation}\label{05072018E6}
	\langle X_0,X\rangle^2 
	= \langle X_0,X'\rangle^2\le\vert X_0\vert^2\vert X'\vert^2
	\le \frac{\vert X_0\vert^2}{c_{k_{X_0}}}\ r_{k_{X_0}}(X')
	= \frac{\vert X_0\vert^2}{c_{k_{X_0}}}\ r_{k_{X_0}}(X).
\end{equation}
Setting $c_0 = \min_{0\le k\le k_0}c_k>0$, we deduce from \eqref{15012019E1}, \eqref{12092018E2} and \eqref{05072018E6} that for all $0\le t\le 1$, $X_0\in S^{\perp}\setminus\{0\}$ and $X\in\mathbb R^{2n}$,
$$\kappa_t(X)\geq t^{2k_{X_0}}\ r_{k_{X_0}}(X)\geq\frac{c_0}{\vert X_0\vert^2}\ t^{2k_{X_0}}\ \langle X,X_0\rangle^2,$$
since $0\le k_{X_0}\le k_0$. It follows that for all $0\le t\le 1$, $X_0\in S^{\perp}\setminus\{0\}$ and $X\in\mathbb C^{2n}$,
\begin{multline*}
	\kappa_t(X) = \kappa_t(\Reelle X) + \kappa_t(\Imag X)
	\geq\frac{c_0}{\vert X_0\vert^2}\ t^{2k_{X_0}}\ \langle \Reelle X,X_0\rangle^2 + \frac{c_0}{\vert X_0\vert^2}\ t^{2k_{X_0}}\ \langle \Imag X,X_0\rangle^2 \\
	= \frac{c_0}{\vert X_0\vert^2}\ t^{2k_{X_0}}\ \big\vert\langle X,X_0\rangle\big\vert^2.
\end{multline*}
This ends the proof of Lemma \ref{23102018L1}.
\end{proof}

The next result will be instrumental to prove Lemma \ref{07112018L1}. Its proof is based on the study of a time-dependent functional.

\begin{lem}\label{23102018L2} For all $s>0$, $t\geq0$ and $u\in\mathscr S(\mathbb R^n)$, the following estimate holds
$$\big\langle a^w_te^{-sa^w_t}u,e^{-sa^w_t}u\big\rangle_{L^2(\mathbb R^n)}\le\frac1{2s} \Vert u\Vert^2_{L^2(\mathbb R^n)}.$$
\end{lem}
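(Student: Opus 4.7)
The plan is to use a standard monotonicity argument based on the dissipation of the $L^2$ norm under the semigroup $(e^{-sa_t^w})_{s\geq 0}$. Since $a_t:\mathbb{R}^{2n}\to\mathbb{R}_+$ is a real-valued non-negative quadratic form, the operator $a_t^w(x,D_x)$ is selfadjoint and non-negative on $L^2(\mathbb{R}^n)$, so the functional calculus applies and the semigroup $(e^{-sa_t^w})_{s\geq 0}$ is a selfadjoint contraction semigroup. Moreover, this semigroup preserves $\mathscr{S}(\mathbb{R}^n)$ by \cite{MR1339714} (Theorem 4.2), so for $u\in\mathscr{S}(\mathbb{R}^n)$ the function $s\mapsto e^{-sa_t^w}u$ is a smooth $\mathscr{S}(\mathbb{R}^n)$-valued function of $s\geq 0$, and all the manipulations below are rigorously justified.

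Fix $t\geq 0$ and $u\in\mathscr{S}(\mathbb{R}^n)$. Set $v(s) := e^{-sa_t^w}u$ and introduce the energy functional
$$\phi(s) := \bigl\langle a_t^w v(s),v(s)\bigr\rangle_{L^2(\mathbb R^n)}, \qquad s\geq 0.$$
The first step is to compute the two derivatives: since $\partial_s v = -a_t^w v$ and $a_t^w$ is selfadjoint,
$$\frac{d}{ds}\|v(s)\|^2_{L^2(\mathbb R^n)} = -2\phi(s), \qquad \phi'(s) = -2\bigl\|a_t^w v(s)\bigr\|_{L^2(\mathbb R^n)}^2 \leq 0.$$
In particular, $\phi$ is non-increasing on $[0,+\infty)$.

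The second step is to combine these two identities. On the one hand, by non-negativity of $a_t^w$, the function $\phi$ takes non-negative values; on the other hand, integrating the first identity from $0$ to $s$ gives
$$2\int_0^s \phi(r)\,dr = \|u\|^2_{L^2(\mathbb R^n)} - \|v(s)\|^2_{L^2(\mathbb R^n)} \leq \|u\|^2_{L^2(\mathbb R^n)}.$$
Since $\phi$ is non-increasing and non-negative, we have $\phi(s)\leq \phi(r)$ for every $0\leq r\leq s$, and integrating this pointwise inequality over $r\in[0,s]$ yields
$$s\,\phi(s) \leq \int_0^s \phi(r)\,dr \leq \frac{1}{2}\|u\|^2_{L^2(\mathbb R^n)},$$
which is exactly the claimed estimate after division by $s>0$.

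The only subtle point is the regularity needed to differentiate $\phi$, i.e.\ to ensure $a_t^w v(s)$ and $(a_t^w)^2 v(s)$ make sense in $L^2$ and that $s\mapsto \phi(s)$ is $C^1$; this is where invoking the preservation of $\mathscr{S}(\mathbb{R}^n)$ by the quadratic semigroup is essential, so that $v(s)\in\mathscr{S}(\mathbb{R}^n)\subset D(a_t^w)\cap D((a_t^w)^2)$ for every $s\geq 0$. Once this is in place, the argument is the textbook ``$s\phi(s)\le \int_0^s\phi$'' trick for non-increasing non-negative functions, and no further computation is needed.
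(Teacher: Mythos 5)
Your proposal is correct and is essentially the paper's argument in a slightly different packaging: the paper monitors the single functional $G(s)=s\,\phi(s)+\frac12\Vert e^{-sa_t^w}u\Vert^2$ and shows $G'(s)=-2s\Vert a_t^we^{-sa_t^w}u\Vert^2\le 0$, whereas you obtain the same inequality $s\,\phi(s)+\frac12\Vert v(s)\Vert^2\le\frac12\Vert u\Vert^2$ by integrating the energy identity and using the monotonicity of $\phi$. The two ingredients (selfadjointness giving $\phi'\le0$, and $\frac{d}{ds}\Vert v\Vert^2=-2\phi$) are identical, so no further comment is needed.
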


\begin{proof} For fixed $t\geq0$ and $u\in\mathscr S(\mathbb R^n)$, we consider the following time-dependent functional defined for all $s\geq0$ by
\begin{equation}\label{23102018E13}
	G(s) = \big\langle sa^w_te^{-sa^w_t}u,e^{-sa^w_t}u\big\rangle_{L^2(\mathbb R^n)}
	+ \frac12\big\Vert e^{-sa^w_t}u\big\Vert^2_{L^2(\mathbb R^n)}.
\end{equation}
The function $G$ is differentiable on $(0,+\infty)$ and its derivative is given for all $s>0$ by
\begin{align*}
	G'(s) = &\ -s\big\langle (a^w_t)^2e^{-sa^w_t}u,e^{-sa^w_t}u\big\rangle_{L^2(\mathbb R^n)} 
	- s\big\langle a^w_te^{-sa^w_t}u,a^w_te^{-sa^w_t}u\big\rangle_{L^2(\mathbb R^n)} \\[5pt]
	&\ - \frac12\big\langle a^w_te^{-sa^w_t}u,e^{-sa^w_t}u\big\rangle_{L^2(\mathbb R^n)}
	- \frac12\big\langle e^{-sa^w_t}u,a^w_te^{-sa^w_t}u\big\rangle_{L^2(\mathbb R^n)}  + \big\langle a^w_te^{-sa^w_t}u,e^{-sa^w_t}u\big\rangle_{L^2(\mathbb R^n)}.
\end{align*}
Since $a^w_t$ is a selfadjoint operator (as its Weyl symbol is real-valued), we obtain that for all $s>0$,
\begin{equation}\label{23102018E14}
	G'(s) = -2s\big\Vert a^w_te^{-sa^w_t}u\big\Vert^2_{L^2(\mathbb R^{n})}\le0.
\end{equation}
We therefore deduce that for all $s\geq0$, $t\geq0$ and $u\in \mathscr S(\mathbb R^n)$,
\begin{equation}
	G(s) = \big\langle sa^w_te^{-sa^w_t}u,e^{-sa^w_t}u\big\rangle_{L^2(\mathbb R^n)} 
	+ \frac12\big\Vert e^{-sa^w_t}u\big\Vert^2_{L^2(\mathbb R^n)}
	\le G(0) = \frac12\Vert u\Vert^2_{L^2(\mathbb R^n)}.
\end{equation}
This ends the proof of Lemma \ref{23102018L2}.
\end{proof}

We need the following lemma whose proof, which is an immediate consequence of H\"ormander's classification of quadratic forms and the symplectic invariance of the Weyl quantization, see \cite[Theorem  18.5.9 and Theorem  21.5.3]{MR2304165}, can be found e.g.\ in \cite[Lemma 2.6]{MR3710672}:

\begin{lem}\label{06072018L1} Let $\tilde q:\mathbb R^{2n}\rightarrow\mathbb R_+$ be a nonnegative quadratic form. Then, the quadratic operator $\tilde q^w(x,D_x)$ is accretive, that is
$$\forall u\in\mathscr S(\mathbb R^n),\quad \big\langle\tilde q^w(x,D_x)u,u\big\rangle_{L^2(\mathbb R^n)}\geq0.$$
\end{lem}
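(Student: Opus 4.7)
The plan is to reduce the accretivity of $\tilde q^w(x,D_x)$ to a sum-of-squares identity, using the fact that Weyl quantization behaves nicely on squares of linear symbols.

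First, since $\tilde q:\mathbb R^{2n}\rightarrow\mathbb R_+$ is a non-negative real quadratic form on a finite-dimensional real vector space, I would invoke the spectral theorem (or Sylvester's theorem) for its associated symmetric positive semidefinite matrix $\tilde Q\in\Symm^+_{2n}(\mathbb R)$ to decompose
\begin{equation*}
\tilde q(X) = \sum_{j=1}^{r} \ell_j(X)^2, \qquad X\in\mathbb R^{2n},
\end{equation*}
where $r = \Rank(\tilde Q)$ and each $\ell_j:\mathbb R^{2n}\rightarrow\mathbb R$ is a real linear form, say $\ell_j(x,\xi) = \langle a_j,x\rangle + \langle b_j,\xi\rangle$ with $a_j,b_j\in\mathbb R^n$.

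Second, for each $j$, the Weyl quantization of $\ell_j$ is simply the first-order differential operator
\begin{equation*}
\ell_j^w(x,D_x) = \langle a_j,x\rangle + \langle b_j,D_x\rangle,
\end{equation*}
which is symmetric on $\mathscr S(\mathbb R^n)$ (since $a_j,b_j$ are real and $D_x=-i\nabla_x$ is symmetric). The key algebraic fact I would use is that the Weyl quantization of the square of a linear symbol coincides with the square of its quantization: $(\ell_j^2)^w = (\ell_j^w)^2$. This follows from the Moyal product formula because $\{\ell_j,\ell_j\}=0$ forces the Moyal expansion to collapse to the pointwise product, so that $\ell_j\#\ell_j = \ell_j^2$. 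By linearity of the Weyl quantization, one then has
\begin{equation*}
\tilde q^w(x,D_x) = \sum_{j=1}^{r} (\ell_j^w(x,D_x))^2.
\end{equation*}

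Third, using this decomposition, for any $u\in\mathscr S(\mathbb R^n)$ the symmetry of $\ell_j^w$ yields
\begin{equation*}
\langle \tilde q^w(x,D_x) u,u\rangle_{L^2(\mathbb R^n)} = \sum_{j=1}^{r} \langle (\ell_j^w)^2 u, u\rangle_{L^2(\mathbb R^n)} = \sum_{j=1}^{r} \|\ell_j^w u\|_{L^2(\mathbb R^n)}^2 \geq 0,
\end{equation*}
which is precisely the accretivity statement. The only nontrivial step is justifying $(\ell_j^2)^w = (\ell_j^w)^2$, which is a classical elementary computation in Weyl calculus (direct verification from the oscillatory integral definition, or the Moyal product argument above); all other steps are routine linear algebra and the symmetry of real first-order pseudodifferential operators.
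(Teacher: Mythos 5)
Your proof is correct and is essentially the standard argument that the paper points to (it cites \cite{MR3710672}, Lemma 2.6, rather than reproving the statement): Gauss/Sylvester decomposition of $\tilde q$ into squares of real linear forms, the identity $(\ell_j^2)^w=(\ell_j^w)^2$ from the Moyal product collapsing on linear symbols, and the symmetry of $\ell_j^w$. These are exactly the two ingredients the paper itself reuses in the proof of Lemma \RefLem{3}, so there is nothing to add.
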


The anisotropic estimates given by Lemma \ref{23102018L1}, combined with Lemma \ref{23102018L2}, provide a first regularizing effect for the evolution operators $e^{-sa^w_t}$.

\begin{lem}\label{07112018L1} There exist some positive constants $0<t_1<T$ and $c>0$ such that for all $0\le\alpha\le1$, $0<t\le t_1$, $s>0$, $X_0\in S^{\perp}$ and $u\in L^2(\mathbb R^n)$,
$$\big\Vert\langle e^{2i\alpha tA_tJ}X_0,X\rangle^we^{-sa^w_t}u\big\Vert_{L^2(\mathbb R^n)}
\le c\vert X_0\vert\ t^{-k_{X_0}}s^{-\frac12}\ \Vert u\Vert_{L^2(\mathbb R^n)},$$
where $0\le k_{X_0}\le k_0$ denotes the index of the point $X_0\in S^{\perp}$ with respect to the singular space defined in \eqref{23012019E4}.
\end{lem}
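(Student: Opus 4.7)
The plan is to reduce the bound to a comparison of real quadratic forms on $\mathbb R^{2n}$ and then invoke Lemma~\ref{23102018L2} to extract the factor $s^{-1/2}$. Throughout set $Y_t = e^{2i\alpha t A_tJ}X_0\in\mathbb C^{2n}$ and $\ell_t(X) = \langle Y_t,X\rangle$.

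\textbf{Step 1 (Weyl calculus reduction).} I would split $Y_t = Y_1 + iY_2$ with $Y_1,Y_2\in\mathbb R^{2n}$. For the real linear symbols $\langle Y_j,X\rangle$ we have $\{\langle Y_j,X\rangle,\langle Y_j,X\rangle\}=0$, hence $(\langle Y_j,X\rangle^w)^2 = (\langle Y_j,X\rangle^2)^w$ without any commutator correction, and $\langle Y_j,X\rangle^w$ is selfadjoint on $L^2(\mathbb R^n)$. The triangle inequality together with $(a+b)^2\le 2(a^2+b^2)$ therefore gives
$$
\Vert \ell_t^w v\Vert_{L^2}^2 \le 2\big\langle (\vert\ell_t\vert^2)^w v,v\big\rangle_{L^2}, \qquad v\in\mathscr S(\mathbb R^n).
$$

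\textbf{Step 2 (quadratic form comparison).} The analytic heart of the proof is the pointwise bound of real quadratic forms
$$
\vert\ell_t(X)\vert^2 \le C\,\frac{|X_0|^2}{t^{2k_{X_0}}}\,a_t(X), \qquad X\in\mathbb R^{2n},
$$
uniformly in $0\le\alpha\le1$, $X_0\in S^\perp\setminus\{0\}$ and $0<t\le t_1$ small. By Theorem~\ref{14102018T2} it suffices to establish the same inequality with $a_t$ replaced by $\kappa_t$. I would expand $Y_t = \sum_{k\ge 0}\tfrac{(2i\alpha t)^k}{k!}(A_tJ)^kX_0$, apply Lemma~\ref{23102018L1} to each linear form $\langle (A_tJ)^kX_0,X\rangle$, and sum via the triangle and Cauchy-Schwarz inequalities. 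The powers $t^k$ from the exponential series must cancel the possible loss of index when $A_tJ$ is applied.

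\textbf{Step 3 (operator positivity and smoothing).} The inequality of Step 2 says that $C|X_0|^2t^{-2k_{X_0}}a_t - |\ell_t|^2$ is a non-negative real quadratic form on $\mathbb R^{2n}$, so by Lemma~\ref{06072018L1} its Weyl quantization is a non-negative selfadjoint operator, yielding $(\vert\ell_t\vert^2)^w \le C|X_0|^2t^{-2k_{X_0}}\,a_t^w$ on $L^2(\mathbb R^n)$. Substituting $v=e^{-sa_t^w}u$ in Step 1 and invoking Lemma~\ref{23102018L2}:
$$
\Vert \ell_t^w e^{-sa_t^w}u\Vert_{L^2}^2 \le \frac{2C|X_0|^2}{t^{2k_{X_0}}}\big\langle a_t^we^{-sa_t^w}u,e^{-sa_t^w}u\big\rangle_{L^2} \le \frac{C|X_0|^2}{t^{2k_{X_0}}s}\Vert u\Vert_{L^2}^2,
$$
and a square root gives the lemma for all $s>0$, first on the dense subspace $\mathscr S(\mathbb R^n)$ and then on $L^2(\mathbb R^n)$ by density.

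\textbf{Main obstacle.} Step 2 is delicate. The matrix $A_tJ$ does \emph{not} in general preserve the subspaces $V_k^\perp$ for $k<k_0$, so a naive termwise application of Lemma~\ref{23102018L1} to $(A_tJ)^kX_0$ controls each term only with the worst-case factor $t^{-2k_0}$, producing a prefactor much too large when $k_{X_0}<k_0$. The resolution must exploit the Taylor expansion $A_t = \Reelle Q + O(t)$ together with the cancellations $\Reelle F(\Imag F)^jW=0$ for $W\in V_{j+1}$ built into the definition of the filtration. These generate extra positive powers of $t$ that exactly compensate the deterioration of the index, and a careful grouping of the expansion by powers of $t$ followed by Cauchy-Schwarz recovers the sharp prefactor $|X_0|^2 t^{-2k_{X_0}}$ needed to match the statement of the lemma.
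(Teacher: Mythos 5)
Your Steps 1 and 3 are correct and coincide with the paper's argument: the splitting of $e^{2i\alpha tA_tJ}X_0$ into real and imaginary parts, the identity $(\langle Y,X\rangle^2)^w=(\langle Y,X\rangle^w)^2$ for linear symbols, the passage from the pointwise inequality between quadratic forms to the operator inequality via Lemma \ref{06072018L1}, and the extraction of $s^{-1/2}$ from Lemma \ref{23102018L2} are exactly what the paper does. The quadratic form comparison of your Step 2, reduced via Theorem \ref{14102018T2} to the estimate $\vert\langle e^{2i\alpha tA_tJ}X_0,X\rangle\vert^2\le C\vert X_0\vert^2t^{-2k_{X_0}}\kappa_t(X)$, is also the correct target.

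However, Step 2 is the analytic heart of the lemma and your proposal does not prove it: you correctly identify that a termwise application of Lemma \ref{23102018L1} to the expansion $\sum_k\frac{(2i\alpha t)^k}{k!}(A_tJ)^kX_0$ fails because $A_tJ$ need not respect the filtration $(V_k^{\perp})_k$, but the claimed resolution (``the Taylor expansion $A_t=\Reelle Q+O(t)$ together with the cancellations $\ldots$ recovers the sharp prefactor'') is asserted, not established, and it is far from clear that this particular bookkeeping closes. The paper takes a different and cleaner route around this obstacle: instead of tracking the index of each vector $(A_tJ)^kX_0$, it proves the quasi-invariance of the weight itself, namely $\kappa_t(X)\le c\,\kappa_t\big(e^{2i\alpha tJA_t}X\big)$ uniformly in $0\le\alpha\le1$ and $0\le t\ll1$ (estimate \eqref{06112018E7}). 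This is obtained by writing $e^{2i\alpha tJA_t}=\exp\big(-\frac{\alpha}{2}\Log(e^{-2itF}e^{-2it\overline F})\big)$ via \eqref{08012019E9} and applying the perturbation Lemma \ref{05112018L2} (made rigorous in Lemma \ref{06092019L1}), whose proof is a several-page argument in the Appendix involving a non-commutative Taylor expansion, the absorption of all cross terms containing a factor $\Reelle F$ by one extra power of $t$, a weighted $\ell^1$ inequality with an inductively chosen family of weights, and the stability of $S+iS$ under the relevant matrices. Once \eqref{06112018E7} is available, one combines it with Lemma \ref{23102018L1} applied to $e^{2i\alpha tJA_t}X$ and transposes, using $(JA_t)^T=-A_tJ$ so that $\langle X_0,e^{-2i\alpha tJA_t}X\rangle=\langle e^{2i\alpha tA_tJ}X_0,X\rangle$, to land exactly on the desired bound with the prefactor $t^{-2k_{X_0}}$. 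Without some substitute for this quasi-invariance statement, your Step 2 remains a genuine gap.
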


\begin{proof} We shall first prove that there exist some positive constants $c_0>0$ and $0<t_0<T$ such that for all $0\le\alpha\le1$, $0< t\le t_0$, $X_0\in S^{\perp}$ and $X\in\mathbb R^{2n}$,
\begin{equation}\label{20112018E3}
	\big\vert\langle e^{2i\alpha tA_tJ}X_0,X\rangle\big\vert^2\le c_0\vert X_0\vert^2\ t^{-2k_{X_0}}\ a_t(X),
\end{equation}
where $0\le k_{X_0}\le k_0$ denotes the index of the point $X_0\in S^{\perp}$ with respect to the singular space defined in \eqref{23012019E4}. If the estimate \eqref{20112018E3} holds, the proof of Lemma \ref{07112018L1} is done. Indeed, by denoting $M_{\alpha,t}=\Reelle(e^{2i\alpha tA_tJ})$, we deduce from \eqref{20112018E3} that for all $0\le\alpha\le1$, $0< t\le t_0$, $X_0\in S^{\perp}$ and $X\in\mathbb R^{2n}$,
\begin{equation}\label{20112018E4}
	\langle M_{\alpha,t}X_0,X\rangle^2\le c_0\vert X_0\vert^2\ t^{-2k_{X_0}}\ a_t(X).
\end{equation}
It then follows from \eqref{20112018E4} and Lemma \ref{06072018L1} that for all $0\le\alpha\le1$, $0<t\le t_0$, $s\geq0$, $X_0\in S^{\perp}$ and $u\in\mathscr S(\mathbb R^n)$,
\begin{equation}\label{20112018E5}
	\big\langle\big(\langle M_{\alpha,t}X_0,X\rangle^2\big)^we^{-sa^w_t}u,e^{-sa^w_t}u\big\rangle_{L^2(\mathbb R^n)}
	\le c_0\vert X_0\vert^2\ t^{-2k_{X_0}}\ \big\langle a^w_te^{-sa^w_t}u,e^{-sa^w_t}u\big\rangle_{L^2(\mathbb R^n)}.
\end{equation}
Moreover, the Weyl calculus, see e.g.\ the composition formula (18.5.4) in \cite{MR2304165}, provides that for all $0\le\alpha\le1$ and $0<t\le t_0$,
\begin{equation}
	\langle M_{\alpha,t}X_0,X\rangle^2 = \langle M_{\alpha,t}X_0,X\rangle\ \sharp^w\ \langle M_{\alpha,t}X_0,X\rangle,
\end{equation}
since the symbol $\langle M_{\alpha,t}X_0,X\rangle$ is a linear form, where $\sharp^w$ denotes the Moyal product defined for all $p_1$ and $p_2$ in proper symbol classes by 
$$(p_1\ \sharp^w\ p_2)(x,\xi) = e^{\frac i2\sigma(D_x,D_{\xi};D_y,D_{\eta})}p_1(x,\xi)p_2(y,\eta)\Big\vert_{(x,\xi)=(y,\eta)},$$
with $\sigma$ the symplectic form defined in \eqref{09052019E12}. This implies that for all $0\le\alpha\le1$ and $0<t\le t_0$,
\begin{equation}\label{20112018E6}
	\big(\langle M_{\alpha,t}X_0,X\rangle^2\big)^w = \langle M_{\alpha,t}X_0,X\rangle^w\langle M_{\alpha,t}X_0,X\rangle^w.
\end{equation}
We deduce from \eqref{20112018E5} and \eqref{20112018E6} that for all $0\le\alpha\le1$, $0<t\le t_0$, $s>0$, $X_0\in S^{\perp}$ and $u\in\mathscr S(\mathbb R^n)$,
$$\big\Vert\langle M_{\alpha,t}X_0,X\rangle^we^{-sa^w_t}u\big\Vert^2_{L^2(\mathbb R^n)}
\le c_0\vert X_0\vert^2\ t^{-2k_{X_0}}\ \big\langle a^w_te^{-sa^w_t}u,e^{-sa^w_t}u\big\rangle_{L^2(\mathbb R^n)},$$
and Lemma \ref{23102018L2} then shows that
\begin{equation}\label{20112018E7}
	\big\Vert\langle M_{\alpha,t}X_0,X\rangle^we^{-sa^w_t}u\big\Vert^2_{L^2(\mathbb R^n)}\le\frac{c_0}2\vert X_0\vert^2\ t^{-2k_{X_0}}s^{-1}\ \Vert u\Vert^2_{L^2(\mathbb R^n)}.
\end{equation}
Notice that the estimate \eqref{20112018E7} can be extended to all $u\in L^2(\mathbb R^n)$ since the Schwartz space $\mathscr S(\mathbb R^n)$ is dense in $L^2(\mathbb R^n)$. Similarly, if we denote $N_{\alpha,t}=\Imag(e^{2i\alpha tA_tJ})$, we have that for all $0\le\alpha\le1$, $0<t\le t_0$, $s>0$, $X_0\in S^{\perp}$ and $u\in L^2(\mathbb R^n)$,
\begin{equation}\label{20112018E8}
	\big\Vert\langle N_{\alpha,t}X_0,X\rangle^we^{-sa^w_t}u\big\Vert^2_{L^2(\mathbb R^n)}\le\frac{c_0}2\vert X_0\vert^2\ t^{-2k_{X_0}}s^{-1}\ \Vert u\Vert^2_{L^2(\mathbb R^n)}.
\end{equation}
Finally, we deduce from the triangle inequality that for all $0\le\alpha\le1$, $0<t\le t_0$, $s>0$, $X_0\in S^{\perp}$ and $u\in L^2(\mathbb R^n)$,
$$\big\Vert\langle e^{2i\alpha tA_tJ}X_0,X\rangle^we^{-sa^w_t}u\big\Vert_{L^2(\mathbb R^n)}
\le \big\Vert\langle M_{\alpha,t}X_0,X\rangle^we^{-sa^w_t}u\big\Vert_{L^2(\mathbb R^n)}
+ \big\Vert\langle N_{\alpha,t}X_0,X\rangle^we^{-sa^w_t}u\big\Vert_{L^2(\mathbb R^n)},$$
and the estimates \eqref{20112018E7} and \eqref{20112018E8} imply that
$$\big\Vert\langle e^{2i\alpha tA_tJ}X_0,X\rangle^we^{-sa^w_t}u\big\Vert_{L^2(\mathbb R^n)}\le \sqrt{2c_0}\vert X_0\vert\ t^{-k_{X_0}}s^{-\frac12}\ \Vert u\Vert_{L^2(\mathbb R^n)}.$$
It therefore remains to prove that the estimate \eqref{20112018E3} actually holds. We shall actually prove that there exist some positive constants $c_1>0$ and $0<t_1<T$ such that for all $0\le\alpha\le1$, $0<t\le t_1$, $X_0\in S^{\perp}$ and $X\in\mathbb R^{2n}$,
\begin{equation}\label{20112018E9}
	\big\vert\langle e^{2i\alpha tA_tJ}X_0,X\rangle\big\vert^2\le c_1\vert X_0\vert^2\ t^{-2k_{X_0}}\ \kappa_t(X).
\end{equation}
The estimate \eqref{20112018E3} is then a straightforward consequence of \eqref{15012019E2} and \eqref{20112018E9}. It follows from Lemma \ref{23102018L1} that there exists a positive constant $c_2>0$ such that for all $0\le t\le 1$, $X_0\in S^{\perp}$ and $X\in\mathbb C^{2n}$,
\begin{equation}\label{06112018E5}
	t^{2k_{X_0}}\big\vert\langle X_0,X\rangle\big\vert^2\le c_2\vert X_0\vert^2\kappa_t(X).
\end{equation}
On the other hand, we recall from \eqref{08012019E9} that for all $0\le\alpha\le1$ and $0\le t\le T$,
$$e^{2i\alpha tJA_t} = \exp\Big(-\frac{\alpha}2\Log\big(e^{-2itF}e^{-2it\overline F}\big)\Big).$$
We would like to deduce from Lemma \ref{05112018L2} applied with the functions
\begin{equation}\label{03092019E4}
	G_{\alpha}(M,N) = \exp\Big(-\frac{\alpha}2\Log\big(e^{-2i(M+iN)}e^{-2i(M-iN)}\big)\Big),\quad \alpha\in[0,1],
\end{equation}
that there exist some positive constants $0<t_1<T$ and $c_3>0$ such that for all $0\le\alpha\le1$, $0\le t\le t_1$ and $X\in\mathbb C^{2n}$,
\begin{equation}\label{06112018E7}
	\kappa_t(X)\le c_3\kappa_t\big(e^{2i\alpha tJA_t}X\big).
\end{equation}
This application of Lemma \ref{05112018L2} is made rigorous in Lemma \ref{06092019L1} in the Appendix, which implies that the estimate \eqref{06112018E7} actually holds. Combining \eqref{06112018E5} and \eqref{06112018E7}, we obtain that for all $0\le\alpha\le1$, $0\le t\le t_1$, $X_0\in S^{\perp}$ and $X\in\mathbb C^{2n}$,
$$t^{2k_{X_0}}\ \big\vert\langle X_0,X\rangle\big\vert^2\le c_2c_3\vert X_0\vert^2 \kappa_t\big(e^{2i\alpha tJA_t}X\big),$$
and a straightforward change of variable shows that for all $0\le\alpha\le1$, $0\le t\le t_1$, $X_0\in S^{\perp}$ and $X\in\mathbb R^{2n}$,
$$t^{2k_{X_0}}\ \big\vert\langle e^{2i\alpha tA_tJ}X_0,X\rangle\big\vert^2\le c_2c_3\vert X_0\vert^2 \kappa_t(X).$$
This proves that \eqref{20112018E9} holds and ends the proof of Lemma \ref{07112018L1}.
\end{proof}

We can now derive the proof of Theorem \ref{22102018T1}. To that end, we implement the strategy presented in the beginning of this subsection. Let $m\geq1$ and $X_1,\ldots,X_m\in S^{\perp}$. We denote by $0\le k_{X_j}\le k_0$ the index of the point $X_j\in S^{\perp}$ with respect to the singular space. It follows from \eqref{09052019E11} that for all $0\le t\le T$,
\begin{equation}\label{06112018E3}
	\langle X_1,X\rangle^w\ldots\langle X_m,X\rangle^we^{-ta^w_t} = \langle Y_{1,t},X\rangle^we^{-\frac tma^w_t}\ldots\langle Y_{m,t},X\rangle^we^{-\frac tma^w_t},
\end{equation}
where, for all $1\le j\le m$, $Y_{j,t} = e^{\frac{2i(j-1)t}mA_tJ}X_j.$  
According to Lemma \ref{07112018L1}, there exist some positive constants $0<t_1<T$ and $c>0$ such that for all $0\le\alpha\le1$, $0<t\le t_1$, $s>0$, $X_0\in S^{\perp}$ and $u\in L^2(\mathbb R^n)$,
\begin{equation}\label{20112018E13}
	\big\Vert\langle e^{2i\alpha tA_tJ}X_0,X\rangle^we^{-sa^w_t}u\big\Vert_{L^2(\mathbb R^n)}
	\le c\vert X_0\vert\ t^{-k_{X_0}}s^{-\frac12}\ \Vert u\Vert_{L^2(\mathbb R^n)},
\end{equation}
where $0\le k_{X_0}\le k_0$ denotes the index of the point $X_0\in S^{\perp}$ with respect to the singular space. We deduce from \eqref{20112018E13} that for all $1\le j\le m$, $0<t\le t_1$ and $u\in L^2(\mathbb R^n)$,
\begin{equation}\label{19112018E3}
	\big\Vert\langle Y_{j,t},X\rangle^we^{-\frac tma^w_t}u\big\Vert_{L^2(\mathbb R^n)}
	\le c\vert X_j\vert\ t^{-k_{X_j}-\frac12}\ m^{\frac12}\ \Vert u\Vert_{L^2(\mathbb R^n)}.
\end{equation}
Notice that the constant $c>0$ is independent on the integer $m\geq1$ and the points $X_j\in S^{\perp}$. It now follows from \eqref{06112018E3}, \eqref{19112018E3} and a straightforward induction that for all $0<t\le t_1$ and $u\in L^2(\mathbb R^n)$,
\begin{align*}
	\big\Vert\langle X_1,X\rangle^w\ldots\langle X_m,X\rangle^we^{-ta^w_t}u\big\Vert_{L^2(\mathbb R^n)}
	& \le \frac{c^m}{t^{k_{X_1}+\ldots+k_{X_m}+\frac m2}}\Bigg[\prod_{j=1}^m\vert X_j\vert\Bigg]\ m^{\frac m2}\ \Vert u\Vert_{L^2(\mathbb R^n)} \\
	& \le \frac{e^{\frac m2}c^m}{t^{k_{X_1}+\ldots+k_{X_m}+\frac m2}}\Bigg[\prod_{j=1}^m\vert X_j\vert\Bigg]\ (m!)^{\frac12}\ \Vert u\Vert_{L^2(\mathbb R^n)},
\end{align*}
where we used that $m^m\le e^m m!$. We then deduce from \eqref{19112018E1} that for all $0<t\le t_1$ and $u\in L^2(\mathbb R^n)$,
\begin{align*}
	\big\Vert\langle X_1,X\rangle^w\ldots\langle X_m,X\rangle^we^{-tq^w}u\big\Vert_{L^2(\mathbb R^n)}
	& \le \frac{e^{\frac m2}c^m}{t^{k_{X_1}+\ldots+k_{X_m}+\frac m2}}\Bigg[\prod_{j=1}^m\vert X_j\vert\Bigg]\ (m!)^{\frac12}\ \big\Vert e^{-itb^w_t}u\big\Vert_{L^2(\mathbb R^n)} \\
	& = \frac{e^{\frac m2}c^m}{t^{k_{X_1}+\ldots+k_{X_m}+\frac m2}}\Bigg[\prod_{j=1}^m\vert X_j\vert\Bigg]\ (m!)^{\frac12}\ \Vert u\Vert_{L^2(\mathbb R^n)},
\end{align*}
since the operators $e^{-itb_t}$ are unitary on $L^2(\mathbb R^n)$. This ends the proof of Theorem \ref{22102018T1}.

\subsection{Directions of regularity} We now perform the proof of Theorem \ref{10122018T1}. The family $(a_t)_{t\in\mathbb R}$ still stands for the family given by Theorem \ref{12102018T1} composed of nonnegative quadratic forms $a_t:\mathbb R^{2n}\rightarrow\mathbb R_+$ with coefficients depending analytically on the time-variable $t\in\mathbb R$. As in the previous subsection, the matrix of the quadratic forms $a_t$ in the canonical basis of $\mathbb R^{2n}$ is denoted $A_t$. Moreover, we consider $(U_t)_{t\in\mathbb R}$ the family of metaplectic operators also given by Theorem \ref{12102018T1}. We recall that the evolution operators $e^{-tq^w}$ split as
\begin{equation}\label{15012019E3}
	\forall t\geq0,\quad e^{-tq^w} = e^{-ta^w_t}U_t.
\end{equation}
Let $t>0$, $X_0\in\mathbb R^{2n}$. We assume that the linear operator $\langle X_0,X\rangle^we^{-tq^w}$ is bounded on $L^2(\mathbb R^n)$. We aim at proving that $X_0\in S^{\perp}$. We first notice that since the metaplectic operator $U_t$ is unitary on $L^2(\mathbb R^n)$, it follows from \eqref{15012019E3} that the linear operator $\langle X_0,X\rangle^we^{-ta^w_t}$ is also bounded on $L^2(\mathbb R^n)$. As a consequence, there exists a positive constant $c_{t,X_0}>0$ depending on $t$ and $X_0$ such that
\begin{equation}\label{10122018E1}
	\forall u\in L^2(\mathbb R^n),\quad \big\Vert\langle X_0,X\rangle^we^{-ta^w_t}u\big\Vert_{L^2(\mathbb R^n)}\le c_{t,X_0}\Vert u\Vert_{L^2(\mathbb R^n)}.
\end{equation}
According to the decomposition $\mathbb R^{2n} = S\oplus S^{\perp}$ of the phase space, the orthogonality being taken with respect to the euclidean structure of $\mathbb R^{2n}$, we write $X_0 = X_{0,S} + X_{0,S^{\perp}}$, with $X_{0,S}\in S$ and $X_{0,S^{\perp}}\in S^{\perp}$. For all $\lambda\geq0$, we consider $X_{\lambda}\in S$ the point of the singular space defined by
\begin{equation}\label{10122018E3}
	X_{\lambda} = \lambda X_{0,S} = (x_{\lambda},\xi_{\lambda})\in S\subset\mathbb R^{2n}.
\end{equation}
Moreover, we consider for all $\lambda\geq0$ the Gaussian function $u_{\lambda}\in\mathscr S(\mathbb R^n)$ given for all $x\in\mathbb R^n$ by
\begin{equation}\label{10122018E2}
	u_{\lambda}(x) = e^{i\langle\xi_{\lambda},x\rangle}e^{-\vert x-x_{\lambda}\vert^2}.
\end{equation}
The strategy will be to find upper and lower bounds for the term 
\begin{equation}\label{09052019E2}
	\big\langle\langle X_0,X\rangle^we^{-ta^w_t}u_{\lambda},u_{\lambda}\big\rangle_{L^2(\mathbb R^n)},
\end{equation}
and to consider the asymptotics when $\lambda$ tends to $+\infty$ in order to conclude that the point $X_{0,S}$ has to be equal to zero. An upper bound can be established readily since it follows from \eqref{10122018E1}, \eqref{10122018E2} and the Cauchy-Schwarz inequality that for all $\lambda\geq0$,
\begin{equation}\label{09052019E1}
	\big\vert\big\langle\langle X_0,X\rangle^we^{-ta^w_t}u_{\lambda},u_{\lambda}\big\rangle_{L^2(\mathbb R^n)}\big\vert\le c_{t,X_0}\Vert u_{\lambda}\Vert^2_{L^2(\mathbb R^n)}
	= c_{t,X_0}\Vert u_0\Vert^2_{L^2(\mathbb R^n)}.
\end{equation}
Notice that the right-hand side of the above estimate does not depend on the parameter $\lambda\geq0$. Now, we investigate a lower bound for the term \eqref{09052019E2} by a direct calculus. It follows from the Mehler formula (Corollary \ref{09012019C1}) that the operator $e^{-ta^w_t}$  is a pseudodifferential operator whose symbol is given by
\begin{equation}\label{07012019E2}
	c_te^{-tm_t(X)}\in L^{\infty}(\mathbb R^{2n}),\quad\text{where}\quad c_t = (\det\cos(tJA_t) )^{-\frac12}>0,
\end{equation}
and where $m_t:\mathbb R^{2n}\rightarrow\mathbb R_+$ is the nonnegative quadratic form whose matrix in the canonical basis of $\mathbb R^{2n}$ is the matrix $M_t$ defined in \eqref{08012019E1}. We therefore deduce from \eqref{10122018E2} and \eqref{07012019E2} that the term \eqref{09052019E2} is given for all $\lambda\geq0$ by
\begin{equation}\label{09052019E5}
	\big\langle\langle X_0,X\rangle^we^{-ta^w_t}u_{\lambda},u_{\lambda}\big\rangle_{L^2(\mathbb R^n)}
	= c_t\big\langle\langle X_0,X\rangle^w(e^{-tm_t})^wT_{\lambda}u_0,T_{\lambda}u_0\big\rangle_{L^2(\mathbb R^n)},
\end{equation}
where the operator $T_{\lambda}:L^2(\mathbb R^n)\rightarrow L^2(\mathbb R^n)$ is defined for all $u\in L^2(\mathbb R^n)$ by
\begin{equation}\label{09052019E6}
	T_{\lambda}u = e^{i\langle\xi_{\lambda},\cdot\rangle}u(\cdot-x_{\lambda}).
\end{equation}
We need compute the commutators between the operators $T_{\lambda}$ and the operators $\langle X_0,X\rangle^w$ and $(e^{-tm_t})^w$ respectively. This is done in the following lemma: 

\begin{lem}\label{09052019L1} Let $a\in\mathscr S'(\mathbb R^{2n})$. We have that for all $\lambda\geq0$ and $u\in\mathscr S(\mathbb R^n)$,
$$a^wT_{\lambda}u = T_{\lambda}(L_{\lambda}a)^wu\quad\text{in $\mathscr S'(\mathbb R^n)$},$$
where $L_{\lambda}a\in\mathscr S'(\mathbb R^n)$ is given by $L_{\lambda}a = a(\cdot+X_{\lambda})$.
\end{lem}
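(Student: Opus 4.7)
The identity to establish is the classical covariance property of the Weyl quantization under the Heisenberg phase-space translation. The plan is to verify it by a direct oscillatory-integral computation on Schwartz symbols, then extend to tempered distributions by duality and continuity.

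First, I would reduce to the case $a\in\mathscr S(\mathbb R^{2n})$. Indeed, both sides of the claimed equality $a^wT_\lambda u = T_\lambda(L_\lambda a)^w u$ depend linearly and sequentially continuously on $a\in\mathscr S'(\mathbb R^{2n})$: the map $a\mapsto a^w u$ is continuous from $\mathscr S'(\mathbb R^{2n})$ to $\mathscr S'(\mathbb R^n)$ for fixed $u\in\mathscr S(\mathbb R^n)$, and $L_\lambda$ (translation by $X_\lambda$) as well as $T_\lambda$ are continuous on $\mathscr S'$. Since $\mathscr S(\mathbb R^{2n})$ is sequentially dense in $\mathscr S'(\mathbb R^{2n})$, it is enough to prove the identity pointwise (as an identity of Schwartz functions of $x$) for $a\in\mathscr S(\mathbb R^{2n})$.

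For such an $a$, I would start from the standard oscillatory-integral formula
$$(a^w v)(x) = \frac1{(2\pi)^n}\int_{\mathbb R^{2n}} e^{i\langle x-y,\xi\rangle}\,a\!\left(\tfrac{x+y}{2},\xi\right)v(y)\,dy\,d\xi,$$
applied to $v=T_\lambda u$, so that $v(y)=e^{i\langle\xi_\lambda,y\rangle}u(y-x_\lambda)$. I would then perform the two translations $y=y'+x_\lambda$, $\xi=\xi'+\xi_\lambda$ in the integrand. The phase becomes $\langle x-y'-x_\lambda,\xi'+\xi_\lambda\rangle+\langle\xi_\lambda,y'+x_\lambda\rangle$, and a short simplification shows that the cross terms in $y'$ and $x_\lambda$ cancel, leaving
$$\langle x-y',\xi'\rangle + \langle\xi_\lambda,x\rangle - \langle x_\lambda,\xi'\rangle,$$
while the symbol turns into $a\!\left(\tfrac{(x-x_\lambda)+y'}{2}+x_\lambda,\xi'+\xi_\lambda\right)$. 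Factoring out $e^{i\langle\xi_\lambda,x\rangle}$ and absorbing $e^{-i\langle x_\lambda,\xi'\rangle}$ into the oscillating phase (which effectively replaces $x$ by $x-x_\lambda$ in $e^{i\langle x-y',\xi'\rangle}$) leads exactly to
$$e^{i\langle\xi_\lambda,x\rangle}\,\frac1{(2\pi)^n}\int e^{i\langle (x-x_\lambda)-y',\xi'\rangle}\,a\!\left(\tfrac{(x-x_\lambda)+y'}{2}+X_\lambda|_x,\xi'+\xi_\lambda\right)u(y')\,dy'\,d\xi',$$
which is $e^{i\langle\xi_\lambda,x\rangle}\,\bigl((L_\lambda a)^w u\bigr)(x-x_\lambda) = T_\lambda\bigl((L_\lambda a)^w u\bigr)(x)$, using the definition $(L_\lambda a)(X)=a(X+X_\lambda)$ together with $X_\lambda=(x_\lambda,\xi_\lambda)$.

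The computation is entirely routine, the only delicate point being to track correctly the six phase contributions so that the unwanted terms $-\langle y',\xi_\lambda\rangle + \langle\xi_\lambda,y'\rangle$ and $-\langle x_\lambda,\xi_\lambda\rangle+\langle\xi_\lambda,x_\lambda\rangle$ cancel. Once the Schwartz case is settled, the sequential continuity argument from the first paragraph delivers the identity for arbitrary $a\in\mathscr S'(\mathbb R^{2n})$, as required.
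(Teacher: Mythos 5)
Your proof is correct, and the computation checks out: after the change of variables $y=y'+x_\lambda$, $\xi=\xi'+\xi_\lambda$ the phase indeed collapses to $\langle (x-x_\lambda)-y',\xi'\rangle+\langle\xi_\lambda,x\rangle$ (the cross terms cancel because the pairing $\langle\cdot,\cdot\rangle$ is symmetric), and the symbol becomes exactly $(L_\lambda a)\bigl(\tfrac{(x-x_\lambda)+y'}{2},\xi'\bigr)$. The route, however, is genuinely different from the paper's. The paper works dually: it computes the Wigner function $\mathcal H_\lambda(u,v)$ of the pair $(T_\lambda u,T_\lambda v)$, shows by the same change of variables that $\mathcal H_\lambda(u,v)=L_\lambda^{-1}\mathcal H_0(u,v)$, and then transfers the translation onto the symbol through the duality bracket $\langle a^wT_\lambda u,\overline{T_\lambda v}\rangle=\langle a,\mathcal H_\lambda(u,v)\rangle$, concluding with $T_\lambda T_\lambda^*=\mathrm{Id}$. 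That argument treats $a\in\mathscr S'(\mathbb R^{2n})$ directly, with no approximation step, because the only object that needs to be a genuine function is the Wigner transform of Schwartz functions. Your approach instead does the explicit oscillatory-integral manipulation for $a\in\mathscr S(\mathbb R^{2n})$ and then invokes sequential density of $\mathscr S$ in $\mathscr S'$ together with the separate continuity of $a\mapsto a^wu$ (weak-$*$ to weak-$*$, for fixed $u$), of $L_\lambda$, and of $T_\lambda$. This is legitimate — the continuity of $a\mapsto a^wu$ is immediate from the very duality formula the paper uses — but it adds a layer of functional-analytic bookkeeping that the dual computation avoids. What your version buys is a completely explicit, self-contained kernel identity; what the paper's buys is economy for distributional symbols, which matters here since the lemma is applied to the non-Schwartz symbols $\langle X_0,X\rangle$ and $e^{-tm_t}$.
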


\begin{proof} Let $\lambda\geq0$ and $u\in\mathscr S(\mathbb R^n)$ be a Schwartz function. For all $v\in\mathscr S(\mathbb R^n)$, we consider the Wigner function $\mathcal H_{\lambda}(u,v)$ associated with the functions $T_{\lambda}u$ and $T_{\lambda}v$ defined for all $(x,\xi)\in\mathbb R^{2n}$ by
\begin{equation}\label{11012019E1}
	\mathcal H_{\lambda}(u,v)(x,\xi) = \int_{\mathbb R^n}e^{-i\langle y,\xi\rangle}(T_{\lambda}u)\Big(x+\frac y2\Big)(\overline{T_{\lambda}v})\Big(x-\frac y2\Big)\ \mathrm dy.
\end{equation}
It follows from \eqref{09052019E6} and \eqref{11012019E1} that for all $\lambda\geq0$, $v\in\mathscr S(\mathbb R^n)$ and $(x,\xi)\in\mathbb R^{2n}$,
\begin{align}\label{11012019E2}
	\mathcal H_{\lambda}(u,v)(x,\xi) & = \int_{\mathbb R^n}e^{-i\langle y,\xi\rangle}e^{i\langle\xi_{\lambda}, x+\frac y2\rangle}u\Big(x+\frac y2 - x_{\lambda}\Big)
	e^{-i\langle\xi_{\lambda}, x-\frac y2\rangle}\overline v\Big(x-\frac y2 - x_{\lambda}\Big)\ \mathrm dy \\
	& = \int_{\mathbb R^n}e^{-i\langle y,\xi-\xi_{\lambda}\rangle}u\Big(x- x_{\lambda}+\frac y2\Big)\overline v\Big(x- x_{\lambda}-\frac y2\Big)\ \mathrm dy \nonumber \\[5pt]
	& = \mathcal H_0(u,v)(x-x_{\lambda},\xi-\xi_{\lambda}) = (L^{-1}_{\lambda}\mathcal H(u,v))(x,\xi), \nonumber
\end{align}
since $T_0$ is the identity operator. It then follows from \eqref{11012019E2} and the definition of the Weyl calculus that for all $v\in\mathscr S(\mathbb R^n)$,
\begin{align*}
	\big\langle T_{\lambda}^*a^wT_{\lambda}u,\overline v\big\rangle_{\mathscr S'(\mathbb R^n),\mathscr S(\mathbb R^n)} 
	& = \big\langle a^wT_{\lambda}u,\overline{T_{\lambda}v}\big\rangle_{\mathscr S'(\mathbb R^n),\mathscr S(\mathbb R^n)}
	= \langle a,\mathcal H_{\lambda}(u,v)\rangle_{\mathscr S'(\mathbb R^{2n}),\mathscr S(\mathbb R^{2n})} \\[5pt]
	& =  \langle a,L^{-1}_{\lambda}\mathcal H_0(u,v)\rangle_{\mathscr S'(\mathbb R^{2n}),\mathscr S(\mathbb R^{2n})}
	=  \langle L_{\lambda}a,\mathcal H_0(u,v)\rangle_{\mathscr S'(\mathbb R^{2n}),\mathscr S(\mathbb R^{2n})} \\[5pt]
	& =  \langle (L_{\lambda}a)^wu,\overline v\rangle_{\mathscr S'(\mathbb R^n),\mathscr S(\mathbb R^n)}.
\end{align*}
Since the above estimate holds for all Schwartz functions $v\in\mathscr S(\mathbb R^n)$, we proved that $T_{\lambda}^*a^wT_{\lambda}u = (L_{\lambda}a)^wu$ in $\mathscr S'(\mathbb R^n)$. As $T_{\lambda}T_{\lambda}^*$ is the identity operator, we obtain that $a^wT_{\lambda}u = T_{\lambda}(L_{\lambda}a)^wu$ in $\mathscr S'(\mathbb R^n)$. This ends the proof of Lemma \ref{09052019L1}.
\end{proof}

\noindent The quadratic form $m_t$ vanishes on the singular space $S$. Indeed, if $X\in S$, we recall from \eqref{09052019E4} that $m_s(X) = 0$ when $0\le s\ll1$ and since the function $s\in\mathbb R\mapsto m_s(X)$ is analytic, see \eqref{08012019E1} where the matrices $M_s$ are constructed, we deduce that $m_s(X) = 0$ for all $s\geq0$. Since the quadratic forms $m_t$ are positive semidefinite from Lemma \ref{08012019L2} and the points $X_{\lambda}$ are elements of $S$, we deduce that
$$\forall \lambda\geq0, \forall X\in\mathbb R^{2n},\quad (L_{\lambda}m_t)(X) = m_t(X+X_{\lambda}) = m_t(X).$$
We therefore deduce from \eqref{07012019E2} and Lemma \ref{09052019L1} that for all $\lambda\geq0$ and $u\in\mathscr S(\mathbb R^n)$,
\begin{equation}\label{09052019E7}
	(e^{-tm_t})^wT_{\lambda}u = T_{\lambda}(L_{\lambda}e^{-tm_t})^wu = T_{\lambda}(e^{-tm_t})^wu = c_t^{-1} T_{\lambda}e^{-ta^w_t}u,\quad \text{in $\mathscr S'(\mathbb R^n)$}.
\end{equation}
Moreover, \cite[Theorem 4.2]{MR1339714} states that for all $s\geq0$, the evolution operator ${e^{-s\tilde q^w}}$ generated by an accretive quadratic operator $\tilde q^w(x,D_x)$, with $\tilde q:\mathbb R^{2n}\rightarrow\mathbb C$ a complex-valued quadratic form with a nonnegative real-part $\Reelle\tilde q\geq0$, maps $\mathscr S(\mathbb R^n)$ into $\mathscr S(\mathbb R^n)$: i.e.\ $\forall u\in\mathscr S(\mathbb R^n),\quad e^{-s\tilde q^w}u\in\mathscr S(\mathbb R^n).$
This implies that $T_{\lambda}e^{-ta^w_t}u\in\mathscr S(\mathbb R^n)$ for all $\lambda\geq0$ and $u\in\mathscr S(\mathbb R^n)$ and that the equality \eqref{09052019E7} holds in $\mathscr S(\mathbb R^n)$. On the other hand, it follows from Lemma \ref{09052019L1} anew that 
\begin{equation}\label{09052019E8}
	\forall \lambda\ge 0, \forall u\in\mathscr S(\mathbb R^n), \ \langle X_0,X\rangle^wT_{\lambda}u = T_{\lambda}\langle X_0,X+X_{\lambda}\rangle^wu, \quad \text{in $\mathscr S'(\mathbb R^n)$}.
\end{equation}
Since the right-hand side of the above formula belongs to the Schwartz space $\mathscr S(\mathbb R^n)$ for all $\lambda\geq0$ and $u\in\mathscr S(\mathbb R^n)$, the equality \eqref{09052019E8} holds in $\mathscr S(\mathbb R^n)$. As a consequence of \eqref{09052019E5}, \eqref{09052019E7} and \eqref{09052019E8}, we have that for all $\lambda\geq0$,
\begin{multline*}
	\big\langle\langle X_0,X\rangle^we^{-ta^w_t}u_{\lambda},u_{\lambda}\big\rangle_{L^2(\mathbb R^n)}
	 = \big\langle\langle X_0,X\rangle^wT_{\lambda}e^{-ta^w_t}u_0,T_{\lambda}u_0\big\rangle_{L^2(\mathbb R^n)} \\[5pt]
	 = \big\langle T_{\lambda}\langle X_0,X+X_{\lambda}\rangle^we^{-ta^w_t}u_0,T_{\lambda}u_0\big\rangle_{L^2(\mathbb R^n)} 
	 = \big\langle\langle X_0,X+X_{\lambda}\rangle^we^{-ta^w_t}u_0,u_0\big\rangle_{L^2(\mathbb R^n)},
\end{multline*}
since the operators $T_{\lambda}$ are unitary on $L^2(\mathbb R^n)$. Moreover, it follows from \eqref{10122018E3} that for all $\lambda\geq0$ and $X\in\mathbb R^{2n}$,
$$\langle X_0,X+X_{\lambda}\rangle^we^{-ta^w_t} = \langle X_0,X\rangle^we^{-ta^w_t} + \lambda\vert X_{0,S}\vert^2e^{-ta^w_t}.$$
This proves that for all $\lambda\geq0$,
$$\big\langle\langle X_0,X\rangle^we^{-ta^w_t}u_{\lambda},u_{\lambda}\big\rangle_{L^2(\mathbb R^n)} = 
\big\langle\langle X_0,X\rangle^we^{-ta^w_t}u_0,u_0\big\rangle_{L^2(\mathbb R^n)}
+\lambda\vert X_{0,S}\vert^2\big\langle e^{-ta^w_t}u_0,u_0\big\rangle_{L^2(\mathbb R^n)}.$$
Combining the above estimate with \eqref{09052019E1}, we obtain that for all $\lambda\geq0$,
$$\lambda\vert X_{0,S}\vert^2\big\vert\big\langle e^{-ta^w_t}u_0,u_0\big\rangle_{L^2(\mathbb R^n)}\big\vert
\le \big\vert\big\langle\langle X_0,X\rangle^we^{-ta^w_t}u_0,u_0\big\rangle_{L^2(\mathbb R^n)}\big\vert
+ c_{t,X_0}\Vert u_0\Vert^2_{L^2(\mathbb R^n)}.$$
We now only need to check that the term $\langle e^{-ta^w_t}u_0,u_0\rangle_{L^2(\mathbb R^n)}$ is not equal to zero to conclude that $X_{0,S} = 0$, since the right-hand side of the above estimate does not depend on the parameter $\lambda\geq0$.
Since $a_t$ is a nonnegative quadratic form, it follows from Corollary \ref{11022019C1} that the operator $e^{-\frac t2a^w_t}$ is injective. As the Gaussian function $u_0\in\mathscr S(\mathbb R^n)$ is non-zero, we deduce that
\begin{equation}\label{07012019E1}
	\big\langle e^{-ta^w_t}u_0,u_0\big\rangle_{L^2(\mathbb R^n)} = \big\Vert e^{-\frac t2a^w_t}u_0\big\Vert^2_{L^2(\mathbb R^n)}\ne0,
\end{equation}
while using the semigroup property of the family of linear selfadjoint operators $(e^{-sa^w_t})_{s\geq0}$. It therefore follows that $X_{0,S} = 0$ and $X_0\in S^{\perp}$. This ends the proof of Theorem \ref{10122018T1}.

\section{Subelliptic estimates enjoyed by quadratic operators}\label{Sub}

This section is devoted to the proof of Theorem \ref{22102018T2}. Let $q:\mathbb R^{2n}\rightarrow\mathbb C$ be a complex-valued quadratic form with a nonnegative real part $\Reelle q\geq0$. We consider $S$ the singular space of $q$ and $0\le k_0\le 2n-1$ the smallest integer such that \eqref{12102018E7} holds. Let $p_k:\mathbb R^{2n}\rightarrow\mathbb R$ be the nonnegative quadratic form given by \eqref{26102018E1} and $\Lambda^2_k$ be the operator defined in \eqref{25102018E12}, with $0\le k\le k_0$. To prove Theorem \ref{22102018T2}, we will use the interpolation theory as in \cite[Subsection 2.4]{MR3710672} which will allow to derive subelliptic estimates for the quadratic operator $q^w(x,D_x)$ from estimates for the evolution operators $e^{-tq^w}$. In the following, several estimates will involve the operators $\Lambda^4_k$ and we recall from the theory of positive operators, see e.g.\ \cite[Section 4]{MR2523200}, that they are positive operators whose domains are given by
$D(\Lambda^4_k) = \big\{u\in L^2(\mathbb R^n) : \Lambda^4_ku\in L^2(\mathbb R^n)\big\}.$

First of all, we need to prove some additional estimates for the semigroup $(e^{-tq^w})_{t\geq0}$.

\begin{lem}\label{23102018L3} There exist some positive constants $c>0$ and $\mu>0$ such that for all $0\le k\le k_0$, $t>0$ and $u\in L^2(\mathbb R^n)$,
$$\big\Vert\Lambda^4_ke^{-tq^w}u\big\Vert_{L^2(\mathbb R^n)}\le ce^{\mu t} t^{-4k-2}\ \Vert u\Vert_{L^2(\mathbb R^n)}.$$
\end{lem}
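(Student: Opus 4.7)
The strategy is first to establish the short-time bound by expanding $\Lambda_k^4$ as a finite sum of products of at most four linear Weyl operators associated to vectors of $V_k^\perp$, then to extend to all positive times using the contraction semigroup property.

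Since $F = JQ$ with $J \in \GL_{2n}(\mathbb R)$, and $\Ker(\Reelle Q\, M) = \Ker(\sqrt{\Reelle Q}\, M)$ for any matrix $M$, the space $V_k$ defined in \eqref{12102018E8} admits the equivalent description
\[
V_k = \bigcap_{j=0}^k \Ker\bigl(\sqrt{\Reelle Q}(\Imag F)^j\bigr) \cap \mathbb R^{2n}.
\]
Writing the rows of $\sqrt{\Reelle Q}(\Imag F)^j$ as $Y_{1,j}, \ldots, Y_{2n,j} \in \mathbb R^{2n}$ for $0 \le j \le k$, every $Y_{i,j}$ lies in $V_k^\perp$, so its index satisfies $k_{Y_{i,j}} \le k$. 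Moreover
\[
p_k(X) = \sum_{j=0}^k \bigl|\sqrt{\Reelle Q}(\Imag F)^j X\bigr|^2 = \sum_{j=0}^k \sum_{i=1}^{2n} \langle Y_{i,j}, X\rangle^2.
\]
The key algebraic observation is that for any linear form $\ell : \mathbb R^{2n} \to \mathbb R$, all iterated Poisson brackets of $\ell$ with itself vanish, so the Moyal product identity gives $(\ell^w)^2 = (\ell^2)^w$. Hence $p_k^w = \sum_{j,i} (\langle Y_{i,j}, X\rangle^w)^2$, and the operator $\Lambda_k^4 = (1 + p_k^w)^2 = 1 + 2 p_k^w + (p_k^w)^2$ expands as a finite sum of monomials $\langle Y_{i_1, j_1}, X\rangle^w \cdots \langle Y_{i_m, j_m}, X\rangle^w$ with $m \in \{0, 2, 4\}$.

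For $0 < t \le t_0$, where $t_0 > 0$ is the constant provided by Theorem \ref{22102018T1}, that theorem applied to each monomial yields
\[
\bigl\Vert\langle Y_{i_1, j_1}, X\rangle^w \cdots \langle Y_{i_m, j_m}, X\rangle^w e^{-tq^w} u\bigr\Vert_{L^2(\mathbb R^n)} \le c\, t^{-(k_{Y_{i_1, j_1}} + \cdots + k_{Y_{i_m, j_m}} + m/2)} \Vert u\Vert_{L^2(\mathbb R^n)}.
\]
Using $k_{Y_{i_\ell, j_\ell}} \le k$ and $m \le 4$ bounds the exponent of $t^{-1}$ by $4k+2$. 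Summing the finitely many monomials appearing in the expansion of $\Lambda_k^4$ and taking a maximum over $0 \le k \le k_0$ then gives $\Vert \Lambda_k^4 e^{-tq^w} u\Vert_{L^2(\mathbb R^n)} \le c'\, t^{-(4k+2)} \Vert u\Vert_{L^2(\mathbb R^n)}$ for all $0 < t \le t_0$.

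To extend to $t > t_0$, I would factor $e^{-tq^w} = e^{-(t_0/2) q^w} e^{-(t - t_0/2) q^w}$. The short-time estimate at time $t_0/2$ bounds $\Lambda_k^4 e^{-(t_0/2) q^w}$ on $L^2(\mathbb R^n)$, while $e^{-(t - t_0/2) q^w}$ is a contraction; hence $\Vert \Lambda_k^4 e^{-tq^w} u\Vert_{L^2(\mathbb R^n)}$ is uniformly bounded for $t \ge t_0$, a bound that can be absorbed into $c\, e^{\mu t} t^{-(4k+2)} \Vert u\Vert_{L^2(\mathbb R^n)}$ by enlarging $c$ and $\mu$. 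The only nontrivial step I anticipate is the algebraic reduction of $\Lambda_k^4$ to a polynomial in the linear Weyl operators $\langle Y_{i,j}, X \rangle^w$; once the identity $(\ell^w)^2 = (\ell^2)^w$ and the inclusion $Y_{i,j} \in V_k^\perp$ are in hand, the conclusion follows directly from Theorem \ref{22102018T1}.
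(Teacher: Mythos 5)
Your proposal is correct and follows essentially the same route as the paper: decompose $p_k$ as a finite sum of squares of linear forms $\langle Y,X\rangle^2$ with $Y\in V_k^{\perp}\subset S^{\perp}$ of index at most $k$, use $(\ell^w)^2=(\ell^2)^w$ to expand $\Lambda_k^4$ into monomials of degree $0$, $2$, $4$ in the operators $\langle Y,X\rangle^w$, apply Theorem \RefThm{22102018T1} for $0<t\le t_0$, and extend to large times by the contraction semigroup property. The only (immaterial) difference is that you take the explicit rows of $\sqrt{\Reelle Q}(\Imag F)^j$ where the paper invokes the abstract Gauss decomposition of $p_k$.
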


\begin{proof} Let $0\le k\le k_0$. It follows from the Gauss decomposition of nonnegative quadratic forms that there exist a positive integer $N_k\geq1$ and some points $X^k_1,\ldots,X^k_{N_k}\in\mathbb R^{2n}$ such that
\begin{equation}\label{23102018E18}
	\forall X\in \mathbb R^{2n}, \ p_k(X) = \sum_{j=1}^{N_k}\langle X^k_j,X\rangle^2.
\end{equation}
However, by definition of $V_k$ and $p_k$ (\eqref{12102018E8} and \eqref{26102018E1}), we know that $p_k$ vanishes on $V_k$. Consequently, for all $1\le j\le N_k$, we have $\langle X^k_j,X\rangle = 0$ for all $X\in V_k$. The points $X^k_j\in\mathbb R^{2n}$ are therefore elements of $V^{\perp}_k\subset S^{\perp}$ and their associated indexes defined in \eqref{23012019E4}, satisfy $0\le k_{X^k_j}\le k$ for all $1\le j\le N_k$. As we have already noticed, the Weyl calculus shows that for all $1\le j\le N_k$,
$$\Op^w\big(\langle X^k_j,X\rangle^2\big) = (\langle X^k_j,X\rangle^w)^2,$$
and we deduce from \eqref{25102018E12}, \eqref{23102018E18} that
\begin{equation}\label{22112018E3}
	\Lambda^4_k = \Big(1+\sum_{j=1}^{N_k}\langle X^k_j,X\rangle^w\langle X^k_j,X\rangle^w\Big)^2 
	= 1 + 2\sum_{j=1}^{N_k} (\langle X^k_j,X\rangle^w)^{2} + \sum_{j=1}^{N_k}\sum_{\ell=1}^{N_k} (\langle X^k_j,X\rangle^w)^{2} (\langle X^k_{\ell},X\rangle^w)^{2}.
\end{equation}
Since the indices $k_{X^k_j}$ are smaller than or equal to $k$, we deduce of Theorem \ref{22102018T1} that there exist some positive constants $c>0$ and $0<t_0<1$ such that for all $1\le j,\ell\le N_k$, $0<t\le t_0$ and $u\in L^2(\mathbb R^n)$,
\begin{equation*}\label{22112018E1}
	\big\Vert\langle X^k_j,X\rangle^w\langle X^k_j,X\rangle^we^{-tq^w}u\big\Vert_{L^2(\mathbb R^n)}
	\le  \sqrt 2c^2 \ t^{-2k-1}\ \vert X^k_j\vert^2\ \Vert u\Vert_{L^2(\mathbb R^n)},
\end{equation*}
and
\begin{equation*}\label{22112018E2}
	\big\Vert\langle X^k_j,X\rangle^w\langle X^k_j,X\rangle^w\langle X^k_{\ell},X\rangle^w\langle X^k_{\ell},X\rangle^we^{-tq^w}u\big\Vert_{L^2(\mathbb R^n)}
	\le 2\sqrt 6c^4\ t^{-4k-2}\ \vert X^k_j\vert^2\vert X^k_{\ell}\vert^2\ \Vert u\Vert_{L^2(\mathbb R^n)},
\end{equation*}
since $X^k_j,X^k_l\in S^{\perp}$. We deduce that there exists a positive constant $c_k>0$ such that for all $0<t\le t_0$ and $u\in L^2(\mathbb R^n)$,
\begin{equation}\label{22112018E4}
	\big\Vert\Lambda^4_ke^{-tq^w}u\big\Vert_{L^2(\mathbb R^n)}
	\le c_k \ t^{-4k-2}\ \Vert u\Vert_{L^2(\mathbb R^n)}.
\end{equation}
Furthermore, it follows from \eqref{22112018E4} and the contraction semigroup property of the family $(e^{-tq^w})_{t\geq0}$ that for all $t>t_0$ and $u\in L^2(\mathbb R^n)$,
$$\big\Vert\Lambda^4_ke^{-tq^w}u \big\Vert_{L^2(\mathbb R^n)} = \big\Vert\Lambda^4_ke^{-t_0q^w}e^{-(t-t_0)q^w}u \big\Vert_{L^2(\mathbb R^n)} 
\le \frac{c_k}{t_0^{4k+2}}\ \Vert e^{-(t-t_0)q^w}u\Vert_{L^2(\mathbb R^n)} \le \frac{c_k}{t_0^{4k+2}}\ \Vert u\Vert_{L^2(\mathbb R^n)}.$$
Consequently, there exists a positive constant $\mu_k>0$ such that for all $t>0$ and $u\in L^2(\mathbb R^n)$,
$$\big\Vert\Lambda^4_ke^{-tq^w}u\big\Vert_{L^2(\mathbb R^n)}\le c_ke^{\mu_k t} \ t^{-4k-2} \ \Vert u\Vert_{L^2(\mathbb R^n)}.$$
This ends the proof of Lemma \ref{23102018L3}.
\end{proof}

By using some results of interpolation theory, we can now derive Theorem \ref{22102018T2} from Lemma~\ref{23102018L3}. Let $0\le k\le k_0$. We consider $\mathscr H_k$ the Hilbert space defined by
$$\mathscr H_k = D(\Lambda^4_k) = \big\{u\in L^2(\mathbb R^n) : \Lambda^4_k u\in L^2(\mathbb{R}^n)\big\},$$
naturally equipped with the scalar product
$\langle u,v\rangle_{\mathscr H_k} = \big\langle\Lambda^4_k u,\Lambda^4_k v\big\rangle_{L^2(\mathbb{R}^n)}.$
We deduce from Lemma \ref{23102018L3} that there exist some positive constants $c>0$ and $\mu>0$ such that for all $t>0$ and $u\in L^2(\mathbb{R}^n)$,
\begin{equation}\label{23082018E3}
	\big\Vert\Lambda^4_k e^{-tq^w}u\big\Vert_{L^2(\mathbb{R}^n)}\le ce^{\mu t} t^{-4k-2} \ \Vert u\Vert_{L^2(\mathbb{R}^n)}.
\end{equation}
Considering the operator $p^w(x,D_x) = q^w(x,D_x) + \mu$,
the estimate \eqref{23082018E3} can be written as
\begin{equation}\label{27082018E1}
	\forall t>0, \forall u\in L^2(\mathbb R^n),\quad \big\Vert e^{-tp^w}u\big\Vert_{\mathscr H_k}\le  c \ t^{-4k-2}\ \Vert u\Vert_{L^2(\mathbb{R}^n)}.
\end{equation}
It follows from \eqref{27082018E1} and the strong continuity of the semigroup $(e^{-tp^w})_{t\geq0}$ that for all $u\in L^2(\mathbb R^n)$, $t_0>0$ and $t>0$, we have 
$$\big\Vert e^{-(t+t_0)p^w}u - e^{-t_0p^w}u\big\Vert_{\mathcal H_k}
= \big\Vert e^{-t_0p^w}\big(e^{-tp^w}u - u\big)\big\Vert_{\mathcal H_k} \\
\le  c \ t_0^{-4k-2}\big\Vert e^{-tp^w}u - u\big\Vert_{L^2(\mathbb R^n)}
\underset{t\rightarrow0}{\rightarrow}0.$$
This proves that for all $u\in L^2(\mathbb R^n)$, the function $t\in(0,+\infty)\mapsto e^{-tp^w}u\in\mathcal H_k$ is continuous, and therefore measurable.
Moreover, we deduce from \cite[pp. 425-426]{MR1339714} that the operator $p^w(x,D_x)$ equipped with the domain $D(q^w)$ is maximal accretive. Corollary 5.13 in \cite{MR2523200} therefore shows that the following continuous inclusion holds between the domain of the quadratic operator $q^w(x,D_x)$ and $(L^2(\mathbb R^n),\mathscr H_k)_{1/(4k+2),2}$ the space obtained by real interpolation between $L^2(\mathbb R^n)$ and $\mathscr H_k$:
\begin{align}\label{26072017E2}
	D(q^w)\subset \big(L^2(\mathbb R^n),\mathscr H_k\big)_{1/(4k+2),2}.
\end{align}
Since $\mathscr H_k$ is the domain of the operator $\Lambda^4_k$ and that $\Lambda^2_k$ is a positive selfadjoint operator, we deduce from Theorem 4.36 in \cite{MR2523200} that
\begin{equation}\label{26072017E4}
	\big(L^2(\mathbb R^n),\mathscr H_k\big)_{1/(4k+2)} 
	= \big(D((\Lambda^2_k)^0),D((\Lambda^2_k)^2)\big)_{1/(4k+2),2}
	= D\big((\Lambda^2_k)^{\frac2{4k+2}}\big) 
	= D\big(\Lambda^{\frac 2{2k+1}}_k\big).
\end{equation}
We therefore obtain from \eqref{26072017E2} and \eqref{26072017E4} that the following continuous inclusion holds
$$D(q^w)\subset D\big(\Lambda_k^{\frac2{2k+1}}\big).$$
This implies that there exists a positive constant $c_k>0$ such that
$$\forall u\in D(q^w),\quad \big\Vert\Lambda_k^{\frac2{2k+1}}u\big\Vert_{L^2(\mathbb R^n)}\le c_k\big[\Vert p^w(x,D_x)u\Vert_{L^2(\mathbb R^n)} + \Vert u\Vert_{L^2(\mathbb R^n)}\big],$$
and we deduce from the definition of $p^w$ that
$$\forall u\in D(q^w),\quad \big\Vert\Lambda_k^{\frac2{2k+1}}u\big\Vert_{L^2(\mathbb R^n)}\le c_k(1+\mu)\big[\Vert q^w(x,D_x)u\Vert_{L^2(\mathbb R^n)} + \Vert u\Vert_{L^2(\mathbb R^n)}\big].$$

\section{Appendix}\label{Appendix}

\subsection{About the polar decomposition}\label{polardecomposition} To begin this appendix, we recall the basics about the polar decomposition of a bounded operator on a Hilbert space. As a prerequisite, we recall that if $H$ is an Hilbert space and $T\in\mathcal L(H)$ is a nonnegative selfadjoint bounded linear operator, there exists a unique nonnegative selfadjoint bounded operator $\sqrt T\in\mathcal L(H)$ such that $(\sqrt T)^2 = T$, see e.g.\ \cite[Theorem 4.4.2]{MR2668420}. From there, we define the absolute value of any bounded operator $T\in\mathcal L(H)$ as the selfadjoint operator defined by $\vert T\vert = \sqrt{T^*T}$. The operator $\vert T\vert$ satisfies $\Ker\vert T\vert = \Ker T$. Moreover, we recall that a bounded operator $U\in\mathcal L(H)$ is a partial isometry if $\Vert Ux\Vert_H = \Vert x\Vert_H$ for all $x\in (\Ker U)^{\perp}$. We can now state the standard polar decomposition theorem whose proof can be found e.g.\ in \cite[Theorem 4.4.3]{MR2668420}:

\begin{thm}\label{15052019T1} Let $H$ be an Hilbert space and $T\in\mathcal L(H)$ be a bounded linear operator. Then, there exist a unique nonnegative selfadjoint bounded linear operator $S\in\mathcal L(H)$ and a partial isometry $U\in\mathcal L(H)$ such that $T = US$ and $\Ker U = \Ker T$. Moreover, the operator $S$ is given by $S = \vert T\vert$.
\end{thm}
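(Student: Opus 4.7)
The plan is to take the classical route: first exhibit $S$ and $U$ explicitly via the positive square root $|T| := \sqrt{T^*T}$, then establish uniqueness by pushing the relation $T = US$ through its adjoint to isolate $S^2 = T^*T$.

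For existence, I would define $S := |T|$, which makes sense because $T^*T$ is non-negative and selfadjoint, so the functional calculus recalled at the beginning of the subsection produces a unique non-negative selfadjoint square root. The central observation is the isometric identity
\begin{equation*}
\bigl\|\,|T|x\,\bigr\|_H^2 = \bigl\langle |T|^2 x, x\bigr\rangle_H = \langle T^*Tx,x\rangle_H = \|Tx\|_H^2,\qquad x\in H,
\end{equation*}
from which I read off immediately that $\Ker |T| = \Ker T$, so $\overline{\Ran|T|} = (\Ker|T|)^\perp = (\Ker T)^\perp$. The map $|T|x \mapsto Tx$ is therefore well-defined and isometric on $\Ran|T|$; extending by continuity to $\overline{\Ran|T|}$ and by zero on the orthogonal complement $(\Ker T)$ yields a bounded operator $U\in\mathcal{L}(H)$ with $\Ker U = \Ker T$. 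By construction $U$ is isometric on $(\Ker U)^\perp$, hence a partial isometry, and $U|T| = T$ by design.

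For uniqueness, suppose $T = US$ with $S\geq 0$ selfadjoint and $U$ a partial isometry satisfying $\Ker U = \Ker T$. The easy inclusion $\Ker S \subset \Ker T = \Ker U$ comes from $Sx=0 \Rightarrow Tx=USx=0$; the reverse inclusion $\Ker U \subset \Ker S$ is slightly more delicate but follows from noticing that $U$ is injective on $(\Ker U)^\perp$, so $USx = 0$ forces $Sx \in \Ker U$; combined with $Sx \in \overline{\Ran S} = (\Ker S)^\perp$ and the identity $\Ker S = \Ker U$ I want to prove, one closes the loop by observing that the partial isometry property of $U$ gives $U^*U = P_{(\Ker U)^\perp}$, so multiplying $T = US$ on the left by its adjoint yields
\begin{equation*}
T^*T = S\,U^*U\,S = S\,P_{(\Ker U)^\perp}\,S.
\end{equation*}
Since $\Ran S \subset (\Ker S)^\perp \subset (\Ker U)^\perp$ (once the equality of kernels is established), $P_{(\Ker U)^\perp}$ acts as the identity on $\Ran S$, giving $T^*T = S^2$. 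Uniqueness of the non-negative square root then forces $S = |T|$. Finally, knowing $S$ fixes $U$ on $\overline{\Ran|T|}$ through $U|T|x = Tx$ and on $(\Ker T)$ through $U=0$, so $U$ is unique as well.

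The only genuine difficulty in this plan is the kernel bookkeeping in the uniqueness step; the rest is bounded linear algebra plus continuity. I would organize the write-up so that the equality $\Ker S = \Ker U$ is extracted cleanly first, since both the identity $T^*T = S^2$ and the recovery of $U$ on $\overline{\Ran S}$ rely on it.
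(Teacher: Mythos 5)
The paper does not actually prove this theorem itself --- it cites \cite{MR2668420} (Theorem 4.4.3) --- so there is no internal proof to compare against; your route is the standard one. The existence half is correct: the identity $\Vert\,\vert T\vert x\,\Vert_H^2=\Vert Tx\Vert_H^2$ gives $\Ker\vert T\vert=\Ker T$ and $\overline{\Ran\vert T\vert}=(\Ker T)^{\perp}$, the map $\vert T\vert x\mapsto Tx$ extends isometrically to $\overline{\Ran\vert T\vert}$, and setting it to zero on $\Ker T$ yields the required partial isometry with $U\vert T\vert=T$.

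The uniqueness half, however, has a genuine gap, and it sits exactly where you flagged the ``only genuine difficulty'': the inclusion $\Ker U\subset\Ker S$. Your argument for it is circular (it invokes ``the identity $\Ker S=\Ker U$ I want to prove'' and ``once the equality of kernels is established''), and the circularity cannot be repaired, because under the stated hypotheses that inclusion is false. Take $H=\mathbb C^2$, $U=\mathrm{diag}(1,0)$ and $S_c=\mathrm{diag}(1,c)$ with $c>0$: then $U$ is a partial isometry, $S_c$ is non-negative selfadjoint and injective, $T:=US_c=\mathrm{diag}(1,0)$ does not depend on $c$, and $\Ker U=\mathbb C e_2=\Ker T$, yet $S_c\neq\vert T\vert=\mathrm{diag}(1,0)$. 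So the pair $(U,S)$ is \emph{not} pinned down by the conditions ``$T=US$, $U$ partial isometry, $S\geq0$ selfadjoint, $\Ker U=\Ker T$''; the correct normalization for uniqueness is $\Ker U=\Ker S$ (equivalently, the initial space of $U$ is $\overline{\Ran S}$), which is what the cited reference uses. Under that hypothesis your computation closes at once: $U^{*}U=P_{(\Ker U)^{\perp}}=P_{\overline{\Ran S}}$ acts as the identity on $\Ran S$, hence $T^{*}T=SU^{*}US=S^{2}$, so $S=\vert T\vert$ by uniqueness of the non-negative square root, and then $\Ker U=\Ker S=\Ker T$ follows and $U$ is determined on $\overline{\Ran S}$ by $U\vert T\vert x=Tx$ and on $\Ker S$ by $U=0$. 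As written, though, the uniqueness step does not go through, and no reorganization of the kernel bookkeeping will save it without changing the hypothesis.
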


However, the decomposition given by Theorem \ref{15052019T1} is not useful for us. We are more interested here with decompositions of the type $T = \vert T\vert U$. Let us assume that $T\in\mathcal L(H)$ is written as 
\begin{equation}\label{20052019E1}
	T = SU,
\end{equation}
with $S\in\mathcal L(H)$ a nonnegative selfadjoint injective bounded linear operator and $U\in\mathcal L(H)$ be a unitary operator. By passing to the adjoint, we deduce that $T^* = U^*S$. Since the operator $U^*\in\mathcal L(H)$ remains unitary on $H$ and that $\Ker U^* = \Ker T^* = \{0\}$, the operator $T^*$ being injective as a composition of two injective operators, we deduce from Theorem \ref{15052019T1} that such a couple $(U,S)$ is uniquely defined and $S = \vert T^*\vert$. With an abuse of terminology, we call the decomposition \eqref{20052019E1}, when it exists (it will always be the case in this paper), with the bounded linear operators $S$ and $U$ respectively nonnegative selfadjoint injective and unitary, the polar decomposition of the operator $T$.

To end this subsection, let us check that formula \eqref{01022019E1}, namely
$e^{-tq^w} = e^{-ta^w_t}e^{-itb^w_t},$
with $t\geq0$ fixed, the operators $e^{-ta^w_t}$ and $e^{-itb^w_t}$ being defined in \eqref{14052019E1}, is the polar decomposition of the evolution operator $e^{-tq^w}$ generated by the accretive quadratic operator $q^w(x,D_x)$, as defined just before. The operator $e^{-ta^w_t}$ is injective from Corollary \ref{11022019C1} since the quadratic form $a_t$ is nonnegative. In order to check that this operator is also nonnegative and selfadjoint on $L^2(\mathbb R^n)$, we recall that the adjoint of any evolution operator $e^{-s\tilde q^w}$ generated by an accretive operator $\tilde q^w(x,D_x)$ is given by $(e^{-s\tilde q^w})^* = e^{-s(\overline{\tilde q})^w}$, see e.g.\ \cite[Chapter 1, Corollary 10.6]{MR710486} and \cite[p. 426]{MR1339714}. This formula implies that $(e^{-ta^w_t})^* = e^{-ta^w_t}$, since the quadratic form $a_t$ is real-valued. The operator $e^{-ta^w_t}$ is therefore selfadjoint on $L^2(\mathbb R^n)$. By using this  selfadjointness together with the semigroup property of the family of contractive operators $(e^{-sa^w_t})_{s\geq0}$, we deduce that
$$\forall u\in L^2(\mathbb R^n),\quad \langle e^{-ta^w_t}u,u\rangle_{L^2(\mathbb R^n)} = \big\Vert e^{-\frac t2a^w_t}u\big\Vert^2_{L^2(\mathbb R^n)}\geq0,$$
which proves that the operator $e^{-ta^w_t}$ is also nonnegative. Finally, the operator $e^{-itb^w_t}$ is unitary on $L^2(\mathbb R^n)$ since the quadratic form $b_t$ is real-valued. 

\subsection{A symplectic lemma}\label{symp} We now prove that any matrix of the form $e^{JQ}$, with $J$ the real symplectic matrix defined in \eqref{08112018E1} and $Q$ a complex symmetric matrix, is symplectic. Before that, let us recall that when $\mathbb K = \mathbb R$ or $\mathbb C$, the symplectic group $\Symp_{2n}(\mathbb K)$ is the subgroup of $\GL_{2n}(\mathbb K)$ composed of all matrices $M\in\GL_{2n}(\mathbb K)$ such that $M^TJM = J$, or equivalently $JM = (M^T)^{-1}J$, where $J$ is again the matrix defined in \eqref{08112018E1}.

\begin{lem}\label{09112018L1} For all $Q\in\Symm_{2n}(\mathbb C)$, we have $e^{JQ}\in\Symp_{2n}(\mathbb C)$.
\end{lem}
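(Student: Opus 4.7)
My plan is to prove this by differentiating $t \mapsto M(t)^T J M(t)$ along the curve $M(t) = e^{tJQ}$ and showing the derivative vanishes identically, so that $M(1)^T J M(1) = M(0)^T J M(0) = J$.

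The key algebraic input will be that $JQ$ belongs to the symplectic Lie algebra $\mathfrak{sp}_{2n}(\mathbb{C}) = \{X \in M_{2n}(\mathbb C) : X^T J + J X = 0\}$. To check this, I would compute $(JQ)^T J + J(JQ) = Q^T J^T J + J^2 Q$, then use the three elementary identities $Q^T = Q$ (since $Q$ is symmetric), $J^T = -J$ and $J^2 = -I_{2n}$ (both visible from the block expression \eqref{08112018E1}), which together give $J^T J = -J^2 = I_{2n}$. Substituting yields $Q \cdot I_{2n} + (-I_{2n}) Q = 0$, confirming that $JQ \in \mathfrak{sp}_{2n}(\mathbb{C})$.

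Then I would set $f(t) = e^{tJQ}{}^T J\, e^{tJQ}$ and differentiate:
\begin{equation*}
f'(t) = e^{tJQ}{}^T \bigl[(JQ)^T J + J(JQ)\bigr] e^{tJQ} = 0,
\end{equation*}
so $f$ is constant equal to $f(0) = J$. Evaluating at $t=1$ gives $(e^{JQ})^T J\, e^{JQ} = J$. Since $e^{JQ}$ is invertible (its inverse is $e^{-JQ}$), this is precisely the defining relation of $\Symp_{2n}(\mathbb{C})$, which concludes the proof.

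There is really no hard step here: the only thing to be careful about is not to confuse $J^T J$ with $JJ^T$ and to remember the sign in $J^T = -J$; once those two sign conventions are tracked, the Lie-algebra identity $(JQ)^T J + J(JQ) = 0$ is immediate, and the exponentiation step is standard.
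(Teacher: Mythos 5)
Your proof is correct and is essentially identical to the paper's: both differentiate $t\mapsto (e^{tJQ})^T J e^{tJQ}$ and use the identities $Q^T=Q$, $J^T=-J$, $J^2=-I_{2n}$ to show the derivative vanishes. Your explicit reformulation of the key identity as $JQ\in\mathfrak{sp}_{2n}(\mathbb C)$ is just a repackaging of the same computation.
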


\begin{proof} Since the matrix $J$ satisfies $J^2=-I_{2n}$ and $J^T=-J$, and the matrix $Q$ is symmetric, we first notice that for all $t\geq0$,
$$\partial_t \big[(e^{tJQ})^TJe^{tJQ}\big] = (JQ e^{tJQ})^TJe^{tJQ} + (e^{tJQ})^TJJQe^{tJQ} = (e^{tJQ})^TQe^{tJQ} - (e^{tJQ})^TQe^{tJQ}=0.$$
Moreover, $(e^{0JQ})^TJe^{0JQ} = J$, which proves that for all $t\geq0$, $(e^{tJQ})^TJe^{tJQ} = J$. In particular, the matrix $e^{JQ}$ is symplectic. This ends the proof of Lemma \ref{09112018L1}.
\end{proof}

\subsection{About Fourier integral operators}\label{FIO} Fourier integral operators associated with nonnegative complex linear transformations play a key role in this paper to manipulate the evolution operators $e^{-tq^w}$ generated by quadratic forms $q:\mathbb R^{2n}\rightarrow\mathbb C$ with nonnegative real parts $\Reelle q\geq0$. In this subsection, we recall their definition and their basic properties following \cite[Section 5]{MR1339714} and \cite[Section 2]{MR3880300}. Let $T\in\Symp_{2n}(\mathbb C)$ be a nonnegative complex symplectic linear  transformation, that is, a complex symplectic transformation satisfying
$$\forall X\in\mathbb C^{2n},\quad i\big(\sigma(\overline{TX},TX)-\sigma(\overline X,X)\big)\geq0,$$
with $\sigma$ the canonical symplectic form on $\mathbb C^{2n}$ defined in \eqref{09052019E12}. Associated with this nonnegative symplectic linear transformation is its twisted graph $\lambda_T = \big\{(TX,X') : X\in\mathbb C^{2n}\big\}\subset\mathbb C^{2n}\times\mathbb C^{2n},$
where $X' = (x,-\xi)\in\mathbb C^{2n}$ if $X = (x,\xi)\in\mathbb C^{2n}$, which defines a nonnegative Lagrangian plane of $\mathbb C^{2n}\times\mathbb C^{2n}$ equipped with the symplectic form $\sigma_1((z_1,z_2),(\zeta_1,\zeta_2)) := \sigma(z_1,\zeta_1) + \sigma(z_2,\zeta_2),$ for $ (z_1,z_2),(\zeta_1,\zeta_2)\in\mathbb C^{2n}\times\mathbb C^{2n}.$
The set $\widetilde{\lambda_T} = \big\{(z_1,z_2,\zeta_1,\zeta_2) : (z_1,\zeta_1,z_2,\zeta_2)\in\lambda_T\big\}\subset\mathbb C^{4n},$
is then a nonnegative Lagrangian plane of $\mathbb C^{4n}$ equipped with the canonical symplectic form on $\mathbb C^{4n}$ (see \eqref{09052019E12}). According to \cite[Proposition 5.1 and Proposition 5.5]{MR1339714}, there exists a complex-valued quadratic form 
\begin{equation}\label{17052019E2}
	p(x,y,\theta) = \langle(x,y,\theta),P(x,y,\theta)\rangle,\quad (x,y)\in\mathbb R^{2n}, \theta\in\mathbb R^N,
\end{equation}
where
\begin{equation}
	P = \begin{pmatrix}
	P_{x,y;x,y} & P_{x,y;\theta} \\
	P_{\theta;x,y} & P_{\theta;\theta}
	\end{pmatrix}\in M_{2n+N}(\mathbb C),
\end{equation}
is a symmetric matrix satisfying the conditions:
\begin{enumerate}[label=\textbf{\arabic*.},leftmargin=* ,parsep=2pt,itemsep=0pt,topsep=2pt]
	\item $\Imag P\geq0$,
	\item The row vectors of the submatrix $\begin{pmatrix} P_{\theta;x,y} & P_{\theta;\theta} \end{pmatrix}\in\mathbb C^{N\times(2n+N)}$ are linearly independent over $\mathbb C$,
\end{enumerate}
parametrizing the nonnegative Lagrangian plane
$$\widetilde{\lambda_T}=\bigg\{\bigg(x,y,\frac{\partial p}{\partial x}(x,y,\theta),\frac{\partial p}{\partial y}(x,y,\theta)\bigg) : \frac{\partial p}{\partial\theta}(x,y,\theta) = 0\bigg\}.$$
By using some integrations par parts as in \cite[p. 442]{MR1339714}, this quadratic form $p$ allows to define the tempered distribution 
\begin{equation}\label{17052019E5}
	K_T = \frac1{(2\pi)^{\frac{n+N}2}}\sqrt{\det\begin{pmatrix} -ip''_{\theta,\theta} & p''_{\theta,y} \\ p''_{x,\theta} & ip''_{x,y} \end{pmatrix}}\int_{\mathbb R^N}e^{ip(x,y,\theta)}\ \mathrm d\theta\in\mathscr S'(\mathbb R^{2n}),
\end{equation}
as an oscillatory integral. Notice here that we do not prescribe the sign of the square root so the tempered distribution $K_T$ is defined up to its sign. Apart form this sign uncertainty, it is checked in \cite[p. 444]{MR1339714} that this definition only depends on the nonnegative complex symplectic transformation $T$, and not on the choice of the parametrization of the nonnegative Lagrangian $\widetilde{\lambda_T}$ by the quadratic form $p$. Associated with the nonnegative complex symplectic linear transformation $T$ is therefore the Fourier integral operator 
$$\mathscr K_T:\mathscr S(\mathbb R^n)\rightarrow\mathscr S'(\mathbb R^n),$$
defined by the kernel $K_T\in\mathscr S'(\mathbb R^{2n})$ as
$$\forall u,v\in\mathscr S(\mathbb R^n),\quad\langle \mathscr K_Tu,v\rangle_{\mathscr S'(\mathbb R^n),\mathscr S(\mathbb R^n)} 
= \langle K_T,u\otimes v\rangle_{\mathscr S'(\mathbb R^{2n}),\mathscr S(\mathbb R^{2n})}.$$
The first properties of this class of Fourier integral operators is summarized in the following proposition which is taken from \cite[Proposition 2.1]{MR3880300}:

\begin{prop}\label{17052019P1} Associated with any nonnegative complex symplectic linear transformation $T$ is a Fourier integral operator $\mathscr K_T:\mathscr S(\mathbb R^n)\rightarrow\mathscr S'(\mathbb R^n)$ whose kernel (determined up to its sign) is the tempered distribution $K_T\in\mathscr S'(\mathbb R^{2n})$ defined in \eqref{17052019E5} and whose adjoint $\mathscr K^*_T = \mathscr K_{\overline T^{-1}}:\mathscr S(\mathbb R^n)\rightarrow\mathscr S'(\mathbb R^n)$ is the Fourier integral operator associated with the nonnegative complex symplectic linear transformation $\overline T^{-1}$. The Fourier integral operator $\mathscr K_T$ defines a continuous mapping on the Schwartz space
$$\mathscr K_T:\mathscr S(\mathbb R^n)\rightarrow\mathscr S(\mathbb R^n),$$
which extends by duality as a continuous map on the space of tempered distributions
$$\mathscr K_T:\mathscr S'(\mathbb R^n)\rightarrow\mathscr S'(\mathbb R^n),$$
satisfying the Egorov formula
\begin{equation}\label{17052019E6}
	\forall X_0\in\mathbb C^{2n},\forall u\in\mathscr S'(\mathbb R^n),\quad\langle X_0,X\rangle^w\mathscr K_Tu = \mathscr K_T\langle J^{-1}T^{-1}JX_0,X\rangle^wu,
\end{equation}
where $J$ is the matrix defined in \eqref{08112018E1} and where for all $Y_0=(y_0,\eta_0)\in\mathbb C^{2n}$, 
$$\langle Y_0,X\rangle^w = \langle y_0,x\rangle + \langle\eta_0,D_x\rangle.$$
Furthermore, the Fourier integral operator
$$\mathscr K_T:L^2(\mathbb R^n)\rightarrow L^2(\mathbb R^n),$$
is a bounded operator on $L^2(\mathbb R^n)$ whose operator norm satisfies $\Vert\mathscr K_T\Vert_{\mathcal L(L^2(\mathbb R^n))}\le 1$.
\end{prop}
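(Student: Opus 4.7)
The plan is to follow Hörmander's construction of Fourier integral operators with complex phase, as developed in Section 5 of \cite{MR1339714} and adapted to this non-negative symplectic setting in \cite{MR3880300}. The starting point is Hörmander's result that any non-negative Lagrangian in $\mathbb{C}^{4n}$ admits a parametrization by a complex quadratic form whose imaginary part is non-negative and whose Hessian in the fiber variables is non-degenerate (conditions \textbf{1} and \textbf{2} in the excerpt). First I would verify that the tempered distribution $K_T$ given by \eqref{17052019E5} is well-defined (up to sign) and independent of the parametrization. Well-definedness as an element of $\mathscr{S}'(\mathbb{R}^{2n})$ comes from the stationary phase principle for complex phases: the condition $\Imag p \geq 0$ together with the non-degeneracy of $p''_{\theta,\theta}$ in condition \textbf{2} makes the oscillatory integral in $\theta$ absolutely convergent after suitable regularization. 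Independence of the parametrization (up to sign) follows from the usual equivalence-of-phase-functions argument via stationary phase.

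Next I would derive the adjoint formula and the mapping properties. For the adjoint, the key point is that the twisted graph of $\overline{T}^{-1}$ corresponds to complex-conjugating the quadratic form $p$ and swapping the roles of $x$ and $y$; consequently $\overline{K_T(x,y)}=K_{\overline{T}^{-1}}(y,x)$ up to sign, which gives $\mathscr{K}_T^{*}=\mathscr{K}_{\overline{T}^{-1}}$. To prove continuity $\mathscr{S}(\mathbb{R}^n)\to\mathscr{S}(\mathbb{R}^n)$, the non-negativity $\Imag p \geq 0$ combined with the non-degeneracy condition \textbf{2} lets one complete the square in $\theta$ and realize $K_T$ as a Gaussian-type distribution whose action on Schwartz functions can be analyzed by repeated integrations by parts using the standard operators $\langle X\rangle^{-2N}(1-\Delta_X)^N$. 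The Egorov formula \eqref{17052019E6} is then a direct consequence of the geometry: the symbol $\langle X_0,X\rangle$ applied in the $x$ variables pulls back, via the twisted graph $\lambda_T$, to the symbol $\langle J^{-1}T^{-1}JX_0,X\rangle$ in the $y$ variables. Rigorously, one verifies this by showing that $\langle X_0,X\rangle^w K_T - K_T\,{}^t\langle J^{-1}T^{-1}JX_0,X\rangle^w = 0$ using integration by parts in $(x,y,\theta)$ against the phase $p$ and the parametrization identities $\partial_x p = \xi$, $\partial_y p = -\eta$ on the critical set.

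The main obstacle is the $L^2$-boundedness (with norm at most $1$), since this is the one statement that does not follow from pure oscillatory-integral manipulations. The strategy I would follow is Hörmander's group-theoretic deformation argument: the set of non-negative complex symplectic transformations forms a semigroup containing $\Symp_{2n}(\mathbb{R})$ on its boundary, and any such $T$ can be connected continuously to a real symplectic transformation, along which the map $T\mapsto \mathscr{K}_T$ depends continuously in a suitable weak sense. For real symplectic $T$, $\mathscr{K}_T$ coincides (up to sign) with a metaplectic operator, hence is unitary on $L^2(\mathbb{R}^n)$. The contraction estimate $\|\mathscr{K}_T\|\leq 1$ along the interior of the semigroup comes from the sub-Markov/dissipativity property encoded in the sign condition $i(\sigma(\overline{TX},TX)-\sigma(\overline X,X))\geq 0$, which translates to positivity of the quadratic form governing a Gaussian convolution kernel. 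A clean implementation is to approximate $T$ by matrices of the form $e^{-2itJQ}$ with $\Reelle Q\geq 0$, for which $\mathscr{K}_T=e^{-tq^w}$ is a contraction semigroup on $L^2(\mathbb{R}^n)$ by the accretivity result recalled in the introduction, and then pass to the limit using the continuity of the parametrization construction.
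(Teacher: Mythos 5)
A preliminary remark: the paper does not prove this proposition at all --- it is imported verbatim from \cite{MR3880300} (Proposition 2.1), whose content is in turn H\"ormander's construction in \cite{MR1339714} (Section 5) --- so your proposal has to be measured against those references. Your treatment of the definition of $K_T$, of the adjoint identity, of the continuity on $\mathscr S(\mathbb R^n)$ and of the Egorov formula follows the same standard route as they do. One inaccuracy along the way: condition \textbf{2} is the linear independence of the rows of $\begin{pmatrix}P_{\theta;x,y} & P_{\theta;\theta}\end{pmatrix}$, i.e.\ the non-degeneracy of $p$ as a phase function, not the invertibility of $p''_{\theta,\theta}$; in general one cannot ``complete the square in $\theta$'', and the oscillatory integral is defined by regularization and integration by parts using this weaker hypothesis.

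The genuine gap is in the $L^2$ bound $\Vert\mathscr K_T\Vert_{\mathcal L(L^2(\mathbb R^n))}\le1$, which you correctly single out as the hard point but do not actually establish. Your reduction rests on the assertion that every non-negative complex symplectic $T$ can be approximated by matrices $e^{-2itJQ}$ with $\Reelle Q\geq0$. This is unjustified, and false if taken literally: real symplectic matrices are non-negative, yet already for $n=1$ a matrix such as $\mathrm{diag}(-2,-1/2)\in\Symp_2(\mathbb R)$ is not of the form $e^{-2itJQ}$ for any admissible $Q$ (any Hamiltonian logarithm of it has imaginary part with eigenvalues $\pm\pi(2k+1)$, which is incompatible with $t\Reelle Q\geq0$), so the density you invoke is precisely the structural fact that is missing. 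What H\"ormander actually proves (\cite{MR1339714}, Proposition 5.11 and Theorem 5.12) is a factorization $T=\chi_1e^{2iF}\chi_2$ with $\chi_1,\chi_2\in\Symp_{2n}(\mathbb R)$ real and $F=JA$ the Hamilton map of a non-negative real quadratic form $a$; Proposition \ref{17052019P3} then gives $\mathscr K_T=\pm\mathscr K_{\chi_1}e^{-a^w}\mathscr K_{\chi_2}$, and the bound follows because the metaplectic factors are unitary and $e^{-a^w}$ is a selfadjoint contraction by maximal accretivity of $a^w$. That factorization is a substantive theorem about the semigroup of non-negative complex symplectic transformations and cannot be replaced by the ``sub-Markov/dissipativity'' remark: the pointwise bound $\vert e^{ip}\vert\le1$ furnished by $\Imag p\geq0$ does not yield an $L^2$ operator norm bound. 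Your closing idea --- weak convergence of $\mathscr K_{T_n}$ plus lower semicontinuity of the norm --- would indeed transfer the estimate, but only once the approximation by products of metaplectic operators and contraction semigroup elements is proved, and that is the whole difficulty.
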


\noindent The Egorov formula is presented in the following way in \cite[Proposition 2.1]{MR3880300}:
\begin{equation}\label{17052019E7}
	\forall(y_0,\eta_0)\in\mathbb C^{2n}, \forall u\in\mathscr S'(\mathbb R^n),\quad (\langle x_0,D_x\rangle - \langle\xi_0,x\rangle)\mathscr K_Tu = \mathscr K_T(\langle y_0,D_x\rangle - \langle\eta_0,x\rangle)u,
\end{equation}
with $(x_0,\xi_0) = T(y_0,\eta_0)$. However, the formulas \eqref{17052019E6} and \eqref{17052019E7} are equivalent since
$$\langle x_0,D_x\rangle - \langle\xi_0,x\rangle = \langle J^{-1}X_0,X\rangle^w = \langle J^{-1}TY_0,X\rangle^w\quad\text{and}\quad \langle y_0,D_x\rangle - \langle\eta_0,x\rangle = \langle J^{-1}Y_0,X\rangle^w.$$

The next proposition, coming from \cite[Proposition 5.9]{MR1339714}, shows that the composition of two Fourier integral operators associated with nonnegative complex symplectic linear transformations remains a Fourier integral operator associated with a nonnegative complex symplectic linear transformation. It has a key role in this paper in Section \ref{Polar}. The sign uncertainty that appears is anew due to the fact that the Schwartz distributions $K_T$ defined in \eqref{17052019E5} are determined up to their sign. However, this sign uncertainty is not an issue in this work.

\begin{prop}\label{17052019P3} If $T_1$ and $T_2$ are two nonnegative complex symplectic linear transformations in $\mathbb C^{2n}$, then $T_1T_2$ is also a nonnegative complex symplectic linear transformation and
$$\mathscr K_{T_1T_2} = \pm\mathscr K_{T_1}\mathscr K_{T_2}.$$
\end{prop}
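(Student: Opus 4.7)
The proof splits naturally into two essentially independent parts: (i) verifying that $T_1T_2$ remains a non-negative complex symplectic transformation, and (ii) establishing the kernel composition formula.

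For part (i), symplecticity of $T_1T_2$ follows from the group structure of $\Symp_{2n}(\mathbb C)$. For non-negativity, I would set $Y := T_2 X$ and use the telescoping identity
$$i\bigl(\sigma(\overline{T_1T_2 X},T_1T_2 X)-\sigma(\overline X,X)\bigr)=i\bigl(\sigma(\overline{T_1 Y},T_1 Y)-\sigma(\overline Y,Y)\bigr)+i\bigl(\sigma(\overline{T_2 X},T_2 X)-\sigma(\overline X,X)\bigr),$$
which displays $i(\sigma(\overline{T_1T_2X},T_1T_2X)-\sigma(\overline X,X))$ as a sum of two non-negative quantities under the hypotheses on $T_1$ and $T_2$.

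For part (ii), the strategy is to compute the Schwartz kernel of $\mathscr K_{T_1}\mathscr K_{T_2}$ directly from the oscillatory integral representations \eqref{17052019E5} and match it with the intrinsic kernel $K_{T_1T_2}$. Let $p_j(x,y,\theta_j)$, $j=1,2$, be the quadratic forms of the type \eqref{17052019E2} parametrizing $\widetilde{\lambda_{T_j}}$, with auxiliary parameters of sizes $N_1$ and $N_2$ respectively. Formally, the kernel of the composition is the partial integration $\int K_{T_1}(x,z)K_{T_2}(z,y)\,\mathrm dz$ in the $z$-variable, which after insertion of the oscillatory representations becomes an oscillatory integral in the extended auxiliary parameter $(z,\theta_1,\theta_2)\in\mathbb R^{n+N_1+N_2}$ with phase
$$\tilde p(x,y,z,\theta_1,\theta_2):=p_1(x,z,\theta_1)+p_2(z,y,\theta_2).$$
This is again a complex quadratic form with $\Imag\tilde p\geq0$ and fits the shape \eqref{17052019E2} for an extended parameter.

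The key verification is that $\tilde p$ parametrizes exactly the non-negative Lagrangian $\widetilde{\lambda_{T_1T_2}}$. The critical set $\partial_{\theta_1}\tilde p=\partial_{\theta_2}\tilde p=0$ already produces the individual twisted graphs of $T_1$ and $T_2$, and the additional condition $\partial_z\tilde p=0$ is precisely the symplectic matching of the intermediate variable (the outgoing momentum of the first factor equals, after the twist, the incoming momentum of the second). A direct calculation will then show that the corresponding critical values of $(\partial_x\tilde p,\partial_y\tilde p)$ describe the twisted graph of $T_1T_2$. By the uniqueness of the Fourier integral operator associated to a non-negative Lagrangian (which is implicit in the construction \eqref{17052019E5} and is precisely the content of Hörmander's machinery in \cite{MR1339714}), the resulting kernel must coincide with $K_{T_1T_2}$ up to a multiplicative scalar; the prefactor $\det$ in \eqref{17052019E5} then fixes that scalar up to its sign.

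The main technical obstacle lies in the non-degeneracy bookkeeping: one must check that the enlarged row vectors $\partial_{(z,\theta_1,\theta_2)}\nabla\tilde p$ remain linearly independent over $\mathbb C$ (the second condition following \eqref{17052019E2}), so that $\tilde p$ genuinely defines a parametrization of $\widetilde{\lambda_{T_1T_2}}$, and that the various square-root determinants combine consistently. The $\pm$ sign ambiguity in the formula $\mathscr K_{T_1T_2}=\pm\mathscr K_{T_1}\mathscr K_{T_2}$ is precisely the unavoidable ambiguity arising from the choice of complex square roots in each of the three normalizing prefactors, as already announced in the construction of $K_T$ after \eqref{17052019E5}.
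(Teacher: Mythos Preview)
The paper does not prove this proposition at all: it is stated in the appendix as a quotation of H\"ormander's result \cite[Proposition~5.9]{MR1339714}, with no argument supplied. Your proposal is therefore not competing against any proof in the paper, and the sketch you give is essentially the standard route one finds in H\"ormander's original treatment: the telescoping identity for part~(i) is exactly right, and for part~(ii) the construction of the composite phase $\tilde p(x,y,z,\theta_1,\theta_2)=p_1(x,z,\theta_1)+p_2(z,y,\theta_2)$ with the intermediate variable $z$ absorbed into the auxiliary parameters is the canonical way to compose quadratic FIO kernels. Your identification of the technical obstacle (linear independence of the enlarged row block and the determinant bookkeeping) is also accurate; these are precisely the points H\"ormander handles in his Proposition~5.9, and there is no shortcut around them.
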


Finally, we are interested in the real case:

\begin{dfn}\label{17052019D1} A Fourier integral operator $\mathscr K_T$ associated with a real symplectic linear transformation $T$ is called metaplectic.
\end{dfn}

The metaplectic operators stand out among the other Fourier integral operators $\mathscr K_T$ as illustrated in the following proposition which comes from \cite[Theorem 5.12]{MR1339714}:

\begin{prop}\label{17052019P2} Let $\mathscr K_T$ be a Fourier integral operator associated with a nonnegative complex symplectic transformation $T$. The operator $\mathscr K_T:L^2(\mathbb R^n)\rightarrow L^2(\mathbb R^n)$ is invertible if and only if $\mathscr K_T$ is a metaplectic operator, that is,  if and only if $T$ is a real symplectic transformation. In this case, the operator $\mathscr K_T:L^2(\mathbb R^n)\rightarrow L^2(\mathbb R^n)$ defines a bijective isometry on $L^2(\mathbb R^n)$.
\end{prop}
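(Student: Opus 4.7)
The plan treats the two implications separately. For the ``if'' direction, I would use the adjoint and composition formulas algebraically. Since $T$ is real symplectic, $\overline T^{-1} = T^{-1}$, so Proposition \ref{17052019P1} gives $\mathscr K_T^* = \mathscr K_{T^{-1}}$, and Proposition \ref{17052019P3} yields $\mathscr K_T^* \mathscr K_T = \pm \mathscr K_I$. The first step is to identify $\mathscr K_I = \pm \mathrm{Id}_{L^2}$ by evaluating the oscillatory integral \eqref{17052019E5} for the phase $p(x, y, \theta) = \langle \theta, x - y\rangle$ parametrizing the diagonal twisted graph of the identity: the determinant prefactor reduces to $\pm 1$ and the integral $(2\pi)^{-n}\int_{\mathbb R^n} e^{i \theta \cdot (x - y)}\,d\theta = \delta(x - y)$ gives $K_I = \pm \delta(x - y)$, hence $\mathscr K_I = \pm \mathrm{Id}$. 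Then $\mathscr K_T^* \mathscr K_T = \pm \mathrm{Id}$; since this operator is nonzero and non-negative selfadjoint, the sign must be positive, so $\mathscr K_T^* \mathscr K_T = \mathrm{Id}$. Applying the same reasoning to $T^{-1}$ (also real symplectic) yields $\mathscr K_T \mathscr K_T^* = \mathrm{Id}$, so $\mathscr K_T$ is a bijective isometry on $L^2(\mathbb R^n)$.

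For the ``only if'' direction, I would argue by contrapositive: assume $T$ is not real and show $\mathscr K_T$ fails to be bijective on $L^2$. A preliminary observation is that if $T$ is not real, the non-negativity inequality $i(\sigma(\overline{TX}, TX) - \sigma(\overline X, X)) \ge 0$ is strict on some nonzero subspace of $\mathbb C^{2n}$: equality everywhere would imply $\overline{TX} = T \overline X$ for every $X$ (via the symplectic property of $T$ and the non-degeneracy of $\sigma$), forcing $T$ real. By the composition and adjoint formulas, $\mathscr K_T^* \mathscr K_T = \mathscr K_S$ with $S = \overline T^{-1} T$ and sign fixed by positivity; moreover $S \ne I$ precisely when $T$ is not real. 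Iterating, $(\mathscr K_T^* \mathscr K_T)^k = \pm \mathscr K_{S^k}$, so if $\mathscr K_T$ were invertible the spectrum of $\mathscr K_T^* \mathscr K_T$ would lie in some $[c, 1]$ with $c > 0$, giving the lower bound $\|\mathscr K_{S^k}\|_{\mathcal L(L^2)} \ge c^k$ for every $k \ge 1$.

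The main obstacle is then to derive a contradiction by establishing that $\|\mathscr K_{S^k}\|_{\mathcal L(L^2)} \to 0$ as $k \to \infty$. I would attempt this by analyzing the parametrizing quadratic form of the twisted graph $\widetilde{\lambda_{S^k}}$: its imaginary part should inherit and accumulate the strict positivity of $S$ under iteration, so that for $k$ sufficiently large the kernel $K_{S^k}$ becomes a genuine Gaussian function in $L^2(\mathbb R^{2n})$ whose Hilbert--Schmidt norm, and therefore its operator norm, decays geometrically. This is essentially the content of Theorem~5.12 in \cite{MR1339714}, and the hard technical point is tracking how strictness of the non-negativity on a single direction of $\mathbb C^{2n}$ propagates to full Gaussian decay after finitely many compositions, which requires a careful analysis of how the non-negative Lagrangian structure of $\widetilde{\lambda_S}$ transforms under composition of the associated canonical relations.
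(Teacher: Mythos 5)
The paper itself offers no proof of this proposition — it is quoted directly from \cite{MR1339714} (Theorem 5.12) — so your proposal is measured on its own merits. Your ``if'' direction is correct and essentially complete: for $T$ real symplectic, $\mathscr K_T^*=\mathscr K_{T^{-1}}$, the composition rule gives $\mathscr K_T^*\mathscr K_T=\pm\mathscr K_I=\pm\mathrm{Id}$, and non-negativity of $\mathscr K_T^*\mathscr K_T$ fixes the sign; the identification $\mathscr K_I=\pm\mathrm{Id}$ via the phase $\langle\theta,x-y\rangle$ and the polarization argument showing that equality in the positivity condition forces $\overline T=T$ are both sound.

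The ``only if'' direction, however, has two genuine gaps. The first is logical: a lower bound $\Vert\mathscr K_{S^k}\Vert\geq c^k$ with $c<1$ is perfectly compatible with $\Vert\mathscr K_{S^k}\Vert\rightarrow0$ geometrically, so the contradiction you announce never materializes even if the decay were proved. (For $\mathscr K_T=e^{-t\mathcal H}$ with $\mathcal H$ the harmonic oscillator, $\Vert\mathscr K_{S^k}\Vert=e^{-2knt}$ decays exactly geometrically.) The invariant you actually need is not the operator norm but compactness, or the failure of a lower bound $\Vert\mathscr K_Tu\Vert\geq\delta\Vert u\Vert$: if $K_{S^{k}}\in L^2(\mathbb R^{2n})$ for some $k$, then $(\mathscr K_T^*\mathscr K_T)^{k}$ is Hilbert--Schmidt, hence $\mathscr K_T^*\mathscr K_T$ is compact and cannot be invertible on an infinite-dimensional space. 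The second gap is that the asserted ``full Gaussian decay after finitely many compositions'' is false in general: strict positivity in one direction need not propagate to all of $\mathbb C^{2n}$ under iteration. Already for $n=1$ and $\mathscr K_T=e^{t\partial_x^2}$ (heat semigroup, $T=I-2itJQ$ non-real), one has $\Vert\mathscr K_{S^k}\Vert=1$ for every $k$ and the kernel $(8\pi kt)^{-1/2}e^{-\vert x-y\vert^2/(8kt)}$ is never in $L^2(\mathbb R^{2})$, being constant along the diagonal; here non-invertibility must be read off from the fact that the Fourier multiplier $e^{-2kt\xi^2}$ is not bounded below. A workable converse must therefore exploit the strictly positive direction directly — for instance by reducing, via metaplectic conjugation and the symplectic classification of non-negative quadratic forms, to a model where one symplectic block acts as $e^{-t\lambda(D^2+x^2)}$ or $e^{-tx_j^2}$, neither of which is bounded below — rather than by trying to make the whole kernel square-integrable or its norm small.
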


To finish, let us recall the metaplectic invariance of the Weyl calculus:

\begin{thm} Let  $T$ be a real symplectic transformation and $\mathscr K_T$ the associated metaplectic operator. Then, the following identity holds for all tempered distributions $a\in\mathscr S'(\mathbb R^n)$,
$$\mathscr K^{-1}_Ta^w(x,D_x)\mathscr K_T = (a\circ T)^w(x,D_x).$$
\end{thm}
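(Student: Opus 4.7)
The plan is to use the Egorov formula from Proposition \ref{17052019P1} to prove the identity on linear Weyl symbols, lift it by functional calculus to phase-space characters, and extend to all of $\mathscr S'(\mathbb R^{2n})$ (reading the statement's $\mathscr S'(\mathbb R^n)$ as the natural phase-space space) by Fourier synthesis and continuity.

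For $T \in \Symp_{2n}(\mathbb R)$, the identity $T^{T} J T = J$ yields $J^{-1} T^{-1} J = T^{T}$, so the Egorov formula \eqref{17052019E6} rearranges, for every $X_0 \in \mathbb C^{2n}$, as
$$
\mathscr K_T^{-1} \langle X_0, X\rangle^w \mathscr K_T \;=\; \langle T^{T} X_0, X\rangle^w \;=\; \big( \langle X_0, \cdot\rangle \circ T \big)^w ,
$$
which is the claim on linear symbols. For real $X_0$, the operator $\langle X_0, X\rangle^w$ is essentially self-adjoint on $\mathscr S(\mathbb R^n)$, and conjugation by the unitary $\mathscr K_T$ (Proposition \ref{17052019P2}) commutes with its functional calculus, so the above upgrades to
$$
\mathscr K_T^{-1} e^{i \langle X_0, X\rangle^w} \mathscr K_T \;=\; e^{i \langle T^{T} X_0, X\rangle^w} \quad \text{on } L^2(\mathbb R^n) .
$$
Since $(e^{i \langle X_0, X\rangle})^w = e^{i \langle X_0, X\rangle^w}$ — the Weyl quantization of a character coincides with the Heisenberg--Weyl operator generated by the corresponding linear symbol — the theorem is established on every phase-space character $a(X) = e^{i \langle X_0, X\rangle}$.

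To conclude, I would invoke Fourier synthesis: any Schwartz symbol $a \in \mathscr S(\mathbb R^{2n})$ is the integral of characters weighted by its Fourier transform $\widehat a$, and Weyl quantization, conjugation by $\mathscr K_T^{\pm 1}$, and pullback by $T$ all commute with such integrals in the appropriate operator topology. This settles the theorem for all Schwartz symbols; the extension to $a \in \mathscr S'(\mathbb R^{2n})$ then follows from weak-$*$ density of $\mathscr S$ in $\mathscr S'$ together with continuity of all three operations (Weyl quantization is weak-$*$ continuous, $\mathscr K_T^{\pm 1}$ are continuous on $\mathscr S$ and $\mathscr S'$ by Proposition \ref{17052019P1}, and $a \mapsto a \circ T$ is an automorphism of $\mathscr S'(\mathbb R^{2n})$ because $T$ is a linear isomorphism with $\lvert\det T\rvert = 1$). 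The main technical obstacle is the functional-calculus step — justifying $(e^{i \langle X_0, X\rangle})^w = e^{i \langle X_0, X\rangle^w}$ requires unpacking the Schrödinger representation of the Heisenberg group — which can alternatively be sidestepped by working directly with the oscillatory-integral kernel \eqref{17052019E5}: the quadratic phase parametrizing $\widetilde{\lambda_T}$ transforms covariantly under $T$, and a linear change of variables in the phase produces the symbol $a \circ T$ from $a$, modulo the metaplectic sign already absorbed into the definition of $\mathscr K_T$.
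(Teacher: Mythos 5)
Your proposal is correct, but be aware that the paper does not actually prove this theorem: it is recalled as a classical fact with a citation to H\"ormander \cite{MR2304165} (Theorem 18.5.9), the only content supplied in the text being the remark that the Egorov formula \eqref{17052019E6} is the special case of linear symbols (via $J^{-1}T^{-1}J=T^{T}$). Your argument runs that remark in the opposite direction and is the standard proof of symplectic covariance of the Weyl calculus: the linear case from Proposition \ref{17052019P1}, exponentiation to phase-space characters, Fourier synthesis for Schwartz symbols, and weak-$*$ density for $\mathscr S'(\mathbb R^{2n})$ (you are right that the statement's $\mathscr S'(\mathbb R^{n})$ should be read as the phase-space class). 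All the ingredients you invoke are available or routine: essential self-adjointness of $\langle X_0,X\rangle^w$ on $\mathscr S(\mathbb R^n)$ for real $X_0$, the identity $(e^{i\langle X_0,X\rangle})^w=e^{i\langle X_0,X\rangle^w}$ (a two-line computation with the Weyl oscillatory integral, both sides being $u\mapsto e^{i\langle x_0,x+\xi_0/2\rangle}u(x+\xi_0)$), unitarity of $\mathscr K_T$ from Proposition \ref{17052019P2}, and $\vert\det T\vert=1$ for the change of variables $X_0\mapsto (T^{T})^{-1}X_0$ matching $\widehat{a\circ T}$. Two details worth recording in a final write-up: the sign ambiguity $\mathscr K_{T^{-1}}=\pm\mathscr K_T^{-1}$ from Proposition \ref{17052019P3} is harmless because conjugation is insensitive to it; and since the identity is between continuous maps $\mathscr S(\mathbb R^n)\rightarrow\mathscr S'(\mathbb R^n)$, the density step is most cleanly phrased by pairing both sides against the Wigner transform of two Schwartz functions, exactly in the spirit of the proof of Lemma \ref{09052019L1}. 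What your approach buys over the paper's bare citation is a self-contained derivation from the Fourier-integral-operator toolkit already set up in the Appendix; what it costs is the (standard but nontrivial) verification of the character-quantization identity, which you correctly identify as the main technical point.
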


The general result of metaplectic invariance of the Weyl calculus can be found e.g.\ in \cite[Theorem 18.5.9]{MR2304165}. Notice that the Egorov formula \eqref{17052019E6} is a particular case of this Theorem for linear forms since \eqref{17052019E6} can be also written in the following way
$$\forall X_0\in\mathbb C^{2n},\quad \mathscr K^{-1}_T\langle X_0,X\rangle^w\mathscr K_T = \langle X_0,TX\rangle^w,$$
by using that $(J^{-1}T^{-1}J)^T = T$, which is a straightforward property of real symplectic matrices.

\subsection{Splitting of the harmonic oscillator semigroup} In this subsection, we give a decomposition of the harmonic oscillator semigroup. To obtain this splitting, we will make use once again of the theory of Fourier integral operators in the very same way as in Section \ref{Polar}. Let us mention as an anecdote that the identity \eqref{11022019E8} involved in the proof of the following proposition has played a major role and has been widely used in image processing in order to make rotations, see e.g.\ \cite P. This identity is also key here for our purpose. As a byproduct of this splitting, we obtain the injectivity property of the evolution operators generated by accretive quadratic operators associated with nonnegative quadratic forms.

\begin{prop}\label{11022019P1} Let $\mathcal H = -\partial^2_x+x^2$, with $x\in\mathbb R$, be the harmonic oscillator. Then, the semigroup $(e^{-t\mathcal H})_{t\geq0}$ generated by the operator $\mathcal H$ admits the following decomposition:
\begin{equation}\label{11022019E16}
	\forall t\geq0,\quad e^{-t\mathcal H} = e^{-\frac12(\tanh t)x^2}e^{\frac12\sinh(2t)\partial^2_x}e^{-\frac12(\tanh t)x^2}.
\end{equation}
This implies in particular that the evolution operators $e^{-t\mathcal H}$ are injective.
\end{prop}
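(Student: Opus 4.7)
The plan is to mimic the reduction to a finite-dimensional matrix identity carried out in Section \ref{Polar}, combining the Fourier integral operator representation of Proposition \ref{13112018P2} with the composition rule of Proposition \ref{17052019P3}. Since the statement is in dimension $n=1$, we have $J=\begin{pmatrix}0&1\\-1&0\end{pmatrix}$, and the Weyl symbol of $\mathcal H$ is the nonnegative real quadratic form $q(x,\xi)=x^2+\xi^2$ with matrix $Q=I_2$; hence $e^{-t\mathcal H}=\mathscr K_{e^{-2itJ}}$. Exactly along the same lines, the multiplication operator $e^{-\frac12(\tanh t)x^2}$ is the value at time $s=\tfrac12\tanh t$ of the semigroup generated by $(x^2)^w$, so equals $\mathscr K_{e^{-i\tanh(t)JQ_1}}$ with $Q_1=\mathrm{diag}(1,0)$; and the Fourier multiplier $e^{\frac12\sinh(2t)\partial_x^2}=e^{-\frac12\sinh(2t)D_x^2}$ is the value at time $s=\tfrac12\sinh(2t)$ of the semigroup generated by $(\xi^2)^w$, so equals $\mathscr K_{e^{-i\sinh(2t)JQ_2}}$ with $Q_2=\mathrm{diag}(0,1)$.

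By Proposition \ref{17052019P3}, the claimed identity then reduces, up to an overall sign, to the finite-dimensional matrix relation
$$e^{-2itJ}\;=\;e^{-i\tanh(t)\,JQ_1}\,e^{-i\sinh(2t)\,JQ_2}\,e^{-i\tanh(t)\,JQ_1}.$$
This is elementary because $(JQ_1)^2=(JQ_2)^2=0$: the two outer factors are $I_2-i\tanh(t)\,JQ_1$ and the middle one is $I_2-i\sinh(2t)\,JQ_2$. Expanding the triple product and invoking the elementary identities $\sinh(2t)\tanh(t)=\cosh(2t)-1$ and $\tanh(t)(\cosh(2t)+1)=\sinh(2t)$ yields $\cosh(2t)\,I_2-i\sinh(2t)\,J$, which equals $e^{-2itJ}$ thanks to $J^2=-I_2$.

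The remaining sign ambiguity from Proposition \ref{17052019P3} is disposed of exactly as in the paragraph surrounding \eqref{14022019E5}: both sides of the claimed identity, applied to a fixed nonzero Schwartz function and paired in $L^2(\mathbb R)$ with that same function, depend continuously on $t\geq 0$, agree at $t=0$, and differ by a factor in $\{\pm 1\}$; connectedness of $[0,+\infty)$ forces this factor to remain equal to $+1$ throughout. This settles \eqref{11022019E16}, and the higher-dimensional case follows by a tensorization argument.

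Finally, injectivity of $e^{-t\mathcal H}$ is an immediate corollary of the factorization. The outer factors are multiplications by the nowhere-vanishing function $e^{-\frac12(\tanh t)x^2}$, hence injective on $L^2(\mathbb R)$; the middle factor is a Fourier multiplier with the nowhere-vanishing symbol $e^{-\frac12\sinh(2t)\xi^2}$, hence also injective. A composition of injective operators being injective, $e^{-t\mathcal H}$ is injective. The main (minor) obstacle is purely bookkeeping in the $2\times2$ matrix computation, together with checking the normalization conventions (placement of the factors $2$ and $i$) so that the Fourier integral operator associated with $e^{-i\tanh(t)JQ_1}$ truly is multiplication by $e^{-\frac12(\tanh t)x^2}$; once these are fixed, the proof is direct.
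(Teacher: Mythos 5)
Your proposal is correct and follows essentially the same route as the paper: reduce \eqref{11022019E16} to a $2\times2$ symplectic identity via Propositions \ref{13112018P2} and \ref{17052019P3}, fix the residual sign $\pm1$ by a continuity-and-positivity argument starting from $t=0$, and deduce injectivity from the injectivity of the three factors. The only (cosmetic) differences are that the paper obtains the matrix identity by analytically continuing the shear decomposition \eqref{11022019E8} of a rotation to imaginary time while you verify the hyperbolic version directly using the nilpotency of $JQ_1$ and $JQ_2$, and that the paper fixes the sign via pointwise positivity of the function $\varphi(t)$ whereas you use positivity of the $L^2$ pairing (for which one should note, as is immediate since the middle factor is a positive Fourier multiplier, that the pairing never vanishes).
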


\begin{proof} First, let us consider the following well know factorization (see e.g.\ \cite{P}) for $t\in(-\pi,\pi)$ 
\begin{equation}\label{11022019E8}
	\begin{pmatrix}
		1 & 0 \\
		\tan\frac t2 & 1
	\end{pmatrix}
	\begin{pmatrix}
		1 & -\sin t \\
		0 & 1
	\end{pmatrix}
	\begin{pmatrix}
		1 & 0 \\
		\tan\frac t2 & 1
	\end{pmatrix}
	=
	\begin{pmatrix}
		\cos t & -\sin t \\
		\sin t & \cos t
	\end{pmatrix}.
\end{equation}
Identifying each one of these matrices with an exponential and extending analytically this relation to $t\in i\mathbb R$ we deduce that
$$\forall t\in\mathbb R,\quad e^{-itJ} = e^{-(i\tanh\frac t2)JQ_x}e^{-(i\sinh t)JQ_\xi} e^{-(i\tanh\frac t2)JQ_x},$$
where $J$ is the symplectic matrix in dimension $2$, $Q_x$ is the matrix of $x^2$ and $Q_{\xi}$ is the matrix of $\xi^2$. Consequently, applying  Proposition \ref{13112018P2}, we deduce that for all $t\geq0$,
\begin{equation}\label{11022019E14}
	\varepsilon_te^{-\frac12(\tanh t)x^2}e^{\frac12\sinh(2t)\partial^2_x}e^{-\frac12(\tanh t)x^2} = e^{-t(x^2-\partial^2_x)},
\end{equation}
with $\varepsilon_t\in\{-1,1\}$ for all $t\geq0$. It only remains to prove that $\varepsilon_t = 1$ for all $t\geq0$ to establish \eqref{11022019E16}. To that end, we consider $u_0\in\mathscr S(\mathbb R)$ the Gaussian function defined for all $x\in\mathbb R$ by $u_0(x) = e^{-x^2}$. We first notice that for all $t\geq0$,
\begin{equation}\label{11022019E13}
	e^{-\frac12(\tanh t)x^2}e^{\frac12\sinh(2t)\partial^2_x}e^{-\frac12(\tanh t)x^2}u_0>0.
\end{equation}
Indeed, this estimate is trivial when $t=0$ by definition of $u_0$. When $t>0$, we observe that for all $u\in\mathscr S(\mathbb R)$ such that $u>0$, the function $e^{-\frac12(\tanh t)x^2}u>0$ is also positive, and on the other hand, we notice by using the explicit formula for the Fourier transform of Gaussian functions that
$$e^{\frac12\sinh(2t)\partial^2_x}u = \sqrt{\frac{2\pi}{\sinh(2t)}}\exp\bigg(-\frac{x^2}{2\sinh(2t)}\bigg)\ast u>0,$$
where $\ast$ denotes the convolution product. This proves that \eqref{11022019E13} holds. Now, let us consider the function $\varphi$ defined for all $t\geq0$ by
\begin{equation}\label{10052019E1}
	\varphi(t) = \varepsilon_te^{-\frac12(\tanh t)x^2}e^{\frac12\sinh(2t)\partial^2_x}e^{-\frac12(\tanh t)x^2}u_0\in\mathscr S(\mathbb R^n).
\end{equation}
The rest of the proof consists in checking that $\varphi (t)>0$ for all $t\geq0$. This property combined with \eqref{11022019E13} will prove that $\varepsilon_t>0$ for all $t\geq0$. Since $\varepsilon_t\in\{-1,1\}$, it will then follows that $\varepsilon_t = 1$ for all $t\geq0$. We first deduce from \cite{MR1339714} (Theorem 4.2) that the function $t\geq0\mapsto e^{-t(x^2-\partial^2_x)}u_0\in\mathscr S(\mathbb R^n)$ is continuous which implies from \eqref{11022019E14} and \eqref{10052019E1} the continuity of the function $\varphi$ from $[0,+\infty)$ to $\mathscr S(\mathbb R)$. As a consequence of \eqref{11022019E13} and \eqref{10052019E1}, the Schwartz function $\varphi(t)$ is not the zero function for all $t\geq0$. Let $x\in\mathbb R$. The previous discussion implies that the function $t\geq0\mapsto\varphi(t)(x)\in\mathbb R^*$ is continuous and does not vanish. Moreover, it follows from \eqref{11022019E14} and \eqref{10052019E1} that $\varphi(0)(x) = u_0(x)>0$. We deduce that $\varphi(t)(x)>0$ for all $t\geq0$. As a consequence, $\varphi(t)>0$ for all $t\geq0$. This proves that \eqref{11022019E16} holds. The injectivity of the operators $e^{-t\mathcal H}$ is then a straightforward consequence of \eqref{11022019E16} since the operators $e^{-\frac12(\tanh t)x^2}$ and $e^{\frac12\sinh(2t)\partial^2_x}$ are themselves injective. This ends the proof of Proposition \ref{11022019P1}.
\end{proof}

Notice that the injectivity property of the evolution operators $e^{-t\mathcal H}$ can also be readily proved by using the Hermite basis of $L^2(\mathbb R^n)$ and a direct calculus.

\begin{cor}\label{11022019C1} Let $q:\mathbb R^{2n}\rightarrow\mathbb R$ be a nonnegative quadratic form $q\geq0$. Then, for all $t\geq0$, the evolution operator $e^{-tq^w}$ generated by the accretive quadratic operator $q^w(x,D_x)$ is injective.
\end{cor}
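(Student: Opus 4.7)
The strategy will be to reduce the general case to Proposition~\ref{11022019P1}, which handles the one-dimensional harmonic oscillator, through symplectic reduction and the metaplectic invariance of the Weyl calculus recalled at the end of Subsection~\ref{FIO}. Since $q$ is a real non-negative quadratic form on the symplectic space $\mathbb R^{2n}$, a classical symplectic normal form is available: one can find a real symplectic transformation $T \in \Symp_{2n}(\mathbb R)$, positive numbers $\lambda_1,\ldots,\lambda_p$, and an integer $r\geq 0$ with $p+r \leq n$, such that
$$q \circ T(y,\eta) = \sum_{j=1}^{p} \lambda_j(y_j^2 + \eta_j^2) + \sum_{j=p+1}^{p+r} \eta_j^2,$$
the remaining variables (if any) being absent from $q\circ T$. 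This normal form is obtained by combining a Williamson-type reduction on the positive definite part of $q$ with a careful analysis of the kernel $N = \{q = 0\}$; the delicate case is when $N$ is neither symplectic nor isotropic, where one must descend to the symplectic quotient $N^\sigma/(N \cap N^\sigma)$ before applying Williamson's theorem.

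Once the normal form is established, the metaplectic invariance of Weyl quantization identifies $q^w(x,D_x)$ with $\mathscr K_T\, (q\circ T)^w(x,D_x)\, \mathscr K_T^{-1}$, where $\mathscr K_T$ is a metaplectic (hence unitary) operator on $L^2(\mathbb R^n)$ by Proposition~\ref{17052019P2}. Consequently, $e^{-tq^w}$ and $e^{-t(q\circ T)^w}$ are unitarily equivalent, and it suffices to prove the injectivity of the latter. Since the variables $y_1,\ldots,y_{p+r}$ appear in decoupled blocks in $q \circ T$, the corresponding evolution operator factorizes as a tensor product on $L^2(\mathbb R^n) = L^2(\mathbb R) \otimes \cdots \otimes L^2(\mathbb R)$:
$$e^{-t(q\circ T)^w} = \bigotimes_{j=1}^{p} e^{-t\lambda_j \mathcal H_j} \,\otimes\, \bigotimes_{j=p+1}^{p+r} e^{t\partial_{y_j}^2} \,\otimes\, \mathrm{Id},$$
where $\mathcal H_j = -\partial_{y_j}^2 + y_j^2$ is the one-dimensional harmonic oscillator.

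Each tensor factor is injective: $e^{-t\lambda_j \mathcal H_j}$ is injective by Proposition~\ref{11022019P1} after the time rescaling $t\mapsto t\lambda_j$; the heat semigroup factor $e^{t\partial_{y_j}^2}$ is injective because conjugation by the Fourier transform turns it into multiplication by the nowhere-vanishing function $\eta_j \mapsto e^{-t\eta_j^2}$; and the identity factor is trivially injective. Since a tensor product of injective bounded operators on $L^2(\mathbb R)$ remains injective on $L^2(\mathbb R^n)$, injectivity of $e^{-tq^w}$ follows.

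The principal obstacle is establishing the symplectic normal form in the fully degenerate case, where one must carefully handle the interplay between the kernel of $q$ and the symplectic structure; once this reduction is in hand, the rest of the argument is a direct tensorization of the one-variable results.
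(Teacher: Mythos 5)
Your proposal is correct and follows essentially the same route as the paper: reduction to the symplectic normal form of a non-negative quadratic form (the paper cites H\"ormander, Theorem 21.5.3, for exactly the decomposition you describe), conjugation by a metaplectic operator, tensorization into one-dimensional blocks, and injectivity of each block via Proposition \ref{11022019P1} for the harmonic oscillators and an explicit nowhere-vanishing symbol for the degenerate blocks (the paper's normal form uses $x_j^2$ rather than $\eta_j^2$, so its degenerate factors are multiplication operators $e^{-tx_j^2}$ instead of heat semigroups, but the two are exchanged by a further symplectic transformation). The normal form you flag as the "principal obstacle" is a classical cited result, so no additional work is needed there.
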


\begin{proof} We deduce from \cite[Theorem 21.5.3]{MR2304165} that there exists a real linear symplectic transformation $\chi:\mathbb R^{2n}\rightarrow\mathbb R^{2n}$ such that for all $(x,\xi)\in\mathbb R^{2n}$,
\begin{equation}
	(q\circ\chi)(x,\xi) = \sum_{j=1}^k\lambda_j(\xi^2_j + x^2_j) + \sum_{j=k+1}^{k+l}x^2_j,
\end{equation}
with $k,l\geq0$ and $\lambda_j>0$ for all $1\le j\le k$. By the symplectic invariance of the Weyl quantization, \cite{MR2304165} (Theorem 28.5.9), we can find a metaplectic operator $\mathcal T$ satisfying
\begin{equation}\label{11022019E15}
	q^w(x,D_x) = \mathcal T^{-1}\Big(\sum_{j=1}^k\lambda_j(D^2_{x_j} + x^2_j) + \sum_{j=k+1}^{k+l}x^2_j\Big)\mathcal T.
\end{equation}
Let $t\geq0$. It follows from \eqref{11022019E15} that the evolution operator $e^{-tq^w}$ is written as
\begin{equation}\label{11022019E17}
	e^{-tq^w} = \mathcal T^{-1}\Big(\prod_{j=1}^ke^{-t\lambda_j(D^2_{x_j} + x^2_j)}\Big)\Big(\sum_{j=k+1}^{k+l}e^{-tx^2_j}\Big)\mathcal T.
\end{equation}
We deduce from \eqref{11022019E17} and Proposition \ref{11022019P1} that the operator $e^{-tq^w}$ is the composition of injective operators, so is itself injective. This ends the proof of Corollary \ref{11022019C1}.
\end{proof}

\subsection{Spectrum localization} The following result provides a localization for the spectrum of matrices of the form $JA$, with $J$ the symplectic matrix defined in \eqref{08112018E1} and $A$ a Hermitian positive semidefinite matrix.

\begin{lem}\label{07012019L1} Let $A\in\Her_n(\mathbb C)$ be a Hermitian positive semidefinite matrix and $J\in\Symp_{2n}(\mathbb R)$ be the symplectic matrix given by \eqref{08112018E1}. Then, the spectrum of the matrix $JA$ is purely imaginary, that is $\sigma(JA)\subset i\mathbb R$. 
\end{lem}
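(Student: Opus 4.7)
The plan is to reduce the spectral problem for $JA$ to that of an explicitly anti-Hermitian matrix, whose eigenvalues are automatically purely imaginary. The key algebraic input is the classical fact that for any two matrices $B,C$ of compatible sizes one has $\sigma(BC)\setminus\{0\} = \sigma(CB)\setminus\{0\}$, which follows from the identity $I - BC$ and $I - CB$ having the same determinant up to a polynomial factor, or equivalently from the Sylvester determinant identity.

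Since $A$ is Hermitian positive semidefinite, I can form its unique Hermitian positive semidefinite square root $A^{1/2}$ via the functional calculus, so that $A = A^{1/2} A^{1/2}$. Writing $JA = (JA^{1/2})(A^{1/2})$ and $A^{1/2} J A^{1/2} = (A^{1/2})(JA^{1/2})$, the identity recalled above gives
$$\sigma(JA)\setminus\{0\} = \sigma(A^{1/2}JA^{1/2})\setminus\{0\}.$$
Now the matrix $M := A^{1/2} J A^{1/2}$ is anti-Hermitian: using $J^T = -J$ (which gives $J^* = -J$ since $J$ is real) and the fact that $A^{1/2}$ is Hermitian, one computes
$$M^* = (A^{1/2})^* J^* (A^{1/2})^* = A^{1/2}(-J)A^{1/2} = -M.$$
Consequently $iM$ is Hermitian, so its spectrum lies in $\mathbb R$, whence $\sigma(M)\subset i\mathbb R$. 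Combining this with the previous displayed identity gives $\sigma(JA)\setminus\{0\}\subset i\mathbb R$, and since $0\in i\mathbb R$ trivially, we conclude $\sigma(JA)\subset i\mathbb R$.

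There is no real obstacle in this proof: the only point requiring a small verification is the passage from the possibly singular $A$ to its square root, for which the functional calculus on Hermitian positive semidefinite matrices is standard. An alternative route, if one prefers to avoid the $BC$-$CB$ spectrum trick, would be to first treat the positive definite case (where $A^{1/2}$ is invertible and $JA$ is directly similar to $A^{1/2}JA^{1/2}$ via conjugation by $A^{1/2}$), and then conclude by a continuity argument: $A + \varepsilon I \in \Her_{2n}(\mathbb C)$ is positive definite for every $\varepsilon>0$, so $\sigma(J(A+\varepsilon I))\subset i\mathbb R$, and letting $\varepsilon\to 0^+$ the eigenvalues (which depend continuously on the matrix entries, see \cite{MR0203473}) accumulate in the closed set $i\mathbb R$.
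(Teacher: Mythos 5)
Your main argument is correct and takes a genuinely different route from the paper. The paper first treats the positive definite case, where $\sqrt A$ is invertible and $JA$ is similar to the skew-Hermitian matrix $\sqrt A J\sqrt A$ via conjugation by $\sqrt A$, and then handles the general semidefinite case by approximating $A$ with positive definite matrices $A_p$ and invoking the continuity of eigenvalues (citing Kato). You instead bypass the limiting procedure entirely: the identity $\sigma(BC)\setminus\{0\}=\sigma(CB)\setminus\{0\}$ applied to $B=JA^{1/2}$, $C=A^{1/2}$ gives $\sigma(JA)\setminus\{0\}=\sigma(A^{1/2}JA^{1/2})\setminus\{0\}$ even when $A^{1/2}$ is singular, and the skew-Hermitian computation for $M=A^{1/2}JA^{1/2}$ then concludes, with $0\in i\mathbb R$ absorbing the degenerate eigenvalue. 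Your approach is purely algebraic and uniform in $A$, at the cost of invoking the Sylvester-type spectrum identity; the paper's is arguably more elementary pointwise but requires the perturbation-and-continuity step. The alternative you sketch at the end is precisely the paper's proof, so you have in effect given both.
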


\begin{proof} We first assume that the matrix $A$ is Hermitian positive definite. Under this assumption, we observe that
$\sqrt A(JA)(\sqrt A)^{-1} = \sqrt AJ\sqrt A$. The matrix $JA$ is therefore conjugated to a skew-Hermitian matrix and its spectrum is then purely imaginary. When $A$ is only Hermitian positive semidefinite, we can consider $(A_p)_p$ a sequence of Hermitian positive definite matrices that converges to $A$. Since the eigenvalues of a complex matrix are continuous with respect to this matrix according to \cite[Theorem II.5.1]{MR0203473}, and that $\sigma(JA_p)\subset i\mathbb R$ from the beginning of the proof, we deduce that the eigenvalues of the matrix $JA$ are purely imaginary. This ends the proof of Lemma~\ref{07012019L1}.
\end{proof}

\subsection{Taylor expansion in a non-commutative setting} In the next lemma, we prove a composition result of Taylor expansions for functions taking values in non-commutative rings. It will be useful in the end of Subsection \ref{perturbation}. Notice that we consider holomorphic functions in a neighborhood of $0$, but the proof works the same way near any point of $\mathbb C$.  Let us recall that $\mathbb C\langle X,Y \rangle$ denotes the ring of non-commutative polynomials in $X$ and $Y$, and that for all nonnegative integer $k\geq0$, we consider $\mathbb C_{k,0}\langle X,Y \rangle$ the finite-dimensional subspace of $\mathbb C\langle X,Y\rangle$ of non-commutative polynomials of degree smaller than or equal to $k$ vanishing in $(0,0)$. In the following, given $\rho>0$, the notation $\mathbb D(0,\rho)$ denotes the open disk in $\mathbb C$ centered in $0$ of radius $\rho$, and $B((0,0),\rho)$ stands for the open ball in $M_{2n}(\mathbb R)\times M_{2n}(\mathbb R)$ centered in $(0,0)$ of radius $\rho$ with respect to the norm $\Vert\cdot\Vert_{\infty}$ defined in \eqref{29032021E1}.

\begin{lem}\label{04092019L1} Let $f:\mathbb D(0,\rho)\rightarrow\mathbb C$ be an analytic function, with $\rho>0$. We consider $P\in\mathbb C_{k,0}\langle X,Y \rangle$, with $k\geq0$ a nonnegative integer, and $R:B((0,0),\rho)\rightarrow M_{2n}(\mathbb C)$ a function satisfying that there exists a positive constant $C>0$ such that for all $(M,N)\in B((0,0),\rho)$ we have
\begin{equation}\label{04092019E1}
	\Vert R(M,N)\Vert\le C\Vert(M,N)\Vert^{k+1}_{\infty}.
\end{equation}
Then, there exists $\rho'\in(0,\rho)$, depending continuously on $P$ and $C$, such that the function 
$$f\circ(P+R):(M,N)\mapsto\sum_{j=0}^{+\infty}\frac{f^j(0)}{j!}(P(M,N)+R(M,N))^j,$$
is well defined on $B((0,0),\rho')$. Furthermore, there exists a continuous map $\Psi : \mathbb{C}_{k,0}\langle X,Y \rangle \to \mathbb C_{k,0}\langle X,Y \rangle$ and a function $R':B((0,0),\rho')\rightarrow M_{2n}(\mathbb C)$ such that for all $(M,N)\in B((0,0),\rho')$,
$$f(P(M,N)+R(M,N)) = f(0)I_{2n}+\Psi(P)(M,N)+R'(M,N),$$
with
$$\Vert R'(M,N)\Vert\le\Gamma_{C,P}\Vert(M,N)\Vert^{k+1}_{\infty},$$
$\Gamma_{C,P}>0$ denoting a positive constant which depends continuously on $C$ and $P$.
\end{lem}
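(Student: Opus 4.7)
The plan is to proceed in three stages: define the radius $\rho'$ on which the power series converges, specify $\Psi$ as a formal truncation, and estimate the remainder $R'$.

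\textbf{Stage 1 (Convergence).} Since $P \in \mathbb C_{k,0}\langle X,Y\rangle$ has no constant term, each monomial appearing in $P$ has total degree between $1$ and $k$. Because any non-commutative monomial $W(X,Y)$ of total degree $d$ satisfies $\|W(M,N)\| \leq \|(M,N)\|_\infty^{d}$, there is a constant $C_P > 0$, depending polynomially (hence continuously) on the coefficients of $P$, such that $\|P(M,N)\| \leq C_P \|(M,N)\|_\infty$ whenever $\|(M,N)\|_\infty \leq 1$. Combining with \eqref{04092019E1} gives $\|P(M,N)+R(M,N)\| \leq C_P\|(M,N)\|_\infty + C\|(M,N)\|_\infty^{k+1}$. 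I would then choose $\rho' \in (0,\min(1,\rho))$ small enough, depending continuously on $C_P$ and $C$, so that $\|P(M,N)+R(M,N)\| \leq \rho/2$ on $B((0,0),\rho')$. Cauchy's inequality for $f$ (radius of convergence $\geq \rho$) then makes the series $\sum_j \frac{f^{(j)}(0)}{j!}(P+R)^j$ absolutely convergent in operator norm on this ball.

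\textbf{Stage 2 (Definition of $\Psi$).} Define $\Psi(P)$ as the truncation to total degree $\leq k$ of the formal non-commutative polynomial $\sum_{j=1}^{k} \frac{f^{(j)}(0)}{j!} P^j$. Since $P$ has valuation $\geq 1$, the polynomial $P^{j}$ has valuation $\geq j$; thus for $j > k$ the polynomial $P^j$ contributes nothing of degree $\leq k$, and $\Psi(P)$ coincides with the degree-$\leq k$ part of the full formal series $f(P(X,Y)) - f(0)$. The result lies in $\mathbb C_{k,0}\langle X,Y\rangle$, and the map $\Psi$ is polynomial in the coefficients of $P$ (it is a finite composition of the polynomial maps $P \mapsto P^j$ and the linear truncation to degree $\leq k$), hence continuous on this finite-dimensional space.

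\textbf{Stage 3 (Remainder estimate).} Set
\[R'(M,N) := f(P(M,N)+R(M,N)) - f(0)I_{2n} - \Psi(P)(M,N)\]
and split $R' = R'_1 + R'_2$, where $R'_1$ gathers the contributions $\sum_j \frac{f^{(j)}(0)}{j!}\bigl((P+R)^j - P^j\bigr)$ (the terms involving at least one factor $R$), and $R'_2 = \sum_j \frac{f^{(j)}(0)}{j!}[P^j]_{>k}(M,N)$ collects the degree-$>k$ part of the pure $P$ expansion. For $R'_1$, I would use the telescoping identity $(P+R)^j - P^j = \sum_{i=0}^{j-1} P^{i}\, R\, (P+R)^{j-1-i}$ (a one-line induction) together with $\|P\|,\|P+R\| \leq \rho/2$ and $\|R\| \leq C\|(M,N)\|_\infty^{k+1}$ to obtain $\|(P+R)^j - P^j\| \leq j(\rho/2)^{j-1} C\|(M,N)\|_\infty^{k+1}$; Cauchy estimates applied to $f'$ make the sum $\sum_j \frac{|f^{(j)}(0)|}{(j-1)!}(\rho/2)^{j-1}$ finite. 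For $R'_2$, treat $j > k$ by $[P^j]_{>k} = P^j$ and $\|P^j(M,N)\| \leq (C_P\|(M,N)\|_\infty)^{k+1}(C_P\|(M,N)\|_\infty)^{j-k-1}$, summed geometrically; for $1 \leq j \leq k$, $[P^j]_{>k}$ is a non-commutative polynomial of valuation $\geq k+1$, which gives $\|[P^j(M,N)]_{>k}\| \leq K_{j,P}\|(M,N)\|_\infty^{k+1}$ for $\|(M,N)\|_\infty \leq 1$, a finite sum.

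The main obstacle is bookkeeping: all the intermediate constants (the $\ell^{1}$ norm of the coefficients of $P$, Cauchy bounds for $|f^{(j)}(0)|/j!$ at radius $\rho/2$, and the combinatorial constants controlling $\|[P^j]_{>k}\|$) must be shown to depend continuously on $(P,C)$, so that $\rho'$ and the final constant $\Gamma_{C,P}$ inherit the required continuous dependence. This is routine once one organises the estimates as above, since every step involves continuous operations on finite-dimensional data.
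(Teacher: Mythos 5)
Your proposal is correct and follows essentially the same route as the paper: both isolate the effect of $R$ first (you via the telescoping identity $(P+R)^j-P^j=\sum_i P^iR(P+R)^{j-1-i}$, the paper via the integral form of the first-order Taylor remainder of $f$ at $P(M,N)$), and both then split $f\circ P$ into $f(0)I_{2n}$, the degree-$\le k$ truncation defining $\Psi(P)$, and a valuation-$\ge k+1$ tail absorbed into $R'$. The continuity bookkeeping you flag is handled in the paper exactly as you describe, through constants depending polynomially on the coefficients of $P$ and on $C$.
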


\begin{proof} Since the functions $P$ and $R$ tend to $(0,0)$ as $(M,N)$ goes to $(0,0)$, if $\rho'\in(0,\rho)$ is chosen sufficiently small, then for $(M,N)\in B((0,0),\rho')$, we have $\Vert P(M,N)\Vert<\rho/4$ and $\Vert R(M,N)\Vert<\rho/4$. Consequently, the function $f\circ(P+R)$ is well defined on $B((0,0),\rho')$. Let $(M,N)\in B((0,0),\rho')$. Realizing a Taylor expansion of the function $f$ (considered as a map on $M_{2n}(\mathbb C)$) in $P(M,N)$, we get that
$$f\circ(P+R)(M,N) = f\circ P(M,N) + \int_0^1\mathrm df(P(M,N)+\alpha R(M,N))(R(M,N))\ \mathrm d\alpha,$$
where $\mathrm df$ denotes the differential of the function $f$. The second term in the right-hand side of the above equality is a remainder term. Indeed, since  $\Vert P(M,N)+\alpha R(M,N)\Vert<\rho/2$ for all $0\le\alpha\le1$ with our choice of $\rho'\in(0,\rho)$, we deduce from \eqref{04092019E1} that this term satisfies
$$\bigg\vert\int_0^1\mathrm df(P(M,N)+\alpha R(M,N))(R(M,N))\ \mathrm d\alpha\bigg\vert\le C\bigg(\sup_{\Vert L\Vert<\rho/2}\Vert\mathrm df(L)\Vert_{\mathcal L(M_{2n}(\mathbb C))}\bigg)\Vert(M,N)\Vert^{k+1}_{\infty},$$
with $\mathcal L(M_{2n}(\mathbb C))$ the space of bounded operators on $M_{2n}(\mathbb C)$. Consequently, we focus on the term $f\circ P(M,N)$. Since the function $f$ is analytic on $\mathbb D(0,\rho)$, we can consider $(a_j)_{j\geq0}\in\mathbb C^\mathbb N$ the coefficients of the Taylor expansion of $f$ and write
$$\forall z\in\mathbb D(0,\rho),\quad f(z) = \sum_{j=0}^{+\infty}a_jz^j.$$
Naturally, $f\circ P(M,N)$ can be decomposed as
$$f\circ P(M,N) = f(0)I_{2n} + Q(P(M,N)) + P(M,N)^{k+1}\sum_{j=0}^{+\infty}a_{j+k+1}P(M,N)^j,$$
where $Q\in\mathbb C_k[X]$ is a polynomial of degree smaller than or equal to $k$ vanishing in $0$ and depending only on $f$, given by $Q(X) = \sum_{j=1}^ka_jX^j$. The third term in the right-hand side of the above equality is also a remainder term. Indeed, since the polynomial $P$ vanishes in $(0,0)$, there exists a positive constant $M_P>0$ depending continuously (and only) on $P$ such that 
$$\Vert P(M,N)\Vert\le M_P\Vert(M,N)\Vert_{\infty}.$$
With the previous choice of $\rho'\in(0,\rho)$, $\Vert P(M,N)\Vert<\rho/4$, and we obtain that
$$\Big\Vert P(M,N)^{k+1}\sum_{j=0}^{+\infty}a_{j+k+1}P(M,N)^j\Big\Vert\le M_P^{k+1}\Vert(M,N)\Vert^{k+1}_{\infty}\sum_{j=0}^{+\infty}\vert a_{j+k+1}\vert\Big(\frac{\rho}4\Big)^j.$$
Notice that the sum in the right-hand side is finite since the function $f$ is analytic on $\mathbb D(0,\rho)$. Finally, we just have to observe that $Q\circ P\in\mathbb C_{k^2,0}\langle X,Y\rangle$ is a non-commutative polynomial vanishing in $(0,0)$ and depending continuously on $P$. The sum of its terms of degree smaller than or equal to $k$ defines $\Psi(P)$ and its higher order terms are remainder terms bounded by $\Vert(M,N)\Vert^{k+1}_{\infty}$, up to a constant also depending continuously on $P$. This ends the proof of Lemma~\ref{04092019L1}.
\end{proof}

\subsection{A perturbation result}\label{perturbation} To end this appendix, we give the proof of a quite technical lemma which is instrumental in Section \ref{Polar} and Section \ref{short}. Let $q:\mathbb R^{2n}\rightarrow\mathbb C$ be a complex-valued quadratic form with a nonnegative real-part $\Reelle q\geq0$. We consider $Q\in\Symm_{2n}(\mathbb C)$ the matrix of $q$ in the canonical basis of $\mathbb R^{2n}$, $F$ the Hamilton map of $q$ and $S$ its singular space. Let $0\le k_0\le 2n-1$ be the smallest integer such that \eqref{12102018E7} holds. Moreover, we consider the time-dependent quadratic form $\kappa_t:\mathbb C^{2n}\rightarrow\mathbb R$ defined in accordance with the convention \eqref{07052019E1} for all $t\geq0$ and $X\in\mathbb C^{2n}$ by
\begin{equation}\label{13052019E9}
	\kappa_t(X) = \sum_{k=0}^{k_0} t^{2k} \Reelle q\big((\Imag F)^k X\big) = \sum_{k=0}^{k_0}t^{2k}\big\vert\sqrt{\Reelle Q}(\Imag F)^kX\big\vert^2.
\end{equation}
The following lemma investigates the perturbations of the quadratic form $\kappa_t$:

\begin{lem}\label{05112018L2} Let $(G_{\alpha})_{0\le\alpha\le1}$ be a family of functions $G_{\alpha}:B((0,0),\rho)\rightarrow M_{2n}(\mathbb C)$, with $\rho>0$, satisfying on the one hand that there exist a family $(P_{\alpha})_{0\le\alpha\le1}$ of non-commutative polynomials $P_{\alpha}\in\mathbb C_{k_0,0}\langle X,Y\rangle$ depending continuously on the parameter $0\le\alpha\le1$, a family $(R_{\alpha})_{0\le\alpha\le1}$ of functions $R_{\alpha}:B((0,0),\rho)\rightarrow M_{2n}(\mathbb C)$ and a positive constant $C>0$ such that for all $0\le\alpha\le1$ and $(M,N)\in B((0,0),\rho)$,
\begin{equation}\label{04092019E9}
	G_{\alpha}(M,N) = I_{2n} + P_{\alpha}(M,N) + R_{\alpha}(M,N),
\end{equation}
with
\begin{equation}\label{04092019E10}
	\Vert R_{\alpha}(M,N)\Vert\le C\Vert(M,N)\Vert^{k_0+1}_{\infty},
\end{equation}
and on the other hand that for all $0\le\alpha\le1$ and $t\geq0$ such that $(t\Reelle F,t\Imag F)\in B((0,0),\rho)$,
\begin{equation}\label{03092019E6}
	G_{\alpha}(t\Reelle F,t\Imag F)(S+iS)\subset S+iS.
\end{equation}
Then, there exist some positive constants $c>0$ and $0<T\le 1$ such that for all $0\le t\le T$, $0\le\alpha\le1$ and $X\in\mathbb C^{2n}$,
$$\kappa_t\big(G_{\alpha}\big(t\Reelle F,t\Imag F\big)X\big)\geq c\kappa_t(X).$$
\end{lem}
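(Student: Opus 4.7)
I first use hypothesis \eqref{03092019E6} to reduce to the case $X\in S^\perp+iS^\perp$, and then establish the uniform lower bound via an anisotropic rescaling adapted to the filtration of $S^\perp$ by the $V_j^\perp$.

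For the reduction, I decompose $\mathbb{C}^{2n}=(S+iS)\oplus(S^\perp+iS^\perp)$ orthogonally for the Hermitian inner product. By convention \eqref{07052019E1} and the non-negativity of $\kappa_t$ on $\mathbb{R}^{2n}$, the extension of $\kappa_t$ to $\mathbb{C}^{2n}$ is a non-negative Hermitian form whose kernel is exactly $S+iS$, and its polar form vanishes on any pair with one entry in $S+iS$. Writing $X=X_S+X_\perp$ in this decomposition and using hypothesis \eqref{03092019E6} to get $G_\alpha(t\Reelle F,t\Imag F)X_S\in S+iS$, I obtain $\kappa_t(X)=\kappa_t(X_\perp)$ and $\kappa_t(G_\alpha(t\Reelle F,t\Imag F)X)=\kappa_t(G_\alpha(t\Reelle F,t\Imag F)X_\perp)$, so it suffices to prove the inequality for $X\in S^\perp+iS^\perp$.

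For the rescaling, I set $W_j:=V_j^\perp\cap V_{j-1}$ for $0\le j\le k_0$ (with the convention $V_{-1}:=\mathbb R^{2n}$), so that $S^\perp=\bigoplus_{j=0}^{k_0}W_j$, and I introduce the anisotropic dilation $D_t\in\GL_{2n}(\mathbb{C})$ acting as $t^jI$ on $W_j+iW_j$ and as the identity on $S+iS$. Using $\sqrt{\Reelle Q}(\Imag F)^\ell W_j=0$ for $\ell<j$ (a consequence of $W_j\subset V_{j-1}$) together with a dominance estimate on the cross-terms in $\kappa_t$ between different $W_j$'s --- analogous to the cross-term bounds in the proof of Theorem~\ref{14102018T2} --- one shows $\kappa_t(Y)\asymp\|D_t Y\|^2$ for $Y\in S^\perp+iS^\perp$, uniformly in $t\in(0,1]$. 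Letting $\pi_\perp$ denote the orthogonal projection onto $S^\perp+iS^\perp$ and setting $\tilde X:=D_tX$, the target inequality becomes $\|\hat G_\alpha(t)\tilde X\|^2\geq c'\|\tilde X\|^2$ with $\hat G_\alpha(t):=D_t\pi_\perp G_\alpha(t\Reelle F,t\Imag F)D_t^{-1}$. The central technical step is then to show that $\hat G_\alpha(t)$ extends continuously to $t=0^+$ uniformly in $\alpha\in[0,1]$, with an invertible limit. Writing $P_\alpha(t\Reelle F,t\Imag F)=\sum_{j=1}^{k_0}t^jP_{\alpha,j}(\Reelle F,\Imag F)$ with $P_{\alpha,j}$ homogeneous of degree $j$, the structural facts $(\Imag F)V_k\subset V_{k-1}$ and $\Reelle F\,V_0=\{0\}$ imply that each monomial of $P_{\alpha,j}$ shifts $W_m$ by at most $j$ steps downward in the filtration, so that the $t^j$ prefactor exactly absorbs the potential $t^{-j}$ amplification coming from $D_tD_t^{-1}$ (after projecting out $S$), yielding a bounded contribution. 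The remainder $R_\alpha$ becomes $O(t)$ after conjugation (its Euclidean norm is $O(t^{k_0+1})$, while $D_tD_t^{-1}$ amplifies by at most $t^{-k_0}$) and vanishes in the limit. Since $G_\alpha(0,0)=I_{2n}$, the limit $\hat G_\alpha(0^+)$ equals the identity plus a strictly lower-triangular perturbation along the filtration, hence is invertible.

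The principal difficulty---and the main obstacle---is the combinatorial verification that $D_t(t^jM)D_t^{-1}$ remains bounded as $t\to 0^+$ for each monomial $M$ of degree $j\le k_0$ in $\Reelle F,\Imag F$: one must carefully match every application of $\Imag F$ (which shifts one unit down in the filtration, thereby absorbing one factor of $t^{-1}$ from $D_t^{-1}$) with the corresponding $t$-factors, and account for $\Reelle F$'s restricted action (nonzero only on $W_0$ by virtue of $\Reelle F\,V_0=\{0\}$). Once this boundedness is established together with the ``identity plus strictly lower-triangular'' structure of $\hat G_\alpha(0^+)$, invertibility is automatic, and continuity in $\alpha\in[0,1]$ together with compactness of $[0,1]$ yields the uniform positive lower bound, completing the proof.
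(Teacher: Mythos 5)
Your proposal is correct in strategy but follows a genuinely different route from the paper. The paper never introduces the grading $S^{\perp}=W_0\oplus\dots\oplus W_{k_0}$ nor the anisotropic dilation $D_t$: it works instead with the $\ell^1$-type quantity $\sum_k t^k\vert\sqrt{\Reelle Q}(\Imag F)^kX\vert$ (equivalent to $\sqrt{\kappa_t}$ up to dimensional constants), expands $t^k(\Imag F)^kG_\alpha(t\Reelle F,t\Imag F)$, handles every monomial containing a factor $\Reelle F$ by writing $\Reelle F=J\sqrt{\Reelle Q}\,\sqrt{\Reelle Q}$ so that it is reabsorbed into the same sum with an extra factor $t$ (no filtration bookkeeping needed), absorbs the pure-$(\Imag F)^j$ cross terms through a weighted triangle inequality (Lemma \RefLem{lem_l1_ineq}) with weights chosen by backward induction, and controls the remainder via the coercivity $\kappa_t\gtrsim t^{2k_0}\vert\cdot\vert^2$ on $(S+iS)^{\perp}$; the restriction to $(S+iS)^{\perp}$ is undone only at the very end, exactly as in your first reduction. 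Your blow-up argument buys conceptual clarity: the statement becomes the uniform invertibility of $D_t\pi_\perp G_\alpha D_t^{-1}$, whose limit is unipotent along the grading, whereas the paper's argument is more elementary but requires the careful inductive choice of the $\eta_k$'s. Both give the same conclusion.

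Two caveats on your sketch. First, the step you yourself flag as the main obstacle is indeed where essentially all the work lies, and your description of it needs one correction: a monomial of degree $j$ containing a factor $\Reelle F$ does \emph{not} ``shift $W_m$ by at most $j$ steps downward'' --- once $\Reelle F$ has acted, the image is no longer controlled within the filtration. The correct dichotomy is: reading the monomial from the right, the first occurrence of $\Reelle F$ is preceded by some $(\Imag F)^p$, and since $(\Imag F)^pW_m\subset V_{m-1-p}\subset V_0=\Ker(\Reelle F)$ whenever $p\le m-1$, the monomial either vanishes identically on $W_m+iW_m$ or satisfies $j\ge p+1\ge m+1$; in the latter case the crude bound $\Vert D_t\pi_\perp\Vert\le1$, $\Vert D_t^{-1}\vert_{W_m+iW_m}\Vert=t^{-m}$ already gives $O(t^{j-m})=O(t)$, with no further tracking required. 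Only the pure powers $(\Imag F)^j$ need the filtration shift $(\Imag F)W_m\subset V_{m-2}$, and they produce the nilpotent part of the limit. Second, the equivalence $\kappa_t(Y)\asymp\Vert D_tY\Vert^2$ on $S^{\perp}+iS^{\perp}$ requires absorbing the cross terms between distinct $W_j$'s, which are $O(t)$ relative to the diagonal ones (a nonzero cross term at order $t^{2k}$ forces $k\ge\max(j,j')$); this is fine but only yields the equivalence for $t$ small, which suffices. With these points made precise, your argument closes.
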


\begin{proof} By definition \eqref{13052019E9} of the time-dependent quadratic form $\kappa_t$, the estimate we want to prove writes for all $0\le\alpha\le1$, $0\le t\ll1$ small enough and $X\in\mathbb C^{2n}$ as
\begin{equation}\label{13052019E13}
	\sum_{k=0}^{k_0}t^{2k}\big\vert\sqrt{\Reelle Q}(\Imag F)^kG_{\alpha}(t\Reelle F,t\Imag F)X\big\vert^2\geq c\sum_{k=0}^{k_0}t^{2k}\big\vert\sqrt{\Reelle Q}(\Imag F)^kX\big\vert^2.
\end{equation}
By using the two classical inequalities that hold for all $m\geq1$ and $a_1,\ldots,a_m\geq0$,
\begin{equation}\label{13052019E11}
	\sqrt{a_1+\ldots+a_m}\le\sqrt{a_1}+\ldots+\sqrt{a_m},
\end{equation}
and
\begin{equation}\label{13052019E12}
	(a_1+\ldots+a_m)^2\le 2^{m-1}(a^2_1+\ldots+a^2_m),
\end{equation}
we notice that in order to prove the estimate \eqref{13052019E13}, it is in fact sufficient to establish that for all $0\le\alpha\le1$, $0\le t\ll1$ small enough and $X\in\mathbb C^{2n}$,
\begin{equation}\label{13052019E10}
	\sum_{k=0}^{k_0}t^k\big\vert\sqrt{\Reelle Q}(\Imag F)^kG_{\alpha}(t\Reelle F,t\Imag F)X\big\vert\geq c\sum_{k=0}^{k_0}t^k\big\vert\sqrt{\Reelle Q}(\Imag F)^kX\big\vert.
\end{equation}
Indeed, we deduce from \eqref{13052019E11} and \eqref{13052019E12} that when \eqref{13052019E10} holds, we have that for all $0\le\alpha\le1$, $0\le t\ll1$ small enough and $X\in\mathbb C^{2n}$,
\begin{align}\label{13052019E20}
	&\ \sum_{k=0}^{k_0}t^{2k}\big\vert\sqrt{\Reelle Q}(\Imag F)^kG_{\alpha}(t\Reelle F,t\Imag F)X\big\vert^2 \\
	\geq &\ \frac1{2^{k_0}}\bigg(\sum_{k=0}^{k_0}t^k\big\vert\sqrt{\Reelle Q}(\Imag F)^kG_{\alpha}(t\Reelle F,t\Imag F)X\big\vert\bigg)^2
	\geq\frac{c^2}{2^{k_0}}\bigg(\sum_{k=0}^{k_0}t^k\big\vert\sqrt{\Reelle Q}(\Imag F)^kX\big\vert\bigg)^2 \nonumber \\
	= &\ \frac{c^2}{2^{k_0}}\bigg(\sum_{k=0}^{k_0}\sqrt{t^{2k}\big\vert\sqrt{\Reelle Q}(\Imag F)^kX\big\vert^2}\bigg)^2
	\geq\frac{c^2}{2^{k_0}}\sum_{k=0}^{k_0}t^{2k}\big\vert\sqrt{\Reelle Q}(\Imag F)^kX\big\vert^2, \nonumber
\end{align}
which is the required estimate. We therefore focus on proving the estimate \eqref{13052019E10}. First of all, let us write the functions $G_{\alpha}$ under a more manageable form. Since the non-commutative polynomials $P_{\alpha}\in\mathbb C_{k_0}\langle X,Y\rangle$ have a degree smaller than or equal to $k_0$, vanish on $(0,0)$ and depend continuously on the parameter $0\le\alpha\le1$, there exist some continuous functions $\sigma_{j,m}:[0,1]\rightarrow\mathbb C$, with $1\le j\le k_0$ and $m\in\{0,1\}^j$, such that for all $0\le\alpha\le1$,
\begin{equation}\label{04092019E8}
	P_{\alpha}(X,Y) = \sum_{j=1}^{k_0}\sum_{m\in\{0,1\}^j}\sigma_{j,m}(\alpha)X^{m_1}Y^{1-m_1}\ldots X^{m_j}Y^{1-m_j}.
\end{equation}
With an abuse of notation, we denote the above non-commutative product by
$$X^{1-m_1}Y^{1-m_1}\ldots X^{m_j}Y^{1-m_j} = \prod_{\ell=1}^jX^{m_{\ell}}Y^{1-m_{\ell}}.$$
We deduce from \eqref{04092019E9} and \eqref{04092019E8} that for all $0\le\alpha\le1$, $0\le k\le k_0$ and $(M,N)\in B((0,0),\rho)$,
\begin{equation}\label{03092019E7}
	G_{\alpha}(M,N) = I_{2n} + \sum_{j=1}^k\sum_{m\in\{0,1\}^j}\sigma_{j,m}(\alpha)\prod_{\ell=1}^jM^{m_{\ell}}N^{1-m_{\ell}} + R_{\alpha,k}(M,N),
\end{equation}
where the remainder terms $R_{\alpha,k}(M,N)$ are given by
\begin{equation}\label{04092019E11}
	R_{\alpha,k}(M,N) = \sum_{j=k+1}^{k_0}\sum_{m\in\{0,1\}^j}\sigma_{j,m}(\alpha)\prod_{\ell=1}^jM^{m_{\ell}}N^{1-m_{\ell}} + R_{\alpha}(M,N).
\end{equation}
Since the functions $\sigma_{j,m}$ are continuous on $[0,1]$, we deduce from \eqref{04092019E10} and \eqref{04092019E11} that there exists a positive constant $C_0>0$ such that for all $0\le\alpha\le1$, $0\le k\le k_0$ and $(M,N)\in B((0,0),\rho)$,
\begin{equation}\label{03092019E8}
	\Vert R_{\alpha,k}(M,N)\Vert\le C_0\Vert(M,N)\Vert^{k+1}_{\infty}.
\end{equation}
We can now tackle the proof of the estimate \eqref{13052019E10}. We begin by studying the matrices $t^k(\Imag F)^kG_{\alpha}(t\Reelle F,t\Imag F)$. Let $T_0>0$ be such that $(t\Reelle F,t\Imag F)\in B((0,0),\rho)$ for all $0\le t\le T_0$. It follows from \eqref{03092019E7} that for all $0\le\alpha\le1$, $0\le k\le k_0$ and $0\le t\le T_0$, 
$$G_{\alpha}\big(t\Reelle F,t\Imag F\big) = I_{2n} + \sum_{j=1}^{k_0-k}\sum_{m\in\{0,1\}^j}\sigma_{j,m}(\alpha)t^j\prod_{\ell = 1}^j(\Reelle F)^{m_{\ell}}(\Imag F)^{1-m_{\ell}} + R_{\alpha,k_0-k}(t\Reelle F,t\Imag F).$$
We deduce that for all $0\le\alpha\le1$, $0\le k\le k_0$ and $0\le t\le T_0$,
\begin{multline}\label{14112018E3}
	t^k(\Imag F)^kG_{\alpha}\big(t\Reelle F,t\Imag F\big)
	= t^k(\Imag F)^k \\
	+ \sum_{j=1}^{k_0-k}\sum_{m\in\{0,1\}^j}\sigma_{j,m}(\alpha)t^{k+j}(\Imag F)^k\prod_{\ell = 1}^j(\Reelle F)^{m_{\ell}}(\Imag F)^{1-m_{\ell}}
	+ t^k(\Imag F)^kR_{\alpha,k_0-k}(t\Reelle F,t\Imag F).
\end{multline}
Let $0\le\alpha\le1$, $0\le k\le k_0$ and $1\le j\le k_0-k$. Isolating the term associated with the tuple $0\in\{0,1\}^j$ whose coordinates are all equal to $0$, we split the following sum in two
\begin{multline}\label{13052019E4}
	\sum_{m\in\{0,1\}^j}\sigma_{j,m}(\alpha)t^{k+j}(\Imag F)^k\prod_{\ell = 1}^j(\Reelle F)^{m_{\ell}}(\Imag F)^{1-m_{\ell}} \\
	= \sigma_{j,0}(\alpha)t^{k+j}(\Imag F)^{k+j}
	+ \sum_{m\in\{0,1\}^j\setminus\{0\}}\sigma_{j,m}(\alpha)t^{k+j}(\Imag F)^k\prod_{\ell = 1}^j(\Reelle F)^{m_{\ell}}(\Imag F)^{1-m_{\ell}}.
\end{multline}
For all $m\in\{0,1\}^j\setminus\{0\}$, we can write
\begin{equation}\label{13052019E5}
	\prod_{\ell = 1}^j(\Reelle F)^{m_{\ell}}(\Imag F)^{1-m_{\ell}} = A_m(\Reelle F)(\Imag F)^{n_m},
\end{equation}
where $n_m$ is a nonnegative integer satisfying $0\le n_m\le j-1$ and $A_m\in M_{2n}(\mathbb R)$ is a real matrix product of $j-1-n_m$ matrices belonging to $\{\Reelle F,\Imag F\}$. It follows from \eqref{13052019E4} and \eqref{13052019E5} that for all $0\le\alpha\le1$, $0\le k\le k_0$ and $0\le t\le T_0$,
\begin{multline}\label{13052019E6}
	\sum_{j=1}^{k_0-k}\sum_{m\in\{0,1\}^j}\sigma_{j,m}(\alpha)t^{k+j}(\Imag F)^k\prod_{\ell = 1}^j(\Reelle F)^{m_{\ell}}(\Imag F)^{1-m_{\ell}} \\
	= \sum_{j=1}^{k_0-k}\sigma_{j,0}(\alpha)t^{k+j}(\Imag F)^{k+j}
	+ \sum_{j=1}^{k_0-k}\sum_{m\in\{0,1\}^j\setminus\{0\}}\sigma_{j,m}(\alpha)t^{k+j}(\Imag F)^kA_m(\Reelle F)(\Imag F)^{n_m}.
\end{multline}
Moreover, the second term in the right-hand side of the above equality can be written as
\begin{align}\label{13052019E7}
	&\ \sum_{j=1}^{k_0-k}\sum_{m\in\{0,1\}^j\setminus\{0\}}\sigma_{j,m}(\alpha)t^{k+j}(\Imag F)^kA_m(\Reelle F)(\Imag F)^{n_m} \\
	= &\ \sum_{j=1}^{k_0-k}\sum_{p=0}^{j-1}\sum_{\substack{m\in\{0,1\}^j\setminus\{0\}\\[1pt] n_m = p}}\sigma_{j,m}(\alpha)t^{k+j}(\Imag F)^kA_m(\Reelle F)(\Imag F)^p \nonumber \\
	= &\ \sum_{p=0}^{k_0-k-1}t^{p+1}\bigg(\sum_{j=p+1}^{k_0-k}\sum_{\substack{m\in\{0,1\}^j\setminus\{0\}\\[1pt]n_m = p}}\sigma_{j,m}(\alpha)t^{k+j-p-1}(\Imag F)^kA_m\bigg)(\Reelle F)(\Imag F)^p \nonumber \\
	=: &\ \sum_{p=0}^{k_0-k-1}t^{p+1}B_{\alpha,p,k}(t)(\Reelle F)(\Imag F)^p. \nonumber
\end{align}
We deduce from \eqref{14112018E3}, \eqref{13052019E6} and \eqref{13052019E7} that for all $0\le\alpha\le1$, $0\le k\le k_0$ and $0\le t\le T_0$,
\begin{multline}\label{13052019E8}
	t^k(\Imag F)^kG_{\alpha}\big(t\Reelle F,t\Imag F\big) = t^k(\Imag F)^k + \sum_{j=1}^{k_0-k}\sigma_{j,0}(\alpha)t^{k+j}(\Imag F)^{k+j} \\
	+ \sum_{p=0}^{k_0-k-1}t^{p+1}B_{\alpha,p,k}(t)(\Reelle F)(\Imag F)^p + t^k(\Imag F)^kR_{\alpha,k_0-k}(t\Reelle F,t\Imag F).
\end{multline}
The triangle inequality therefore implies that for all $0\le\alpha\le1$, $0\le k\le k_0$, $0\le t\le T_0$ and $X\in\mathbb C^{2n}$,
\begin{equation}\label{13052019E15}
\begin{split}
	&t^k\big\vert\sqrt{\Reelle Q}(\Imag F)^kG_{\alpha}\big(t\Reelle F,t\Imag F\big)X\big\vert  \\
	\geq&\bigg\vert t^k\sqrt{\Reelle Q}(\Imag F)^kX+ \sum_{j=1}^{k_0-k}t^{k+j}\sigma_{j,0}(\alpha)\sqrt{\Reelle Q}(\Imag F)^{k+j}X\bigg\vert \\
	&- \bigg\vert\sum_{p=0}^{k_0-k-1}t^{p+1}\sqrt{\Reelle Q}B_{\alpha,p,k}(t)(\Reelle F)(\Imag F)^pX\bigg\vert 
	- t^k\big\vert\sqrt{\Reelle Q}(\Imag F)^kR_{\alpha,k_0-k}(t\Reelle F,t\Imag F)X\big\vert.
	\end{split}
\end{equation}
Our aim is now to control the two first terms appearing in the right-hand side of the above estimate. To that end, we begin by noticing that since $(\sigma_{j,m})_{1\le j\le k_0,m\in\{0,1\}^j}$ is a finite family of continuous functions defined on $[0,1]$, and by definition of the terms $B_{\alpha,p,k}(t)$ in \eqref{13052019E7}, there exists a positive constant $c_0>0$ such that for all $0\le\alpha\le1$, $0\le k\le k_0$, $1\le j\le k-k_0$ and $m\in\{0,1\}^j$,
\begin{equation}\label{13052019E14}
	\big\vert\sigma_{j,m}(\alpha)\big\vert + \big\Vert\sqrt{\Reelle Q}B_{\alpha,p,k}(t)J\sqrt{\Reelle Q}\big\Vert\le c_0.
\end{equation}
Then, the first term can be controlled in the following way: from \eqref{13052019E14} and Lemma \ref{lem_l1_ineq}, we have that for all $0\le k\le k_0-1$ and $\eta_k\in(\mathbb R^*_+)^{k_0-k}$, there exists a positive constant $\gamma_{\eta_k}>0$, such that for all $0\le\alpha\le1$, $0\le t\le T_0$ and $X\in\mathbb C^{2n}$,
\begin{multline}\label{19112018E8}
	\bigg\vert t^k\sqrt{\Reelle Q}(\Imag F)^kX + \sum_{j=1}^{k_0-k}t^{k+j}\sigma_{j,0}(\alpha)\sqrt{\Reelle Q}(\Imag F)^{k+j}X\bigg\vert \\
	\geq\gamma_{\eta_k}t^k\big\vert\sqrt{\Reelle Q}(\Imag F)^kX\big\vert - c_0\sum_{j=1}^{k_0-k}(\eta_k)_jt^{k+j}\big\vert\sqrt{\Reelle Q}(\Imag F)^{k+j}X\big\vert.
\end{multline}
Notice that when $k=k_0$, the sum appearing in the left-hand side of the estimate \eqref{19112018E8} is reduced to zero, which motivates to set $\gamma_{\eta_{k_0}} = 1$. By using that $F = JQ$ and \eqref{13052019E14}, we derive the following estimate for the second term for all $0\le\alpha\le1$, $0\le k\le k_0$, $0\le t\le T_0$ and $X\in\mathbb C^{2n}$,
\begin{multline}\label{14112018E9}
	\bigg\vert\sum_{p=0}^{k_0-k-1}t^{p+1}\sqrt{\Reelle Q}B_{\alpha,p,k}(t)(\Reelle F)(\Imag F)^pX\bigg\vert \\
	\le\sum_{p=0}^{k_0-k-1}t^{p+1}\big\vert\sqrt{\Reelle Q}B_{\alpha,p,k}(t)J\sqrt{\Reelle Q}\sqrt{\Reelle Q}(\Imag F)^pX\big\vert
	\le c_0\sum_{p=0}^{k_0}t^{p+1}\big\vert\sqrt{\Reelle Q}(\Imag F)^pX\big\vert.
\end{multline}
We deduce from \eqref{13052019E15}, \eqref{19112018E8} and \eqref{14112018E9} that for all $0\le\alpha\le1$, $0\le t\le T_0$ and $X\in\mathbb C^{2n}$,
\begin{multline*}
	p_{\alpha,t}(X) \geq \sum_{k=0}^{k_0}\gamma_{\eta_k}t^k\big\vert\sqrt{\Reelle Q}(\Imag F)^kX\big\vert
	- c_0\sum_{k=0}^{k_0-1}\sum_{j=1}^{k_0-k}(\eta_k)_jt^{k+j}\big\vert\sqrt{\Reelle Q}(\Imag F)^{k+j}X\big\vert \\
	- c_0(k_0+1)\sum_{p=0}^{k_0}t^{p+1}\big\vert\sqrt{\Reelle Q}(\Imag F)^pX\big\vert
	- \sum_{k=0}^{k_0}t^k\big\vert\sqrt{\Reelle Q}(\Imag F)^kR_{\alpha,k_0-k}(t\Reelle F,t\Imag F)X\big\vert,
\end{multline*}
where the functions $p_{\alpha,t}$ are the ones appearing in the left-hand side of the estimate \eqref{13052019E10}, defined for all $0\le\alpha\le1$, $0\le t\le T_0$ and $X\in\mathbb C^{2n}$ by
\begin{equation}\label{13052019E18}
	p_{\alpha,t}(X) = \sum_{k=0}^{k_0}t^k\big\vert\sqrt{\Reelle Q}(\Imag F)^kG_{\alpha}(t\Reelle F,t\Imag F)X\big\vert.
\end{equation}
We make the change of indexes $j' = k$ and $k' = k+j$ in the following sum
$$\sum_{k=0}^{k_0-1}\sum_{j=1}^{k_0-k}(\eta_k)_jt^{k+j}\big\vert\sqrt{\Reelle Q}(\Imag F)^{k+j}X\big\vert
= \sum_{k=1}^{k_0}\bigg(\sum_{j=0}^{k-1}(\eta_j)_{k-j}\bigg)t^k\big\vert\sqrt{\Reelle Q}(\Imag F)^kX\big\vert.$$
Considering the quantity 
\begin{equation}\label{20112018E1}
	\varepsilon_{\eta,k,t} = \gamma_{\eta_k} - c_0\sum_{j=0}^{k-1}(\eta_j)_{k-j} - c_0(k_0+1)t,
\end{equation}
and the remainder term
\begin{equation}\label{19112018E10}
	\Sigma_{\alpha,t}(X) = \sum_{k=0}^{k_0}t^k\big\vert\sqrt{\Reelle Q}(\Imag F)^kR_{\alpha,k_0-k}(t\Reelle F,t\Imag F)X\big\vert,
\end{equation}
we deduce that for all $0\le\alpha\le1$, $0\le t\le T_0$ and $X\in\mathbb C^{2n}$, $p_{\alpha,t}(X)$ satisfies the estimate
\begin{equation}\label{ou_yeah}
	p_{\alpha,t}(X)\geq(\gamma_{\eta_0}-c_0(k_0+1)t)\big\vert\sqrt{\Reelle Q}X\big\vert
	+ \sum_{k=1}^{k_0}\varepsilon_{\eta,k,t}t^k\big\vert\sqrt{\Reelle Q}(\Imag F)^kX\big\vert - \Sigma_{\alpha,t}(X).
\end{equation}
Now, we determine the $\eta_k\in(\mathbb R^*_+)^{k_0-k}$. We would like to have $c_0(\eta_j)_{k-j} = \frac{\gamma_{\eta_k}}{k+1}$. Therefore, we define for all $0\le k\le k_0-1$ and $1\le j\le k_0-k$,
\begin{equation}\label{13112018E13}
	(\eta_k)_j = \gamma_{\eta_{k+j}} (c_0(k+j+1) )^{-1}.
\end{equation}
This construction seems implicit but, in fact, it is not. Indeed, to define $\eta_k$, we just need to know $\gamma_{\eta_{\ell}}$ for the indexes $k+1\le\ell\le k_0$ and since $\gamma_{\eta_{k_0}}=1$, we can proceed by induction. With this construction \eqref{13112018E13} of $\eta_k$, we have that for all $1\le k\le k_0$ and $0\le t\le T_0$,
$$\varepsilon_{\eta,k,t} = \frac{\gamma_{\eta_k}}{k+1} - c_0(k_0+1)t.$$
We deduce from this construction and \eqref{ou_yeah} that for all $0\le\alpha\le1$, $0\le t\le T_0$ and $X\in\mathbb C^{2n}$,
$$p_{\alpha,t}(X)\geq\sum_{k=0}^{k_0}\bigg(\frac{\gamma_{\eta_k}}{k+1} - c_0(k_0+1)t\bigg)t^k\big\vert\sqrt{\Reelle Q}(\Imag F)^kX\big\vert - \Sigma_{\alpha,t}(X).$$
Therefore, there exist some positive constants $c_1>0$ and $0<T_1<T_0$ such that for all $0\le\alpha\le1$, $0\le t\le T_1$ and $X\in\mathbb C^{2n}$,
\begin{equation}\label{13052019E16}
	p_{\alpha,t}(X) \geq c_1\sum_{k=0}^{k_0}t^k\big\vert\sqrt{\Reelle Q}(\Imag F)^kX\big\vert - \Sigma_{\alpha,t}(X).
\end{equation}
Now, we prove that the reminder term $\Sigma_{\alpha,t}$ can be controlled by $\sum_{k=0}^{k_0}t^k\vert\sqrt{\Reelle Q}(\Imag F)^kX\vert$. To that end, we begin by observing from 
\eqref{03092019E8} and \eqref{19112018E10} that $0\le\alpha\le1$, $0\le t\le T_1$ and $X\in\mathbb C^{2n}$,
\begin{align}\label{03092019E10}
	\Sigma_{\alpha,t}(X) & \le C_0\sum_{k=0}^{k_0}t^k\big\Vert\sqrt{\Reelle Q}(\Imag F)^k\big\Vert\big\Vert(t\Reelle F,t\Imag F)\big\Vert^{k_0-k+1}_{\infty}\vert X\vert \\
	& = t^{k_0+1}\bigg(C_0\sum_{k=0}^{k_0}\big\Vert\sqrt{\Reelle Q}(\Imag F)^k\big\Vert\big\Vert(\Reelle F,\Imag F)\big\Vert^{k_0-k+1}_{\infty}\bigg)\vert X\vert. \nonumber
\end{align}
Then, the inequality \eqref{13052019E11}, the estimate \eqref{03092019E10} and Lemma \ref{13112018L1} imply that there exists a positive constant $c_2>0$ such that for all $0\le\alpha\le1$, $0\le t\le\min(1,T_1)$ and $X\in (S+iS)^{\perp}$,
$$\Sigma_{\alpha,t}(X)\le c_2t\bigg(\sum_{k=0}^{k_0}t^k\big\vert\sqrt{\Reelle Q}(\Imag F)^kX\big\vert\bigg),$$
where the orthogonality is taken with respect to the Hermitian structure of $\mathbb C^{2n}$. This estimate combined with \eqref{13052019E16} shows the existence of positive constants $c_3>0$ and $0<T_2<T_1$ such that for all $0\le\alpha\le1$, $0\le t\le T_2$ and $X\in (S+iS)^{\perp}$,
\begin{equation}\label{13112018E17}
	p_{\alpha,t}(X) \geq (c_1-c_2t)\sum_{k=0}^{k_0}t^k\big\vert\sqrt{\Reelle Q}(\Imag F)^kX\big\vert\geq c_3\sum_{k=0}^{k_0}t^k\big\vert\sqrt{\Reelle Q}(\Imag F)^kX\big\vert.
\end{equation}
Now, it only remains to check that the estimate \eqref{13112018E17} can be extended to all $X\in\mathbb C^{2n}$. To that end, we notice that for all $0\le k\le k_0$, $X\in S+iS$ and $Y\in \mathbb C^{2n}$,
\begin{equation}\label{28082019E1}
	\sqrt{\Reelle Q}(\Imag F)^k(X+Y) = \sqrt{\Reelle Q}(\Imag F)^kY,
\end{equation}
since $\sqrt{\Reelle Q}(\Imag F)^k(S+iS) = \{0\}$ by definition \eqref{12102018E7} of the singular space $S$. This implies that for all $0\le t\le T_2$ and $X\in\mathbb C^{2n}$ written $X = X_{S+iS} + X_{(S+iS)^{\perp}}$, with $X_{S+iS}\in S+iS$ and $X_{(S+iS)^{\perp}}\in (S+iS)^{\perp}$ according to the decomposition $\mathbb C^{2n} = (S+iS)\oplus(S+iS)^{\perp}$, the orthogonality being taken with respect to the Hermitian structure of $\mathbb C^{2n}$, we have
\begin{equation}\label{13052019E17}
	\sum_{k=0}^{k_0}t^k\big\vert\sqrt{\Reelle Q}(\Imag F)^kX\big\vert = \sum_{k=0}^{k_0}t^k\big\vert\sqrt{\Reelle Q}(\Imag F)^kX_{(S+iS)^{\perp}}\big\vert.
\end{equation}
Moreover, it follows from the assumption \eqref{03092019E6} that for all $0\le\alpha\le1$, $0\le t\le T_2$, $S+iS$ is a stable subspace of  $G_{\alpha}(t\Reelle F,t\Imag F)$. Consequently, we deduce from \eqref{13052019E18}, \eqref{28082019E1} that for all $0\le\alpha\le1$, $0\le t\le T_2$ and $X\in\mathbb C^{2n}$,
\begin{equation}\label{13052019E21}
	p_{\alpha,t}(X) = p_{\alpha,t}(X_{(S+iS)^{\perp}}).
\end{equation}
As a consequence of \eqref{13052019E17} and \eqref{13052019E21}, the estimate \eqref{13112018E17} can be extended to all $0\le\alpha\le1$, $0\le t\le T_2$ and $X\in\mathbb C^{2n}$. This ends the proof of Lemma \ref{05112018L2}.
\end{proof}

The two following lemmas are used to prove Lemma \ref{05112018L2}.

\begin{lem}\label{13112018L1} There exists a positive constant $c>0$ such that for all $0\le t\le 1$ and $X\in (S+iS)^{\perp}$,
$$\kappa_t(X)\geq ct^{2k_0}\vert X\vert^2,$$
where the orthogonality is taken with respect to the Hermitian structure of $\mathbb C^{2n}$.
\end{lem}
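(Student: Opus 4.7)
The key observation is that for $0\le t\le 1$ one has $t^{2k}\ge t^{2k_0}$ for every $0\le k\le k_0$, so from the definition \eqref{13052019E9} of $\kappa_t$ one immediately gets
$$\kappa_t(X)\ge t^{2k_0}\,p(X),\qquad p(X):=\sum_{k=0}^{k_0}\big\vert\sqrt{\Reelle Q}(\Imag F)^kX\big\vert^2.$$
Thus the lemma reduces to the time-independent coercivity estimate: there exists $c>0$ such that $p(X)\ge c|X|^2$ for all $X\in(S+iS)^{\perp}$.

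To establish this, I would first reduce to a real statement. Since $\sqrt{\Reelle Q}$ and $\Imag F$ are real matrices, writing $X=X_1+iX_2$ with $X_1,X_2\in\mathbb R^{2n}$ the Hermitian norm squared splits as $|\sqrt{\Reelle Q}(\Imag F)^kX|^2=|\sqrt{\Reelle Q}(\Imag F)^kX_1|^2+|\sqrt{\Reelle Q}(\Imag F)^kX_2|^2$, and hence $p(X)=p(X_1)+p(X_2)$. Moreover, a direct verification from the definition of the Hermitian inner product on $\mathbb C^{2n}$ shows that $(S+iS)^{\perp}=S^{\perp}+iS^{\perp}$, where $S^{\perp}$ denotes the Euclidean orthogonal of $S$ in $\mathbb R^{2n}$. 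Since $|X|^2=|X_1|^2+|X_2|^2$ as well, it is enough to prove that $p(Y)\ge c|Y|^2$ for every $Y\in S^{\perp}\subset\mathbb R^{2n}$.

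This real estimate is the main step, though it is essentially an exercise in linear algebra. Using $\Reelle F=J\Reelle Q$ with $J\in\GL_{2n}(\mathbb R)$ and $\Ker\sqrt{\Reelle Q}=\Ker\Reelle Q$, one has $\Ker(\sqrt{\Reelle Q}(\Imag F)^k)\cap\mathbb R^{2n}=\Ker(\Reelle F(\Imag F)^k)\cap\mathbb R^{2n}$ for every $k\ge 0$. Consequently, the kernel in $\mathbb R^{2n}$ of the non-negative real quadratic form $p$ is
$$\bigcap_{k=0}^{k_0}\Ker\big(\Reelle F(\Imag F)^k\big)\cap\mathbb R^{2n}=S,$$
by the very definition \eqref{12102018E7} of $k_0$. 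Therefore $p$ is positive on $S^{\perp}\setminus\{0\}$, and since $S^{\perp}$ is finite-dimensional, a compactness argument on its unit sphere produces a constant $c>0$ with $p(Y)\ge c|Y|^2$ on $S^{\perp}$.

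Combining the three steps yields $\kappa_t(X)\ge t^{2k_0}p(X)=t^{2k_0}(p(\Reelle X)+p(\Imag X))\ge ct^{2k_0}|X|^2$ for every $0\le t\le 1$ and $X\in(S+iS)^{\perp}$. I do not foresee any real obstacle: the only point requiring a small verification is the identification $(S+iS)^{\perp}=S^{\perp}+iS^{\perp}$, which is immediate since vectors in $S+iS$ are $\mathbb C$-linear combinations of real vectors in~$S$.
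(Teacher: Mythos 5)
Your proof is correct and follows essentially the same route as the paper: the bound $t^{2k}\ge t^{2k_0}$ for $t\le 1$, the splitting $\kappa_t(X)=\kappa_t(\Reelle X)+\kappa_t(\Imag X)$ reducing to $X\in S^{\perp}\subset\mathbb R^{2n}$, and the coercivity of $\sum_{k=0}^{k_0}|\sqrt{\Reelle Q}(\Imag F)^kX|^2$ on $S^{\perp}$ via the kernel identification $\Ker(\sqrt{\Reelle Q}(\Imag F)^k)=\Ker(\Reelle F(\Imag F)^k)$ and compactness. The only difference is that the paper delegates this last coercivity step to the estimate \eqref{23102018E8} already established in the proof of Lemma \ref{23102018L1} (with $V_{k_0}^{\perp}=S^{\perp}$), whereas you re-derive it inline by the same argument.
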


\begin{proof} We begin by observing that for all $0\le t\le 1$ and $X\in\mathbb C^{2n}$,
\begin{equation}\label{14012019E6}
	\kappa_t(X) \geq t^{2k_0}\sum_{k=0}^{k_0}\big\vert\sqrt{\Reelle Q}(\Imag F)^kX\big\vert^2.
\end{equation}
It follows from \eqref{12102018E9}, \eqref{12092018E2} and \eqref{23102018E8} that there exists a positive constant $c>0$ such that for all $X\in S^{\perp}$,
$$\kappa_t(X)\geq t^{2k_0}\sum_{k=0}^{k_0}\big\vert\sqrt{\Reelle Q}(\Imag F)^kX\big\vert^2\geq ct^{2k_0}\vert X\vert^2,$$
since $V^{\perp}_{k_0} = S^{\perp}$. Moreover, if $X\in(S+iS)^{\perp}$, then $\Reelle X,\Imag X\in S^{\perp}$ and since $\kappa_t$ is a nonnegative quadratic form, we deduce that 
$$ct^{2k_0}\vert X\vert^2 = ct^{2k_0}\vert\Reelle X\vert^2 + ct^{2k_0}\vert\Imag X\vert^2\le\kappa_t(\Reelle X) + \kappa_t(\Imag X) = \kappa_t(X).$$
This ends the proof of Lemma \ref{13112018L1}.
\end{proof}

\begin{lem}\label{lem_l1_ineq} Let $m\in \mathbb N^*$ and $\eta\in(\mathbb R^*_+)^m$. Then, we have that for all $x,y_1,\dots,y_m\in \mathbb C^m$,
$$\Big\vert x + \sum_{j=1}^m y_j\Big\vert\geq\frac{\vert x\vert}{1+ \eta_{\min}^{-1}} - \sum_{j=1}^m \eta_j\vert y_j\vert, \ \ \ \ \  \mathrm{with} \ \eta_{\min} = \min_{1\le j\le m}\eta_j.$$
\end{lem}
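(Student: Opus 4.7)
The plan is to prove the inequality by a simple case split according to whether the right-hand side is non-positive or strictly positive, combined with the triangle inequality and the elementary bound $|y_j|\le \eta_{\min}^{-1}\eta_j |y_j|$.

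First I would observe that, since $|x+\sum_j y_j|\ge 0$, the inequality holds trivially in the case
$$\sum_{j=1}^m \eta_j |y_j|\ \ge\ \frac{|x|}{1+\eta_{\min}^{-1}},$$
because the right-hand side of the desired estimate is then non-positive. Hence I would focus on the complementary case
$$\sum_{j=1}^m \eta_j |y_j|\ <\ \frac{|x|}{1+\eta_{\min}^{-1}} \ =\ \frac{\eta_{\min}\,|x|}{1+\eta_{\min}}.$$

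In this second regime, using $\eta_{\min}\le \eta_j$ for every $j$, I would bound
$$\sum_{j=1}^m |y_j|\ \le\ \eta_{\min}^{-1}\sum_{j=1}^m \eta_j |y_j|\ <\ \frac{|x|}{1+\eta_{\min}},$$
and then invoke the standard triangle inequality $\big|x+\sum_j y_j\big|\ge |x|-\sum_j |y_j|$ to conclude
$$\Big|x+\sum_{j=1}^m y_j\Big|\ >\ |x|-\frac{|x|}{1+\eta_{\min}}\ =\ \frac{\eta_{\min}|x|}{1+\eta_{\min}}\ =\ \frac{|x|}{1+\eta_{\min}^{-1}}\ \ge\ \frac{|x|}{1+\eta_{\min}^{-1}}-\sum_{j=1}^m \eta_j |y_j|,$$
the last inequality following from the positivity of the $\eta_j|y_j|$.

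I do not anticipate any genuine obstacle: the lemma is purely elementary, and the only mildly non-obvious point is to notice that the algebraic identity $1-\tfrac{1}{1+\eta_{\min}}=\tfrac{\eta_{\min}}{1+\eta_{\min}}=\tfrac{1}{1+\eta_{\min}^{-1}}$ makes the two cases fit together exactly. No cancellation between the $y_j$ in the sum $x+\sum_j y_j$ is needed, and no structure on the $\eta_j$ beyond positivity is used.
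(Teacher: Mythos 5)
Your proof is correct and follows essentially the same route as the paper: a two-case split (perturbation small versus large relative to $\vert x\vert$), with the triangle inequality in one case and the positivity of the subtracted term in the other. The only cosmetic difference is that you threshold on the weighted sum $\sum_j\eta_j\vert y_j\vert$ whereas the paper thresholds on $\sum_j\vert y_j\vert$ against $\alpha\vert x\vert$ with $\alpha=(1+\eta_{\min})^{-1}$; the two splits are interchangeable here.
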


\begin{proof} Let $x,y_1,\dots,y_m\in \mathbb C^m$. We consider $\alpha = \frac1{1+ \eta_{\min}}$ and distinguish two cases: \\[5pt]
\textbf{1.} On the one hand, if $\alpha\vert x\vert\geq\sum_{j=1}^m\vert y_j\vert$, we have that
$$\Big\vert x + \sum_{j=1}^m y_j\Big\vert + \sum_{j=1}^m\eta_j\vert y_j\vert 
\geq \Big\vert x + \sum_{j=1}^m y_j\Big\vert 
\geq \vert x\vert - \sum_{j=1}^m\vert y_j\vert
\geq \vert x\vert(1-\alpha) = \frac{\vert x\vert}{1+ \eta_{\min}^{-1}}.$$
\textbf{2.} On the other hand, when $\alpha\vert x\vert \le \vert y_1\vert+\dots+ \vert y_m\vert$, it follows that
$$\Big\vert x + \sum_{j=1}^m y_j\Big\vert + \sum_{j=1}^m\eta_j\vert y_j\vert
\geq\sum_{j=1}^m\eta_j\vert y_j\vert
\geq\alpha\eta_{\min}\vert x\vert = \frac{\vert x\vert}{1+ \eta_{\min}^{-1}}.$$
This ends the proof of Lemma \ref{lem_l1_ineq}.
\end{proof}

To end this subsection, let us detail why Lemma \ref{05112018L2} can be applied to the functions $G$ and $G_{\alpha}$ respectively defined in \eqref{04092019E4} and \eqref{03092019E4}.

\begin{lem}\label{25122025L7} The function $G$ defined in \eqref{04092019E4} satisfies the assumptions of Lemma \ref{05112018L2}.
\end{lem}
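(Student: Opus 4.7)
The plan is to treat $G$ as the constant family $G_\alpha := G$ for $\alpha\in[0,1]$, so that the continuous dependence on $\alpha$ required by Lemma \ref{05112018L2} is automatic, and then to verify the two hypotheses \eqref{04092019E9}--\eqref{04092019E10} and \eqref{03092019E6}. The key preliminary observation is that, under the substitution $M = t\Reelle F$, $N = t\Imag F$, one has $M + iN = tF$ and $M - iN = t\overline F$, so
\begin{equation*}
G(t\Reelle F, t\Imag F) = 2\bigl(\sqrt{e^{-2itF} e^{-2it\overline F}} + I_{2n}\bigr)^{-1} = \Phi_t,
\end{equation*}
with $\Phi_t$ as in \eqref{08012019E8}; in particular $G(0,0) = I_{2n}$.

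For the Taylor decomposition \eqref{04092019E9}--\eqref{04092019E10}, I will apply Lemma \ref{04092019L1} successively along the chain of analytic operations defining $G$: the linear expressions $M \pm iN$; the matrix exponentials $e^{-2i(M\pm iN)}$ (entire function); the matrix product (bilinear, so a mere manipulation of polynomials and controlled remainders); the square root (holomorphic near $I_{2n}$); and finally the map $W \mapsto 2(W+I_{2n})^{-1}$ (holomorphic near $I_{2n}$). At every step the current expression keeps the form $I_{2n} + P + R$ with $P \in \mathbb C_{k_0,0}\langle X,Y\rangle$ and $\|R(M,N)\| \le C\|(M,N)\|_\infty^{k_0+1}$ on a small ball around $(0,0)$, and this form is preserved by Lemma \ref{04092019L1} as long as the constant term remains in the domain of the next operation, which is guaranteed by continuity.

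The main obstacle is the stability hypothesis \eqref{03092019E6}, namely $\Phi_t(S+iS) \subset S+iS$ for $t$ small. The argument will go as follows. By \eqref{23012019E2}, $(\Reelle F)S = \{0\}$ and $(\Imag F)S \subset S$, so
\begin{equation*}
FS = i(\Imag F)S \subset iS, \qquad \overline F S = -i(\Imag F)S \subset iS,
\end{equation*}
and then $F(iS), \overline F(iS) \subset -S$, which shows that both $F$ and $\overline F$ stabilize $S+iS$. Consequently all their polynomials do as well, in particular $e^{-2itF}$, $e^{-2it\overline F}$, and the product $K_t := e^{-2itF}e^{-2it\overline F}$. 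For $t$ small enough, the spectrum of $K_t$ (included in $\mathbb R_+^*$ by Lemma \ref{08012019L1}) is confined to a small neighborhood of $1$ where $z \mapsto 2(\sqrt z + 1)^{-1}$ is analytic, so $\Phi_t$ equals the sum of a convergent power series in $K_t - I_{2n}$. Each partial sum is a polynomial in $K_t$ and thus preserves $S+iS$; passing to the limit in the closed subspace $S+iS$ yields the required stability of $\Phi_t = G(t\Reelle F, t\Imag F)$, and the proof is complete.
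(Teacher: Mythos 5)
Your proof is correct and follows essentially the same route as the paper: the decomposition \eqref{04092019E9}--\eqref{04092019E10} via Lemma \ref{04092019L1} and analytic functional calculus near $I_{2n}$, and the stability \eqref{03092019E6} via the power-series expansion of $\Phi_t$ in $e^{-2itF}e^{-2it\overline F}-I_{2n}$ combined with the invariance of $S+iS$ under $F$ and $\overline F$ coming from \eqref{23012019E2}. The only cosmetic difference is that you chain Lemma \ref{04092019L1} through each elementary operation, whereas the paper computes the Cauchy product of the two exponentials by hand and then applies the lemma once to the single scalar function $z\mapsto 2(\sqrt z+1)^{-1}$.
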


\begin{proof} Let us recall that the function $G$ is given by
\begin{equation}\label{05092019E1}
	G(M,N) = 2 (\sqrt{e^{-2i(M+iN)}e^{-2i(M-iN)}} + I_{2n})^{-1}.
\end{equation}
The matrix exponential being defined as the sum of an absolutely convergent series, the product of the two exponentials is given by the following Cauchy product for all $(M,N)\in M_{2n}(\mathbb R)\times M_{2n}(\mathbb R)$,
\begin{equation}\label{05092019E4}
	e^{-2i(M+iN)}e^{-2i(M-iN)} = \sum_{j=0}^{+\infty}\frac{(-2i)^j}{j!}\sum_{\ell=0}^j\binom j{\ell}(M+iN)^{\ell}(M-iN)^{j-\ell}.
\end{equation}
Let us consider the non-commutative polynomial $P$ defined by
$$P(X,Y)=\sum_{j=1}^{k_0}\frac{(-2i)^j}{j!}\sum_{\ell=0}^j\binom j{\ell}(X+iY)^{\ell}(X-iY)^{j-\ell}\in\mathbb C_{k_0,0}\langle X,Y\rangle.$$
We also consider the function $R:(M,N)\in M_{2n}(\mathbb R)\times M_{2n}(\mathbb R)\rightarrow M_{2n}(\mathbb C)$ defined for all $(M,N)\in~M_{2n}(\mathbb R)\times M_{2n}(\mathbb R)$ by
$$R(M,N) = \sum_{j=k_0+1}^{+\infty}\frac{(-2i)^j}{j!}\sum_{\ell=0}^j\binom j{\ell}(M+iN)^{\ell}(M-iN)^{j-\ell}.$$
With these notations, the product of exponentials takes the following form for all $(M,N)\in~M_{2n}(\mathbb R)\times M_{2n}(\mathbb R)$,
\begin{equation}\label{05092019E5}
	e^{-2i(M+iN)}e^{-2i(M-iN)} = I_{2n} + P(M,N) + R(M,N).
\end{equation}
Notice that the term $R(M,N)$ is a remainder since for all $\rho>0$ there exists a positive constant $c>0$ such that for all $(M,N)\in B((0,0),\rho)$,
$$\Vert R(M,N)\Vert\le c\Vert(M,N)\Vert^{k_0+1}_{\infty}.$$
Now applying Lemma \ref{04092019L1} with $\rho =1$ (it could be chosen arbitrarily) and the analytic function 
\begin{equation}\label{06092019E1}
	f:z\in\mathbb D(1,1)\mapsto((\sqrt z+1)/2)^{-1},
\end{equation}
we deduce that there exists $\rho'\in(0,1)$ such that the function $G$ is well defined on $B((0,0),\rho')$ and satisfies the assumptions \eqref{04092019E9} and \eqref{04092019E10} of Lemma \ref{05112018L2} on $B((0,0),\rho')$ (with no dependence with respect to the parameter $0\le\alpha\le1$ here).

Always in order to apply Lemma \ref{05112018L2} to the function $G$, it remains to check that for all $t\geq0$ such that $(t\Reelle F,t\Imag F)\in B((0,0),\rho')$, 
$$G(t\Reelle F,t\Imag F)(S+iS)\subset S+iS.$$
Notice that by definition, we have $G(t\Reelle F,t\Imag F) = \Phi_t$ 
 The inclusion we aim at proving is therefore equivalent to the following one for all $t\geq0$ such that $(t\Reelle F,t\Imag F)\in B((0,0),\rho')$,
\begin{equation}\label{05092019E6}
	\Phi_t(S+iS)\subset S+iS.
\end{equation}
Since the matrix function $((\sqrt\cdot + I_{2n})/2)^{-1}$ is analytic on $B(I_{2n},1)$ (from the analyticity of the function \eqref{06092019E1} on $\mathbb D(1,1)$), there exists a sequence of complex numbers  $(\sigma_j)_{j\geq1}$ such that
$$\forall A\in B(I_{2n},1),\quad 2 (\sqrt A+ I_{2n})^{-1} = I_{2n} + \sum_{j=1}^{+\infty}\sigma_j(A-I_{2n})^j.$$
It follows that the matrix $\Phi_t$ is the sum of the following series for all $t\geq0$ such that $(t\Reelle F,t\Imag F)\in B((0,0),\rho')$,
\begin{equation}\label{05092019E7}
	\Phi_t = I_{2n} + \sum_{j=1}^{+\infty}\sigma_j(e^{-2itF}e^{-2it\overline F}-I_{2n})^j.
\end{equation}
Since $(\Reelle F)S=\{0\}$ and $(\Imag F)S\subset S$ from \eqref{23012019E2}, the two inclusions $F(S+iS)\subset S+iS$ and $\overline F(S+iS)\subset S+iS$ hold. They imply in particular that $e^{-2itF}(S+iS)\subset S+iS$ and $e^{-2it\overline F}(S+iS)\subset S+iS$ for all $t\geq0$. The inclusion \eqref{05092019E6} is then a consequence of this observation and \eqref{05092019E7}.
\end{proof}

\begin{lem}\label{06092019L1} The family of functions $(G_{\alpha})_{0\le\alpha\le1}$ defined in \eqref{03092019E4} satisfies the assumptions of Lemma \ref{05112018L2}.
\end{lem}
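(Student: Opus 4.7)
The aim is to verify the two hypotheses of Lemma \ref{05112018L2} for the family $(G_\alpha)_{0\le\alpha\le1}$ defined by $G_\alpha(M,N) = \exp(-\tfrac{\alpha}{2}\Log(e^{-2i(M+iN)}e^{-2i(M-iN)}))$: namely, an expansion of the form $G_\alpha = I_{2n} + P_\alpha + R_\alpha$ with $P_\alpha \in \mathbb C_{k_0,0}\langle X,Y\rangle$ depending continuously on $\alpha \in [0,1]$ and a remainder estimate uniform in $\alpha$, together with the singular space stability $G_\alpha(t\Reelle F, t\Imag F)(S+iS) \subset S+iS$ for $t$ small enough.

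For the expansion, the plan is to compose the argument of Lemma \ref{25122025L7} with Lemma \ref{04092019L1}, keeping track of the $\alpha$-dependence. As in Lemma \ref{25122025L7}, one writes $A(M,N) := e^{-2i(M+iN)}e^{-2i(M-iN)} = I_{2n} + P(M,N) + R(M,N)$ on some ball $B((0,0),\rho)$, where $P \in \mathbb C_{k_0,0}\langle X,Y\rangle$ is the Taylor polynomial of degree $k_0$ of the product of exponentials vanishing at $(0,0)$, and $\Vert R(M,N)\Vert \le c\Vert(M,N)\Vert^{k_0+1}_{\infty}$. For each $\alpha \in [0,1]$ the function $\phi_\alpha(z) = \exp(-\tfrac{\alpha}{2}\Log z)$ is analytic on $\mathbb D(1,1)$ and satisfies $\phi_\alpha(1) = 1$. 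Applying Lemma \ref{04092019L1} to $\phi_\alpha$ (after a translation centered at $1$) with the polynomial $P$ and the remainder $R$ yields $G_\alpha = \phi_\alpha(A) = I_{2n} + P_\alpha + R_\alpha$ with $P_\alpha := \Psi_{\phi_\alpha}(P) \in \mathbb C_{k_0,0}\langle X,Y\rangle$ and $\Vert R_\alpha(M,N)\Vert \le \Gamma_\alpha \Vert(M,N)\Vert^{k_0+1}_{\infty}$. Since the Taylor coefficients of $\phi_\alpha$ at $1$ depend continuously (indeed analytically) on $\alpha$, the map $\alpha \mapsto \Psi_{\phi_\alpha}(P)$ is continuous, and by compactness of $[0,1]$ one can shrink $\rho'$ once for all $\alpha$ and bound $\Gamma_\alpha$ by a uniform constant.

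For the stability condition, fix $t \ge 0$ such that $(t\Reelle F, t\Imag F) \in B((0,0),\rho')$, so in particular $\Vert e^{-2itF}e^{-2it\overline F} - I_{2n}\Vert < 1$ and $\Log(e^{-2itF}e^{-2it\overline F})$ is given by the convergent series $\sum_{k\ge1}\tfrac{(-1)^{k-1}}{k}(e^{-2itF}e^{-2it\overline F} - I_{2n})^k$. As observed in the proof of Lemma \ref{25122025L7}, the properties $(\Reelle F)S = \{0\}$ and $(\Imag F)S \subset S$ from \eqref{23012019E2} imply that $F$ and $\overline F$ both stabilize $S + iS$, hence so do $e^{-2itF}$, $e^{-2it\overline F}$ and their product. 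Consequently every power $(e^{-2itF}e^{-2it\overline F} - I_{2n})^k$ stabilizes $S + iS$, and taking the limit of partial sums shows that $\Log(e^{-2itF}e^{-2it\overline F})$ stabilizes $S + iS$ as well. Finally, expanding $G_\alpha(t\Reelle F, t\Imag F) = \sum_{j\ge0}\tfrac{1}{j!}(-\tfrac{\alpha}{2}\Log(e^{-2itF}e^{-2it\overline F}))^j$ and using the same argument once more gives the desired inclusion $G_\alpha(t\Reelle F, t\Imag F)(S+iS) \subset S+iS$.

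The main obstacle is the uniformity in $\alpha \in [0,1]$: Lemma \ref{04092019L1} is formulated for a single analytic function $f$, and its conclusions involve a radius $\rho'$ and a constant $\Gamma_{C,P}$ that a priori depend on $f$. One has to trace through its proof and observe that these quantities depend continuously on the Taylor coefficients of $f$, so that continuous dependence of $\phi_\alpha$ on $\alpha$ combined with compactness of $[0,1]$ provides the required uniform bounds. All the remaining verifications (continuity of $\alpha \mapsto P_\alpha$, convergence of the logarithm and exponential series) are routine.
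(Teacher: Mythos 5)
Your proposal is correct in substance, but it handles the key difficulty — uniformity in $\alpha$ — by a genuinely different route from the paper, and your route requires more work than you acknowledge as "routine". You keep the polynomial $P$ (coming from the expansion of $e^{-2i(M+iN)}e^{-2i(M-iN)}$) fixed and let the analytic function $\phi_\alpha(z)=\exp(-\tfrac\alpha2\Log z)$ vary with $\alpha$; since Lemma \ref{04092019L1} is stated for a single function $f$ and only guarantees that $\rho'$ and $\Gamma_{C,P}$ depend continuously on $P$ and $C$ (not on $f$), you are forced to reopen its proof and check that all the constants (the radius $\rho'$, the bound $\sup_{\Vert L\Vert<\rho/2}\Vert\mathrm df(L)\Vert$, the tail of the Taylor series, and the map $\Psi$ itself) depend continuously on the Taylor coefficients of $f$, and then invoke compactness of $[0,1]$. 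This works, but it amounts to proving a strengthened, family version of Lemma \ref{04092019L1}. The paper avoids this entirely by moving the $\alpha$-dependence into the polynomial: it first expands the \emph{logarithm}, writing $\Log(e^{-2i(M+iN)}e^{-2i(M-iN)})=P(M,N)+R(M,N)$ with a single $P\in\mathbb C_{k_0,0}\langle X,Y\rangle$, and then applies Lemma \ref{04092019L1} with the \emph{fixed} function $f=\exp$ and the varying polynomials $-\tfrac\alpha2P$, which range over a bounded set; the continuity of $\rho'$, $\Gamma$ and $\Psi$ in $P$ is exactly what the lemma already provides, so the uniformity in $\alpha$ is immediate. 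Your treatment of the stability condition $G_\alpha(t\Reelle F,t\Imag F)(S+iS)\subset S+iS$ (series expansion of $\Log$ and then of $\exp$, stability of $S+iS$ under $F$ and $\overline F$) matches the paper's argument, which simply expands $\exp(-\tfrac\alpha2\Log\,\cdot\,)$ in powers of $e^{-2itF}e^{-2it\overline F}-I_{2n}$ in one step. If you wish to keep your route, you must actually carry out the inspection of Lemma \ref{04092019L1}'s proof rather than assert it; otherwise the cleaner fix is to commute the roles of $f$ and $P$ as the paper does.
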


\begin{proof} We recall that the matrix functions $G_{\alpha}$ are defined for all $0\le\alpha\le1$ by
\begin{equation}\label{05092019E10}
	G_{\alpha}(M,N) = \exp\Big(-\frac{\alpha}2\Log\big(e^{-2i(M+iN)}e^{-2i(M-iN)}\big)\Big).
\end{equation}
Similarly to the previous study of the function $G$ in the proof of Lemma \ref{25122025L7}, we deduce that there exists $\rho>0$ and $C>0$ such that the function
$(M,N)\mapsto \Log\big(e^{-2i(M+iN)}e^{-2i(M-iN)}\big),$
is well defined on $B((0,0),\rho)$ and can be written as
$$\forall (M,N)\in B((0,0),\rho),\quad \Log\big(e^{-2i(M+iN)}e^{-2i(M-iN)}\big)= P(M,N)+R(M,N),$$
where $P\in \mathbb C_{k_0,0}\langle X,Y\rangle$ and $R$ is a remainder term
$$\forall (M,N)\in B((0,0),\rho),\quad \Vert R(M,N)\Vert\le C\Vert (M,N)\Vert^{k_0+1}_{\infty}.$$
Now, observing that the set $\{-(\alpha/2)P:0\le\alpha\le1\}$ is bounded, we deduce from Lemma \ref{04092019L1} applied with $f=\exp$ that there exists $\rho'\in(0,\rho)$ and $C'>0$ (independent of $\alpha$) such that for all $0\le\alpha\le1$, the function $G_{\alpha}$ is well defined on $B((0,0),\rho')$ and there exists $R_{\alpha}:B((0,0),\rho')\to M_{2n}(\mathbb{C})$ satisfying
$$\forall (M,N)\in B((0,0),\rho'),\quad \Vert R_{\alpha}(M,N)\Vert\le C'\Vert (M,N)\Vert^{k_0+1}_{\infty},$$
such that
$$\forall (M,N)\in B((0,0),\rho'),\quad G_{\alpha}(M,N) = I_{2n} + \Psi(-\frac{\alpha}2P)(M,N) + R_{\alpha}(M,N).$$
Since $\Psi$ is a continuous map, the family of functions $(G_{\alpha})_{0\le\alpha\le1}$ satisfies the assumptions \eqref{04092019E9} and \eqref{04092019E10} of Lemma \ref{05112018L2} on $B((0,0),\rho')$.

It remains to check that for all $0\le\alpha\le1$ and $t\geq0$ such that $(t\Reelle F,t\Imag F)\in B((0,0),\rho')$,
\begin{equation}\label{05092019E8}
	G_{\alpha}(t\Reelle F,t\Imag F)(S+iS)\subset S+iS.
\end{equation}
Let $0\le\alpha\le1$ and $t\geq0$ such that $(t\Reelle F,t\Imag F)\in B((0,0),\rho)$ fixed. Since the complex function $\exp(-(\alpha/2)\Log\cdot)$ is analytic on the disk $\mathbb D(1,1)$, the matrix function $\exp(-(\alpha/2)\Log\cdot)$ is analytic on $B(I_{2n},1)$. Thus, there exists a sequence $(\sigma_{\alpha,j})_{j\geq0}$ of complex numbers such that 
$$\forall A\in B(I_{2n},1),\quad \exp\Big(-\frac{\alpha}2\Log A\Big)=\sum_{j=0}^{+\infty}\sigma_{\alpha,j}(A-I_{2n})^j.$$
We deduce from this series expansion that
\begin{equation}\label{05092019E9}
	G_{\alpha}(t\Reelle F,t\Imag F) = \sum_{j=0}^{+\infty}\sigma_{\alpha,j}(e^{-2itF}e^{-2it\overline F}-I_{2n})^j.
\end{equation}
However, we have already noticed that the vector space $S+iS$ is stable by the matrices $e^{-2itF}$ and $e^{-2it\overline F}$. The inclusion \eqref{05092019E8} is therefore a consequence of this observation and \eqref{05092019E9}.
\end{proof}

\end{document}